\g@addto@macro{\endabstract}{\@setabstract}
\newcommand{\authorfootnotes}{\renewcommand\thefootnote{\@fnsymbol\c@footnote}}%
\theoremstyle{plain}
\newtheorem{theorem}{Theorem}
\newtheorem{thm}{Theorem}[subsection]
\newtheorem{lem}[thm]{Lemma}
\newtheorem{prop}[thm]{Proposition}
\newtheorem{cor}[thm]{Corollary}
\newcommand\scalemath[2]{\scalebox{#1}{\mbox{\ensuremath{\displaystyle #2}}}}
\theoremstyle{plain}
\newtheorem{defn}[thm]{Definition}
\theoremstyle{remark}
\newtheorem{rem}[thm]{Remark}
\newcommand{\mycomment}[1]{}
\renewcommand{\tocsection}[3]{%
  \indentlabel{\@ifnotempty{#2}{\bfseries\ignorespaces#1 #2\quad}}\bfseries#3}
\renewcommand{\tocsubsection}[3]{%
  \indentlabel{\@ifnotempty{#2}{\ignorespaces#1 #2\quad}}#3}
\renewcommand{\tocsubsubsection}[3]{%
  \indentlabel{\@ifnotempty{#2}{\ignorespaces#1 #2\quad}}#3}
\newcommand\@dotsep{4.5}
\def\@tocline#1#2#3#4#5#6#7{\relax
  \ifnum #1>\c@tocdepth 
  \else
    \par \addpenalty\@secpenalty\addvspace{#2}%
    \begingroup \hyphenpenalty\@M
    \@ifempty{#4}{%
      \@tempdima\csname r@tocindent\number#1\endcsname\relax
    }{%
      \@tempdima#4\relax
    }%
    \parindent\z@ \leftskip#3\relax \advance\leftskip\@tempdima\relax
    \rightskip\@pnumwidth plus1em \parfillskip-\@pnumwidth
    #5\leavevmode\hskip-\@tempdima{#6}\nobreak
    \leaders\hbox{$\m@th\mkern \@dotsep mu\hbox{.}\mkern \@dotsep mu$}\hfill
    \nobreak
    \hbox to\@pnumwidth{\@tocpagenum{\ifnum#1=1\bfseries\fi#7}}\par
    \nobreak
    \endgroup
  \fi}
\renewcommand\csname r@tocindent0\endcsname{0pt}
\def\l@subsection{\@tocline{2}{0pt}{2.5pc}{5pc}{}}
\def\l@subsubsection{\@tocline{2}{0pt}{4pc}{5pc}{}}
\DeclareMathOperator{\GL}{GL}
\DeclareMathOperator{\ch}{ch}
\DeclareMathOperator{\Hom}{Hom}
\DeclareMathOperator{\vol}{vol}
\DeclareMathOperator{\PGL}{PGL}
\newcommand\blfootnote[1]{%
  \begingroup
  \renewcommand\thefootnote{}\footnote{#1}%
  \addtocounter{footnote}{-1}%
  \endgroup
}
\date{} 
\begin{document}
\hypersetup{citecolor=blue}
\hypersetup{linkcolor=red}

\begin{center}
  \LARGE 
 ON RANKIN-SELBERG INTEGRAL STRUCTURES AND EULER SYSTEMS FOR $\GL_2\times \GL_2.$
 
  \normalsize
  \bigskip
  Alexandros Groutides \par 
  
 Mathematics Institute, University of Warwick\par
\end{center}

\begin{abstract}
   We study how Rankin-Selberg periods and distinction problems interact with integral structures in spherical Whittaker type representations. Using this representation-theoretic framework, we settle a conjecture of Loeffler by showing that the local Euler factors appearing in the construction of the motivic Rankin-Selberg Euler system for a product of modular forms are integrally optimal; i.e. any construction of this type with any choice of integral input data in the recipe of Loeffler-Skinner-Zerbes, would give local factors appearing in tame norm relations at $p$, which are integrally divisible by the Euler factor $\mathcal{P}_p^{'}(\mathrm{Frob}_p^{-1})$ modulo $p-1$. We also interpret this as an integrality result on the unramified part of the period associated to the Rankin-Selberg convolution of two modular forms.
\end{abstract}
\blfootnote{We gratefully acknowledge support from the following research grant: ERC Grant No. 101001051—Shimura varieties and the Birch–Swinnerton-Dyer conjecture.}
\vspace{-2em}
\section{Introduction} 
\subsection{Overview of the results}
This paper is devoted to the study of Rankin-Selberg periods associated to the convolution of two modular forms. These play an important role in the study of Rankin-Selberg $L$-functions, and the geometry and arithmetic of modular curves; in particular they are fundamental input for the Euler system constructed in \cite{Lei_2014}. We establish integral properties of these periods, when the input data lies in suitable lattices, within the context of Euler systems and distinction problems.

For our first main result, we show that \textit{any} family of integral input data with determinant level in the sense of \cite{Loeffler_2021}, gives rise to cohomology classes which are norm-compatible in the strongest possible integral sense, i.e. as classes in motivic cohomology whose local factors appearing in tame norm relations at $p$ are integral operator multiples of the Euler factor $\mathcal{P}_p^{'}(\mathrm{Frob}_p^{-1})$ modulo $\scalemath{0.9}{(p-1)}$. It is important to note that our results hold for non-trivial coefficient sheaves as well. However, we avoid doing this here to ease exposition. We refer the reader to \cite{Loeffler_2021}, \cite{Loeffler_Skinner_Zerbes_2021} and \cite{grossi2020norm} for the formalisms required to extend this to non-trivial motives. The integral structures with determinant level, arise from global integral considerations under \'etale regulator, and they are precisely the ones which give rise to classes in the integral \'etale cohomology. We refer the reader to \Cref{sec group schemes eq maps} for details and definitions. 

We write $H=\GL_2$ and $G=\GL_2\times \GL_2$ with $H\hookrightarrow G$ diagonally. We adopt the notation $H_p:=H(\mathbf{Q}_p)$ and $H_p^\circ:=H(\mathbf{Z}_p)$, and the same for $G$. 

\begin{theorem}[\Cref{thm euler system norm relations}, \Cref{cor base change}]\label{thm intro A}Let $S$ be a finite set of primes containing $2$, $K_S$ an open compact level subgroup unramified away from $S$, and $c\in\mathbf{Z}_{>1}$ coprime to $6S$. Let $\mathscr{Z}_{cS}$ be the set of square-free positive integers coprime to $cS$. For \textit{any} family $\underline{\delta}=(\delta_p)_{p\notin S}$ of \textit{integral data with determinant level}, the integral Rankin-Eisenstein classes
$$_c\Xi_{\mathrm{mot},n}(\underline{\delta})\in H_\mathrm{mot}^3\left(Y_G(K_S)\times_\mathbf{Q}\mathbf{Q}(\mu_n),\mathbf{Q}(2)\right),\ n\in\mathscr{Z}_S$$
where $_c\Xi_{\mathrm{mot},n}(\underline{\delta})$ depends on $\delta_p$ for $p|n$, satisfy the following norm relations:
\begin{enumerate}
\item For all $n,m\in\mathscr{Z}_{cS}$ with $\tfrac{m}{n}=p$ prime
\begin{align*} 
\mathrm{norm}^{\mathbf{Q}(\mu_m)}_{\mathbf{Q}(\mu_n)}\left(_c\Xi_{\mathrm{mot},m}(\underline{\delta})\right)&=\mathcal{P}_{\mathrm{Tr}(\delta_p)}^\mathrm{cycl}\cdot\ _c\Xi_{\mathrm{mot},n}(\underline{\delta})\end{align*}
with
\begin{align*}
\mathcal{P}_{\mathrm{Tr}(\delta_p)}^\mathrm{cycl}&\in\left(p-1,\mathcal{P}_p^{'}(\mathrm{Frob}_p^{-1})\right)\subseteq\mathcal{H}_{G_p}^\circ(\mathbf{Z}[1/p])[\mathrm{Gal}(\mathbf{Q}(\mu_n)/\mathbf{Q})].
\end{align*}
 The local factor $\mathcal{P}_{\mathrm{Tr}(\delta_p)}^\mathrm{cycl}$ is canonical and $\mathrm{Frob}_p\in \mathrm{Gal}(\mathbf{Q}(\mu_n)/\mathbf{Q})$ is arithmetic Frobenius at $p$. Additionally, $\mathcal{P}_p^{'}(x)\in\mathcal{H}_{G_p}^\circ(\mathbf{Z}[1/p])[x]$ is the polynomial over the integral-away-from-$p$ spherical Hecke algebra which interpolates local $L$-factors of the unramified principal-series of $G_p.$
 \item Specialising to $\underline{\delta}=\underline{\delta}_1$, we have for all $n,m\in\mathscr{Z}_{cS}$ with $\tfrac{m}{n}=p$ prime 
\begin{align*}
    \mathrm{norm}^{\mathbf{Q}(\mu_m)}_{\mathbf{Q}(\mu_n)}(_c\Xi_{\mathrm{mot},m}(\underline{\delta}_1))&=
        \mathcal{P}_p^{'}(\mathrm{Frob}_p^{-1})\cdot\ _c\Xi_{\mathrm{mot},n}\left(\underline{\delta}_1\right).
\end{align*} 
\end{enumerate}
\end{theorem}
  \noindent 
  We refer the reader to \Cref{def integral lattice} and \Cref{sec from and back} for more details on local integral data, and to \Cref{thm euler system norm relations} for the definition of the family $\underline{\delta}_1$. For the sake of completeness, we also mention that the other generator of the ideal in part $(1)$, i.e. $p-1$, is also trivially obtained. Additionally, part $(2)$ of the theorem is a special instance of part $(1)$ and it is the corresponding version of Theorem A in \cite{Loeffler_2021}, in our setup. It combines the best of both worlds between \cite{Lei_2014} and \cite{grossi2020norm}; i.e. Euler system tame norm relations in motivic cohomology and with the correct Euler factors.

 The local theory developed to show that the tame norm relations are integrally optimal, contains several novel stand-alone integrality results regarding distinction problems and zeta integrals. This brings us to our second main result.
 
 Let $f_1,f_2$ be normalized cuspidal Hecke eigenforms of even integral weights $k_1,k_2$, levels $N_1,N_2$ and Nebentypes $\epsilon_1,\epsilon_2$. Write $\pi_{f_i}$ for the corresponding cuspidal automorphic representation attached to $f_i$. Let $S$ be a finite set of places containing $\infty$ and all the prime divisors of $N_1N_2$. Then, $\pi_{f_1}^S\boxtimes\pi_{f_2}^S$ is unramified and contains a unique normalized spherical vector $W_{\pi_{f_1}}^\mathrm{sph}\otimes W_{\pi_{f_2}}^\mathrm{sph}$. The unramified Rankin-Selberg period associated to $f_1\times f_2$ is the unique normalized linear form
 $$\mathcal{Z}_{f_1\times f_2}\in\mathrm{Hom}_{\GL_2(\mathbf{A}^S)}\left(\mathcal{S}((\mathbf{A}^S)^2)\otimes \pi_{f_1}^S\otimes\pi_{f_2}^S,\mathbf{1}\right)$$
 in this one-dimensional Hom-space, where $\mathcal{S}((\mathbf{A}^S)^2$ is a space of Schwartz functions. It can be realized using the local zeta integrals and $L$-factors of \cite{jacquet1981euler}. Our second main result is concerned with the integral behavior of this period.

\begin{theorem}[\Cref{thm integral JPSS periods}, \Cref{thm global rankin selberg}]\label{thm intro B}
     Let $\ell\nmid N_1N_2$ be a fixed prime and fix $\mathbf{C}\simeq_\iota\overline{\mathbf{Q}}_\ell$. Let $\mathbf{L}_{f_1,f_2}$ be the smallest $\ell$-adic number field containing the composite of the number fields of $f_1$ and $f_2$ under $\iota$. Then the following are true: 
    \begin{enumerate}
        \item For any $g_1,g_2\in\GL_2(\mathbf{A}^S)$ and any decomposable Schwartz function $\Phi=\otimes_{p\not\in S}\Phi_p\in \mathcal{S}((\mathbf{A}^S)^2)$ where each $\Phi_p$ is valued in $\vol_{H_p}(\mathrm{Stab}_{H_p}(\Phi_p)\cap g_pG_p^\circ g_p^{-1})^{-1}\mathcal{O}_{\mathbf{L}_{f_1,f_2}}$, the period $\mathcal{Z}_{f_1\times f_2}$ satisfies $$\mathcal{Z}_{f_1\times f_2}\left(\Phi,g_1W_{\pi_{f_1}^S}^\mathrm{sph},g_2W_{\pi_{f_2}^S}^\mathrm{sph}\right)\in\mathcal{O}_{\mathbf{L}_{f_1,f_2}}.$$ 
        \item Suppose, moreover, that $S_0$ denotes a finite set of primes disjoint from $S$, and for each $p\in S_0$, $\Phi_p$ is valued in $\vol_{H_p}(\mathrm{Stab}_{H_p}(\Phi_p)\cap g_pG_p^\circ[p]g_p^{-1})^{-1}\mathcal{O}_{\mathbf{L}_{f_1,f_2}}$ \emph{(note the addition of the determinant level $G_p^\circ[p]$ instead of $G_p^\circ$)}. Then, the period $\mathcal{Z}_{f_1\times f_2}$ also satisfies
        $$ \mathcal{Z}_{f_1\times f_2}\left(\Phi,g_1W_{\pi_{f_1}^S}^\mathrm{sph},g_2W_{\pi_{f_2}^S}^\mathrm{sph}\right)\cdot \left(\prod_{\substack{p\in S_0\ \mathrm{s.t:}\\ \Phi_p(0,0)\neq 0}} L_p(\epsilon_1\epsilon_2,0)^{-1}\right)\in \prod_{\substack{p\in S_{0}}}\left(p-1,L_p(\pi_{f_1}\boxtimes \pi_{f_2},0)^{-1}\right)\subseteq\mathcal{O}_{\mathbf{L}_{f_1,f_2}}.$$
        In particular, if the levels of $f_1,f_2$ satisfy $(\#(\mathbf{Z}/N_1N_2\mathbf{Z})^\times,\ell)=1$, then the containment holds without the bracketed product of abelian $L$-factors.
    \end{enumerate}
    
\end{theorem}
\noindent 
Here $\vol_{H_p}$ denotes the unramified Haar measure on $H_p$, and $G_p^\circ[p]\subseteq G_p^\circ$ is the open compact subgroup given in \eqref{eq: det levl}.

It is important to note that \Cref{thm intro B} (and more generally \Cref{sec in lattice in S x pi1 x pi2}, \Cref{sec integral lattice in pi1 x pi2 x pi3}, \Cref{rankin selber integrals}) treats \textit{arbitrary} integral test vectors as per \Cref{def integral lattice}. There are related results in the literature, mostly resembling \Cref{sec in lattice in S x pi1 x pi2}, but only for \textit{specific} nice integral input data (see for example \cite[\S $4$]{grossi2020norm}). Overall, it does not seem feasible to directly study these periods for arbitrary integral test vectors and expect these general integrality results. This is mostly due to the lack of compatibility between the additive structure of the local space of Schwartz functions, and the normalisation volume factors of integral data arising from global considerations. The way we work around this issue is by translating our problem to an integral problem in $\mathrm{Ind}_{P_p}^{G_p}\mathbf{1}$ where $P$ is the mirabolic subgroup of $\GL_2$. This is also crucial in proving \Cref{thm intro A} since in order to realize the local factors for general integral input data and study them, one essentially needs to closely examine how local Rankin-Selberg periods interact with integral structures in unramified principal-series representations of $\GL_2\times \GL_2$. 

\subsection{Outline of the paper}
After setting up some preliminaries in \Cref{sec prelims}; \Cref{section structure theorems} and \Cref{sec Hecke forms} are mainly dedicated to reproducing the original local construction of \cite{Loeffler_2021} and then ``embedding it'' into a different setup. This transition captures and realizes integrality in a more natural and approachable manner. It also bypasses the problem mentioned above regarding the local Rankin-Selberg zeta integral evaluated on arbitrary integral elements. There are a few main ingredients to this: Firstly, we verify that a general cyclicity result of \cite{Sakellaridis_2008} can be applied to our setup, and proceed to link the two as in \cite{Loeffler_2021}. Then, we construct and study a Hecke equivariant morphism to a certain space of compactly supported functions that are invariant under left translations by the diagonal mirabolic subgroup $P\subseteq\GL_2$. Afterwards, using a splitting of measures inspired by \cite{kurinczuk2017rankin}, together with certain local zeta integrals, we construct non-zero Hecke equivariant forms from this space to $(\Pi_p^\vee)^{G_p^\circ}$ for a sufficiently dense family of Whittaker type principal-series $\Pi_p$. These are easier to work with at an integral level and with general input data. Finally, we prove a main integrality result (\Cref{thm conj 1 for P-invariant}) within this $P$-invariant setting. We also establish a (integral) structure result for the aforementioned space which even though is not crucial for norm relations it offers a more complete version of the theory and it links our work with integral structures in unramified Whittaker type $G$-representations distinguished by the diagonal mirabolic, and with branching laws for $\ell$-modular representations (\Cref{sec relation to l-modular}).

In \Cref{sec local norm relations}, by ``increasing the level'' in our previous constructions and using certain trace maps, we transfer our integral results back to the original Rankin-Selberg setting. This is how we obtain the \textit{local abstract integral analogue} of \Cref{thm intro A} (\Cref{thm local norm relations}). The rest of \Cref{sec euler systems for gl2 x gl2} is dedicated to introducing the setup of \Cref{thm intro A} and defining the Rankin-Eisenstein map $\mathcal{RE}$, following \cite{Loeffler_2021}. The next step is to apply our integral local result to this sophisticated example. Doing this, we obtain the integrally optimal norm-compatibility of our classes in motivic cohomology attached to general integral data. 

In the final section (\Cref{sec int periods}), we link our local Hecke theoretic results to the integral behavior of the unique Rankin-Selberg for a product of modular forms, which culminates in \Cref{thm intro B}.
\subsection{Acknowledgments}
I am vastly grateful to my PhD supervisor David Loeffler, without whom this paper would not have been possible. In particular, I thank him for his constant support and invaluable mathematical input while this work was taking place. Additionally, I would like to thank Ju-Feng Wu, Robert Kurinczuk,  Dipendra Prasad, Yiannis Sakellaridis and Johannes Girsch for useful discussions. 
 
{
  \hypersetup{linkcolor=black}
  \tableofcontents
}\section{Preliminaries}\label{sec prelims}
\subsection{Group schemes and Hecke algebras}
Throughout, $p$ will always denote a prime number. We fix once and for all the algebraic groups $$G:=\GL_2\times \GL_2,\  H:=\GL_2.$$
We identify $H$ with a subgroup of $G$ via the diagonal embedding $h\mapsto (h,h)$. 
  For a group $\Gamma$ in $\{H,G\}$, we write $\Gamma_p$ for the locally pro-finite group $\Gamma(\mathbf{Q}_p)$ and $\Gamma_p^\circ$ for the maximal open compact subgroup  $\Gamma(\mathbf{Z}_p)$. We fix a $\mathbf{Q}$-valued Haar measure $d\gamma$ on $\Gamma_p$, normalized to give $\Gamma_p^\circ$ volume one. We write $\mathcal{H}(\Gamma_p):=C_c^\infty(\Gamma_p)$ for the full Hecke algebra of smooth, compactly supported $\mathbf{C}$-valued functions on $\Gamma_p$, under the usual convolution, which we denote by $$(\theta_1\cdot_\Gamma \theta_2)(x):=\int_{\Gamma_p} \theta_1(\gamma)\theta_2(\gamma^{-1}x)\ d\gamma, \ x\in\Gamma_p, \theta_1,\theta_2\in\mathcal{H}(\Gamma_p).$$ The subscript $\Gamma$ will be used when it's necessary to distinguish between different subgroups, but when it is clear from context, it will be omitted. We write $\mathcal{H}_{\Gamma_p}^\circ:=C_c^\infty(\Gamma_p^\circ\backslash \Gamma_p/\Gamma_p^\circ)$ for the spherical (or unramified) Hecke algebra of (smooth), compactly supported, $\Gamma_p^\circ$-bi-invariant, $\mathbf{C}$-valued functions on $\Gamma_p$, under the same convolution as before, and we also write $\mathcal{H}_{\Gamma_p}^\circ(\mathbf{Z}[1/p])$ for the subalgebra of functions valued in $\mathbf{Z}[1/p]$. It is well known that $\mathcal{H}_{\Gamma_p}^\circ$ is a commutative $\mathbf{C}$-algebra of Weyl group invariant polynomials in the Satake parameters. Explicitly, we have:
  \begin{align*}\mathcal{H}_{H_p}^\circ\simeq\mathbf{C}[\mathcal{S}_p^{\pm 1},\mathcal{T}_p]&\longhookrightarrow\mathcal{H}_{G_p}^\circ\simeq\mathbf{C}[\mathcal{S}_{p,1}^{\pm1},\mathcal{T}_{p,1},\mathcal{S}_{p,2}^{\pm 1},\mathcal{T}_{p,2}]\\
\mathcal{H}_{H_p}^\circ(\mathbf{Z}[1/p])\simeq\mathbf{Z}[1/p][\mathcal{S}_p^{\pm 1},\mathcal{T}_p]&\longhookrightarrow\mathcal{H}_{G_p}^\circ(\mathbf{Z}[1/p])\simeq\mathbf{Z}[1/p][\mathcal{S}_{p,1}^{\pm1},\mathcal{T}_{p,1},\mathcal{S}_{p,2}^{\pm 1},\mathcal{T}_{p,2}]\\
\mathcal{S}_p& \longmapsto \mathcal{S}_{p,1}\mathcal{S}_{p,2}\\
\mathcal{T}_{p}&\longmapsto\mathcal{T}_{p,1}\mathcal{T}_{p,2}\end{align*}
where
\begin{equation*}
\begin{split}
\mathcal{S}_p&:=\ch\left(\left[\begin{smallmatrix}
    p & \\
    & p
\end{smallmatrix}\right]H_p^\circ\right)\\
\mathcal{S}_{p,1}&:=\ch\left(\left(\left[\begin{smallmatrix}
    p & \\
    & p
\end{smallmatrix}\right],1\right)G_p^\circ\right)\\
\mathcal{S}_{p,2}&:=\ch\left(\left(1,\left[\begin{smallmatrix}
    p & \\
    & p
\end{smallmatrix}\right]\right)G_p^\circ\right)
\end{split}\ \ \ \ 
\begin{split}
\mathcal{T}_p&:=\ch\left(H_p^\circ\left[\begin{smallmatrix}
    p & \\
    & 1
\end{smallmatrix}\right] H_p^\circ\right)\\
\mathcal{T}_{p,1}&:= \ch\left(G_p^\circ\left(\left[\begin{smallmatrix}
    p & \\
    & 1
\end{smallmatrix}\right],1\right)G_p^\circ\right)\\
\mathcal{T}_{p,1}&:=\ch\left(G_p^\circ\left(1,\left[\begin{smallmatrix}
    p & \\
    & 1
\end{smallmatrix}\right]\right)G_p^\circ\right).
\end{split}
\end{equation*}
  We will denote by $(-)^{'}$ the automorphism of $\mathcal{H}_{\Gamma_p}^\circ$ induced by inversion in $\Gamma_p$, i.e. for any $\theta\in\mathcal{H}_{\Gamma_p}^\circ$, we have $\theta^{'}(\gamma):=\theta(\gamma^{-1})$ for all $\gamma\in \Gamma_p$. 
For an open subgroup $U_p$ of $\Gamma_p$, we write $\ch(U_p)$ for the characteristic function of $U_p$. All representations considered will be $\mathbf{C}$-linear (or $\overline{\mathbf{Q}}_\ell$-linear after fixing $\mathbf{C}\simeq\overline{\mathbf{Q}}_\ell$) unless otherwise stated (e.g \Cref{sec relation to l-modular}), and every smooth $\Gamma_p$-representation $V$ will also be regarded as a $\mathcal{H}(\Gamma_p)$-module in the usual way, via $$\theta\cdot v:=\int_{\Gamma_p}\theta(\gamma)\ \gamma\cdot v\ d\gamma,\ \theta\in\mathcal{H}(\Gamma_p), v\in V.$$ 
In particular, if $v\in V$ is $U_p$-invariant, then for any $g\in\Gamma_p$, $\ch(g U_p)\cdot v=\vol_{\Gamma_p}(U_p,d\gamma) g\cdot v$.
\subsection{Whittaker models, equivariant maps and volume factors}\label{sec group schemes eq maps}
We write ``$\mathrm{Ind}$'' (resp. ``$c\text{-}\mathrm{Ind}$'') for unnormalized (resp. compact unnormalized) induction, and ``$\mathrm{ind}$'' for the usual normalized induction.
We will usually denote by $B_p$ the upper triangular Borel of $H_p$, $N_p$ its unipotent radical of upper unitriangular matrices, and by $T_p$ the diagonal torus. Then $B_p=T_p\ltimes N_p$ and we also have the well-known Iwasawa decomposition $H_p=B_pH_p^\circ$.
 Let $\psi:\mathbf{Q}_p\rightarrow \mathbf{C}^\times$ be an additive character which we regard as a character of $N_p$ in the usual way by identifying $N_p$ with $\mathbf{Q}_p$ through its top right entry. As in \cite{loeffler2021gross}, we say that a smooth $H_p$-representation $\pi_p$ is of \textit{Whittaker type} if it is either irreducible and generic or a twist of the reducible principle series, which contains the Steinberg representation as a co-dimension one submodule. By a classic yet deep theorem of Gel'fand-Kazhdan (in a more general setting, \cite{gel1972representation}) such a representation is isomorphic to a space of  $\mathbf{C}$-valued functions on $H_p$ transforming by $\psi$ under left translations by $N_p$, i.e. $\pi_p\hookrightarrow \mathrm{Ind}_{N_p}^{H_p}\psi$. This space is unique and we call it the \textit{Whittaker model} of $\pi_p$, denoted by $\mathcal{W}(\pi_p,\psi)$. We will thus identify such a representation with its Whittaker model. 
Additionally, we say that such a representation is \textit{unramified} if it has a non-zero $H_p^\circ$-fixed vector. It is well-known that for such representation the space $(\pi_p)^{H_p^\circ}$ is one-dimensional and is generated by a canonical spherical vector $W_{\pi_p}^\mathrm{sph}$ such that $W_{\pi_p}^\mathrm{sph}(1)=1$. Thus, the spherical Hecke algebra of $H_p$ acts on this space via a character which we call the \textit{spherical Hecke eigensystem} of $\pi_p$ and denote it by $\Theta_{\pi_p}$.

We write $\mathcal{S}(\mathbf{Q}_p^2)$ for the space of $\mathbf{C}$-valued Schwartz functions (i.e locally constant and compactly supported) on $\mathbf{Q}_p^2$, regarded as a smooth $H_p$-representation under right translations. We also write $\mathcal{S}_0(\mathbf{Q}_p^2)$ for the subrepresentation of $\mathcal{S}(\mathbf{Q}_p^2)$ consisting of functions that vanish at $(0,0)$.

\begin{defn}\emph{(\cite[Definition $3.1.1$]{Loeffler_2021})}\label{def equiv maps}
    Let $V$ be a smooth $G_p$-representation. A linear map 
    $$\mathfrak{Z}:\mathcal{S}(\mathbf{Q}_p^2)\otimes \mathcal{H}(G_p)\longrightarrow{V}$$
    is $(G_p\times H_p)$-equivariant, if it is equivariant with respect to the following actions: 
\begin{enumerate}
    \item $G_p$ acts on the left hand side by $g\cdot(\phi\otimes\xi):=\phi\otimes(\xi((-)g)$ and on the right-hand side by its assigned action on $V$.
    \item $H_p$ acts on the left-hand side by $h\cdot(\phi\otimes\xi):=\phi((-)h)\otimes\xi(h^{-1}(-))$ and trivially on the right-hand side
\end{enumerate}
    
\end{defn}

\noindent 
The family of such equivariant maps can be canonically identified with $\Hom_{H_p}(\mathcal{S}(\mathbf{Q}_p^2)\otimes V^\vee,\mathbf{1})$, where $V^\vee$ denotes the smooth dual of $V$ \cite[\S $4.9$]{Loeffler_Skinner_Zerbes_2021}. The latter is often non-zero and very interesting to study. For an open subgroup $U_p\subseteq G_p^\circ$, we write $\mathcal{I}(G_p/U_p)$, for the $H_p$-coinvariants of $\mathcal{S}(\mathbf{Q}_p^2)\otimes C_c^\infty(G_p/U_p)$, and $[\cdot]$ for the class of an element in this space of coinvariants. We will eventually drop the $[\cdot]$ notation once it's no longer necessary. It follows from the definitions that for every such pair $(V,\mathfrak{Z})$, we get an induced map $\mathcal{I}(G_p/U_p)\rightarrow V^{U_p}$. We equip $\mathcal{S}(\mathbf{Q}_p^2)\otimes C_c^\infty(G_p/G_p^\circ)$ with the structure of an $\mathcal{H}_{G_p}^\circ$-module via the $G_p$-action defined above. It can be easily checked that this induces a well-defined action of $\mathcal{H}_{G_p}^\circ$ on $\mathcal{I}(G_p/G_p^\circ)$. A formal unravelling of definitions, gives the following two identities as in \cite{Loeffler_2021}: \begin{itemize}
    \item Let $\phi\in \mathcal{S}(\mathbf{Q}_p^2)$ and $\xi_1,\xi_2\in \mathcal{H}(G_p)$. Then
    $[\phi\otimes (\xi_1\cdot_G\xi_2^{'})]=\xi_2\cdot[\phi\otimes \xi_1].$
    \item Let $\phi\in \mathcal{S}(\mathbf{Q}_p^2)$, $\xi_1\in \mathcal{H}(H_p)$ and $\xi_2\in\mathcal{H}(G_p)$. Then
    $[(\xi_1\cdot \phi)\otimes \xi_2]=[\phi\otimes (\xi_1^{'}\cdot_H \xi_2)].$
\end{itemize}
These two identities will be used throughout without further mention. 

\begin{defn}\label{def integral lattice} We introduce the following integral lattices in $\mathcal{I}(G_p/G_p^\circ)$ as in \emph{\cite[Definition $3.2.1$]{Loeffler_2021}}. Let $U_p$ be an open subgroup of $G_p^\circ$. 
   The \textit{integral} lattice \emph{(}at level $U_p$\emph{)} $\mathcal{I}_{(0)}(G_p/U_p,\mathbf{Z}[1/p])$ consists of all $\mathbf{Z}[1/p]$-linear combinations of elements of the form $[\phi\otimes \ch(gU_p)]$, where $\phi\in\mathcal{S}_{(0)}(\mathbf{Q}_p^2)$, $g\in G_p$, and $\phi$ is valued in 
   $$\frac{1}{\vol_{H_p}\left(\mathrm{Stab}_{H_p}(\phi)\cap g U_p g^{-1}\right)}\mathbf{Z}[1/p].$$
   Here $\mathcal{S}_{(0)}$ is shorthand notation to mean either $\mathcal{S}$ or $\mathcal{S}_0$ and $\mathcal{I}_{(0)}$ is then defined accordingly. Additionally, $\vol_{H_p}$ denotes the normalized $\mathbf{Q}$-valued Haar measure which gives $H_p^\circ$ volume $1$.
\end{defn}
\begin{rem}
    At first glance this definition seems a bit odd. However from a global viewpoint as pointed out in \cite{Loeffler_2021}, this is precisely the data which gives integral classes in the \'etale cohomology of our Shimura variety (see \Cref{sec etale realisation}).  As we will see later on, these elements do not only possess global integral properties, but also deep \textit{local} integral properties (see \Cref{prop integrally from coinvariants to C^infinity} and \Cref{thm local norm relations}).
\end{rem}
\noindent Of particular interest to us, will be the lattices $\mathcal{I}_{(0)}(G_p/G_p^\circ,\mathbf{Z}[1/p])$ and $\mathcal{I}_{(0)}(G_p/G_p^\circ[p],\mathbf{Z}[1/p])$, where $G_p^\circ[p]$ is the open level subgroup of $G_p^\circ$ given by 
\begin{align}\label{eq: det levl}G_p^\circ[p]:=\{(g_1,g_2)\in G_p^\circ\ |\ \det(g_2)\equiv 1\mod p\}.\end{align}
\begin{rem}
    One might think that there is a certain asymmetry in this definition in the sense that we've imposed the determinant condition on the second factor. However, this choice is irrelevant. The results are identical if one imposes the condition on the first factor instead. In fact, the existence of this choice can be eliminated all together if one works with the larger group $G\times \GL_1$ and then imposes this determinant condition on the $\GL_1$ factor. We do not do that here to avoid unnecessarily complicating the notation everywhere else.
\end{rem}
\noindent The inclusion $G_p^\circ[p]\subseteq G_p^\circ$ induces trace maps which descend to an integral level \cite[Proposition $3.2.3$]{Loeffler_2021}:
\[\begin{tikzcd}[sep=scriptsize]
	{\mathcal{I}_{(0)}(G_p/G_p^\circ[p])} && {\mathcal{I}_{(0)}(G_p/G_p^\circ)} & {} \\
	{\mathcal{I}_{(0)}(G_p/G_p^\circ[p],\mathbf{Z}[1/p])} && {\mathcal{I}_{(0)}(G_p/G_p^\circ,\mathbf{Z}[1/p])}
	\arrow["{\mathrm{Tr}^{G_p^\circ[p]}_{G_p^\circ}}", from=1-1, to=1-3]
	\arrow[hook', from=2-1, to=1-1]
	\arrow["{\mathrm{Tr}^{G_p^\circ[p]}_{G_p^\circ}}", from=2-1, to=2-3]
	\arrow[hook, from=2-3, to=1-3]
\end{tikzcd}\]
where $\mathrm{Tr}(\phi\otimes \xi):=\mathrm{Tr}^{G_p^\circ[p]}_{G_p^\circ}(\phi\otimes \xi):=\phi\otimes\sum_{\gamma\in G_p^\circ/G_p^\circ[p]}\xi((-)\gamma)$.

\section{Structure theorems}\label{section structure theorems}
\subsection{Connection to spherical varieties and a result of Sakellaridis} 
We introduce the space of $\mathbf{C}$-valued functions $C_c^\infty(H_p^\circ\backslash G_p/ G_p^\circ)$, which we regard as a module over $\mathcal{H}_{G_p}^\circ\otimes \mathcal{H}_{H_p}^\circ$ via
$$\left((\theta_1\otimes \theta_2)\cdot\xi\right)(x):=\int_{G_p}\int_{H_p} \theta_1(g)\theta_2(h)\xi(h^{-1}xg)\ dg\ dh, \ \theta_1\otimes \theta_2\in \mathcal{H}_{G_p}^\circ\otimes \mathcal{H}_{H_p}^\circ,\ \xi\in C_c^\infty(H_p^\circ\backslash G_p/ G_p^\circ).$$
The goal of this section is to prove that the module $C_c^\infty(H_p^\circ\backslash G_p/G_p^\circ)$ is cyclic over $\mathcal{H}_{G_p}^\circ \otimes \mathcal{H}_{H_p}^\circ$, and generated by the characteristic function of $G_p^\circ$. To do this, we heavily rely on an application of a general theorem of Sakellaridis \cite{sakellaridis2013spherical} regarding function spaces on spherical varieties. Sakellaridis' result \cite[Corollary $8.0.4$]{sakellaridis2013spherical}  shows that for any split reductive group $\mathscr{G}$ over $\mathbf{Z}_p$, and any homogeneous affine spherical $\mathscr{G}$-variety $\mathcal{X}$ that satisfies a long list of conditions, the module $C_c^\infty(\mathcal{X}(\mathbf{Q}_p))^{\mathscr{G}_p^\circ}$, of $\mathscr{G}_p^\circ$-invariant Schwartz functions on $\mathcal{X}(\mathbf{Q}_p)$, is cyclic over $\mathcal{H}_{\mathscr{G}_p}^\circ$, generated by the characteristic function of $\mathcal{X}(\mathbf{Z}_p)$. In fact, Sakellaridis' results also give us the annihilator of this action \cite[Theorem $8.0.2$]{sakellaridis2013spherical}, but we won't be concerned with that here.
\begin{thm}\label{cyclicity via sakellaridis}
    The module $C_c^\infty(H_p^\circ\backslash G_p/G_p^\circ)$ is cyclic over $\mathcal{H}_{G_p}^\circ\otimes \mathcal{H}_{H_p}^\circ$, generated by the characteristic function $\ch(G_p^\circ)$.
    \begin{proof}
        In order to be able to apply Sakellaridis' result, we need some preparation. We write $\mathscr{G}$ for the group scheme $G\times H=\GL_2^3$, and $\mathscr{H}$ for the diagonal copy of $H$ in $\mathscr{G}$. We define three different Borel subgroups of $\GL_2$. We write $B_1$ for the upper triangular Borel, $B_2$ for the lower triangular Borel, and $B_3$ for $nB_2n^{-1}$, where $n=\left[\begin{smallmatrix}
            1 & 1 \\
            & 1
        \end{smallmatrix}\right]$. Then $\mathscr{B}:=B_1\times B_2\times B_3$ is a Borel of $\mathscr{G}$. With this choice, the $\mathscr{B}$-orbit of the identity is the unique open $\mathscr{B}$-orbit on the homogeneous affine variety $\mathcal{X}:=\mathscr{H}\backslash \mathscr{G}$. We write $\mathcal{Y}$ for this open orbit. We write $T$ for the diagonal split torus of $\GL_2$, and $\mathscr{T}:=T\times T\times nTn^{-1}\subseteq\mathscr{B}$. Write $w_1,w_2,w_3$ for the simple reflections associated to each copy of $\GL_2$ in $\mathscr{G}$. The Weyl group $W$, of $\mathscr{G}$ is given by $\langle w_1\rangle \times \langle w_2\rangle\times  \langle w_3\rangle\simeq (\mathbf{Z}/2\mathbf{Z})^3$. For each simple reflection we write $\mathscr{P}_i$ for the associated parabolic with respect to our fixed choice of $\mathscr{T}\subseteq \mathscr{B}$ (e.g $\mathscr{P}_1=\GL_2\times B_2\times B_3$). In \cite{knop1995set}, Knop defines an action of the Weyl group of $\mathscr{G}$ on the set of Borel orbits (over $\overline{\mathbf{Q}}_p$) which is summarized in Section $2.2$ of \cite{Sakellaridis_2008}. We want to compute the action of each simple reflection on the open orbit $\mathcal{Y}$. We do this for $i=1$ and the other two cases are handled in the same fashion. For this, we once again follow \cite[\S $2.2$]{Sakellaridis_2008}. Consider the quotient 
        $$\mathscr{P}_1\rightarrow \mathrm{PGL}_2=\mathrm{Aut}(\mathbf{P}_1)=\mathrm{Aut}(\mathscr{B}\backslash\mathscr{P}_1).$$
        The image of $\mathrm{Stab}_{\mathscr{P}_1}(1)$ (where we regard $1$ as an element of $\mathscr{H}\backslash\mathscr{G}$) in $\mathrm{PGL}_2$ is a spherical subgroup. It is readily seen that this image is given by the non-trivial torus $B_2\cap B_3$ in $\PGL_2$. Thus it
        is of ``type T'' as per the classification in \cite[\S $2.1.1$]{Sakellaridis_2008}. From the description of Knop's action, we deduce that $w_1$ fixes the open orbit $\mathcal{Y}$. Repeating this for $w_2$ and $w_3$, we see that $W$ fixes the open orbit $\mathcal{Y}$. Now, in our case, the standard parabolic $P(\mathcal{X}):=\{g\in \mathscr{G}\ |\ \mathcal{Y}\cdot g=\mathcal{Y}\}$ is simply equal to $\mathscr{B}$, and hence the Weyl group and modular quasi-character of its Levi are trivial. Since we've shown that $\mathcal{Y}$ is a fixed point for the Knop action, it follows from \cite[Theorem $2.2.2$]{Sakellaridis_2008} (originally due to \cite{knop1995set}) that the ``little Weyl group'', $W_\mathcal{X}$, of $\mathcal{X}$ (originally introduced by \cite{brion1990vers}) coincides with the whole Weyl group $W$, of $\mathscr{G}$. We now turn our attention to verifying the assumptions in \cite[\S $8$]{sakellaridis2013spherical}. Using, the discussion on simple root types above, and \cite[\S $7$]{sakellaridis2013spherical}, we see that the assumptions of \cite[Theorem $7.2.1$]{sakellaridis2013spherical} are satisfied. Furthermore, it is clear that $\mathcal{Y}(\mathbf{Q}_p)$ has a unique $\mathscr{B}(\mathbf{Q}_p)$-orbit, since $\mathrm{Stab}_{\mathscr{B}}(1)=Z_{\GL_2}^\mathrm{diag}$ is smooth and by \cite[Proposition $1$]{bhargava2014arithmetic}, the set of such orbits is in bijection with the kernel of the map$$H^1\left(\mathrm{Gal}(\overline{\mathbf{Q}}_p/\mathbf{Q}_p),\mathrm{Stab}_\mathscr{B}(1)(\overline{\mathbf{Q}}_p)\right)\longrightarrow H^1\left(\mathrm{Gal}(\overline{\mathbf{Q}}_p/\mathbf{Q}_p),\mathscr{B}(\overline{\mathbf{Q}}_p)\right).$$But the left-most term is trivial by an application of Hilbert $90$ (in fact both terms are). We write $\mathscr{T}_\mathcal{X}$ for $\mathscr{T}/(\mathscr{T}\cap \mathscr{H})=\mathscr{T}/Z_{\GL_2}^{\mathrm{diag}}$. The little Weyl group $W_\mathcal{X}=W$ acts on $X_*(\mathscr{T}_X)$ and it remains to verify that the natural map
        \begin{align}\label{eq:natural map}\mathbf{C}[X_*(\mathscr{T})]^W\longrightarrow \mathbf{C}[X_*(\mathscr{T}_\mathcal{X})]^{W_\mathcal{X}}\end{align}
        is surjective. But $W_\mathcal{{X}}=W$, and $\mathbf{C}[X_*(\mathscr{T})]$ is a semisimple $\mathbf{C}[W]$-module. Thus, surjectivity of \eqref{eq:natural map} follows from surjectivity of the map $\mathbf{C}[X_*(\mathscr{T})]\rightarrow \mathbf{C}[X_*(\mathscr{T}_\mathcal{X})]$ which is simply the quotient from the algebra $\mathbf{C}[x_1^{\pm1},y_1^{\pm1},x_2^{\pm1},y_2^{\pm1},x_3^{\pm1},y_3^{\pm1}]$ down to $\mathbf{C}[x_1^{\pm1},y_1^{\pm1},x_2^{\pm1},y_2^{\pm1},x_3^{\pm1},y_3^{\pm1}]/(1-x_1y_1x_2y_2x_3y_3)$. Finally, we are in a position to apply \cite[Corollary $8.0.4$]{sakellaridis2013spherical} that $C_c^\infty(\mathcal{X}(\mathbf{Q}_p))^{\mathscr{G}_p^\circ}$ is a cyclic $\mathcal{H}_{\mathscr{G}_p}^\circ$-module generated by $\ch(\mathcal{X}(\mathbf{Z}_p))$, where the Hecke action is the one induced from regarding $C_c^\infty(\mathcal{X}(\mathbf{Q}_p))$ as a smooth $\mathscr{G}_p$-representation via right translations. Finally, an unravelling of definitions, shows that we have a canonical isomorphism
        $$C_c^\infty(\mathcal{X}(\mathbf{Q}_p))^{\mathscr{G}_p^\circ}\simeq C_c^\infty(H_p^\circ\backslash G_p/G_p^\circ)\ ,\ F\mapsto\left(f_F:g\mapsto F(g,1)\right)$$
        which is equivariant with respect to the Hecke action of $\mathcal{H}_{\mathscr{G}_p}^\circ\simeq \mathcal{H}_{G_p}^\circ\otimes \mathcal{H}_{H_p}^\circ$, and maps $\ch(\mathcal{X}(\mathbf{Z}_p))$ to $\ch(G_p^\circ)$. This concludes the proof of the theorem.
    \end{proof}
\end{thm}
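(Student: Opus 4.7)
The plan is to reduce cyclicity of $C_c^\infty(H_p^\circ\backslash G_p/G_p^\circ)$ over $\mathcal{H}_{G_p}^\circ\otimes \mathcal{H}_{H_p}^\circ$ to an instance of Sakellaridis' general cyclicity theorem (Corollary 8.0.4 of \emph{loc.\,cit.}) for a suitable affine spherical variety. The natural setup is to take $\mathscr{G}:=G\times H=\GL_2^3$ and $\mathcal{X}:=\mathscr{H}\backslash\mathscr{G}$, where $\mathscr{H}$ is the diagonal $\GL_2$. Under the canonical map $F\mapsto(g\mapsto F(g,1))$, the space $C_c^\infty(\mathcal{X}(\mathbf{Q}_p))^{\mathscr{G}_p^\circ}$ is identified with $C_c^\infty(H_p^\circ\backslash G_p/G_p^\circ)$ as modules over $\mathcal{H}_{\mathscr{G}_p}^\circ\simeq \mathcal{H}_{G_p}^\circ\otimes \mathcal{H}_{H_p}^\circ$, sending $\ch(\mathcal{X}(\mathbf{Z}_p))$ to $\ch(G_p^\circ)$. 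So it suffices to verify Sakellaridis' hypotheses for $(\mathscr{G},\mathcal{X})$.

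The first step is to produce a Borel $\mathscr{B}\subseteq \mathscr{G}$ for which $\mathcal{X}$ has a unique open $\mathscr{B}$-orbit. Taking the diagonal Borel in all three factors fails due to the stabilizer condition $\mathscr{B}\cap \mathscr{H}$ being too big, so I would instead take three pairwise distinct Borels of $\GL_2$, e.g.\ the upper triangular $B_1$, the lower triangular $B_2$, and $B_3=nB_2n^{-1}$ for $n$ a non-trivial unipotent, and set $\mathscr{B}=B_1\times B_2\times B_3$. A direct dimension/stabilizer check then shows that the identity coset in $\mathcal{X}$ lies in an open $\mathscr{B}$-orbit $\mathcal{Y}$, with $\mathrm{Stab}_{\mathscr{B}}(1)=Z_{\GL_2}^{\mathrm{diag}}$.

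The main technical step is the computation of the little Weyl group $W_\mathcal{X}$ via Knopp's action of the Weyl group $W\simeq (\mathbf{Z}/2)^3$ on the set of Borel orbits. For each simple reflection $w_i$, one must identify the spherical type of the image of $\mathrm{Stab}_{\mathscr{P}_i}(1)$ in $\PGL_2=\mathrm{Aut}(\mathscr{B}\backslash\mathscr{P}_i)$, where $\mathscr{P}_i$ is the minimal standard parabolic attached to $w_i$. A direct check shows this image is a non-split torus in each case (here the choice of three distinct Borels is essential), which in Sakellaridis' classification is ``type T'', and hence $w_i$ fixes $\mathcal{Y}$ under Knopp's action. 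It follows that $W_\mathcal{X}=W$, the standard parabolic $P(\mathcal{X})$ preserving $\mathcal{Y}$ is just $\mathscr{B}$ itself, and the Levi has trivial Weyl group and modular character.

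Finally, I would verify the remaining technical hypotheses of Sakellaridis' Theorem 7.2.1 and Corollary 8.0.4: (i) the unique orbit statement $\mathcal{Y}(\mathbf{Q}_p)=\mathscr{B}(\mathbf{Q}_p)\cdot 1$, which follows from Hilbert 90 applied to the smooth diagonal center, and (ii) the surjectivity of the natural map $\mathbf{C}[X_*(\mathscr{T})]^W\to \mathbf{C}[X_*(\mathscr{T}_\mathcal{X})]^{W_\mathcal{X}}$. Since $W_\mathcal{X}=W$ acts semisimply on $\mathbf{C}[X_*(\mathscr{T})]$, this reduces to surjectivity of the quotient by the ideal $(1-x_1y_1x_2y_2x_3y_3)$ corresponding to the central character constraint, which is immediate. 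Applying Sakellaridis' corollary then yields cyclicity, which transports back to the desired statement. I expect the main obstacle to be the Knopp action computation in the third step, since it requires correctly interpreting the classification of simple root types and running the local $\PGL_2$-reduction uniformly for each $w_i$.
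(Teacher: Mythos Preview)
Your proposal is essentially identical to the paper's proof: same choice of $\mathscr{G}=\GL_2^3$, $\mathscr{H}$ the diagonal $\GL_2$, same Borel $\mathscr{B}=B_1\times B_2\times B_3$ with three pairwise distinct Borels, same Knopp action computation yielding $W_\mathcal{X}=W$, same verification of the rational orbit condition via Hilbert~90 and of the surjectivity on invariants, followed by Sakellaridis' Corollary~8.0.4 and the transport isomorphism. One small terminological slip: the image of $\mathrm{Stab}_{\mathscr{P}_i}(1)$ in $\PGL_2$ is a \emph{split} maximal torus (it is the intersection of two Borels defined over the base, and in any case the Knopp computation is over $\overline{\mathbf{Q}}_p$), not a non-split one; the paper simply calls it the ``non-trivial torus $B_2\cap B_3$''. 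This does not affect the argument, since all that matters for the classification is that the subgroup is of type~T.
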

\subsection{Cyclicity of $\mathcal{I}(G_p/G_p^\circ)$} We will now use \Cref{cyclicity via sakellaridis} to determine the structure of $\mathcal{I}(G_p/G_p^\circ)$ over the spherical Hecke algebra of $G_p$. Our approach draws heavily from \cite[\S $4.3$]{Loeffler_2021}.  

\begin{thm}\label{thm cyclicity on coinvariants}
    Let $\phi_0=\ch(\mathbf{Z}_p^2)$ and $\xi_0=\ch(G_p^\circ)$. The module $\mathcal{I}(G_p/G_p^\circ)$ is cyclic over $\mathcal{H}_{G_p}^\circ$ generated by $[\phi_0\otimes \xi_0]$.
    \begin{proof}
        The proof is essentially translating \cite[Lemma $4.3.3$ \& Proposition $4.3.4$]{Loeffler_2021}, and then applying the formalism of \cite[Theorem $4.3.6$]{Loeffler_2021} together with \Cref{cyclicity via sakellaridis}. But since it's quite involved and combines quite a few results, we give the details here. We write $A$ for the rank one torus given by $Z_H$. We have embeddings of algebras \begin{align*}&\mathcal{H}_{A_p}^\circ\overset{\Delta_H}{\longrightarrow} \mathcal{H}_{H_p}^\circ,\ \ch(\left[\begin{smallmatrix}
            p& \\
            & p
        \end{smallmatrix}\right] A_p^\circ)\mapsto \ch(\left[\begin{smallmatrix}
            p& \\
            & p
        \end{smallmatrix}\right] H_p^\circ)\\
        &\mathcal{H}_{A_p}^\circ\overset{\Delta_G}{\longrightarrow} \mathcal{H}_{G_p}^\circ,\ \ch(\left[\begin{smallmatrix}
            p& \\
            & p
        \end{smallmatrix}\right] A_p^\circ)\mapsto \ch((\left[\begin{smallmatrix}
            p& \\
            & p
        \end{smallmatrix}\right],\left[\begin{smallmatrix}
            p& \\
            & p
        \end{smallmatrix}\right])G_p^\circ)
        \end{align*}
        and that $\mathcal{H}_{A_p}^\circ\simeq \mathbf{C}[\ch(\left[\begin{smallmatrix}
            p & \\
            & p
        \end{smallmatrix}\right]A_p^\circ)^{\pm 1}]$ as $\mathbf{C}$-algebras.
        We write $\phi_0=\ch(\mathbf{Z}_p^2)$. By the proof of \cite[Lemma $4.3.3$]{Loeffler_2021}, there exists an algebra homomorphism $\mathcal{H}_{H_p}^\circ\overset{\zeta_H}{\longrightarrow} \mathcal{H}_{A_p}^\circ$, such that $\theta\cdot \phi_0=(\Delta_H\circ \zeta_H)(\theta)\cdot \phi_0$ for every $\theta\in\mathcal{H}_{H_p}^\circ$. The definition of this map can be found in \textit{op.cit}, and it is not hard to check that it satisfies the aforementioned property. We now consider an arbitrary element in $\mathcal{I}(G_p/G_p^\circ)$ of the form $[\delta]=[\phi\otimes \xi]$. Up to a non-zero scalar multiple, such an element is equal to $[(\ch(A_p^\circ)\cdot_A \phi)\otimes \xi]$ in $\mathcal{I}(G_p/G_p^\circ)$, and thus we may assume that $[\delta]=[\phi\otimes \xi]$ with $\phi\in (\mathcal{S}(\mathbf{Q}_p^2))^{A_p^\circ}$. By the proof of \cite[Proposition $4.3.4$]{Loeffler_2021}, the $\mathbf{C}[H_p]$-module $\mathcal{S}(\mathbf{Q}_p^2)^{A_p^\circ}$ is cyclic, generated by $\phi_0$. Since the vector $\phi_0$ is unramified, we may assume that $[\delta]=[(\theta\cdot\phi_0)\otimes \xi]$ with $\theta\in C_c^\infty(H_p/H_p^\circ)$. But this is nothing more than $[\phi_0\otimes (\theta^{'}\cdot_H \xi)]$. Since $\xi$ is right $G_p^\circ$-invariant and $\theta^{'}\in \mathcal{H}_{H_p}^\circ$, the convolution $\theta^{'}\cdot_H\xi$ is an element of $C_c^\infty(H_p^\circ\backslash G_p/G_p^\circ)$. By \Cref{cyclicity via sakellaridis}, we can express $\theta^{'}\cdot_H \xi$ as a linear combination of elements of the form $(\theta_1\otimes \theta_2)\cdot \ch(G_p^\circ)$ with $\theta_1\otimes \theta_2\in \mathcal{H}_{G_p}^\circ\otimes \mathcal{H}_{H_p}^\circ$. A quick integral computation shows that $(\theta_1\otimes \theta_2)\cdot \ch(G_p^\circ)$ is equal to $\theta_2\cdot_H \theta_1^{'}$. Thus, we may assume that $[\delta]=[\phi_0\otimes(\alpha\cdot_H \beta)]$ where $\alpha\in\mathcal{H}_{H_p}^\circ$ and $\beta\in\mathcal{H}_{G_p}^\circ$. As in the proof of \cite[ Theorem $4.3.6$]{Loeffler_2021}, we have the chain of equalities
        \begin{align*}
            [\delta]&=[\phi_0\otimes (\alpha\cdot_H\beta)]\\
            &=[(\alpha^{'}\cdot \phi_0)\otimes \beta]\\
            &=[(\Delta_H(\alpha_1)\cdot \phi_0)\otimes \beta]\\
            &=[\phi_0\otimes (\Delta_H(\alpha_1)^{'} \cdot_H \beta)]\\
            &=[\phi_0\otimes (\Delta_G(\alpha_1)^{'} \cdot_G \beta)]
        \end{align*}
        where $\alpha_1:=\zeta_H(\alpha^{'})\in \mathcal{H}_{A_p}^\circ$, and the last equality follows from the fact that $$\Delta_H(\ch(\left[\begin{smallmatrix}
            p & \\
            & p
        \end{smallmatrix}\right] A_p^\circ))\cdot_H \beta=\beta(\left[\begin{smallmatrix}
            p & \\
            & p
        \end{smallmatrix}\right]^{-1}(-))=\Delta_G(\ch(\left[\begin{smallmatrix}
            p & \\
            & p
        \end{smallmatrix}\right] A_p^\circ))\cdot_G \beta.$$
        If we set $\theta_3:= \Delta_G(\alpha_1)^{'}\cdot_G \beta$, which is an element of $\mathcal{H}_{G_p}^\circ$, we see that $[\delta]$ is equal to $[\phi_0\otimes (\xi_0\cdot_G \Lambda)]$, which is in turn equal to $\Lambda^{'}*_G[\phi_0\otimes \xi_0].$ This concludes the proof.
    \end{proof}
\end{thm}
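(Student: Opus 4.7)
The plan is to reduce an arbitrary class $[\phi\otimes\xi]\in\mathcal{I}(G_p/G_p^\circ)$ to a $\mathcal{H}_{G_p}^\circ$-multiple of $[\phi_0\otimes\xi_0]$ by shuttling between the two tensor slots, using the two coinvariant identities recorded in \Cref{sec group schemes eq maps} together with two cyclicity inputs: the (essentially elementary) cyclicity of $\mathcal{S}(\mathbf{Q}_p^2)^{A_p^\circ}$ over $\mathbf{C}[H_p]$ generated by $\phi_0$, and the Sakellaridis-type statement \Cref{cyclicity via sakellaridis} that we just proved.

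First, I would reduce to $\phi\in\mathcal{S}(\mathbf{Q}_p^2)^{A_p^\circ}$: since the compact central torus $A_p^\circ$ acts trivially on classes in $\mathcal{I}(G_p/G_p^\circ)$, averaging $\phi$ against $\ch(A_p^\circ)$ only changes $[\phi\otimes\xi]$ by a nonzero volume factor. Cyclicity of $\mathcal{S}(\mathbf{Q}_p^2)^{A_p^\circ}$ over $\mathbf{C}[H_p]$ then lets me write $\phi=\theta\cdot\phi_0$ for some $\theta\in C_c^\infty(H_p/H_p^\circ)$, and the second coinvariant identity transfers this $H_p$-action to the other factor, yielding $[\phi\otimes\xi]=[\phi_0\otimes(\theta'\cdot_H\xi)]$. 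Because $\theta'\in\mathcal{H}_{H_p}^\circ$ and $\xi$ is right $G_p^\circ$-invariant, the convolution $\theta'\cdot_H\xi$ lands in $C_c^\infty(H_p^\circ\backslash G_p/G_p^\circ)$, so \Cref{cyclicity via sakellaridis} expresses it as a $\mathbf{C}$-linear combination of terms of the form $(\theta_1\otimes\theta_2)\cdot\xi_0$ with $\theta_1\in\mathcal{H}_{G_p}^\circ$, $\theta_2\in\mathcal{H}_{H_p}^\circ$; a direct integral computation identifies each such term with the bi-convolution $\theta_2\cdot_H\xi_0\cdot_G\theta_1'$. By bilinearity, the problem reduces to classes of the shape $[\phi_0\otimes(\alpha\cdot_H\beta)]$ with $\alpha\in\mathcal{H}_{H_p}^\circ$ and $\beta\in\mathcal{H}_{G_p}^\circ$.

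The decisive step, and the one I anticipate as the main obstacle, is converting the residual spherical $H_p$-action $\alpha$ on the Schwartz-factor into a spherical $G_p$-action on the Hecke-factor. For this I would invoke Lemma 4.3.3 of Loeffler--Skinner--Zerbes, which produces an algebra map $\zeta_H\colon\mathcal{H}_{H_p}^\circ\to\mathcal{H}_{A_p}^\circ$ with the property $\alpha\cdot\phi_0=\Delta_H(\zeta_H(\alpha))\cdot\phi_0$, where $\Delta_H,\Delta_G$ are the embeddings of the central spherical Hecke algebra. Setting $\alpha_1:=\zeta_H(\alpha')\in\mathcal{H}_{A_p}^\circ$ and threading the coinvariant identities, one obtains the chain
\begin{align*}
[\phi_0\otimes(\alpha\cdot_H\beta)]
&=[(\alpha'\cdot\phi_0)\otimes\beta]
=[\Delta_H(\alpha_1)\cdot\phi_0\otimes\beta]\\
&=[\phi_0\otimes(\Delta_H(\alpha_1)'\cdot_H\beta)]
=[\phi_0\otimes(\Delta_G(\alpha_1)'\cdot_G\beta)].
\end{align*}
The final equality is the subtle point: it asserts that convolving $\beta$ by a central element via $\Delta_H$ agrees with convolving by the corresponding central element via $\Delta_G$. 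This follows because both operations simply translate $\beta$ by powers of the central matrix $\diag(p,p)$, which sits in $H_p\subseteq G_p$, so the two actions literally coincide on any right-$G_p^\circ$-invariant function on $G_p$.

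Once that identification is in hand, setting $\Lambda:=\Delta_G(\alpha_1)'\cdot_G\beta\in\mathcal{H}_{G_p}^\circ$ and using $\xi_0\cdot_G\Lambda=\Lambda$ (since $\vol_{G_p}(G_p^\circ)=1$) together with the first coinvariant identity, rewrites the class as $\Lambda'\cdot[\phi_0\otimes\xi_0]$, completing the reduction. The whole argument is driven by the commuting diagram of central Hecke operators and by repeated application of the two coinvariant identities; the genuinely nontrivial input is \Cref{cyclicity via sakellaridis}, which allows the $H_p^\circ$-bi-invariant, $G_p^\circ$-right-invariant remainder to be absorbed into a single bi-convolution action.
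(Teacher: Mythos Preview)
Your proposal is correct and follows essentially the same approach as the paper's proof: the same reduction to $A_p^\circ$-invariants, the same cyclicity of $\mathcal{S}(\mathbf{Q}_p^2)^{A_p^\circ}$ and invocation of \Cref{cyclicity via sakellaridis}, and the same use of the Loeffler--Skinner--Zerbes map $\zeta_H$ to convert the residual $\mathcal{H}_{H_p}^\circ$-action into a central one that can be absorbed into $\mathcal{H}_{G_p}^\circ$. The only cosmetic difference is that you write $(\theta_1\otimes\theta_2)\cdot\xi_0=\theta_2\cdot_H\xi_0\cdot_G\theta_1'$ where the paper writes $\theta_2\cdot_H\theta_1'$, but these agree since $\xi_0=\ch(G_p^\circ)$ is the convolution unit.
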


\begin{rem}
    Later on, we will actually show that $\mathcal{I}(G_p/G_p^\circ)$ is free over $\mathcal{H}_{G_p}^\circ$. But that will require a different approach, so we postpone it for now. 
\end{rem}

\noindent We now showcase how the structure of $\mathcal{I}(G_p/G_p^\circ)$ can be used to recover, unify and adapt certain results of Jacquet, Shalika, Piatetski-Shapiro and Prasad to unramified representations of Whittaker type.

\subsubsection{Relation to results of Jacquet, Piatetskii-Shapiro, Shalika}

\begin{thm}[Jacquet, Piatetskii-Shapiro, Shalika]\label{prop JSPS}
    For unramified $H_p$-representations $\pi_{p,1}, \pi_{p,2}$ of Whittaker type, we have
    $$\dim\ \Hom_{H_p}(\mathcal{S}(\mathbf{Q}_p^2)\otimes \pi_{p,1}\otimes \pi_{p,2},\mathbf{1})\leq 1$$
    and every non-zero such linear form, is non-vanishing on $\phi_0\otimes W_{\pi_p,1}^\mathrm{sph}\otimes W_{\pi_p,2}^\mathrm{sph}$.
\end{thm}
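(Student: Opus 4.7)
The plan is to exploit the canonical correspondence $\Hom_{H_p}(\mathcal{S}(\mathbf{Q}_p^2)\otimes V^\vee,\mathbf{1})\simeq\{(G_p\times H_p)\text{-equivariant maps }\mathcal{S}(\mathbf{Q}_p^2)\otimes \mathcal{H}(G_p)\to V\}$ recalled in \Cref{sec group schemes eq maps}, applied to $V:=\pi_{p,1}^\vee\boxtimes\pi_{p,2}^\vee$. A given $\ell$ then corresponds to a unique such $\mathfrak{Z}$; restricting the latter to $\mathcal{S}(\mathbf{Q}_p^2)\otimes C_c^\infty(G_p/G_p^\circ)$ and descending to $H_p$-coinvariants produces an $\mathcal{H}_{G_p}^\circ$-equivariant map
$$\bar{\mathfrak{Z}}\colon\mathcal{I}(G_p/G_p^\circ)\longrightarrow V^{G_p^\circ},$$
whose target is one-dimensional, spanned by the tensor of the canonical dual spherical vectors since each $\pi_{p,i}^\vee$ is unramified.

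By \Cref{thm cyclicity on coinvariants}, $\mathcal{I}(G_p/G_p^\circ)$ is cyclic over $\mathcal{H}_{G_p}^\circ$ with generator $[\phi_0\otimes\xi_0]$, so $\bar{\mathfrak{Z}}$ is entirely determined by its value on this single element. A routine unwinding of the correspondence shows that this value, expressed in the spherical basis of the target, coincides with $\ell(\phi_0\otimes W_{\pi_{p,1}}^\mathrm{sph}\otimes W_{\pi_{p,2}}^\mathrm{sph})$ up to a non-zero volume normalisation. Hence it suffices to prove that the functional $\ell\mapsto\ell(\phi_0\otimes W_{\pi_{p,1}}^\mathrm{sph}\otimes W_{\pi_{p,2}}^\mathrm{sph})$ on $\Hom_{H_p}(\mathcal{S}(\mathbf{Q}_p^2)\otimes\pi_{p,1}\otimes\pi_{p,2},\mathbf{1})$ is injective; this yields both the dimension bound and the non-vanishing clause in one stroke.

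For the injectivity, I would assume the value vanishes. Then $\bar{\mathfrak{Z}}([\phi_0\otimes\xi_0])=0$ and hence $\bar{\mathfrak{Z}}\equiv 0$ by cyclicity. Expanding this through the correspondence, for every $\phi\in\mathcal{S}(\mathbf{Q}_p^2)$ the function $g\mapsto\ell(\phi\otimes g_1W_{\pi_{p,1}}^\mathrm{sph}\otimes g_2W_{\pi_{p,2}}^\mathrm{sph})$ on $G_p$ is right $G_p^\circ$-invariant (each $W_{\pi_{p,i}}^\mathrm{sph}$ being $H_p^\circ$-fixed) and integrates to zero against every element of $C_c^\infty(G_p/G_p^\circ)$; consequently it must vanish identically. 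Since $W_{\pi_{p,i}}^\mathrm{sph}$ cyclically generates $\pi_{p,i}$ as an $H_p$-module -- in the reducible Whittaker-type case using that the Steinberg sits as a codimension-one subrepresentation with the spherical vector projecting non-trivially to the quotient -- bilinearity in $v_1,v_2$ forces $\ell$ to vanish on the entire space $\mathcal{S}(\mathbf{Q}_p^2)\otimes\pi_{p,1}\otimes\pi_{p,2}$. The main obstacle is precisely this last transfer: passing from spherical-level vanishing of $\bar{\mathfrak{Z}}$ to full vanishing of $\ell$ hinges on both the explicit integral form of the $\ell\leftrightarrow\mathfrak{Z}$ correspondence and the spherical cyclicity of unramified Whittaker-type representations, which is what confines the theorem to this setting.
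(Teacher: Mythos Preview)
Your argument is correct and rests on the same key input, \Cref{thm cyclicity on coinvariants}, but it is packaged differently from the paper. The paper works directly on the coinvariant side: it observes that acting on the spherical vector gives a surjection $\mathcal{S}(\mathbf{Q}_p^2)\otimes C_c^\infty(G_p/G_p^\circ)\twoheadrightarrow\mathcal{S}(\mathbf{Q}_p^2)\otimes\pi_{p,1}\otimes\pi_{p,2}$, which descends to a surjection $\mathcal{I}(G_p/G_p^\circ)\twoheadrightarrow(\mathcal{S}(\mathbf{Q}_p^2)\otimes\pi_{p,1}\otimes\pi_{p,2})_{H_p}$; since every element of the source is $\theta\cdot[\phi_0\otimes\xi_0]$ and $\theta$ acts on the target by the scalar $\Theta_{\Pi_p}(\theta)$, the target is at most one-dimensional and generated by $[\phi_0\otimes W_{\pi_{p,1}}^\mathrm{sph}\otimes W_{\pi_{p,2}}^\mathrm{sph}]$. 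Dualising gives both claims at once. Your route through the $\ell\leftrightarrow\mathfrak{Z}$ correspondence with $V=\pi_{p,1}^\vee\boxtimes\pi_{p,2}^\vee$ is a legitimate alternative, but it forces you to unwind the correspondence explicitly and then argue back from $\bar{\mathfrak{Z}}\equiv 0$ to $\ell\equiv 0$ via cyclicity of the spherical vectors in each $\pi_{p,i}$. The paper uses that same cyclicity, but only once and implicitly (for the surjectivity of the first map), so its argument is shorter and avoids the detour through the smooth dual.
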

This is well-known to experts. Below, we give a short algebraic proof of \Cref{prop JSPS}, which gives uniqueness, and existence of good test vectors, relying on the structure of $\mathcal{I}(G_p/G_p^\circ)$ as a module over the spherical Hecke algebra. This Hecke approach is what will allow us to study the \textit{integral} behaviour of this period later on.

\begin{proof}
    There exists a surjective linear map from $\mathcal{S}(\mathbf{Q}_p^2)\otimes C_c^\infty(G_p/G_p^\circ)$ to $\mathcal{S}(\mathbf{Q}_p^2)\otimes \pi_{p,1}\otimes \pi_{p,2}$ that is given by sending $\phi\otimes \xi$ to $\phi\otimes\left(\xi\cdot (W_{\pi_{p,1}}^\mathrm{sph}\otimes W_{\pi_{p,2}}^\mathrm{sph})\right)$. This clearly induces a map from $\mathcal{I}(G_p/G_p^\circ)$ to $\left(\mathcal{S}(\mathbf{Q}_p^2)\otimes \pi_{p,1}\otimes \pi_{p,2}\right)_{H_p}$. By \Cref{thm cyclicity on coinvariants}, every element of $\mathcal{I}(G_p/G_p^\circ)$ is of the form $\theta*[\phi_0\otimes \xi_0]=[\phi_0\otimes \theta^{'}]$ for $\theta\in\mathcal{H}_{G_p}^\circ$. Hence the space $\left(\mathcal{S}(\mathbf{Q}_p^2)\otimes \pi_{p,1}\otimes \pi_{p,2}\right)_{H_p}$ is at most one dimensional and is generated by $[\phi_0\otimes W_{\pi_{p,1}}^\mathrm{sph}\otimes W_{\pi_{p,2}}^\mathrm{sph}]$. This concludes the proof.
\end{proof}

\subsubsection{Relation to results of Prasad}

\begin{thm}[Prasad]\label{thm trilinear forms prasad}
Let $\pi_{p,1},\pi_{p,2},\pi_{p,3}$ be unramified $H_p$-representations of Whittaker type. Then
$$\dim\ \Hom_{H_p}(\pi_{p,1}\otimes \pi_{p,2}\otimes \pi_{p,3},\mathbf{1})\leq 1$$
and every non-zero such linear form is non-vanishing on $W_{\pi_{p,1}}^\mathrm{sph}\otimes W_{\pi_{p,2}}^\mathrm{sph}\otimes W_{\pi_{p,3}}^\mathrm{sph}$.
\end{thm}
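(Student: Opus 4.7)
The plan is to imitate the short Hecke-module proof of $\Cref{prop JSPS}$, but working directly at the level of the group $\widetilde{G}:=G\times H=\GL_2^3$, without any auxiliary Schwartz factor, and with $\widetilde{H}=H$ embedded diagonally in $\widetilde{G}$. This way the diagonal $H_p$-action on $\pi_{p,1}\otimes\pi_{p,2}\otimes\pi_{p,3}$ is exactly the action of $\widetilde{H}_p$, and no ``mirabolic'' ingredient is needed in the triple-product case.

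First I would consider the $\widetilde{G}_p$-equivariant linear map
$$\Psi\colon C_c^\infty(\widetilde{G}_p/\widetilde{G}_p^\circ)\longrightarrow \pi_{p,1}\otimes\pi_{p,2}\otimes\pi_{p,3},\qquad \xi\longmapsto \xi\cdot\bigl(W_{\pi_{p,1}}^{\mathrm{sph}}\otimes W_{\pi_{p,2}}^{\mathrm{sph}}\otimes W_{\pi_{p,3}}^{\mathrm{sph}}\bigr),$$
which is surjective because the spherical vectors generate each unramified $\GL_2(\mathbf{Q}_p)$-representation. Passing to $H_p$-coinvariants (diagonal action) yields a surjection
$$C_c^\infty(H_p\backslash\widetilde{G}_p/\widetilde{G}_p^\circ)\twoheadrightarrow \bigl(\pi_{p,1}\otimes\pi_{p,2}\otimes\pi_{p,3}\bigr)_{H_p}.$$
The spherical $\widetilde{G}$-variety $\mathcal{X}:=\widetilde{H}\backslash\widetilde{G}$ appearing here is exactly the one whose geometric input into Sakellaridis' cyclicity theorem (unique open Borel orbit, Knop action fixing the open orbit, little Weyl group $W_\mathcal{X}=W$, surjectivity of the map on Weyl-invariant polynomials) has already been verified inside the proof of $\Cref{cyclicity via sakellaridis}$. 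Thus \cite{sakellaridis2013spherical} Corollary $8.0.4$ produces, prior to the final unravelling step carried out in that proof, cyclicity of $C_c^\infty(\mathcal{X}(\mathbf{Q}_p))^{\widetilde{G}_p^\circ}=C_c^\infty(H_p\backslash \widetilde{G}_p/\widetilde{G}_p^\circ)$ as an $\mathcal{H}_{\widetilde{G}_p}^\circ$-module, generated by $\ch(\widetilde{G}_p^\circ)$.

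Combining the two statements, $(\pi_{p,1}\otimes\pi_{p,2}\otimes\pi_{p,3})_{H_p}$ is at most one-dimensional and is generated by the image of $\ch(\widetilde{G}_p^\circ)$, namely a non-zero scalar multiple of the class of $W_{\pi_{p,1}}^{\mathrm{sph}}\otimes W_{\pi_{p,2}}^{\mathrm{sph}}\otimes W_{\pi_{p,3}}^{\mathrm{sph}}$. Dualizing gives $\dim\Hom_{H_p}(\pi_{p,1}\otimes\pi_{p,2}\otimes\pi_{p,3},\mathbf{1})\le 1$, together with the claim that any non-zero such form is non-vanishing on the triple of canonical spherical vectors.

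There is no substantive obstacle here: the hard geometric work is already packaged into $\Cref{cyclicity via sakellaridis}$. The only care needed is to extract from its proof the intermediate cyclicity of $C_c^\infty(\mathcal{X}(\mathbf{Q}_p))^{\widetilde{G}_p^\circ}$ over $\mathcal{H}_{\widetilde{G}_p}^\circ$ rather than its ``tensor-product'' reformulation in terms of $C_c^\infty(H_p^\circ\backslash G_p/G_p^\circ)$ and $\mathcal{H}_{G_p}^\circ\otimes\mathcal{H}_{H_p}^\circ$. Equivalently, one can first convolve any coinvariant representative on the source of $\Psi$ by $\ch(H_p^\circ)$ (producing an $H_p^\circ$-invariant representative) as in the opening reduction of the proof of $\Cref{thm cyclicity on coinvariants}$, and then invoke $\Cref{cyclicity via sakellaridis}$ exactly as stated.
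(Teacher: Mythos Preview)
Your argument is correct and arrives at the conclusion via a slightly different (and, for this statement alone, more direct) route than the paper.

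The paper first twists so that $\pi_{p,3}=I(\chi_p,|\cdot|_p^{-1/2})$, then uses the surjective $H_p$-map $\mathcal{S}(\mathbf{Q}_p^2)\to\pi_{p,3}$, $\phi_0\mapsto W_{\pi_{p,3}}^{\mathrm{sph}}$, to produce a surjection $\mathcal{I}(G_p/G_p^\circ)\twoheadrightarrow(\pi_{p,1}\otimes\pi_{p,2}\otimes\pi_{p,3})_{H_p}$ and then invokes \Cref{thm cyclicity on coinvariants}. You instead bypass the Schwartz model entirely and act directly with $C_c^\infty(\widetilde{G}_p/\widetilde{G}_p^\circ)$ on the triple of spherical vectors, using the raw cyclicity of $C_c^\infty(\mathcal{X}(\mathbf{Q}_p))^{\widetilde{G}_p^\circ}$ over $\mathcal{H}_{\widetilde{G}_p}^\circ$ that is the penultimate step of the proof of \Cref{cyclicity via sakellaridis}. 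Both arguments rest on the \emph{same} Sakellaridis input for the \emph{same} spherical variety $H\backslash\GL_2^3$; the difference is only in packaging. Your route avoids the twist and the auxiliary map $\mathcal{S}(\mathbf{Q}_p^2)\to\pi_{p,3}$, while the paper's route has the virtue of being a direct corollary of the module $\mathcal{I}(G_p/G_p^\circ)$ that is central to its later integral results.

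One small point worth making explicit: the identification $(C_c^\infty(\widetilde{G}_p/\widetilde{G}_p^\circ))_{H_p}\simeq C_c^\infty(H_p\backslash\widetilde{G}_p/\widetilde{G}_p^\circ)$ you use is the averaging map $\xi\mapsto\bigl(g\mapsto\int_{H_p}\xi(hg)\,dh\bigr)$, which is Hecke-equivariant and bijective (both sides have bases indexed by $H_p\backslash\widetilde{G}_p/\widetilde{G}_p^\circ$); this sends $[\ch(\widetilde{G}_p^\circ)]$ to $\ch(\mathcal{X}(\mathbf{Z}_p))$ since $\vol_{H_p}(H_p\cap\widetilde{G}_p^\circ)=1$. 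With that spelled out, the argument is complete.
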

For $\pi_{p,1},\pi_{p,2},\pi_{p,3}$ irreducible, the uniqueness is (part of) \cite[Theorem $1.1$]{prasad1990trilinear} and is proved using invariant distributions. For the cases where some of the $\pi_{p,i}$ are reducible of Whittaker type, the uniqueness is treated in three results of \cite{harris2001note}. For good test vectors in the unramified case, this is Theorem $1.3$ in \cite{prasad1990trilinear}. Below we give a short alternative proof of this result using \Cref{prop JSPS}. This approach will later allow us to make \textit{integral} statements for the non-zero period of this $\Hom$-space using results for the lattice $\mathcal{I}(G_p/G_p^\circ,\mathbf{Z}[1/p])$.
\begin{proof}
    By twisting, we may assume that $\pi_{p,1}$ is the normalized induction $I(\chi_p,|\cdot|_p^{-1/2})$ with $\chi_p$ an unramified quasi-character of $\mathbf{Q}_p^\times$ with $I(\chi_p,|\cdot|_p^{-1/2})$ of Whittaker type. Here $I(\chi_p,|\cdot|_p^{-1/2})$ denotes the representation $\mathrm{ind}_{B_p}^{G_p}\left[\begin{smallmatrix}
        \chi_p & \\
        & |\cdot|_p^{-1/2}
    \end{smallmatrix}\right]$ where $B_p$ is the upper triangular Borel, and the diagonal character acts trivially on the unipotent radical. Using \cite[Proposition $3.2.2$ \& Lemma $3.2.5$]{Loeffler_Skinner_Zerbes_2021}, and \cite[Proposition $1.5.2$]{grossi2020norm}, one has a non-zero $H_p$-equivariant map $\mathcal{S}(\mathbf{Q}_p^2)\rightarrow \pi_{p,1}$. Since $\pi_{p,1}$ is unramified of Whittaker type, this map is surjective and maps $\phi_0$ to $W_{\pi_{p,1}}^\mathrm{sph}$. This induces a surjective $H_p$-equivariant map $\mathcal{S}(\mathbf{Q}_p^2)\otimes C_c^\infty(G_p/G_p^\circ)\rightarrow \pi_{p,1}\otimes \pi_{p,2}\otimes \pi_{p,3}$, which in turn induces a surjective linear map $\mathcal{I}(G_p/G_p^\circ)\rightarrow (\pi_{p,1}\otimes \pi_{p,2}\otimes \pi_{p,3})_{H_p}$. The result then follows at once in the same fashion as the proof of \Cref{prop JSPS} given above, once again utilizing \Cref{cyclicity via sakellaridis}.
\end{proof}

    We would now like to realise the cyclicity of the Hecke module $\mathcal{I}(G_p/G_p^\circ)$ and study its properties when we specialise to integral data as in \Cref{def integral lattice}. The right way forward as mentioned in the introduction is \textit{not} to directly use the local Rankin-Selberg zeta integral as in \cite{Loeffler_2021}. Indeed, it does not seem possible to treat \textit{general} integral test vectors that way and obtain meaningful results at an integral level. Instead, we proceed as in the following sections.

\subsection{Rephrasing using the mirabolic subgroup}\label{sec from and back}

From now on, we will drop the notation $[\cdot]$ on elements of $\mathcal{I}(G_p/G_p^\circ)$. We denote by $P$ the mirabolic subgroup of $H=\GL_2$, given by $\left\{\left[\begin{smallmatrix}
    * & * \\
    & 1
\end{smallmatrix}\right]\right\}$, and we will also regard it as a subgroup of $G$ through the diagonal embedding $H\hookrightarrow G$. As usual, we write $P_p$ for its $\mathbf{Q}_p$-points and 
$P_p^\circ$ for its $\mathbf{Z}_p$-points. There's an identification
$$P_p \backslash H_p\overset{\simeq}{\longrightarrow} \mathbf{Q}_p^2-\{(0,0)\}\ ,\ h\mapsto (0,1)h.$$
This induces an embedding of $H_p$-representations (where recall that $\mathrm{Ind}$ denotes unnormalized, non-compact induction)
$$\mathcal{S}(\mathbf{Q}_p^2)\hookrightarrow \mathrm{Ind}_{P_p}^{H_p}\mathbf{1}\ ,\ \phi\mapsto \left(f_\phi: h\mapsto \phi((0,1)h)\right)$$
under which $\mathcal{S}_0(\mathbf{Q}_p^2)$ gets identified with $\mathrm{c}\text{-}\mathrm{Ind}_{P_p}^{H_p}\mathbf1$. 
\begin{defn}We define the linear map
\begin{align*}\Xi:\mathcal{S}(\mathbf{Q}_p^2)\otimes C_c^\infty(G_p/G_p^\circ)\longrightarrow C^\infty(P_p\backslash G_p/G_p^\circ)=\left(\mathrm{Ind}_{P_p}^{G_p}\mathbf{1}\right)^{G_p^\circ}\\
\phi\otimes \xi\mapsto \left(\Xi_{\phi\otimes \xi}:g\mapsto\int_{H_p}\xi(h^{-1}g)f_\phi(h)\ dh\right)\end{align*}
\end{defn}
\noindent A note should be made on the ``convergence'' of the integral used to define the map $\Xi$, which follows from the compactness of $H_p\cap g_1 G_p^\circ g_2$ in $H_p$, for any $g_1,g_2\in G_p$.
It follows by construction, that $\Xi_{h(\phi\otimes \xi)}=\Xi_{\phi\otimes \xi}$ for any $h\in H_p$, and hence $\Xi$ factors through $\mathcal{I}(G_p/G_p^\circ)$. An unraveling of definitions also shows that this map is Hecke equivariant with respect to $\mathcal{H}_{G_p}^\circ$, where the action on the right is given by the natural action of $\mathcal{H}_{G_p}^\circ$ on $(\mathrm{Ind}_{P_p}^{H_p}\mathbf{1})^{G_p^\circ}$. 

Once again, we \textit{emphasize} that for $\phi\otimes \xi$ with $\phi\in\mathcal{S}_0(\mathbf{Q}_p^2)$, the corresponding function $\Xi_{\phi\otimes \xi}$ belongs in $C_c^\infty(P_p\backslash G_p/G_p^\circ)$, i.e. it is already compactly supported. This will be important later on. We write $C^\infty(P_p\backslash G_p/G_p^\circ,\mathbf{Z}[1/p])$ for the $\mathcal{H}_{G_p}^\circ(\mathbf{Z}[1/p])$-submodule of $C^\infty(P_p\backslash G_p/ G_p^\circ)$ consisting of $\mathbf{Z}[1/p]$-valued functions, and similarly for compactly supported functions.

\begin{prop}\label{prop integrally from coinvariants to C^infinity}
    The image of $\mathcal{I}(G_p/G_p^\circ,\mathbf{Z}[1/p])$ under the map $\Xi$, is contained in $C^\infty(P_p\backslash G_p/G_p^\circ,\mathbf{Z}[1/p])$.
    \begin{proof}
        Let $\phi\otimes \ch(gG_p^\circ)$ be an element of $\mathcal{I}(G_p/G_p^\circ,\mathbf{Z}[1/p])$. Recall that this means by definition that $\phi$ is valued in 
        $$\frac{1}{\vol_{H_p}\left(\mathrm{Stab}_{H_p}(\phi)\cap g G_p^\circ g^{-1}\right)}\mathbf{Z}[1/p].$$
        For ease of notation, write $\delta=\phi\otimes \ch(gG_p^\circ)$.
        For $x\in G_p$, we have by construction that
        \begin{align}\label{eq:7}
            \Xi_\delta(x)&=\int_{H_p}\left(h\cdot \ch(gG_p^\circ)\right)(x)\ (h\cdot \phi)(0,1)\ dh
        \end{align}
        Let $K$ be the open compact of $H_p$ given by $\mathrm{Stab}_{H_p}(\phi)\cap g G_p^\circ g^{-1}$. We can split the integral in \eqref{eq:7} into a finite sum of the form  
        \begin{align}\label{eq: 8}\Xi_\delta(x)=\sum_\gamma \int_{\gamma K}\left(h\cdot \ch(gG_p^\circ)\right)(x)\ (h\cdot \phi)(0,1)\ dh=\vol_{H_p}(K)\sum_\gamma \ch(gG_p^\circ)(\gamma^{-1}x)\ \phi((0,1)\gamma).\end{align}
        By the assumption on the values of $\phi$, \eqref{eq: 8} is an element of $\mathbf{Z}[1/p]$ which concludes the proof.
    \end{proof}
\end{prop}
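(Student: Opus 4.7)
The plan is to reduce to generators and exploit the compatibility built into \Cref{def integral lattice}. By linearity of $\Xi$, it suffices to prove $\Xi_\delta \in C^\infty(P_p\backslash G_p/G_p^\circ,\mathbf{Z}[1/p])$ for a generator $\delta = \phi \otimes \ch(gG_p^\circ)$ with $\phi$ valued in $\vol_{H_p}(K)^{-1}\mathbf{Z}[1/p]$, where $K := \mathrm{Stab}_{H_p}(\phi) \cap gG_p^\circ g^{-1}$ is a compact-open subgroup of $H_p$. I would fix an arbitrary $x \in G_p$ and unfold $\Xi_\delta(x)$ as the integral $\int_{H_p}\ch(gG_p^\circ)(h^{-1}x)\,\phi((0,1)h)\,dh$, reducing the claim to the assertion that this integral is a $\mathbf{Z}[1/p]$-multiple of $\vol_{H_p}(K)$.

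The first technical observation is that the integrand has compact support as a function of $h$: it vanishes outside $H_p \cap xG_p^\circ g^{-1}$, which is closed in the compact set $xG_p^\circ g^{-1}\subseteq G_p$, hence compact in $H_p$. The second, and crucial, observation is that the integrand is invariant under right translation $h \mapsto hk$ for $k \in K$. Invariance of $\phi((0,1)h)$ is immediate from $K \subseteq \mathrm{Stab}_{H_p}(\phi)$; invariance of $\ch(gG_p^\circ)(h^{-1}x)$ is a quick computation using $K \subseteq gG_p^\circ g^{-1}$, which gives $g^{-1}k^{\pm 1}g \in G_p^\circ$.

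With these two facts in hand, I would partition the compact support into finitely many right cosets $\gamma K$ and evaluate termwise, obtaining the finite expression $\Xi_\delta(x) = \vol_{H_p}(K)\sum_\gamma \ch(gG_p^\circ)(\gamma^{-1}x)\,\phi((0,1)\gamma)$. The integrality hypothesis on $\phi$ says exactly that each nonzero value $\phi((0,1)\gamma)$ lies in $\vol_{H_p}(K)^{-1}\mathbf{Z}[1/p]$, so the volume factor cancels and $\Xi_\delta(x) \in \mathbf{Z}[1/p]$, as required.

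There is no serious obstacle here; the proposition is essentially a formal verification that \Cref{def integral lattice} is designed so that the volume normalization on $\phi$ absorbs precisely the Haar-measure factor produced by integrating $\ch(gG_p^\circ)$ against an element stabilized by $K$. The only mild subtlety is the compactness check that makes the coset decomposition finite, and that is immediate. Philosophically, this is the point at which the seemingly peculiar volume normalization of \Cref{def integral lattice}, which arose from global cohomological considerations, reveals a clean \emph{local} integrality property for $\Xi$ and sets up the subsequent transfer of the norm-relation problem into the $P$-invariant setup.
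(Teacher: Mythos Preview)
Your proof is correct and follows essentially the same route as the paper's: reduce to a generator $\phi\otimes\ch(gG_p^\circ)$, exploit right $K$-invariance of the integrand to break the integral into finitely many cosets $\gamma K$, and cancel the resulting $\vol_{H_p}(K)$ against the normalisation in \Cref{def integral lattice}. You have simply made the compactness and invariance checks more explicit than the paper does.
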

\subsection{Mapping to compact induction}
We first start by noting that the element $\Xi_0=\Xi(\ch(\mathbf{Z}_p^2)\otimes \ch(G_p^\circ))$, even though it is unramified in the sense that is $G_p^\circ$-fixed, it does not coincide with the trivial unramified element in $\mathrm{c}\text{-}\mathrm{Ind}_{P_p}^{G_p}\mathbf{1}\subseteq\mathrm{Ind}_{P_p}^{G_p}\mathbf{1}$ given by $\ch(P_pG_p^\circ)$. For $i\in\mathbf{Z}$, we write $Z_{p,i}$ for the central subset of $H_p$ given by
$\left\{\left[\begin{smallmatrix}
    p^n & \\
    & p^n
\end{smallmatrix}\right]\ |\ n\geq i\right\}.$
\begin{lem}\label{lem Xi_0}
    The function $\Xi_0$ is equal to the characteristic function $\ch(Z_{p,0} P_pG_p^\circ)$ in $C^\infty(P_p\backslash G_p/G_p^\circ)$.
    \begin{proof}
        For an element $g=(g_1,g_2)\in G_p$, we have
        \begin{align*}
            \Xi_0(g)=\int_{H_p\cap g G_p^\circ} \ch(\mathbf{Z}_p^2)((0,1)h)\ dh
            =\begin{cases}
                0,\ &\mathrm{if}\ g_1H_p^\circ\neq g_2 H_p^\circ\\
                \ch(\mathbf{Z}_p^2)((0,1)g_1),\ &\mathrm{if}\ g_1H_p^\circ= g_2H_p^\circ.
            \end{cases}
        \end{align*}
        The result then follows from the decomposition $H_p=\bigsqcup_{n\in\mathbf{Z}}\left[\begin{smallmatrix}
            p^n & \\
            & p^n
        \end{smallmatrix}\right] P_pH_p^\circ.$
    \end{proof}
\end{lem}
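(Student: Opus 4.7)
The plan is to compute $\Xi_0(g)$ pointwise from the defining integral, identify its support as a union of double cosets, and then recognize that union as $Z_{p,0}P_pG_p^\circ$ via the Iwasawa decomposition of $H_p$.

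First I would write $g=(g_1,g_2)\in G_p=H_p\times H_p$ and unpack the integral: using that $H_p\hookrightarrow G_p$ is the diagonal embedding,
\[
\Xi_0(g)=\int_{H_p}\ch(G_p^\circ)(h^{-1}g)\,\ch(\mathbf{Z}_p^2)((0,1)h)\,dh,
\]
and the condition $h^{-1}g\in G_p^\circ$ with $h\in H_p$ diagonal becomes $g_i^{-1}h\in H_p^\circ$ for $i=1,2$, i.e.\ $h\in g_1H_p^\circ\cap g_2H_p^\circ$. This intersection is either empty (when $g_1H_p^\circ\neq g_2H_p^\circ$) or the single right coset $g_1H_p^\circ$. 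This immediately yields vanishing off the set $\{g : g_1H_p^\circ=g_2H_p^\circ\}$.

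In the nontrivial case I would substitute $h=g_1h'$ with $h'\in H_p^\circ$. Because $\mathbf{Z}_p^2\cdot H_p^\circ=\mathbf{Z}_p^2$, the function $h'\mapsto \phi_0((0,1)g_1h')$ is constant on $H_p^\circ$ with value $\phi_0((0,1)g_1)\in\{0,1\}$, and since $\vol_{H_p}(H_p^\circ)=1$ this collapses the integral to
\[
\Xi_0(g)=\phi_0((0,1)g_1)\quad\text{when } g_1H_p^\circ=g_2H_p^\circ.
\]
Thus $\Xi_0$ is the characteristic function of the set $S:=\{(g_1,g_2): g_1H_p^\circ=g_2H_p^\circ,\ (0,1)g_1\in\mathbf{Z}_p^2\}$.

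The last step is to identify $S$ with $Z_{p,0}P_pG_p^\circ$ (diagonal embedding). Using the indicated decomposition $H_p=\bigsqcup_{n\in\mathbf{Z}} \operatorname{diag}(p^n,p^n)\,P_pH_p^\circ$, write $g_1=\operatorname{diag}(p^n,p^n)\,u\,k$ with $u\in P_p$, $k\in H_p^\circ$. Then $(0,1)g_1=p^n(0,1)k$, which lies in $\mathbf{Z}_p^2$ precisely when $n\geq 0$, i.e.\ $\operatorname{diag}(p^n,p^n)\in Z_{p,0}$. If in addition $g_1H_p^\circ=g_2H_p^\circ$, write $g_2=g_1k'$ with $k'\in H_p^\circ$; then
\[
(g_1,g_2)=\bigl(\operatorname{diag}(p^n,p^n),\operatorname{diag}(p^n,p^n)\bigr)\cdot(u,u)\cdot(k,kk')\in Z_{p,0}P_pG_p^\circ.
\]
The converse is immediate from the same Iwasawa-type decomposition, since elements of $Z_{p,0}P_pG_p^\circ$ clearly satisfy both $g_1H_p^\circ=g_2H_p^\circ$ and $(0,1)g_1\in\mathbf{Z}_p^2$.

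The only mildly subtle point is the book-keeping in the final step: one has to be careful that the two conditions defining $S$ package correctly into a single double-coset description after absorbing the $H_p^\circ$-factors on both coordinates simultaneously. Everything else is a direct unravelling of definitions together with $\vol_{H_p}(H_p^\circ)=1$ and the $H_p^\circ$-invariance of $\phi_0$.
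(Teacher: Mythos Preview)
Your proof is correct and follows essentially the same route as the paper: compute the integral by recognizing the domain as $H_p\cap gG_p^\circ=g_1H_p^\circ\cap g_2H_p^\circ$, collapse it using $\vol_{H_p}(H_p^\circ)=1$ and the $H_p^\circ$-invariance of $\phi_0$, and then identify the support via the decomposition $H_p=\bigsqcup_{n\in\mathbf{Z}}\left[\begin{smallmatrix}p^n&\\&p^n\end{smallmatrix}\right]P_pH_p^\circ$. The paper's version is simply terser, leaving the final double-coset identification implicit where you spell it out.
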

\begin{prop}\label{prop mapping to compact induction}
    There exists $\Psi\in\mathrm{End}_{\mathcal{H}_{G_p}^\circ}\left(C^\infty(P_p\backslash G_p/G_p^\circ)\right)$ for which the composition $\Xi_c:=\Psi\circ \Xi$ gives rise to the following well-defined commutative diagram, where the top map is $\mathcal{H}_{G_p}^\circ$-equivariant.
    \[\begin{tikzcd}[ampersand replacement=\&,row sep=scriptsize]
	{\mathcal{I}(G_p/G_p^\circ)} \&\& {C_c^\infty(P_p\backslash G_p/G_p^\circ)=\left(\mathrm{c\text{-}Ind}_{P_p}^{G_p}\mathbf{1}\right)^{G_p^\circ}} \\
	{\mathcal{I}(G_p/G_p^\circ,\mathbf{Z}[1/p])} \&\& {C_c^\infty(P_p\backslash G_p/G_p^\circ,\mathbf{Z}[1/p]).}
	\arrow["{\Xi_c}", from=1-1, to=1-3]
	\arrow["{\Xi_c}", from=2-1, to=2-3]
	\arrow[hook', from=2-1, to=1-1]
	\arrow[hook, from=2-3, to=1-3]
\end{tikzcd}\]
Finally, the map $\Xi_c$ maps $\ch(\mathbf{Z}_p^2)\otimes \ch(G_p^\circ)$ to $\ch(P_pG_p^\circ)$.
\begin{proof}
    We write $\mathcal{S}_p=\mathcal{S}_{p,1}\mathcal{S}_{p,2}$ for the Hecke operator in $(\mathcal{H}_{G_p}^\circ)^\times$ associated to the element $\left(\left[\begin{smallmatrix}
        p & \\
        & p
    \end{smallmatrix}\right],\left[\begin{smallmatrix}
        p & \\
        & p
    \end{smallmatrix}\right]\right)$ in $G_p$. We define the $\mathcal{H}_{G_p}^\circ$-equivariant endomorphism $\Psi$ to be the map
    $$\Psi: C^\infty(P_p\backslash G_p/G_p^\circ)\rightarrow C^\infty(P_p\backslash G_p/G_p^\circ),\ \xi\mapsto (1-\mathcal{S}_p^{-1})\cdot\xi.$$
    As in the statement of the proposition, we put $\Xi_c=\Psi\circ \Xi$. By Hecke equivariance and \Cref{thm cyclicity on coinvariants}, it suffices to show that $\Xi_c$ maps $\ch(\mathbf{Z}_p^2)\otimes \ch(G_p^\circ)$ to an element in the compact induction. In particular we will show that $\ch(\mathbf{Z}_p^2)\otimes \ch(G_p^\circ)$ is mapped to $\ch(P_pG_p^\circ)$. By \Cref{lem Xi_0}, $\Xi_c\left(\ch(\mathbf{Z}_p^2)\otimes \ch(G_p^\circ)\right)$ is given by $\Psi\left(\ch(Z_{p,0}P_p G_p^\circ)\right)$. This in turn is given by the function
    \begin{align*}
        \Psi\left(\ch(Z_{p,0}P_p G_p^\circ)\right)(g)&=\ch(Z_{p,0}P_p G_p^\circ)(g)-\int_{G_p} \mathcal{S}_p^{-1}(\gamma)\ \ch(Z_{p,0}P_p G_p^\circ)(g\gamma)\ d\gamma\\
        &= \ch(Z_{p,0}P_p G_p^\circ)(g)- \ch(Z_{p,0}P_p G_p^\circ)\left(g\left[\begin{smallmatrix}
            p & \\
            & p
        \end{smallmatrix}\right]^{-1}\right).
    \end{align*}
    From this it is clear that $\Xi_c\left(\ch(\mathbf{Z}_p^2)\otimes \ch(G_p^\circ)\right)=\ch(P_pG_p^\circ)$ as claimed. Finally, the fact that $\Xi_c$ induces a map on integral level follows immediately from \Cref{prop integrally from coinvariants to C^infinity}.
\end{proof}
\end{prop}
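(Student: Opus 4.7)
The plan is to take $\Psi$ to be convolution with $1 - \mathcal{S}_p^{-1}$, where $\mathcal{S}_p^{-1}$ is the (integral) unit in $\mathcal{H}_{G_p}^\circ$ given by $\ch((\diag(p^{-1},p^{-1}),\diag(p^{-1},p^{-1}))G_p^\circ)$. Since $\mathcal{H}_{G_p}^\circ$ is commutative, $\Psi$ is automatically a $\mathcal{H}_{G_p}^\circ$-equivariant endomorphism of $C^\infty(P_p\backslash G_p/G_p^\circ)$, so $\Xi_c := \Psi \circ \Xi$ is Hecke equivariant and factors through $\mathcal{I}(G_p/G_p^\circ)$ since $\Xi$ does.

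The heart of the argument is the computation of $\Xi_c$ on the cyclic generator. By \Cref{lem Xi_0}, $\Xi(\ch(\mathbf{Z}_p^2) \otimes \ch(G_p^\circ)) = \ch(Z_{p,0}P_pG_p^\circ)$. Unwinding the Hecke action gives
\begin{align*}
\bigl(\mathcal{S}_p^{-1}\cdot \ch(Z_{p,0}P_pG_p^\circ)\bigr)(g)
&= \ch(Z_{p,0}P_pG_p^\circ)\bigl(g\cdot(\diag(p,p),\diag(p,p))^{-1}\bigr)\\
&= \ch\bigl(Z_{p,0}P_pG_p^\circ \cdot (\diag(p,p),\diag(p,p))\bigr)(g)\\
&= \ch(Z_{p,1}P_pG_p^\circ)(g),
\end{align*}
where in the last step we use that $(\diag(p,p),\diag(p,p))$ is central. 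Since $Z_{p,0}\setminus Z_{p,1} = \{1\}$, subtracting gives
$$\Xi_c\bigl(\ch(\mathbf{Z}_p^2)\otimes \ch(G_p^\circ)\bigr) = \ch(Z_{p,0}P_pG_p^\circ) - \ch(Z_{p,1}P_pG_p^\circ) = \ch(P_pG_p^\circ),$$
which is a single compactly supported class in $(\mathrm{c\text{-}Ind}_{P_p}^{G_p}\mathbf{1})^{G_p^\circ}$.

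To extend to all of $\mathcal{I}(G_p/G_p^\circ)$, I invoke cyclicity (\Cref{thm cyclicity on coinvariants}): any element can be written as $\theta \cdot (\ch(\mathbf{Z}_p^2)\otimes \ch(G_p^\circ))$ for some $\theta \in \mathcal{H}_{G_p}^\circ$, so by Hecke equivariance $\Xi_c$ sends it to $\theta \cdot \ch(P_pG_p^\circ)$. Since $\theta$ has compact support in $G_p$ and $\ch(P_pG_p^\circ)$ has compact support modulo $P_p$, their convolution remains compactly supported modulo $P_p$, confirming the image lies in $C_c^\infty(P_p\backslash G_p/G_p^\circ)$.

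For the integral diagram, \Cref{prop integrally from coinvariants to C^infinity} shows $\Xi$ takes $\mathcal{I}(G_p/G_p^\circ,\mathbf{Z}[1/p])$ into $C^\infty(P_p\backslash G_p/G_p^\circ,\mathbf{Z}[1/p])$. The operator $\Psi = 1 - \mathcal{S}_p^{-1}$ lies in $\mathcal{H}_{G_p}^\circ(\mathbf{Z}[1/p])$ (in fact over $\mathbf{Z}$), and convolution by such operators preserves $\mathbf{Z}[1/p]$-valued functions, so $\Xi_c$ restricts to a map on the bottom row of the diagram. The only mild subtlety worth flagging is verifying the $Z_{p,0}$ vs.\ $Z_{p,1}$ centrality calculation above, which is the reason for the specific choice of $\mathcal{S}_p^{-1}$ rather than any other unit; everything else is formal consequences of cyclicity and Hecke equivariance.
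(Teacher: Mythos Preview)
Your proposal is correct and follows essentially the same approach as the paper: define $\Psi$ as multiplication by $1-\mathcal{S}_p^{-1}$, use \Cref{lem Xi_0} to identify $\Xi$ on the generator with $\ch(Z_{p,0}P_pG_p^\circ)$, compute that $\Psi$ kills all but the $n=0$ layer to obtain $\ch(P_pG_p^\circ)$, invoke cyclicity (\Cref{thm cyclicity on coinvariants}) to propagate to all of $\mathcal{I}(G_p/G_p^\circ)$, and appeal to \Cref{prop integrally from coinvariants to C^infinity} for the integral statement. Your presentation of the key computation as $\ch(Z_{p,0}P_pG_p^\circ)-\ch(Z_{p,1}P_pG_p^\circ)=\ch(P_pG_p^\circ)$ is a slight repackaging of the paper's pointwise calculation, but the content is identical.
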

\noindent Our constructions so far can be summarised in the following commutative diagrams, where the square on the left is equivariant with respect to the action of the spherical Hecke algebra $\mathcal{H}_{G_p}^\circ$ and $\mathcal{I}(G_p/G_p^\circ)$ is cyclic over $\mathcal{H}_{G_p}^\circ$ generated by $\delta_0:=\ch(\mathbf{Z}_p^2)\otimes \ch(G_p^\circ)$.
\[\begin{tikzcd}[column sep=small,row sep=scriptsize]
	{\delta_0} & {\mathcal{I}(G_p/G_p^\circ)} & {C^\infty(P_p\backslash G_p/G_p^\circ)} & {\mathcal{I}(G_p/G_p^\circ,\mathbf{Z}[1/p])} & {C^\infty(P_p\backslash G_p/G_p^\circ,\mathbf{Z}[1/p])} \\
	{\ch(P_pG_p^\circ)} & {C_c^\infty(P_p\backslash G_p/G_p^\circ)} & {C^\infty(P_p\backslash G_p/G_p^\circ)} & {C_c^\infty(P_p\backslash G_p/G_p^\circ,\mathbf{Z}[1/p])} & {C^\infty(P_p\backslash G_p/G_p^\circ,\mathbf{Z}[1/p])}
	\arrow["{\Xi_c}", maps to, from=1-1, to=2-1]
	\arrow["\Xi", from=1-2, to=1-3]
	\arrow["{\Xi_c}", from=1-2, to=2-2]
	\arrow["{{\cdot\left(1-\mathcal{S}_p^{-1}\right)}}"', from=1-3, to=2-3]
	\arrow["\Xi", from=1-4, to=1-5]
	\arrow["{\Xi_c}", from=1-4, to=2-4]
	\arrow["{{\cdot\left(1-\mathcal{S}_p^{-1}\right)}}"', from=1-5, to=2-5]
	\arrow[hook, from=2-2, to=2-3]
	\arrow[hook, from=2-4, to=2-5]
\end{tikzcd}\]

\section{Hecke equivariant forms on $C_c^\infty(P_p\backslash G_p/G_p^\circ)$}\label{sec Hecke forms}
We fix once and for all an additive character $\psi:\mathbf{Q}_p\rightarrow\mathbf{C}^\times$ of conductor one. It is standard that such a character exists (e.g \cite[\S $7$]{ramakrishnan2013fourier}). We consider the family of $G_p$-representations $\pi_{p,1}\boxtimes \pi_{p,2}$ where $\pi_{p,1}$ and $\pi_{p,2}$ are unramified $H_p$-representations of Whittaker type. As it is often the case, we identify $\pi_{p,1}$ with its Whittaker model $\mathcal{W}(\pi_{p,1},\psi)$ and similarly, $\pi_{p,2}$ with $\mathcal{W}(\pi_{p,2},\psi^{-1})$. For further theory surrounding Whittaker models for this family of representations, we refer the reader to \cite{bump_1997}. 
\subsection{Splitting of measures}\label{sec measures}
It follows from a standard computation, that the modular quasi-character of the mirabolic subgroup $P_p$ is simply given by $\delta_P\left(\left[\begin{smallmatrix}
    a & b \\
    & 1
\end{smallmatrix}\right]\right)=|a|_p$. We write $d^Rx$ for the normalized right Haar measure on $P_p$. Then $\delta_P^{-1} d^Rx$ is the normalized left Haar measure on $P_p$.  We write $C_c^\infty(P_p\backslash G_p;\delta_P)$ for the space of smooth functions on $G_p$, that are compactly supported modulo $P_p$ on the left, transform by $\delta_P$ under left $P_p$-translations, and are fixed under right translations by some open compact subgroup of $G_p^\circ$. Following \cite[\S $2.2$]{kurinczuk2017rankin} and \cite[I $2.8$]{vigneras1996representations}, there exists a surjective map
\begin{align}\label{eq: surjective map}\mathcal{H}(G_p)\longrightarrow C_c^\infty(P_p\backslash G_p;\delta_P),\ f\mapsto \left(f^P:g\mapsto \int_{P_p}f(xg) \delta_P(x)^{-1}\ d^Rx\right)\end{align}
\and a unique, up to scalars, linear form $d_{P_p\backslash G_p}g$ on $C_c^\infty(P_p\backslash G_p;\delta_P)$, that is right invariant by $G_p$. We write $$d_{P_p\backslash G_p}g(f):=\int_{P_p\backslash G_p} f(g)\ d_{P_p\backslash G_p},\ f\in C_c^\infty(P_p\backslash G_p;\delta_P) g.$$
Up to correct normalization of $d_{P_p\backslash G_p}g$, we have \begin{align}\label{eq:quotient measure}\int_{G_p}h(g)\ dg=\int_{P_p\backslash G_p} h^P(g)\ d_{P_p\backslash G_p}g,\ h\in \mathcal{H}(G_p)\end{align}
where $dg$ is the normalized Haar measure on $G_p$. We want to be able to unfold the integral $\int_{P_p\backslash G_p} f(g)\ d_{P_p\backslash G_p}g$ for $f\in C_c^\infty(P_p\backslash G_p/G_p^\circ;\delta_P)$.
\begin{lem}\label{lem unfolding}
    Let $f$ be a function in $C_c^\infty(P_p\backslash G_p/G_p^\circ;\delta_P)$. We write $Z_p$ for the center of $H_p$ embedded diagonally in $G_p$. Then,
    $$\int_{P_p\backslash G_p} f(g)\ d_{P_p\backslash G_p}g=\int_{Z_p}\int_{\GL_2(\mathbf{Q}_p)}f(z(\gamma,1))\ dz\ d\gamma.$$
    \begin{proof}
    By \eqref{eq: surjective map}, we can write $f$ as $h^P$ for some $h$ in $\mathcal{H}(G_p)$.
        It is clear that $G_p$ splits as a semidirect product of $ \GL_2(\mathbf{Q}_p)\times 1$ and $H_p$. Hence by a slightly modified version of \cite[Lemma $2.4$]{kurinczuk2017rankin}, using unimodularity, and \eqref{eq:quotient measure}, we see that the integral in question is equal to $\int_{H_p}\int_{\GL_2(\mathbf{Q}_p)}h\left(x(\gamma,1)\right)\ dx\ d\gamma.$ Using  Cartier's argument and \textit{op.cit} one more time, this is nothing more than $$\scalemath{0.98}{\int_{P_p}\int_{Z_p}\int_{H_p^\circ}\int_{\GL_2(\mathbf{Q}_p)}h\left(yzk(\gamma,1)\right)\ d^Ly\ dz\ dk\ d\gamma=\int_{P_p}\int_{Z_p}\int_{H_p^\circ}\int_{\GL_2(\mathbf{Q}_p)}h\left(yzk(\gamma,1)\right)\delta_P(y)^{-1}\ d^Ry\ dz\ dk\ d\gamma}.$$
        By definition of $h^P$, this reduces to
        $$\int_{Z_p}\int_{\GL_2(\mathbf{Z}_p)}\int_{\GL_2(\mathbf{Q}_p)}f(z(\kappa\gamma,\kappa))\ dz\ d\kappa\ d\gamma=\int_{Z_p}\int_{\GL_2(\mathbf{Q}_p)}f(z(\gamma,1))\ dz\ d\gamma$$
        where the last equality follows from the fact that $f$ is right $G_p^\circ$-invariant, together with a change of variables.
    \end{proof}
\end{lem}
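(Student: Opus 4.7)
My plan is to reduce the identity to an ordinary Haar integral on $G_p$ and then unfold it using two nested decompositions. First, I would invoke the surjectivity of the map in \eqref{eq: surjective map} to write $f = h^P$ for some $h \in \mathcal{H}(G_p)$. Combined with the defining property \eqref{eq:quotient measure} of the quotient measure, the left-hand side of the lemma becomes $\int_{G_p} h(g)\, dg$. This converts an integral over a coset space into an ordinary Haar integral, which is easier to manipulate.

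Next I would exploit the group-theoretic fact that $G_p = (\GL_2(\mathbf{Q}_p) \times \{1\}) \rtimes H_p$, where $H_p$ embedded diagonally acts by conjugation on the first factor. Since both factors are unimodular and their intersection is trivial, the Haar measure on $G_p$ decomposes as a product measure and Fubini gives
\begin{align*}
\int_{G_p} h(g)\, dg = \int_{H_p} \int_{\GL_2(\mathbf{Q}_p)} h(x(\gamma,1))\, d\gamma\, dx,
\end{align*}
up to a normalizing constant that can be absorbed into the choice of $d_{P_p\backslash G_p}g$. The third step is to unfold the $H_p$-integral via the Iwasawa decomposition $H_p = B_p H_p^\circ$ combined with $B_p = Z_p \cdot P_p$, giving $H_p = P_p Z_p H_p^\circ$. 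A standard Cartier-style decomposition of Haar measures, as in \cite{kurinczuk2015rankinselberg} Lemma 2.4, rewrites the $H_p$-integral as
\begin{align*}
\int_{P_p}\int_{Z_p}\int_{H_p^\circ} h(yzk(\gamma,1))\, \delta_P(y)^{-1}\, d^R y\, dz\, dk,
\end{align*}
where the modular character $\delta_P$ appears precisely because $d^L y = \delta_P(y)^{-1}\, d^R y$ on $P_p$. Integrating out $y$ against $\delta_P^{-1}\, d^R y$ now converts $h$ into $h^P = f$, evaluated at $zk(\gamma,1)$.

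Finally, to remove the $k$-integration, I would exploit the right $G_p^\circ$-invariance of $f$. Writing $k = (k_0,k_0)$ with $k_0 \in H_p^\circ$, one has $zk(\gamma,1) = z(k_0\gamma, k_0)$, and right-multiplying by $(k_0^{-1},k_0^{-1}) \in G_p^\circ$ gives $f(zk(\gamma,1)) = f(z(k_0\gamma k_0^{-1},1))$. A unimodular change of variable $\gamma \mapsto k_0^{-1}\gamma k_0$ together with $\vol_{H_p}(H_p^\circ)=1$ then collapses the $k$-integral and produces the claimed formula. The main technical obstacle I anticipate is careful bookkeeping: the decomposition $H_p = P_p Z_p H_p^\circ$ is not a direct product, and the conversion between left and right Haar measures on $P_p$ must line up exactly with the normalizations of $d_{P_p\backslash G_p}g$ and $dg$ so that no stray modular factor survives. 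This is really the heart of the argument, and it is essentially a routine but delicate application of Cartier's measure decomposition.
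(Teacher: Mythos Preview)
Your proposal is correct and follows essentially the same route as the paper: write $f=h^P$, reduce to $\int_{G_p}h$, split $G_p$ as $(\GL_2(\mathbf{Q}_p)\times 1)\rtimes H_p$, decompose the $H_p$-integral via $H_p=P_pZ_pH_p^\circ$ with the attendant $\delta_P^{-1}$, re-assemble $h^P=f$, and kill the $H_p^\circ$-integral using right $G_p^\circ$-invariance plus a change of variables. The only cosmetic difference is in the last step: the paper right-translates $f(z(\kappa\gamma,\kappa))$ directly and substitutes $\gamma\mapsto\kappa^{-1}\gamma$, whereas you conjugate first and substitute $\gamma\mapsto k_0^{-1}\gamma k_0$; both are equally valid.
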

\subsection{The linear form $\Lambda_{\Pi_p}$}\label{sec the linear form} The goal of this section is to construct, for each unramified $G_p$-representation $\Pi_p$ of Whittaker type, a linear form $\Lambda_{\Pi_p}$ on $C_c^\infty(P_p\backslash G_p/G_p^\circ)$, satisfying a certain Hecke equivariance condition. This will allow us to ``realize'', in a way, the cyclicity of the module $\mathcal{I}(G_p/G_p^\circ)$. 
\begin{defn}\label{def W}
    Let $\pi_{p,1}$ and $\pi_{p,2}$ be two unramified $H_p$-representations of Whittaker type and let $\Pi_p=\pi_{p,1}\boxtimes \pi_{p,2}$. For $f\in C_c^\infty(P_p\backslash G_p/G_p^\circ)$ and $g\in G_p$, we define 
    $$\mathscr{W}_{\Pi_p}(s;g,f):=f(g)\ |\det(g_2)|_p^s\int_{\mathbf{Q}_p^\times}\left(W_{\pi_{p,1}}^\mathrm{sph}\boxtimes W_{\pi_{p,2}}^\mathrm{sph}\right)\left(\left[\begin{smallmatrix}
        a & \\
        & 1
    \end{smallmatrix}\right]g\right)\ |a|_p^{s-1}\ d^\times a\in \mathbf{C}(p^s,p^{-s})$$
    where $g_2$ denotes the projection onto the second factor, and $s\in\mathbf{C}$ is a complex parameter. For fixed $f$ and $g$ and $\Re(s)$ large enough, (independently of $f$ and $g$), using one of the main results of \cite{jacquet1983rankin}, the integral appearing above converges and admits unique meromorphic continuation as a rational function of $p^s$.
\end{defn}
\noindent  
As a function onf $g\in G_p$, $\mathscr{W}_{\Pi_p}(s;g,f)$ is compactly supported modulo $P_p$ on the left by construction. For $x=\left[\begin{smallmatrix}
    x_1 & x_2\\
     & 1
\end{smallmatrix}\right]\in P_p$, we have 
\begin{align}\label{eq: 13}
    \mathscr{W}_{\Pi_p}(s;xg,f)&=f(xg)\ |\det(xg_2)|_p^s\int_{\mathbf{Q}_p^\times} \left(W_{\pi_{p,1}}^\mathrm{sph}\boxtimes W_{\pi_{p,2}}^\mathrm{sph}\right)\left(\left[\begin{smallmatrix}
        ax_1 & ax_2 \\
        & 1
    \end{smallmatrix}\right]g\right)\ |a|_p^{s-1}\ d^\times a\\
    \nonumber &=f(g)\ |\det(xg_2)|_p^s\  |x_1|_p^{1-s}\int_{\mathbf{Q}_p^\times} \left(W_{\pi_{p,1}}^\mathrm{sph}\boxtimes W_{\pi_{p,2}}^\mathrm{sph}\right)\left(\left[\begin{smallmatrix}
        a & \\
        & 1
    \end{smallmatrix}\right]g\right)\ |a|_p^{s-1}\ d^\times a\\
    \nonumber &=\delta_P(x)\  \mathscr{W}_{\Pi_p}(s;g,f)
\end{align}
where we have also used the fact that $\pi_{p,1}$ is identified with is $\psi$-Whittaker model, and $\pi_{p,2}$ is identified with its $\psi^{-1}$-Whittaker model.
\begin{defn}
    For $f\in C_c^\infty(P_p\backslash G_p/G_p^\circ)$ and notation as in \emph{\Cref{def W}}, we define
    $$\Lambda_{\Pi_p}(s;f):=\int_{P_p\backslash G_p} \mathscr{W}_{\Pi_p}(s;g,f)\ d_{P_p\backslash G_p} g\in \mathbf{C}(p^s,p^{-s}).$$
\end{defn}
\begin{prop}\label{prop meromorphic cont}
    For $f\in C_c^\infty(P_p\backslash G_p/G_p^\circ)$, the integral $\Lambda_{\Pi_p}(s;f)$ converges for $\Re(s)$ large enough and admits unique meromorphic continuation as a rational function of $p^s$, in the fractional ideal $L(\Pi_p,s)\mathbf{C}[p^s,p^{-s}]$. 
    \begin{proof}
        For $\Re(s)$ sufficiently large, we can unfold the integral using \Cref{lem unfolding}. The result then follows from \cite{jacquet1983rankin}.
    \end{proof}
\end{prop}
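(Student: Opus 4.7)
The plan is to unfold the integral $\Lambda_{\Pi_p}(s;f)$ using Lemma \ref{lem unfolding} and recognise the resulting expression as a classical local Rankin-Selberg zeta integral of Jacquet-Piatetski-Shapiro-Shalika, for which the desired convergence and meromorphic continuation in $L(\Pi_p,s)\mathbf{C}[p^s,p^{-s}]$ are established in \cite{jacquet1983rankin}.

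First I would verify that $g\mapsto\mathscr{W}_{\Pi_p}(s;g,f)$ lies in $C_c^\infty(P_p\backslash G_p/G_p^\circ;\delta_P)$ for $s$ in the region of convergence: compact support modulo $P_p$ on the left is inherited from $f$, right $G_p^\circ$-invariance comes from $f$ together with the sphericity of the Whittaker vectors $W_{\pi_{p,i}}^{\mathrm{sph}}$, and the $\delta_P$-equivariance is exactly equation (13). Hence Lemma \ref{lem unfolding} applies and, writing $z=(z_0,z_0)\in Z_p$ and using that $z$ is central in $G_p$ and acts on each Whittaker vector through the central character $\omega_{\pi_{p,i}}$, it yields for $\Re(s)\gg 0$
\begin{align*}
\Lambda_{\Pi_p}(s;f) = \int_{Z_p}\omega_{\Pi_p}(z_0)|z_0|_p^{2s}\!\int_{\GL_2(\mathbf{Q}_p)}\!\int_{\mathbf{Q}_p^\times}\!f(z(\gamma,1))\,W_{\pi_{p,1}}^{\mathrm{sph}}\!\left(\left[\begin{smallmatrix}a & \\ & 1\end{smallmatrix}\right]\gamma\right)W_{\pi_{p,2}}^{\mathrm{sph}}\!\left(\left[\begin{smallmatrix}a & \\ & 1\end{smallmatrix}\right]\right)|a|_p^{s-1}\,d^\times a\,d\gamma\,dz,
\end{align*}
where $\omega_{\Pi_p}=\omega_{\pi_{p,1}}\omega_{\pi_{p,2}}$.

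Second, the compact support of $f$ modulo $P_p$ ensures that only finitely many cosets of the form $p^n I\cdot Z_p^\circ$ in $Z_p$ contribute through the $f$-dependence, so after collecting central characters the outer $Z_p$-integration becomes a finite linear combination of geometric series in $\omega_{\Pi_p}(p)p^{-2s}$, each of which converges for $\Re(s)\gg 0$ and extends to a rational function of $p^s$. For each such $z$, identifying the slice $\gamma\mapsto f(z(\gamma,1))$ with (the pullback of) a spherical Schwartz function $\Phi_z$ on $\mathbf{Q}_p^2$ via the embedding $\mathcal{S}(\mathbf{Q}_p^2)\hookrightarrow\mathrm{Ind}_{P_p}^{H_p}\mathbf{1}$ of Section \ref{sec from and back}, the remaining double integral in $(\gamma,a)$ is precisely the classical local Rankin-Selberg zeta integral $Z(\Phi_z,W_{\pi_{p,1}}^{\mathrm{sph}},W_{\pi_{p,2}}^{\mathrm{sph}},s)$. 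Invoking the main results of \cite{jacquet1983rankin} on absolute convergence for $\Re(s)\gg 0$ and meromorphic continuation in the fractional ideal $L(\Pi_p,s)\mathbf{C}[p^s,p^{-s}]$ then completes the argument.

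The main obstacle is the bookkeeping involved in identifying the slice $\gamma\mapsto f(z(\gamma,1))$ with a well-defined Schwartz function on $\mathbf{Q}_p^2$ in a manner compatible with the left $P_p$-invariance of $f$, the Haar measure normalisations of Lemma \ref{lem unfolding}, and the conventions of \cite{jacquet1983rankin}; once this identification is fixed, every remaining step is formal.
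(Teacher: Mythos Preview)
Your approach—unfold via \Cref{lem unfolding} and invoke \cite{jacquet1983rankin}—is exactly what the paper does, and your verification that $\mathscr{W}_{\Pi_p}(s;-,f)\in C_c^\infty(P_p\backslash G_p/G_p^\circ;\delta_P)$ is a useful addition. Two intermediate steps are slightly off, though, and can be replaced by something simpler.

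First, the $Z_p$-integration is a genuine finite sum, not a geometric series: since $f$ is right $G_p^\circ$-invariant, inspecting the second coordinate shows that $z(\gamma,1)\in P_p g G_p^\circ$ forces $z$ into a single $Z_p^\circ$-coset (cf.\ the proof of \Cref{prop explicit formula for Lambda}). Second, the slice $\gamma\mapsto f(z(\gamma,1))$ is only left $P_p^\circ$-invariant (combine diagonal left $P_p$-invariance of $f$ with right $G_p^\circ$-invariance), not left $P_p$-invariant, so it does not come from a Schwartz function via $\mathcal{S}(\mathbf{Q}_p^2)\hookrightarrow\mathrm{Ind}_{P_p}^{H_p}\mathbf{1}$; correspondingly your $(\gamma,a)$-integral is not literally a classical $Z(\Phi_z,W_1,W_2,s)$, since $W_{\pi_{p,2}}^{\mathrm{sph}}$ is evaluated at $\left[\begin{smallmatrix}a&\\&1\end{smallmatrix}\right]$ rather than $\left[\begin{smallmatrix}a&\\&1\end{smallmatrix}\right]\gamma$. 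The clean fix is to observe that compact support of $f$ modulo $P_p$ also makes the $\gamma$-integral a finite sum, leaving only finitely many inner integrals $\int_{\mathbf{Q}_p^\times}(W_{\pi_{p,1}}^{\mathrm{sph}}\boxtimes W_{\pi_{p,2}}^{\mathrm{sph}})\bigl(\left[\begin{smallmatrix}a&\\&1\end{smallmatrix}\right]g\bigr)|a|_p^{s-1}\,d^\times a$, each already in $L(\Pi_p,s)\mathbf{C}[p^s,p^{-s}]$ by \cite{jacquet1983rankin}. This is what the paper's two-line proof is implicitly doing.
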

\begin{defn} With notation as in \emph{\Cref{def W}} we define the linear form 
$$\Lambda_{\Pi_p}:C_c^\infty(P_p\backslash G_p/G_p^\circ)\longrightarrow \mathbf{C},\ \Lambda_{\Pi_p}(f):= \lim_{s\rightarrow 0}\frac{\Lambda_{\Pi_p}(s;f)}{L(\Pi_p,s)}$$
which is well-defined by \emph{\Cref{prop meromorphic cont}}.
\end{defn}
\subsubsection{Equivariance properties}

As shown in the following proposition, the linear form $\Lambda_{\Pi_p}$ becomes $\mathcal{H}_{G_p}^\circ$-equivariant if we interpret $\mathbf{C}$ as the one dimensional subspace of $G_p^\circ$-fixed vectors in the smooth dual of $\Pi_p$.
\begin{prop}\label{prop equivariance of linear form} Let $\Pi_p=\pi_{p,1}\boxtimes \pi_{p,2}$, with $\pi_{p,i}$ unramified $H_p$-representations of Whittaker type. We write $\omega_{\Pi_p}$ for its central character and $\Theta_{\Pi_p}$ for its spherical Hecke eigensystem.
    \begin{enumerate}
    \item The linear form $\Lambda_{\Pi_p}$ defines an element of $$\Hom_{\mathcal{H}_{G_p}^\circ}\left(C_c^\infty (P_p\backslash G_p/G_p^\circ), \left(\Pi_p^\vee\right)^{G_p^\circ}\right).$$
        \item For $f_0=\ch(P_pG_p^\circ)$, we have $\Lambda_{\Pi_p}\left(f_0\right)=L(\omega_{\Pi_p},0)^{-1}=\Theta_{\Pi_p}(1-\mathcal{S}_p)$. 
    \end{enumerate}
    \begin{proof}
        We firstly prove the first part. Let $f\in C_c^\infty(P_p\backslash G_p/G_p^\circ)$ and $\mathcal{P}\in \mathcal{H}_{G_p}^\circ$.
        \begin{align*}
            \Lambda_{\Pi_p}(s;\mathcal{P}\cdot f)&=\int_{P_p\backslash G_p}\int_{\mathbf{Q}_p^\times} (\mathcal{P}\cdot f)(g)\ |\det(g_2)|_p^s\left(W_{\pi_{p,1}}^\mathrm{sph}\boxtimes W_{\pi_{p,2}}^\mathrm{sph}\right)\left(\left[\begin{smallmatrix}
                a & \\
                & 1
            \end{smallmatrix}\right]g\right)\ |a|_p^{s-1}\ d^\times a\ d_{P_p\backslash G_p}g\\
            &= \int_{P_p\backslash G_p}\int_{\mathbf{Q}_p^\times} \int_{G_p} \mathcal{P}(\gamma) f(g\gamma)\ |\det(g_2)|_p^s\left(W_{\pi_{p,1}}^\mathrm{sph}\boxtimes W_{\pi_{p,2}}^\mathrm{sph}\right)\left(\left[\begin{smallmatrix}
                a & \\
                & 1
            \end{smallmatrix}\right]g\right)\ |a|_p^{s-1}\ d\gamma \ d^\times a\ d_{P_p\backslash G_p}g
        \end{align*}
        From \Cref{sec measures}, $d_{P_p\backslash G_p}g$ is right $G_p$-invariant, hence the above is equal to
        $$ \int_{P_p\backslash G_p}\int_{\mathbf{Q}_p^\times} \int_{G_p} \mathcal{P}(\gamma) f(g)\ |\det(g_2\gamma_2^{-2})|_p^s\left(W_{\pi_{p,1}}^\mathrm{sph}\boxtimes W_{\pi_{p,2}}^\mathrm{sph}\right)\left(\left[\begin{smallmatrix}
                a & \\
                & 1
            \end{smallmatrix}\right]g\gamma^{-1}\right)\ |a|_p^{s-1}\ d\gamma \ d^\times a\ d_{P_p\backslash G_p}g.$$
            Since $G_p$ is unimodular, the Haar measure $d\gamma$ is invariant under the change of variables $\gamma\rightsquigarrow \gamma^{-1}$. Thus, this is equal to
            $$\int_{P_p\backslash G_p}\int_{\mathbf{Q}_p^\times}  f(g)\ |\det(g_2)|_p^s\left(\int_{G_p}|\det(\gamma_2)|_p^s\ \mathcal{P}^{'}(\gamma)\left(W_{\pi_{p,1}}^\mathrm{sph}\boxtimes W_{\pi_{p,2}}^\mathrm{sph}\right)\left(\left[\begin{smallmatrix}
                a & \\
                & 1
        \end{smallmatrix}\right]g\gamma\right)\  d\gamma\right)|a|_p^{s-1}\ \ d^\times a\ d_{P_p\backslash G_p}g.$$
            The first part then follows upon using Cartan decomposition to express $\mathcal{P}$ as a linear combination of $G_p^\circ$ double coset operators. For the second part, using \Cref{lem unfolding}, we have
            \begin{align*}
                \Lambda_{\Pi_p}(s;f_0)&=\int_{Z_p}\int_{\GL_2(\mathbf{Q}_p)}\mathscr{W}_{\Pi_p}(s;(z\gamma,z), f_0)\ dz\ d\gamma.\\
                &=\int_{Z_p}\int_{\GL_2(\mathbf{Q}_p)}\int_{\mathbf{Q}_p^\times}f_0((z\gamma,z))|\det(z)|_p^s \left(W_{\pi_{p,1}}^\mathrm{sph}\boxtimes W_{\pi_{p,2}}^\mathrm{sph}\right)\left(\left[\begin{smallmatrix}
                a & \\
                & 1
        \end{smallmatrix}\right](z\gamma,z)\right)\ |a|_p^{s-1}\ d^\times a\ d\gamma\ dz.
            \end{align*}
            Now, $f_0((z\gamma,z))$ vanishes for $z$ outside $Z_p^\circ$, and is equal to $f_0((\gamma,1))$ for $z\in Z_p^\circ$. Similarly, $f_0((\gamma,1))$ vanishes for $\gamma$ outside $\GL_2(\mathbf{Z}_p)$ and is identically equal to one on $\GL_2(\mathbf{Z}_p)$. Thus, the triple integral above collapses to give
            $$\int_{\mathbf{Q}_p^\times} \left(W_{\pi_{p,1}}^\mathrm{sph}\boxtimes W_{\pi_{p,2}}^\mathrm{sph}\right)\left(\left[\begin{smallmatrix}
                a & \\
                & 1
        \end{smallmatrix}\right]\right)\ |a|_p^{s-1}\ d^\times a.$$
        It is well-known that the latter is given by $L(\Pi_p,s)L(\omega_{\Pi_p},2s)^{-1}$ and thus the proof of the second part is complete.
    \end{proof}
\end{prop}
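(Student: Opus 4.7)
The plan is to handle the two parts somewhat independently, with the first part reduced via standard manipulations to the spherical Hecke eigenvalue equation for the Whittaker functions, and the second part being a direct unfolding to the classical Rankin--Selberg zeta integral at the identity.

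For part (1), I would start from the definition $\Lambda_{\Pi_p}(\mathcal{P}\cdot f)=\lim_{s\to 0}L(\Pi_p,s)^{-1}\int_{P_p\backslash G_p}\mathscr{W}_{\Pi_p}(s;g,\mathcal{P}\cdot f)\,d_{P_p\backslash G_p}g$ and insert the definition $(\mathcal{P}\cdot f)(g)=\int_{G_p}\mathcal{P}(\gamma)f(g\gamma)\,d\gamma$. The key move is to use the right $G_p$-invariance of $d_{P_p\backslash G_p}g$ from \Cref{sec measures} to translate $g\mapsto g\gamma^{-1}$, followed by the change of variables $\gamma\mapsto \gamma^{-1}$ (valid by unimodularity of $G_p$), which replaces $\mathcal{P}$ by its involution $\mathcal{P}'$. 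After interchanging the order of integration, the resulting inner integral over $\gamma\in G_p$ is the Hecke action of $\mathcal{P}'$ applied to the right translate of $W_{\pi_{p,1}}^{\mathrm{sph}}\boxtimes W_{\pi_{p,2}}^{\mathrm{sph}}$ twisted by $|\det(\gamma_2)|_p^s$. Using the Cartan decomposition to reduce $\mathcal{P}$ to double coset operators and the fact that the spherical Whittaker pair is an eigenvector with eigensystem $\Theta_{\Pi_p}$ (absorbing the $|\det|_p^s$-twist into a twisted eigensystem which at $s=0$ recovers $\Theta_{\Pi_p}$ up to the involution), we obtain $\Lambda_{\Pi_p}(s;\mathcal{P}\cdot f)=\Theta_{\Pi_p}(\mathcal{P}')\cdot\Lambda_{\Pi_p}(s;f)$ for $\Re(s)$ sufficiently large. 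Taking meromorphic continuation and the limit $s\to 0$ yields exactly the compatibility required for $\Lambda_{\Pi_p}$ to land in the one-dimensional space $(\Pi_p^\vee)^{G_p^\circ}$, identified with $\mathbf{C}$ via the dual spherical vector.

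For part (2), I would compute $\Lambda_{\Pi_p}(s;f_0)$ directly via \Cref{lem unfolding}: writing the integral as $\int_{Z_p}\int_{\GL_2(\mathbf{Q}_p)}\mathscr{W}_{\Pi_p}(s;(z\gamma,z),f_0)\,dz\,d\gamma$, and observing that $f_0=\ch(P_p G_p^\circ)$ forces $z\in Z_p^\circ$ and $\gamma\in \GL_2(\mathbf{Z}_p)$, on which $f_0\equiv 1$. The outer integrals then give a unit factor, and after collapsing we are left with the one-variable integral $\int_{\mathbf{Q}_p^\times}(W_{\pi_{p,1}}^{\mathrm{sph}}\boxtimes W_{\pi_{p,2}}^{\mathrm{sph}})\bigl(\left[\begin{smallmatrix}a&\\&1\end{smallmatrix}\right]\bigr)|a|_p^{s-1}\,d^\times a$. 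This is the classical unramified Rankin--Selberg integral of \cite{jacquet1983rankin}, evaluating to $L(\Pi_p,s)L(\omega_{\Pi_p},2s)^{-1}$. Dividing by $L(\Pi_p,s)$ and taking $s\to 0$ gives $L(\omega_{\Pi_p},0)^{-1}=1-\omega_{\Pi_p}(p)$, which via Satake translates to $\Theta_{\Pi_p}(1-\mathcal{S}_p)$.

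The main obstacle I anticipate is bookkeeping: keeping track of the $|\det(\gamma_2)|_p^s$ twist when moving the Hecke operator onto the Whittaker functions in part (1), and ensuring that after the involution $\mathcal{P}\mapsto \mathcal{P}'$ the eigensystem that appears is genuinely $\Theta_{\Pi_p}$ (rather than $\Theta_{\Pi_p^\vee}$) once we pass to the dual interpretation $(\Pi_p^\vee)^{G_p^\circ}\simeq \mathbf{C}$. Convergence of $\Lambda_{\Pi_p}(s;f)$ for $\Re(s)\gg 0$ is not an issue since $\mathscr{W}_{\Pi_p}(s;g,f)$ is compactly supported modulo $P_p$ and we already have \Cref{prop meromorphic cont} at our disposal; meromorphic continuation then transports the identity to a neighbourhood of $s=0$.
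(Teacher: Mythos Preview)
Your proposal is correct and follows essentially the same route as the paper's proof: for part (1) you expand the Hecke action, use right $G_p$-invariance of $d_{P_p\backslash G_p}g$ and unimodularity to pass from $\mathcal{P}$ to $\mathcal{P}'$ acting on the spherical Whittaker vector, then invoke Cartan decomposition; for part (2) you unfold via \Cref{lem unfolding}, collapse the support to $Z_p^\circ\times\GL_2(\mathbf{Z}_p)$, and identify the remaining one-variable integral with $L(\Pi_p,s)L(\omega_{\Pi_p},2s)^{-1}$. Your anticipated bookkeeping concern about $\Theta_{\Pi_p}$ versus $\Theta_{\Pi_p^\vee}$ is exactly the point: the involution $\mathcal{P}\mapsto\mathcal{P}'$ is what matches $\Theta_{\Pi_p}(\mathcal{P}')$ with the action of $\mathcal{P}$ on $(\Pi_p^\vee)^{G_p^\circ}$, so there is no discrepancy.
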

\begin{rem}
    It is clear from \Cref{prop equivariance of linear form} that the linear form $\Lambda_{\pi_{p,1}\boxtimes \pi_{p,2}}$ is non-zero for $\omega_{\pi_{p,1}}\neq \omega_{\pi_{p,2}}^{-1}$, where $\omega_{\pi_{p,i}}$ denotes the central character of $\pi_i.$
\end{rem}
\begin{cor}
     The Hecke equivariant map $\Xi_c$ gives a canonical isomorphism onto its image, of free rank one $\mathcal{H}_{G_p}^\circ$-modules.
    \begin{proof}
        Suppose that some $\mathcal{P}\in\mathcal{H}_{G_p}^\circ$ annihilates $\ch(\mathbf{Z}_p^2)\otimes \ch(G_p^\circ)$ in $\mathcal{I}(G_p/G_p^\circ)$. By the Hecke equivariance of $\Xi_c$, $\mathcal{P}$ also annihilates $\Xi_c(\ch(\mathbf{Z}_p^2)\otimes \ch(G_p^\circ))=\ch(P_pG_p^\circ)$. Now, applying $\Lambda_{\Pi_p}$, and using \Cref{prop equivariance of linear form} part $(2)$, we obtain 
        \begin{align}
            \Theta_{\Pi_p}\left(\mathcal{P}^{'}(1-\mathcal{S}_p)\right)=0.
        \end{align}
        The equality in \eqref{eq:7} holds for every unramified $\Pi_p=\pi_{p,1}\boxtimes\pi_{p,2}$ with $\pi_{p,i}$ of Whittaker type. This family of representations is dense in $\mathrm{Spec}(\mathcal{H}_{G_p}^\circ)$ and thus $\mathcal{P}^{'}(1-\mathcal{S}_p)$ is identically zero in $\mathcal{H}_{G_p}^\circ$. From here it follows instantly that $\mathcal{P}=0$. This combined with \Cref{thm cyclicity on coinvariants}, shows that $\mathcal{I}(G_p/G_p^\circ)$ is free of rank one and that $\Xi_c$ is injective, completing the proof.
    \end{proof}
\end{cor}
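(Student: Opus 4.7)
The plan is to deduce the corollary from the cyclicity result in \Cref{thm cyclicity on coinvariants} by showing that the structure map $\mathcal{H}_{G_p}^\circ \to \mathcal{I}(G_p/G_p^\circ)$, $\mathcal{P} \mapsto \mathcal{P} \cdot \delta_0$ (with $\delta_0 = \ch(\mathbf{Z}_p^2) \otimes \ch(G_p^\circ)$), is injective. Because $\Xi_c$ is $\mathcal{H}_{G_p}^\circ$-equivariant by \Cref{prop mapping to compact induction} and sends the generator $\delta_0$ to the generator $\ch(P_pG_p^\circ)$ of its image, injectivity of this structure map propagates at once to $\Xi_c$ itself, giving both freeness of rank one and the claimed canonical isomorphism onto the image.

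To execute the injectivity argument, I would suppose $\mathcal{P} \cdot \delta_0 = 0$ and apply $\Xi_c$ to obtain $\mathcal{P} \cdot \ch(P_pG_p^\circ) = 0$ in $C_c^\infty(P_p\backslash G_p/G_p^\circ)$. The next step is to probe this identity with the linear forms $\Lambda_{\Pi_p}$ constructed in \Cref{sec the linear form}, letting $\Pi_p = \pi_{p,1}\boxtimes \pi_{p,2}$ range over all unramified Whittaker type principal series. By parts (1) and (2) of \Cref{prop equivariance of linear form}, applying $\Lambda_{\Pi_p}$ converts the identity into the scalar relation
\[
\Theta_{\Pi_p}\bigl(\mathcal{P}'(1-\mathcal{S}_p)\bigr) = 0,
\]
where the involution $(-)'$ arises because the action on the one dimensional space $(\Pi_p^\vee)^{G_p^\circ}$ is twisted accordingly.

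The final step is a density argument. The characters $\Theta_{\Pi_p}$, as $\Pi_p$ ranges over unramified Whittaker type principal series of $G_p$, correspond under the Satake isomorphism to a Zariski dense subset of $\Spec(\mathcal{H}_{G_p}^\circ)$, since the Whittaker type locus is the complement of a proper closed subvariety of the Satake torus. Hence $\mathcal{P}'(1-\mathcal{S}_p)$ vanishes identically in $\mathcal{H}_{G_p}^\circ$. Since the spherical Hecke algebra is a Laurent polynomial ring in the Satake generators, and in particular an integral domain, and since $1-\mathcal{S}_p$ is nonzero, we conclude $\mathcal{P}'=0$ and hence $\mathcal{P}=0$. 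There is no serious obstacle to this plan; the only point requiring care is correctly tracking the involution $(-)'$ in the dual action of the Hecke algebra, which is already clean in the formulation of \Cref{prop equivariance of linear form}(1), and invoking the density of Whittaker type unramified points in the Satake spectrum.
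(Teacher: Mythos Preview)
Your proposal is correct and follows essentially the same approach as the paper: assume $\mathcal{P}\cdot\delta_0=0$, push through $\Xi_c$ to get $\mathcal{P}\cdot\ch(P_pG_p^\circ)=0$, apply $\Lambda_{\Pi_p}$ and the Hecke equivariance of \Cref{prop equivariance of linear form} to obtain $\Theta_{\Pi_p}(\mathcal{P}'(1-\mathcal{S}_p))=0$, then use density of the Whittaker type locus and the fact that $\mathcal{H}_{G_p}^\circ$ is an integral domain to conclude $\mathcal{P}=0$. Your write-up is slightly more explicit about why $1-\mathcal{S}_p$ can be cancelled and why injectivity of the structure map yields injectivity of $\Xi_c$, but the argument is the same.
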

\begin{defn}
     Let $\delta_0=\ch(\mathbf{Z}_p^2)\otimes \ch(G_p^\circ)$. For $\delta\in\mathcal{I}(G_p/G_p^\circ)$, the Hecke operator $\mathcal{P}_\delta$ is the unique element of $\mathcal{H}_{G_p}^\circ$ that satisfies $\mathcal{P}_\delta\cdot\delta_0=\delta$ in $\mathcal{I}(G_p/G_p^\circ)$. 
\end{defn}
\begin{rem}
    Equivalently, the Hecke operator $\mathcal{P}_\delta$ is the unique element of $\mathcal{H}_{G_p}^\circ$ for which $\mathcal{P}_\delta\cdot \ch(P_pG_p^\circ)$ is equal to $\Xi_c(\delta)$.
\end{rem}
\subsection{Explicit formulas for $\Lambda_{\Pi_p}$ in terms of $L$-factors and Satake parameters}\label{sec explicit formulae} Now that we have constructed the Hecke equivariant linear form $\Lambda_{\Pi_p}$, we want to directly compute the integrals involved in its definition, for general input data.
 \begin{defn}\label{def Euler polynomial factor}
The polynomial $\mathcal{P}_p\in\mathcal{H}_{G_p}^\circ[x]$, is characterized by the property 
    $$\Theta_{\Pi_p}(\mathcal{P}_p)(p^{-s})=L(\Pi_p,s)^{-1}$$
    for every unramified $G_p$-representation $\Pi_p=\pi_{p,1}\boxtimes \pi_{p,2}$,
    where $\Theta_{\Pi_p}$ denotes the spherical Hecke eigensystem of $\Pi_p$ and $L(\Pi_p,s)$ is the usual Rankin-Selberg local $L$-factor attached to such an unramified principal-series representation $\Pi_p$, (e.g. \cite{Jacquet1972GL2Part2}).
\end{defn}
\noindent We note that such a polynomial does exists as the local $L$-factor can be expressed as a Weyl-group invariant polynomial in the Satake parameters of $\Pi_p$.

\begin{defn} \label{def generalized L factors}
    Let $\Pi_p=\pi_{p,1}\boxtimes \pi_{p,2}$ with $\pi_{p,i}$ unramified $H_p$-representations of Whittaker type. Write $\alpha_{\pi_{p,i}}$ and $\beta_{\pi_{p,i}}$ for the Satake parameters of $\pi_{p,i}$. For $r\in\mathbf{Z}_{\geq 1}$, we define
    \begin{align*}
       \scalemath{0.9}{ L_1^{(r)}(\omega_{\Pi_p},s)^{-1}:=\frac{(\alpha_{\pi_{p,1}}^r-\beta_{\pi_{p,1}}^r)-\alpha_{\pi_{p,1}}\beta_{\pi_{p,1}}(\alpha_{\pi_{p,2}}+\beta_{\pi_{p,2}})(\alpha_{\pi_{p,1}}^{r-1}-\beta_{\pi_{p,1}}^{r-1})p^{-s}+(\alpha_{\pi_{p,2}}\beta_{\pi_{p,2}}\alpha_{\pi_{p,1}}^2\beta_{\pi_{p,1}}^2)(\alpha_{\pi_{p,1}}^{r-2}-\beta_{\pi_{p,1}}^{r-2})p^{-2s}}{\alpha_{\pi_{p,1}}-\beta_{\pi_{p,1}}}}
    \end{align*}
    \begin{align*}
       \scalemath{0.9}{ L_2^{(r)}(\omega_{\Pi_p},s)^{-1}:=\frac{(\alpha_{\pi_{p,2}}^r-\beta_{\pi_{p,2}}^r)-\alpha_{\pi_{p,2}}\beta_{\pi_{p,2}}(\alpha_{\pi_{p,1}}+\beta_{\pi_{p,1}})(\alpha_{\pi_{p,2}}^{r-1}-\beta_{\pi_{p,2}}^{r-1})p^{-s}+(\alpha_{\pi_{p,1}}\beta_{\pi_{p,1}}\alpha_{\pi_{p,2}}^2\beta_{\pi_{p,2}}^2)(\alpha_{\pi_{p,2}}^{r-2}-\beta_{\pi_{p,2}}^{r-2})p^{-2s}}{\alpha_{\pi_{p,2}}-\beta_{\pi_{p,2}}}}
    \end{align*}
\end{defn}
\begin{rem}\label{rem :4.3.4}
    The $L$-factors considered above are natural generalisations of $L(\omega_{\Pi_p},s)$ and will show up in our computation of $\Lambda_{\Pi_p}$. In fact, one has $L_1^{(1)}(\omega_{\Pi_p},s)=L_2^{(1)}(\omega_{\Pi_p},s)=L(\omega_{\Pi_p},2s)$.
\end{rem}

\begin{lem}
    For $r\in\mathbf{Z}_{\geq 1}$, there exist polynomials $\mathcal{P}_{p,1}^{(r)}(x)$ and $\mathcal{P}_{p,2}^{(r)}(x)$ in $\mathcal{H}_{G_p}^\circ[x]$ such that
    \begin{align*}
\Theta_{\Pi_p}\left(\mathcal{P}_{p,1}^{(r)}\right)(p^{-s})= L_1^{(r)}(\omega_{\Pi_p},s)^{-1}\ \mathrm{and}\ 
\Theta_{\Pi_p}\left(\mathcal{P}_{p,2}^{(r)}\right)(p^{-s})=L_2^{(r)}(\omega_{\Pi_p},s)^{-1}
    \end{align*}
    for every $\Pi_p$ as in \emph{\Cref{def generalized L factors}}.
    \begin{proof}
        The inverse $L$-factors $L_i^{(r)}(\omega_{\Pi_p},s)^{-1}$ are polynomials in $p^{-s}$ with coefficients that are themselves Weyl-group invariant polynomials in the Satake parameters of $\Pi_p$. The result then follows at once.
    \end{proof}
\end{lem}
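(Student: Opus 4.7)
The plan is to exhibit each $L_i^{(r)}(\omega_{\Pi_p},s)^{-1}$ as a polynomial in $p^{-s}$ whose coefficients are Weyl-group invariant polynomial expressions in the Satake parameters of $\Pi_p = \pi_{p,1} \boxtimes \pi_{p,2}$, and then invoke the Satake isomorphism $\mathcal{H}_{G_p}^\circ \simeq \mathbf{C}[\alpha_{\pi_{p,1}}^{\pm 1},\beta_{\pi_{p,1}}^{\pm 1},\alpha_{\pi_{p,2}}^{\pm 1},\beta_{\pi_{p,2}}^{\pm 1}]^{W}$ recorded in \Cref{sec prelims} to realize these coefficients as elements of $\mathcal{H}_{G_p}^\circ$.

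Concretely, observing \Cref{def generalized L factors}, each inverse $L$-factor has the shape $N/(\alpha_{\pi_{p,i}}-\beta_{\pi_{p,i}})$ where $N$ is a sum of three monomials, each of which is a difference $\alpha_{\pi_{p,i}}^{k}-\beta_{\pi_{p,i}}^{k}$ (for $k=r,r-1,r-2$) multiplied by a term which is already symmetric in $\alpha_{\pi_{p,j}},\beta_{\pi_{p,j}}$ for $j\neq i$. The first step is therefore the elementary identity
\begin{equation*}
\frac{\alpha^{k}-\beta^{k}}{\alpha-\beta} \;=\; \sum_{j=0}^{k-1}\alpha^{j}\beta^{k-1-j},
\end{equation*}
which is a symmetric polynomial in $\alpha,\beta$. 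Applying this with $(\alpha,\beta)=(\alpha_{\pi_{p,i}},\beta_{\pi_{p,i}})$ cancels the denominator and expresses the coefficient of each power of $p^{-s}$ as a product of a symmetric polynomial in $(\alpha_{\pi_{p,i}},\beta_{\pi_{p,i}})$ with a symmetric polynomial in $(\alpha_{\pi_{p,j}},\beta_{\pi_{p,j}})$. Such a product is invariant under the Weyl group $W=(\mathbf{Z}/2\mathbf{Z})^2$ acting factor-wise, and manifestly a Laurent polynomial in the Satake parameters.

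The second step is then purely formal: grouping by the power of $p^{-s}$ and invoking the Satake isomorphism, each coefficient lifts uniquely to an element of $\mathcal{H}_{G_p}^\circ$, and assembling them yields polynomials $\mathcal{P}_{p,i}^{(r)}(x)\in\mathcal{H}_{G_p}^\circ[x]$ whose evaluation under $\Theta_{\Pi_p}$ at $x=p^{-s}$ gives the prescribed $L_i^{(r)}(\omega_{\Pi_p},s)^{-1}$ for all unramified Whittaker-type $\Pi_p$, by density of the Satake parameters in $\mathrm{Spec}(\mathcal{H}_{G_p}^\circ)$.

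There is no real obstacle here; the only thing to be mildly careful about is verifying that the cancellation by $\alpha_{\pi_{p,i}}-\beta_{\pi_{p,i}}$ is genuinely polynomial (not merely rational), which is immediate from the identity above, and that the resulting expression is invariant under swapping $\alpha_{\pi_{p,i}}\leftrightarrow\beta_{\pi_{p,i}}$ so that it descends through the Satake isomorphism. Both checks are transparent from the explicit form in \Cref{def generalized L factors}.
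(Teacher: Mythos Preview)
Your proposal is correct and takes essentially the same approach as the paper: both observe that the inverse $L$-factors are polynomials in $p^{-s}$ whose coefficients are Weyl-invariant polynomials in the Satake parameters, and then invoke the Satake isomorphism. You have simply spelled out the cancellation $\tfrac{\alpha^k-\beta^k}{\alpha-\beta}=\sum_{j=0}^{k-1}\alpha^j\beta^{k-1-j}$ explicitly, whereas the paper leaves this implicit; the final appeal to density is unnecessary, since the Satake isomorphism already identifies Weyl-invariant polynomials in the parameters with elements of $\mathcal{H}_{G_p}^\circ$ directly.
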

\begin{lem}\label{lem two integrals}
   For $n\in\mathbf{Z}_{\geq 0}$ and with notation as above, we have 
\begin{align*}\int_{\mathbf{Q}_p^\times}\left(W_{\pi_{p,1}}^\mathrm{sph}\boxtimes W_{\pi_{p,2}}^\mathrm{sph}\right)\left(\left[\begin{smallmatrix}
        ap^n & \\
        & 1
\end{smallmatrix}\right],\left[\begin{smallmatrix}
        a & \\
        & 1
    \end{smallmatrix}\right]\right)\ |a|_p^{s-1}\ d^\times a &=L(\Pi_p,s)\cdot p^\frac{-n}{2}L_1^{(n+1)}(\omega_{\Pi_p},s)^{-1}\\
    \int_{\mathbf{Q}_p^\times}\left(W_{\pi_{p,1}}^\mathrm{sph}\boxtimes W_{\pi_{p,2}}^\mathrm{sph}\right)\left(\left[\begin{smallmatrix}
        a & \\
        & 1
\end{smallmatrix}\right],\left[\begin{smallmatrix}
        ap^n & \\
        & 1
    \end{smallmatrix}\right]\right)\ |a|_p^{s-1}\ d^\times a &=L(\Pi_p,s)\cdot p^\frac{-n}{2}L_2^{(n+1)}(\omega_{\Pi_p},s)^{-1}
    \end{align*}
    \begin{proof}
        The proof is a direct algebraic computation in the same style as \cite[Lemma $3.3.2$]{grossi2020norm}, once again again using \cite{shintani1976explicit} to express the values of $W_{\pi_{p,i}}^\mathrm{sph}(\left[\begin{smallmatrix}
            a & \\
            & 1
        \end{smallmatrix}\right])$ as polynomials in the Satake parameters.
    \end{proof}
\end{lem}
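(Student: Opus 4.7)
The plan is to compute the first integral by substituting Shintani's explicit formula for the spherical Whittaker functions and recognising the resulting rational function in $p^{-s}$ as $L(\Pi_p,s)\cdot p^{-n/2}\cdot L_1^{(n+1)}(\omega_{\Pi_p},s)^{-1}$. The second integral will follow by running the identical argument with the roles of $\pi_{p,1}$ and $\pi_{p,2}$ exchanged; this also explains the asymmetric form of $L_1^{(r)}$ versus $L_2^{(r)}$ in \Cref{def generalized L factors}.

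First, I would recall that for an unramified Whittaker-type representation $\pi_p$ with Satake parameters $\alpha,\beta$, Shintani's formula gives
\[
W_{\pi_p}^{\mathrm{sph}}\!\left(\left[\begin{smallmatrix} p^m & \\ & 1\end{smallmatrix}\right]\right)=\begin{cases} p^{-m/2}\dfrac{\alpha^{m+1}-\beta^{m+1}}{\alpha-\beta}, & m\geq 0,\\ 0, & m<0.\end{cases}
\]
Since the factor $W_{\pi_{p,2}}^{\mathrm{sph}}$ forces $m:=v_p(a)\geq 0$, and $n\geq 0$ automatically guarantees $v_p(ap^n)\geq 0$, partitioning $\mathbf{Q}_p^\times$ into the cosets $p^m\mathbf{Z}_p^\times$ (with $d^\times a$ normalised so that $\mathbf{Z}_p^\times$ has volume one) reduces the integral to
\[
p^{-n/2}\sum_{m=0}^\infty \frac{(\alpha_1^{m+n+1}-\beta_1^{m+n+1})(\alpha_2^{m+1}-\beta_2^{m+1})}{(\alpha_1-\beta_1)(\alpha_2-\beta_2)}\,p^{-ms},
\]
where $\alpha_i,\beta_i$ denote the Satake parameters of $\pi_{p,i}$.

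Next, I would expand the numerator into four monomials and sum each resulting geometric series via $\sum_{m\geq 0}\gamma^{m+1}p^{-ms}=\gamma/(1-\gamma p^{-s})$ for $\gamma\in\{\alpha_1\alpha_2,\alpha_1\beta_2,\beta_1\alpha_2,\beta_1\beta_2\}$. Grouping the four fractions in pairs by their $\pi_{p,1}$-Satake parameter and using $A/(1-Ax)-B/(1-Bx)=(A-B)/[(1-Ax)(1-Bx)]$ collapses the sum to
\[
\frac{1}{\alpha_1-\beta_1}\left[\frac{\alpha_1^{n+1}}{(1-\alpha_1\alpha_2 p^{-s})(1-\alpha_1\beta_2 p^{-s})}-\frac{\beta_1^{n+1}}{(1-\beta_1\alpha_2 p^{-s})(1-\beta_1\beta_2 p^{-s})}\right].
\]
I then place this expression over the common denominator $L(\Pi_p,s)^{-1}$, expand the two remaining quadratic factors $1-\beta_1(\alpha_2+\beta_2)p^{-s}+\beta_1^2\alpha_2\beta_2 p^{-2s}$ and $1-\alpha_1(\alpha_2+\beta_2)p^{-s}+\alpha_1^2\alpha_2\beta_2 p^{-2s}$, and collect powers of $p^{-s}$ using the identities
\[
\frac{\alpha_1^{r+1}\beta_1-\beta_1^{r+1}\alpha_1}{\alpha_1-\beta_1}=\alpha_1\beta_1\,\frac{\alpha_1^r-\beta_1^r}{\alpha_1-\beta_1},\qquad \frac{\alpha_1^{r+1}\beta_1^2-\beta_1^{r+1}\alpha_1^2}{\alpha_1-\beta_1}=(\alpha_1\beta_1)^2\,\frac{\alpha_1^{r-1}-\beta_1^{r-1}}{\alpha_1-\beta_1},
\]
to match the result with $r=n+1$ to $L_1^{(n+1)}(\omega_{\Pi_p},s)^{-1}$ as defined in \Cref{def generalized L factors}.

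The main obstacle is organisational rather than conceptual: the naive numerator has apparent degree four in $p^{-s}$, whereas $L_1^{(r)}(\omega_{\Pi_p},s)^{-1}$ is a polynomial of degree two. The pairing by $\pi_{p,1}$-Satake parameter engineered above is precisely what produces the two-fraction cancellation and at the same time isolates the $\alpha_1^{n+1}\leftrightarrow\beta_1^{n+1}$ structure responsible for the factors $(\alpha_1^r-\beta_1^r)/(\alpha_1-\beta_1)$ appearing in \Cref{def generalized L factors}. The second integral is then handled by grouping the four fractions by their $\pi_{p,2}$-parameter instead, which yields $L_2^{(n+1)}(\omega_{\Pi_p},s)^{-1}$ via identical manipulations.
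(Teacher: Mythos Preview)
Your proposal is correct and follows exactly the approach the paper sketches: apply Shintani's explicit formula for the spherical Whittaker values, reduce the integral to a geometric-series sum in $p^{-s}$, and algebraically reorganise to recognise the quotient $L(\Pi_p,s)\cdot L_i^{(n+1)}(\omega_{\Pi_p},s)^{-1}$. The paper's own proof is a one-line reference to this method, so your write-up simply fills in the details that the paper omits.
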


\noindent We now have all the tools needed to evaluate the linear form $\Lambda_{\Pi_p}$ on $f\in C_c^\infty(P_p\backslash G_p/G_p^\circ)$. By Iwasawa decomposition, we may clearly assume that $f$ is given by a characteristic function $\ch\left(P_pz_0(\gamma_0,1)G_p^\circ\right)$ with $z_0\in Z_p$ and $\gamma_0\in\GL_2(\mathbf{Q}_p)$ . 
\noindent We recall that $\mathcal{S}_{p,1}$ and $\mathcal{T}_{p,1}$ denote the spherical Hecke operators in $\mathcal{H}_{G_p}^\circ$, corresponding to the elements $\left(\left[\begin{smallmatrix}
    p & \\
     & p
\end{smallmatrix}\right],1\right)$ and $\left(\left[\begin{smallmatrix}
    p & \\
     & 1
\end{smallmatrix}\right],1\right)$ respectively. Similarly, $\mathcal{S}_{p,2}$ and $\mathcal{T}_{p,2}$ denote the Hecke operators corresponding to $\left(1,\left[\begin{smallmatrix}
    p & \\
     & p
\end{smallmatrix}\right]\right)$ and $\left(1,\left[\begin{smallmatrix}
    p & \\
     & 1
\end{smallmatrix}\right]\right)$ respectively. Additionally, $\mathcal{S}_p$ will once again denote $\mathcal{S}_{p,1}\mathcal{S}_{p,2}$. We write $\mathfrak{s}_n(x,y)$ for the Schur polynomial $(x^{n+1}-y^{n+1})/(x-y)$, and $\mathfrak{s}_n^\circ(X,Y)$ for the polynomial that satisfies $$\Theta_{\pi_{p,i}}\left(\mathfrak{s}_n^\circ(\mathcal{S}_{p,i},\mathcal{T}_{p,i})\right)=\mathfrak{s}_n\left(\alpha_{\pi_{p,i}},\beta_{\pi_{p,i}}\right),\ i\in\{1,2\}.$$Finally, we put $f_{z_0,\gamma_0}:=\ch\left(P_pz_0(\gamma_0,1)G_p^\circ\right)$ in $C_c^\infty(P_p\backslash G_p/G_p^\circ)$, with 
$$z_0=\left[\begin{smallmatrix}
    p^{r_0} & \\
     & p^{r_0}
\end{smallmatrix}\right]\in Z_p,\ \gamma_0=\left[\begin{smallmatrix}
    p^{r_1} & \\
     & p^{r_1}
\end{smallmatrix}\right]\left[\begin{smallmatrix}
    p^m &  \\
     & 1
\end{smallmatrix}\right]\left[\begin{smallmatrix}
    1 & y\\
    & 1
\end{smallmatrix}\right] k\in \GL_2(\mathbf{Q}_p)=Z_pP_p\GL_2(\mathbf{Z}_p)$$
\begin{prop}\label{prop explicit formula for Lambda}  With notation as above, we have
$$\Lambda_{\Pi_p}(f_{z_0,\gamma_0})=
    \Theta_{\Pi_p}\left( \mathscr{V}_{z_0,\gamma_0}\left[\mathscr{P}_{\gamma_0} + \mathscr{Q}_{\gamma_0}\mathcal{P}_p(1) \right] \right)
$$
where:
\begin{align*}
    \mathscr{V}_{z_0,\gamma_0}&:=\vol_{\GL_2(\mathbf{Q}_p)}(P_p^\circ \gamma_0 \GL_2(\mathbf{Z}_p))\cdot \mathcal{S}_p^{r_0}\mathcal{S}_{p,1}^{r_1}\\
    \mathscr{P}_{\gamma_0}&:=p^\frac{-m}{2}\begin{cases}
        \mathcal{P}_{p,1}^{(m+1)}(1),\ &\mathrm{if}\ m\geq 0\\
        \mathcal{P}_{p,2}^{(-m+1)}(1),\ &\mathrm{if}\ m<0
    \end{cases}\\
    \mathscr{Q}_{\gamma_0}&:=p^\frac{-m}{2}\begin{cases}
        \sum_{i=0}^{-v_p(y)-m-1}\varepsilon_i(yp^m)\mathfrak{s}_{i+m}^\circ(\mathcal{S}_{p,1},\mathcal{T}_{p,1})\mathfrak{s}_{i}^\circ(\mathcal{S}_{p,2},\mathcal{T}_{p,2}),\ &\mathrm{if}\ m\geq 0, v_p(y)<-m\\
        \sum_{i=0}^{-v_p(y)-1}\varepsilon_i(y)\mathfrak{s}_{i}^\circ(\mathcal{S}_{p,1},\mathcal{T}_{p,1})\mathfrak{s}_{i-m}^\circ(\mathcal{S}_{p,2},\mathcal{T}_{p,2}),\ &\mathrm{if}\ m< 0, v_p(y)<0\\
        0,\ &\mathrm{otherwise}.
    \end{cases}\\
    \varepsilon_i(w)&:=\begin{cases}
        -1,\ &\mathrm{if}\ 0\leq i<-v_p(w)-1\\
        \frac{-p}{p-1},\ &\mathrm{if}\ i=-v_p(w)-1.
    \end{cases}
\end{align*}
    \begin{proof}
        Using \Cref{lem unfolding}, we once again unfold $\Lambda_{\Pi_p}(s;f_{z_0,\gamma_0})$ as
        \begin{align}\label{eq: 14}
            \nonumber \Lambda_{\Pi_p}(s;f_{z_0,\gamma_0})&=\int_{Z_p}\int_{\GL_2(\mathbf{Q}_p)}\int_{\mathbf{Q}_p^\times} f_{z_0,\gamma_0}((z\gamma,z))\ |\det(z)|_p^s\ \left(W_{\pi_{p,1}}^\mathrm{sph}\boxtimes W_{\pi_{p,2}}^\mathrm{sph}\right)\left(\left[\begin{smallmatrix}
                a & \\
                 & 1
            \end{smallmatrix}\right](z\gamma,z)\right)\ |a|_p^{s-1}\ d^\times a\ d\gamma \ dz\\
             &=\omega_{\Pi_p}(z_0)\ |\det(z_0)|_p^s\ \int_{P_p^\circ \gamma_0 \GL_2(\mathbf{Z}_p)}\int_{\mathbf{Q}_p^\times} \left(W_{\pi_{p,1}}^\mathrm{sph}\boxtimes W_{\pi_{p,2}}^\mathrm{sph}\right)\left(\left[\begin{smallmatrix}
                a & \\
                 & 1
            \end{smallmatrix}\right](\gamma,1)\right)\ |a|_p^{s-1}\ d^\times a\ d\gamma
        \end{align}
          where the second equality follows from the following fact: As a function of $z\in Z_p$, $f_{z_0,\gamma_0}((z\gamma,z))$ is equal to $f_{1,\gamma_0}(\gamma,1)$ on $z_0Z_p^\circ$, and zero everywhere else. Similarly, as a function of $\gamma\in \GL_2(\mathbf{Q}_p)$, $f_{1,\gamma_0}(\gamma,1)$ is equal to $1$ on $P_p^\circ \gamma_0 \GL_2(\mathbf{Z}_p)$, and zero everywhere else. We now claim that the innermost integral in \eqref{eq: 14} is independent of $\gamma\in P_p^\circ \gamma_0 \GL_2(\mathbf{Z}_p)$ and is completely determined by its value on $\gamma_0$. Indeed, let $\gamma$ be an element of $P_p^\circ \gamma_0 \GL_2(\mathbf{Z}_p)$ written as $\left[\begin{smallmatrix}
            x_1 & x_2 \\
            & 1
        \end{smallmatrix}\right]\gamma_0 k$, where by definition, $x_1\in\mathbf{Z}_p^\times$ and $x_2\in \mathbf{Z}_p$. Then the innermost integral in \eqref{eq: 14} is given by 
        $$\int_{\mathbf{Q}_p^\times\cap \mathbf{Z}_p} \psi(ax_2) \left(W_{\pi_{p,1}}^\mathrm{sph}\boxtimes W_{\pi_{p,2}}^\mathrm{sph}\right)\left(\left[\begin{smallmatrix}
            ax_1 & \\
             & 1
        \end{smallmatrix}\right]\gamma_0,\left[\begin{smallmatrix}
            a & \\
             & 1
        \end{smallmatrix}\right]\right)\ |a|_p^{s-1}\ d^\times a.$$
        Using the fact that $\psi$ has conductor one, $x_2\in\mathbf{Z}_p$, $x_1\in\mathbf{Z}_p^\times$ and a change of variables $a\rightsquigarrow ax_1$, we see that the claim holds. We thus have
        \begin{align}\label{eq:15}
            \scalemath{0.95}{\Lambda_{\Pi_p}(s;f_{z_0,\gamma_0})=\omega_{\Pi_p}(z_0)\ |\det(z_0)|_p^s\ \vol_{\GL_2(\mathbf{Q}_p)}\left(P_p^\circ \gamma_0 \GL_2(\mathbf{Z}_p)\right)\int_{\mathbf{Q}_p^\times} \left(W_{\pi_{p,1}}^\mathrm{sph}\boxtimes W_{\pi_{p,2}}^\mathrm{sph}\right)\left(\left[\begin{smallmatrix}
                a & \\
                & 1
            \end{smallmatrix}\right]\gamma_0,\left[\begin{smallmatrix}
                a & \\
                & 1
            \end{smallmatrix}\right]\right)\ |a|_p^{s-1}\ d^\times a.}
        \end{align}
        The integral appearing in \eqref{eq:15} is treated in a very similar way to \cite[\S $6.3.1$]{groutides2024integral}, once again using \cite{shintani1976explicit} to evaluate the spherical Whittaker functions. Writting $C_{\gamma_0}:=\omega_{\Pi_p}\left(\left[\begin{smallmatrix}
            p^{r_1} & \\
             & p^{r_1}
        \end{smallmatrix}\right],1\right)$, this approach together with \Cref{lem two integrals}, allows us to split the integral into a ``main'' term of the form 
        $$\begin{cases}
            C_{\gamma_0}L(\Pi_p,s)\cdot p^\frac{-m}{2}L_1^{(m+1)}(\omega_{\Pi_p},s)^{-1},\ &\mathrm{if}\ m\geq 0\\
        \\
        C_{\gamma_0}L(\Pi_p,s)\cdot p^{\frac{-m}{2}+ms}L_2^{(-m+1)}(\omega_{\Pi_p},s)^{-1},\ &\mathrm{if}\ m<0
        \end{cases}$$
        and an``error'' term of the form
        $$\scalemath{1}{\begin{cases}
            C_{\gamma_0}\sum_{i=0}^{-v_p(y)-m-1}\left(\int_{p^i\mathbf{Z}_p^\times}\psi(ayp^m)-1\ d^\times a\right)\cdot p^\frac{-m}{2}\mathfrak{s}_{i+m}(\alpha_{\pi_{p,1}},\beta_{\pi_{p,1}})\mathfrak{s}_{i}(\alpha_{\pi_{p,2}},\beta_{\pi_{p,2}})p^{-is},\ &\mathrm{if}\ m\geq 0\\
        \\
            C_{\gamma_0}\sum_{i=0}^{-v_p(y)-1}\left(\int_{p^i\mathbf{Z}_p^\times}\psi(ay)-1\ d^\times a\right)\cdot p^{\frac{-m}{2}+ms}\mathfrak{s}_{i}(\alpha_{\pi_{p,1}},\beta_{\pi_{p,1}})\mathfrak{s}_{-m+i}(\alpha_{\pi_{p,2}},\beta_{\pi_{p,2}})p^{-is},\ &\mathrm{if}\ m<0.
        \end{cases}}$$
        All it remains to do is to use the expression found in \cite[\S $6.3.1$]{groutides2024integral} for the integrals involving $\psi$, combine these two terms with \eqref{eq:15}, apply $\lim_{s\rightarrow0}\frac{(-)}{L(\Pi_p,s)}$ and pass to the spherical Hecke algebra through $\Theta_{\Pi_p}$. 
    \end{proof}
\end{prop}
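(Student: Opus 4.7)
The plan is to reduce $\Lambda_{\Pi_p}(f_{z_0,\gamma_0})$ to the explicit Whittaker integrals of \Cref{lem two integrals} via a careful unfolding. I would first invoke \Cref{lem unfolding} to rewrite $\Lambda_{\Pi_p}(s;f_{z_0,\gamma_0})$ as a triple integral over $Z_p \times \GL_2(\mathbf{Q}_p) \times \mathbf{Q}_p^\times$. The support conditions on $f_{z_0,\gamma_0}$ collapse the $Z_p$-integral onto $z_0 Z_p^\circ$ (producing the factor $\omega_{\Pi_p}(z_0)|\det(z_0)|_p^s$, which at $s=0$ corresponds to $\mathcal{S}_p^{r_0}$ via $\Theta_{\Pi_p}$) and the $\GL_2(\mathbf{Q}_p)$-integral onto $P_p^\circ \gamma_0 \GL_2(\mathbf{Z}_p)$, producing the volume factor in $\mathscr{V}_{z_0,\gamma_0}$.

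Next I would show that the innermost Whittaker integral is constant on the coset $P_p^\circ \gamma_0 \GL_2(\mathbf{Z}_p)$. Writing a general element as $\left[\begin{smallmatrix} x_1 & x_2 \\ & 1 \end{smallmatrix}\right]\gamma_0 k$ with $x_1 \in \mathbf{Z}_p^\times$, $x_2 \in \mathbf{Z}_p$ and $k\in\GL_2(\mathbf{Z}_p)$, spherical right-$G_p^\circ$-invariance eliminates $k$; the change of variable $a \mapsto a x_1$ absorbs $x_1$; and since $\psi$ has conductor one and the integrand is supported on $a \in \mathbf{Z}_p$, one has $\psi(a x_2) = 1$. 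Thus the integral reduces to its value at $\gamma_0$ alone, producing the shape of equation \eqref{eq:15}.

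Using $\gamma_0 = \left[\begin{smallmatrix} p^{r_1} & \\ & p^{r_1} \end{smallmatrix}\right]\left[\begin{smallmatrix} p^m & y \\ & 1 \end{smallmatrix}\right] k'$, extracting the central $p^{r_1}$-factor yields the $\mathcal{S}_{p,1}^{r_1}$ contribution to $\mathscr{V}_{z_0,\gamma_0}$ and leaves a Whittaker integral against $\left[\begin{smallmatrix} p^m & y \\ & 1 \end{smallmatrix}\right]$. I would then split the integrand via $\psi(ay) = (\psi(ay) - 1) + 1$. The ``$+1$'' piece falls under \Cref{lem two integrals} and contributes $L(\Pi_p, s)\cdot p^{-m/2} L_1^{(m+1)}(\omega_{\Pi_p}, s)^{-1}$ for $m \geq 0$ (and the analogous $L_2^{(-m+1)}$ expression for $m<0$, with an extra $p^{ms}$ factor). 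Dividing by $L(\Pi_p, s)$, sending $s \to 0$, and translating back to the spherical Hecke algebra via $\Theta_{\Pi_p}$ yields $\mathscr{P}_{\gamma_0}$. The $(\psi(ay)-1)$ correction is supported on the indices $0 \leq i < -v_p(y)$, yielding a finite sum whose inner integrals $\int_{p^i \mathbf{Z}_p^\times}(\psi(ay)-1)\, d^\times a$ evaluate, as in \cite{groutides2024integral} Section $6.3.1$, to the coefficients $\varepsilon_i(y)$, while Shintani's formula expresses the Whittaker values as Schur polynomials $\mathfrak{s}_n(\alpha_{\pi_{p,i}}, \beta_{\pi_{p,i}})$. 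Because this error term carries no natural $L(\Pi_p, s)$ factor, dividing by $L(\Pi_p, s)$ and taking $s\to 0$ introduces an extra $L(\Pi_p, 0)^{-1} = \Theta_{\Pi_p}(\mathcal{P}_p(1))$, thereby producing the term $\mathscr{Q}_{\gamma_0} \mathcal{P}_p(1)$.

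The hard part is the bookkeeping. The two factors $\pi_{p,1}$ and $\pi_{p,2}$ enter asymmetrically through $m$ (the $p^m$ in the upper-left shifts only the first Whittaker argument), so one must track the index shifts $\mathfrak{s}_{i+m}^\circ$ versus $\mathfrak{s}_i^\circ$ depending on whether $m \geq 0$ or $m < 0$, as well as the powers of $p^s$ arising in the $m<0$ case, which must cancel against the $|\det(z)|_p^s$ contributions so that only the clean prefactor $p^{-m/2}$ survives at $s=0$. One must also separate the case $v_p(y)\geq 0$ (in which the error term vanishes and $\mathscr{Q}_{\gamma_0}=0$) from $v_p(y)<0$. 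Once all these pieces are assembled and passed through $\Theta_{\Pi_p}^{-1}$, the result collapses to the stated closed form.
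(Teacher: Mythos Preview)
Your proposal is correct and follows essentially the same approach as the paper's proof: unfold via \Cref{lem unfolding}, collapse the $Z_p$- and $\GL_2(\mathbf{Q}_p)$-integrals onto $z_0Z_p^\circ$ and $P_p^\circ\gamma_0\GL_2(\mathbf{Z}_p)$, verify constancy of the inner integral on that coset via the conductor-one property of $\psi$, then split into a main term handled by \Cref{lem two integrals} and an error term evaluated via Shintani's formula and the Gauss-type integrals from \cite{groutides2024integral} $6.3.1$, finally dividing by $L(\Pi_p,s)$ and passing through $\Theta_{\Pi_p}$. Your explicit $\psi(ay)=(\psi(ay)-1)+1$ decomposition and the bookkeeping remarks on the $m\geq 0$ versus $m<0$ index shifts match exactly what the paper does more tersely.
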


\begin{lem}\label{lem integrality of operators appearing in explicit formula}
    With notation as in \emph{\Cref{prop explicit formula for Lambda}}, the elements $\mathscr{V}_{z_0,\gamma_0}\left[\mathscr{P}_{\gamma_0}+\mathscr{Q}_{\gamma_0}\mathcal{P}_p(1)\right]$ are contained in $\mathcal{H}_{G_p}^\circ(\mathbf{Z}[1/p])$ for any $f_{z_0,\gamma_0}$.
    \begin{proof}
         There is a number of things we need to check. Firstly, we need to make sure that the operators $\mathscr{P}_{\gamma_0}$ and $\mathscr{Q}_{\gamma_0}$ involve no odd powers of $p^\frac{-1}{2}$. Indeed, this follows from a simple parity consideration on $m$ (and $i$ for $\mathscr{Q}_{\gamma_0}$), using the expansion of the polynomials $\mathfrak{s}_n(x,y)$ found in \cite[\S $6.3$]{groutides2024integral}, together with the well-known fact that $\Theta_{\Pi_p}(\mathcal{S}_{p,i})=\alpha_{\pi_{p,i}}\beta_{\pi_{p,i}}$ and $\Theta_{\Pi_p}(\mathcal{T}_{p,i})=p^\frac{1}{2}(\alpha_{\pi_{p,i}}+\beta_{\pi_{p,i}})$. Hence $\mathscr{P}_{\gamma_0}$ and $\mathscr{Q}_{\gamma_0}$ are elements of $\mathcal{H}_{G_p}^\circ(\mathbf{Z}[1/p])$ and $\mathcal{H}_{G_p}^\circ(\frac{1}{p-1}\mathbf{Z}[1/p])$ respectively. It follows from a direct computation that the Euler factor $\mathcal{P}_p(1)$ is also an element of $\mathcal{H}_{G_p}^\circ(\mathbf{Z}[1/p])$. We now claim that for every $\gamma_0$,
    $$\vol_{\GL_2(\mathbf{Q}_p)}(P_p^\circ \gamma_0 \GL_2(\mathbf{Z}_p))\mathscr{Q}_{\gamma_0}$$
    is an element of $\mathcal{H}_{G_p}^\circ(\mathbf{Z}[1/p])$. By Cartan decomposition, $\gamma_0$ is contained in $z_1\GL_2(\mathbf{Z}_p)\left[\begin{smallmatrix}
        p^\lambda & \\
        & 1
    \end{smallmatrix}\right]\GL_2(\mathbf{Z}_p)$, with $z_1\in Z_p$ and $\lambda\in\mathbf{Z}_{\geq 0}$, uniquely determined by $\gamma_0$. We now recall the decomposition \begin{align}
        \scalemath{0.9}{\GL_2(\mathbf{Z}_p) \left[\begin{smallmatrix}
            p^\lambda & \\
            & 1
        \end{smallmatrix}\right] \GL_2(\mathbf{Z}_p)=\left(\bigsqcup_{\beta\in \mathbf{Z}/p^{\lambda}\mathbf{Z}}\left[\begin{smallmatrix}
            p^{\lambda} & \beta\\
            & 1
        \end{smallmatrix}\right] \GL_2(\mathbf{Z}_p)\right)  \sqcup \left(\bigsqcup_{\substack{0<i<\lambda\\ \\ \beta\in(\mathbf{Z}/p^{i}\mathbf{Z})^\times}}\left[\begin{smallmatrix}
            p^{i} & \beta\\
            &p^{\lambda-i}
        \end{smallmatrix}\right] \GL_2(\mathbf{Z}_p)\right)\sqcup\left[\begin{smallmatrix}
            1 & \\
            &p^{\lambda}
        \end{smallmatrix}\right] \GL_2(\mathbf{Z}_p).}
    \end{align}
    If $\gamma_0$ is contained in $z_1\left[\begin{smallmatrix}
            p^\lambda & \beta_0\\
            & 1
        \end{smallmatrix}\right] \GL_2(\mathbf{Z}_p)=z_1\left[\begin{smallmatrix}
            p^\lambda & \\
            & 1
        \end{smallmatrix}\right]\left[\begin{smallmatrix}
            1 & \beta_0p^{-\lambda}\\
            & 1
        \end{smallmatrix}\right]\GL_2(\mathbf{Z}_p)$ or $z_1 \left[\begin{smallmatrix}
            1 & \\
            &p^{\lambda}
        \end{smallmatrix}\right] \GL_2(\mathbf{Z}_p)$, then by the definition of $\mathscr{D}_{\gamma_0}$ in \Cref{prop explicit formula for Lambda}, we see that $\mathscr{Q}_{\gamma_0}=0$ hence the claim holds trivially. If on the other hand $\gamma_0$ is contained in $z_1 \left[\begin{smallmatrix}
            p^{i} & \beta_0\\
            &p^{\lambda-i}
        \end{smallmatrix}\right] \GL_2(\mathbf{Z}_p)$ with $\beta_0$ in $(\mathbf{Z}/p^i\mathbf{Z})^\times$, then $\gamma_0=z_1\left[\begin{smallmatrix}
            p^{\lambda-i} & \\
             & p^{\lambda-i}
        \end{smallmatrix}\right]\left[\begin{smallmatrix}
            p^{2i-\lambda} &  \\
            & 1
        \end{smallmatrix}\right]\left[\begin{smallmatrix}
            1 & \beta_0 p^{-i} \\
            & 1
        \end{smallmatrix}\right]k$ with $i>0$ and $k\in \GL_2(\mathbf{Z}_p)$. Thus in this case, \Cref{prop explicit formula for Lambda} shows that $\mathscr{Q}_{\gamma_0}$ is contained in $\mathcal{H}_{G_p}^\circ(\frac{1}{p-1}\mathbf{Z}[1/p])$. However in this case, it is straightforward to see that
        $$P_p^\circ\gamma_0 \GL_2(\mathbf{Z}_p)=z_1P_p^\circ\left[\begin{smallmatrix}
            p^{i} & \beta_0 \\
            & p^{\lambda-i}
        \end{smallmatrix}\right] \GL_2(\mathbf{Z}_p)=\bigsqcup_{\beta\in (\mathbf{Z}/p^i\mathbf{Z})^\times}z_1\left[\begin{smallmatrix}
            p^i & \beta \\
            & p^{\lambda-i}
        \end{smallmatrix}\right]\GL_2(\mathbf{Z}_p).$$
        Thus $\vol_{\GL_2(\mathbf{Q}_p)}(P_p^\circ \gamma_0 \GL_2(\mathbf{Z}_p))=\#(\mathbf{Z}/p^i\mathbf{Z})^\times$, which is a multiple of $p-1$.
    \end{proof}
\end{lem}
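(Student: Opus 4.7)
The plan is to decompose the element $\mathscr{V}_{z_0,\gamma_0}[\mathscr{P}_{\gamma_0}+\mathscr{Q}_{\gamma_0}\mathcal{P}_p(1)]$ into its three ingredients and control each separately, first establishing that the superficially-half-integral powers of $p$ actually produce integer powers once paired with the Schur-type factors through the Satake isomorphism, and then showing that the only genuinely non-integral denominator, namely the $\tfrac{1}{p-1}$ coming from $\varepsilon_{-v_p(y)-1}(y)$, is always absorbed by the volume factor $\vol_{\GL_2(\mathbf{Q}_p)}(P_p^\circ\gamma_0\GL_2(\mathbf{Z}_p))$ sitting inside $\mathscr{V}_{z_0,\gamma_0}$.

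First I would address the powers of $p^{-1/2}$. Using Shintani's formula together with $\Theta_{\Pi_p}(\mathcal{S}_{p,i})=\alpha_{\pi_{p,i}}\beta_{\pi_{p,i}}$ and $\Theta_{\Pi_p}(\mathcal{T}_{p,i})=p^{1/2}(\alpha_{\pi_{p,i}}+\beta_{\pi_{p,i}})$, each $\mathcal{T}_{p,i}$ contributes a factor of $p^{1/2}$ under the Satake transform. A parity count on the degrees of the Schur polynomials $\mathfrak{s}_n^\circ$ appearing in $\mathscr{P}_{\gamma_0}$ and $\mathscr{Q}_{\gamma_0}$ then matches the $p^{-m/2}$ prefactor so that, modulo fractions from $\varepsilon_i$, the combination lies in $\mathcal{H}_{G_p}^\circ(\mathbf{Z}[1/p])$. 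In parallel, a straightforward inspection of $\mathcal{P}_p$ as a polynomial in the Satake parameters—or equivalently its definition as an Euler factor—places $\mathcal{P}_p(1)$ in $\mathcal{H}_{G_p}^\circ(\mathbf{Z}[1/p])$. At this stage $\mathscr{P}_{\gamma_0}\in\mathcal{H}_{G_p}^\circ(\mathbf{Z}[1/p])$ outright, while $\mathscr{Q}_{\gamma_0}\in\mathcal{H}_{G_p}^\circ(\tfrac{1}{p-1}\mathbf{Z}[1/p])$.

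The main obstacle is therefore controlling the $\tfrac{1}{p-1}$ that enters through $\varepsilon_{-v_p(y)-1}(y)$. The plan is a case analysis on the Cartan position of $\gamma_0$: write $\gamma_0\in z_1\GL_2(\mathbf{Z}_p)\bigl[\begin{smallmatrix}p^\lambda & \\ & 1\end{smallmatrix}\bigr]\GL_2(\mathbf{Z}_p)$ and expand this double coset as a disjoint union of left $\GL_2(\mathbf{Z}_p)$-cosets via the standard representatives $\bigl[\begin{smallmatrix}p^\lambda & \beta\\ & 1\end{smallmatrix}\bigr]$, $\bigl[\begin{smallmatrix}p^i & \beta\\ & p^{\lambda-i}\end{smallmatrix}\bigr]$, and $\bigl[\begin{smallmatrix}1 & \\ & p^\lambda\end{smallmatrix}\bigr]$. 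In the first and third families, a direct computation of $y$ in the mirabolic decomposition of $\gamma_0$ will give $v_p(y)\geq 0$, so $\mathscr{Q}_{\gamma_0}=0$ by definition and there is nothing to prove. The remaining family $\bigl[\begin{smallmatrix}p^i & \beta_0\\ & p^{\lambda-i}\end{smallmatrix}\bigr]$ with $0<i<\lambda$ and $\beta_0\in(\mathbf{Z}/p^i\mathbf{Z})^\times$ is the interesting one.

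In this last case, I would rewrite $\gamma_0$ as a central scalar times a mirabolic matrix times an element of $\GL_2(\mathbf{Z}_p)$ to read off the relevant $m$ and $y$, so that Proposition~\ref{prop explicit formula for Lambda} applies and $\mathscr{Q}_{\gamma_0}$ may legitimately carry the $\tfrac{1}{p-1}$ denominator. To finish, I would compute the $P_p^\circ$-orbit on $\gamma_0\GL_2(\mathbf{Z}_p)$ inside $\GL_2(\mathbf{Q}_p)$: left-multiplication by $P_p^\circ$ precisely scans the residues of $\beta_0$ modulo $p^i$ through $(\mathbf{Z}/p^i\mathbf{Z})^\times$, giving
\begin{equation*}
P_p^\circ\gamma_0\GL_2(\mathbf{Z}_p)=\bigsqcup_{\beta\in(\mathbf{Z}/p^i\mathbf{Z})^\times}z_1\left[\begin{smallmatrix}p^i & \beta\\ & p^{\lambda-i}\end{smallmatrix}\right]\GL_2(\mathbf{Z}_p)
\end{equation*}
so $\vol_{\GL_2(\mathbf{Q}_p)}(P_p^\circ\gamma_0\GL_2(\mathbf{Z}_p))=\#(\mathbf{Z}/p^i\mathbf{Z})^\times$, which is divisible by $p-1$. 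Since this factor is part of $\mathscr{V}_{z_0,\gamma_0}$, it cancels the offending denominator in $\mathscr{Q}_{\gamma_0}$, while $\mathcal{S}_p^{r_0}\mathcal{S}_{p,1}^{r_1}$ is plainly in $\mathcal{H}_{G_p}^\circ(\mathbf{Z}[1/p])^\times$. Combining the three ingredients then yields the claimed integrality.
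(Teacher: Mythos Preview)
Your proposal is correct and follows essentially the same argument as the paper: first a parity check to eliminate half-integral powers of $p$, then the Cartan case analysis via the standard coset decomposition of $\GL_2(\mathbf{Z}_p)\bigl[\begin{smallmatrix}p^\lambda & \\ & 1\end{smallmatrix}\bigr]\GL_2(\mathbf{Z}_p)$, and finally the computation $\vol_{\GL_2(\mathbf{Q}_p)}(P_p^\circ\gamma_0\GL_2(\mathbf{Z}_p))=\#(\mathbf{Z}/p^i\mathbf{Z})^\times$ in the non-trivial case to absorb the $\tfrac{1}{p-1}$. The paper's proof proceeds identically in structure and detail.
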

\begin{cor}\label{cor integrality for lambda}
    Let $\Pi_p=\pi_{p,1}\boxtimes \pi_{p,2}$ with $\pi_{p,i}$ unramified $H_p$-representations of Whittaker type with $\omega_{\pi_{p,1}}\neq \omega_{\pi_{p,2}}^{-1}$. Suppose that $\Theta_{\Pi_p}$ restricts to a morphism $\mathcal{H}_{G_p}^\circ(\mathcal{R})\rightarrow \mathcal{R}$ for some $\mathbf{Z}[1/p]$-algebra $\mathcal{R}\subseteq\mathbf{C}$. Then $\Lambda_{\Pi_p}$ induces a non-zero Hecke equivariant morphism
    $$\Lambda_{\Pi_p}:C_c^\infty(P_p\backslash G_p/G_p^\circ,\mathcal{R})\longrightarrow\mathcal{R}\cdot W_{\Pi_p^\vee}^\mathrm{sph}.$$
    \begin{proof}
       Hecke equivariance is part of \Cref{prop equivariance of linear form} using the assumption on $\Pi_p$.
    The result then follows at once using \Cref{lem integrality of operators appearing in explicit formula} and the assumption on $\Theta_{\Pi_p}$.
    \end{proof}
\end{cor}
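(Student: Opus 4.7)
The plan is to show that this corollary is essentially a formal consequence of results already assembled in the excerpt. I would separate the claim into three sub-claims -- Hecke equivariance, $\mathcal{R}$-integrality of the values, and non-vanishing -- and dispatch each by invoking a named ingredient. The only mild point to match is that the target $\mathcal{R}\cdot W_{\Pi_p^\vee}^{\mathrm{sph}}$ really is where the explicit formula for $\Lambda_{\Pi_p}$ lands once we restrict to $\mathcal{R}$-valued inputs.

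First, Hecke equivariance of $\Lambda_{\Pi_p} \colon C_c^\infty(P_p\backslash G_p/G_p^\circ) \to (\Pi_p^\vee)^{G_p^\circ} = \mathbf{C}\cdot W_{\Pi_p^\vee}^{\mathrm{sph}}$ is precisely Proposition \ref{prop equivariance of linear form}(1), and this survives restriction to the $\mathcal{R}$-submodule $C_c^\infty(P_p\backslash G_p/G_p^\circ,\mathcal{R})$ by $\mathcal{R}$-linearity. The substantive task is therefore $\mathcal{R}$-integrality of the images. Here I would use the Iwasawa-type decomposition $G_p = Z_p P_p (\GL_2(\mathbf{Q}_p)\times 1) G_p^\circ$ recalled just before Proposition \ref{prop explicit formula for Lambda} to write any $f \in C_c^\infty(P_p\backslash G_p/G_p^\circ,\mathcal{R})$ as an $\mathcal{R}$-linear combination of the elementary double-coset characteristic functions $f_{z_0,\gamma_0}$. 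By linearity it suffices to check $\Lambda_{\Pi_p}(f_{z_0,\gamma_0}) \in \mathcal{R}$ for every such $f_{z_0,\gamma_0}$. The explicit formula of Proposition \ref{prop explicit formula for Lambda} gives
\begin{equation*}
\Lambda_{\Pi_p}(f_{z_0,\gamma_0}) = \Theta_{\Pi_p}\bigl(\mathscr{V}_{z_0,\gamma_0}\bigl[\mathscr{P}_{\gamma_0} + \mathscr{Q}_{\gamma_0}\mathcal{P}_p(1)\bigr]\bigr),
\end{equation*}
and Lemma \ref{lem integrality of operators appearing in explicit formula} shows that the Hecke operator in the brackets lies in $\mathcal{H}_{G_p}^\circ(\mathbf{Z}[1/p]) \subseteq \mathcal{H}_{G_p}^\circ(\mathcal{R})$. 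Applying the hypothesis that $\Theta_{\Pi_p}$ restricts to a morphism $\mathcal{H}_{G_p}^\circ(\mathcal{R}) \to \mathcal{R}$ then yields the required integrality.

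Finally, for non-vanishing I would evaluate $\Lambda_{\Pi_p}$ on $f_0 = \ch(P_p G_p^\circ) \in C_c^\infty(P_p\backslash G_p/G_p^\circ,\mathcal{R})$: Proposition \ref{prop equivariance of linear form}(2) gives $\Lambda_{\Pi_p}(f_0) = L(\omega_{\Pi_p},0)^{-1}$, which is non-zero precisely by the assumption $\omega_{\pi_{p,1}} \neq \omega_{\pi_{p,2}}^{-1}$. No serious obstacle arises here, since all the delicate integral analysis and the volume-versus-denominator bookkeeping for $\mathscr{Q}_{\gamma_0}$ has already been performed in Proposition \ref{prop explicit formula for Lambda} and Lemma \ref{lem integrality of operators appearing in explicit formula}; this corollary is purely an assembly step that packages those results into the form needed for the applications in later sections.
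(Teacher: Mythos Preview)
Your proposal is correct and follows essentially the same approach as the paper's proof: Hecke equivariance from Proposition~\ref{prop equivariance of linear form}, integrality from Lemma~\ref{lem integrality of operators appearing in explicit formula} combined with the hypothesis on $\Theta_{\Pi_p}$, and non-vanishing from Proposition~\ref{prop equivariance of linear form}(2) together with $\omega_{\pi_{p,1}}\neq\omega_{\pi_{p,2}}^{-1}$. The paper's proof is simply more terse, leaving the reduction to the basis functions $f_{z_0,\gamma_0}$ and the non-vanishing step implicit.
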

\subsection{Inverting $1-\mathcal{S}_p$}\label{sec inverting} The Hecke action by $1-\mathcal{S}_p$ induces an injective endomorphism of $C_c^\infty(P_p\backslash G_p/G_p^\circ)$. Thus, the localisation $C_c^\infty(P_p\backslash G_p/G_p^\circ)_{1-\mathcal{S}_p}$ is canonically isomorphic to \[\begin{tikzcd}[ampersand replacement=\&]
	{C_c^\infty(P_p\backslash G_p/G_p^\circ)[\tfrac{1}{1-\mathcal{S}_p}]:=\mathrm{coker}(C_c^\infty(P_p\backslash G_p/G_p^\circ)[x]} \&\& {C_c^\infty(P_p\backslash G_p/G_p^\circ)[x])}
	\arrow["{1-(1-\mathcal{S}_p)x}", from=1-1, to=1-3]
\end{tikzcd}\]
where $C_c^\infty(P_p\backslash G_p/G_p^\circ)[x]$ denotes the natural $\mathcal{H}_{G_p}^\circ[x]$-module of formal polynomials in a single variable $x$ with coefficients in $C_c^\infty(P_p\backslash G_p/G_p^\circ)$, or equivalently the tensor product $\mathcal{H}_{G_p}^\circ[x]\otimes_{\mathcal{H}_{G_p}^\circ} C_c^\infty(P_p\backslash G_p/G_p^\circ)$. For this reason, it is clear that $C_c^\infty(P_p\backslash G_p/G_p^\circ)$ embeds into its localisation $C_c^\infty(P_p\backslash G_p/G_p^\circ)[\tfrac{1}{1-\mathcal{S}_p}]$. As usual, we consider representations $\Pi_p=\pi_{p,1}\boxtimes\pi_{p,2}$ where $\pi_{p,i}$ are unramified of Whittaker type. From \Cref{prop explicit formula for Lambda}, and the density of this family of representations in $\mathrm{Spec}(\mathcal{H}_{G_p}^\circ)$, we can deduce that for each function $f$ in $C_c^\infty(P_p\backslash G_p/G_p^\circ)$, there exists a unique Hecke operator which we call $\mathcal{Q}_f$ in $\mathcal{H}_{G_p}^\circ$, for which $\Lambda_{\Pi_p}(f)=\Theta_{\Pi_p}(\mathcal{Q}_f)$ for every such $\Pi_p$. \Cref{prop equivariance of linear form} then implies that \begin{align}\label{eq: inverting}\Lambda_{\Pi_p}((1-\mathcal{S}_p)\cdot f)=\Theta_{\Pi_p^\vee}(1-\mathcal{S}_p)\Lambda_{\Pi_p}(f)=\Theta_{\Pi_p}(-\mathcal{S}_p^{-1}(1-\mathcal{S}_p)\mathcal{Q}_f))=\Lambda_{\Pi_p}(-\mathcal{S}_p\mathcal{Q}_f^{'}\cdot f_0).\end{align}
\begin{thm}\label{thm freeness of P-invariant vectors}
    The module $C_c^\infty(P_p\backslash G_p/G_p^\circ)[\tfrac{1}{1-\mathcal{S}_p}]$ is free of rank one over $\mathcal{H}_{G_p}^\circ[\tfrac{1}{1-\mathcal{S}_p}]$, generated by the characteristic function $f_0:=\ch(P_pG_p^\circ)$. In particular, for $f\in C_c^\infty(P_p\backslash G_p/G_p^\circ)$, we have $f=\tfrac{-\mathcal{S}_p\mathcal{Q}_f^{'}}{1-\mathcal{S}_p}\cdot f_0$, where $\mathcal{Q}_f$ is the Hecke operator described above.
    \begin{proof} The proof is long and quite technical, so we give it in \Cref{appendix} in order to ease exposition.
    \end{proof}
\end{thm}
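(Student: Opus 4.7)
The plan is to establish the more refined claim that for every $f \in C_c^\infty(P_p\backslash G_p/G_p^\circ)$,
\begin{equation}\label{eq: plan key identity}
(1-\mathcal{S}_p)\cdot f = -\mathcal{S}_p\mathcal{Q}_f'\cdot f_0 \quad \text{in } C_c^\infty(P_p\backslash G_p/G_p^\circ).
\end{equation}
Once \eqref{eq: plan key identity} is in hand, the theorem follows at once: the preceding corollary identifies the image $\mathcal{H}_{G_p}^\circ\cdot f_0$ of $\Xi_c$ with the free rank-one module $\mathcal{I}(G_p/G_p^\circ)$; $1-\mathcal{S}_p$ acts injectively on $C_c^\infty(P_p\backslash G_p/G_p^\circ)$ by the opening remark of \Cref{sec inverting}; and \eqref{eq: plan key identity} says that the cokernel of $\Xi_c$ is annihilated by $1-\mathcal{S}_p$. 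Together these yield the freeness of $C_c^\infty(P_p\backslash G_p/G_p^\circ)[\tfrac{1}{1-\mathcal{S}_p}]$ over $\mathcal{H}_{G_p}^\circ[\tfrac{1}{1-\mathcal{S}_p}]$ and the stated formula $f=\tfrac{-\mathcal{S}_p\mathcal{Q}_f'}{1-\mathcal{S}_p}f_0$.

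The easy half is spectral: I verify \eqref{eq: plan key identity} holds after applying $\Lambda_{\Pi_p}$ for every $\Pi_p=\pi_{p,1}\boxtimes\pi_{p,2}$ in the dense family. Since $\mathcal{H}_{G_p}^\circ$ acts on the one-dimensional space $(\Pi_p^\vee)^{G_p^\circ}$ via the inverted character $\mathcal{P}\mapsto\Theta_{\Pi_p}(\mathcal{P}')$, \Cref{prop equivariance of linear form} together with $\mathcal{S}_p'=\mathcal{S}_p^{-1}$ and the defining property of $\mathcal{Q}_f$ give
\begin{equation*}
\Lambda_{\Pi_p}\bigl((1-\mathcal{S}_p)f\bigr) = \Theta_{\Pi_p}\bigl((1-\mathcal{S}_p^{-1})\mathcal{Q}_f\bigr) = \Theta_{\Pi_p}\bigl(-\mathcal{S}_p^{-1}(1-\mathcal{S}_p)\mathcal{Q}_f\bigr) = \Lambda_{\Pi_p}\bigl(-\mathcal{S}_p\mathcal{Q}_f' f_0\bigr),
\end{equation*}
which is precisely \eqref{eq: inverting}.

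The principal obstacle is lifting this spectral agreement to an identity inside $C_c^\infty(P_p\backslash G_p/G_p^\circ)$ itself. Both sides of \eqref{eq: plan key identity} depend $\mathcal{H}_{G_p}^\circ$-linearly on $f$, and by the Iwasawa decomposition $G_p=Z_p P_p\GL_2(\mathbf{Z}_p)$ used in \Cref{sec explicit formulae} the space $C_c^\infty(P_p\backslash G_p/G_p^\circ)$ is $\mathbf{C}$-spanned by the basic functions $f_{z_0,\gamma_0}$, so it suffices to verify \eqref{eq: plan key identity} on each such generator. The left-hand side $(1-\mathcal{S}_p)f_{z_0,\gamma_0}$ is explicitly a signed sum of characteristic functions of $P_p\backslash G_p/G_p^\circ$-double cosets obtained by right-translating $\gamma_0$ by the single-coset representative of $\mathcal{S}_p^{-1}$. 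The right-hand side $-\mathcal{S}_p\mathcal{Q}_{f_{z_0,\gamma_0}}'\cdot f_0$ must be unwound from the explicit expression $\mathcal{Q}_{f_{z_0,\gamma_0}}=\mathscr{V}_{z_0,\gamma_0}[\mathscr{P}_{\gamma_0}+\mathscr{Q}_{\gamma_0}\mathcal{P}_p(1)]$ of \Cref{prop explicit formula for Lambda}, using the standard Cartan decompositions of $\mathcal{S}_{p,i}$ and $\mathcal{T}_{p,i}$ acting on $f_0=\ch(P_p G_p^\circ)$. The hard part is a long case-by-case combinatorial comparison, stratified according to the sign of $m$ and the value of $v_p(y)$ in the decomposition $\gamma_0=z_1\bigl[\begin{smallmatrix}p^m&y\\ &1\end{smallmatrix}\bigr]k$, checking that the two resulting linear combinations of characteristic functions of $P_p\backslash G_p/G_p^\circ$-double cosets coincide term by term. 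This double-coset bookkeeping is the technical core that the authors defer to the appendix.
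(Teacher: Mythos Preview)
Your overall architecture is right and agrees with the paper: both sides of \eqref{eq: plan key identity} match after applying every $\Lambda_{\Pi_p}$, so cyclicity (and then freeness) reduces to lifting this spectral agreement back to $C_c^\infty(P_p\backslash G_p/G_p^\circ)$. Where you diverge is in how that lift is executed. You propose a \emph{direct} verification of \eqref{eq: plan key identity} on each generator $f_{z_0,\gamma_0}$ by expanding $-\mathcal{S}_p\mathcal{Q}_{f_{z_0,\gamma_0}}'\cdot f_0$ via the coset decompositions of $\mathcal{T}_{p,i},\mathcal{S}_{p,i}$ and matching the resulting characteristic functions against the two-term expression $(1-\mathcal{S}_p)f_{z_0,\gamma_0}=f_{z_0,\gamma_0}-f_{p^{-1}z_0,\gamma_0}$. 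The paper does \emph{not} do this. Instead it proves the equivalent statement that the family $\{\Lambda_{\Pi_p}\}$ separates points, i.e.\ that the linear map $f\mapsto\mathcal{Q}_f$ from $C_c^\infty(P_p\backslash G_p/G_p^\circ)$ into $\mathcal{H}_{G_p}^\circ$ is injective. This is established entirely inside the polynomial ring $\mathbf{C}[\mathcal{S}_{p,1}^{\pm1},\mathcal{T}_{p,1},\mathcal{S}_{p,2}^{\pm1},\mathcal{T}_{p,2}]$ by a leading-monomial argument on the explicit formula of \Cref{prop explicit formula for Lambda}: one shows first that $\mathcal{P}_p(1)$ never divides a nonzero combination of the $\mathscr{V}\mathscr{P}$-terms, and then strips off coefficients term by term. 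The advantage of the paper's route is that it never requires computing the module action of a high-degree Hecke polynomial (involving $\mathcal{P}_p(1)$ and arbitrary Schur polynomials) on $f_0$; the combinatorics stays in the commutative Hecke algebra where degree bookkeeping is tractable. Your approach is in principle valid, but carrying it out would demand exactly those module computations for every $(m,v_p(y))$-type, and the miraculous cancellations needed to collapse $-\mathcal{S}_p\mathcal{Q}_{f_{z_0,\gamma_0}}'\cdot f_0$ to two terms are not obviously easier to verify than the paper's injectivity argument. (Minor remark: the $\mathcal{H}_{G_p}^\circ$-linearity you note is correct but doesn't help reduce the checking, since cyclicity is what is at stake; only the $\mathbf{C}$-span reduction to the $f_{z_0,\gamma_0}$ is used.)
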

\begin{rem}
    \Cref{thm freeness of P-invariant vectors}, strictly speaking, is not necessary for our proof of optimal integral norm relations, however it definitely makes it cleaner and more complete. In addition, it showcases the obstruction regarding the invertibility of the operators $1-\mathcal{S}_p$. In fact, later on we will see that \Cref{thm freeness of P-invariant vectors} is simply not true without the localisation. We also like to stress the fact that \Cref{thm freeness of P-invariant vectors} by no means replaces the need for \Cref{thm cyclicity on coinvariants} and the two results are not equivalent. 
\end{rem}
\begin{cor}\label{cor freeness at integral level for P-invariants}
    The module $C_c^\infty(P_p\backslash G_p/G_p^\circ,\mathbf{Z}[1/p])[\frac{1}{1-\mathcal{S}_p}]$ is free of rank one over $\mathcal{H}_{G_p}^\circ(\mathbf{Z}[1/p])[\frac{1}{1-\mathcal{S}_p}]$ generated by $\ch(P_pG_p^\circ)$.
    \begin{proof}
        This follows from \eqref{eq: inverting}, \Cref{lem integrality of operators appearing in explicit formula} and \Cref{thm freeness of P-invariant vectors}.
    \end{proof}
\end{cor}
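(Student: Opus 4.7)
The plan is to bootstrap from the complex statement of Theorem \ref{thm freeness of P-invariant vectors} by tracking integrality through the explicit formula of Proposition \ref{prop explicit formula for Lambda}. The corollary has two parts: that $f_0 := \ch(P_p G_p^\circ)$ has no nontrivial annihilator in $\mathcal{H}_{G_p}^\circ(\mathbf{Z}[1/p])[\tfrac{1}{1-\mathcal{S}_p}]$, and that it generates. The first part is formal: any nontrivial $\mathcal{H}_{G_p}^\circ(\mathbf{Z}[1/p])[\tfrac{1}{1-\mathcal{S}_p}]$-relation sits inside $\mathcal{H}_{G_p}^\circ[\tfrac{1}{1-\mathcal{S}_p}]$ and would contradict the freeness over $\mathbf{C}$ supplied by Theorem \ref{thm freeness of P-invariant vectors}.

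The content lies in generation. By Theorem \ref{thm freeness of P-invariant vectors}, every $f$ in the complex module satisfies
\[
f = \tfrac{-\mathcal{S}_p \mathcal{Q}_f^{'}}{1-\mathcal{S}_p} \cdot f_0,
\]
where $\mathcal{Q}_f \in \mathcal{H}_{G_p}^\circ$ is the unique Hecke operator characterised by $\Theta_{\Pi_p}(\mathcal{Q}_f) = \Lambda_{\Pi_p}(f)$ on all unramified Whittaker-type $\Pi_p = \pi_{p,1} \boxtimes \pi_{p,2}$. Since the involution $(-)'$ and $\mathcal{S}_p$ both preserve $\mathcal{H}_{G_p}^\circ(\mathbf{Z}[1/p])$, and $\mathcal{S}_p$ is a unit in the target ring after localisation, it is enough to prove $\mathcal{Q}_f \in \mathcal{H}_{G_p}^\circ(\mathbf{Z}[1/p])$ whenever $f$ takes values in $\mathbf{Z}[1/p]$.

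For this, first I would apply the Iwasawa decomposition in the second $\GL_2$-factor to express an arbitrary $f \in C_c^\infty(P_p \backslash G_p / G_p^\circ, \mathbf{Z}[1/p])$ as a finite $\mathbf{Z}[1/p]$-linear combination of characteristic functions $f_{z_0, \gamma_0}$. By linearity of $f \mapsto \mathcal{Q}_f$, the problem reduces to these generators. Here I would invoke Proposition \ref{prop explicit formula for Lambda}, which together with the density of the Hecke eigensystems $\Theta_{\Pi_p}$ in $\mathrm{Spec}(\mathcal{H}_{G_p}^\circ)$ forces the identification
\[
\mathcal{Q}_{f_{z_0, \gamma_0}} = \mathscr{V}_{z_0, \gamma_0}\bigl[\mathscr{P}_{\gamma_0} + \mathscr{Q}_{\gamma_0} \mathcal{P}_p(1)\bigr].
\]
Lemma \ref{lem integrality of operators appearing in explicit formula} says precisely that this operator lies in $\mathcal{H}_{G_p}^\circ(\mathbf{Z}[1/p])$, finishing the argument.

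There is really no substantive obstacle here, since the heavy lifting sits in Theorem \ref{thm freeness of P-invariant vectors} and in the integrality bookkeeping of Lemma \ref{lem integrality of operators appearing in explicit formula} (in particular the delicate fact that the $\tfrac{1}{p-1}$ denominators in $\mathscr{Q}_{\gamma_0}$ are absorbed by the volume factors in $\mathscr{V}_{z_0, \gamma_0}$). The only mild conceptual point is recognising that the abstractly defined $\mathcal{Q}_f$ from equation \eqref{eq: inverting} and the explicit integral expression from Proposition \ref{prop explicit formula for Lambda} must agree, which is forced by density of unramified Whittaker-type representations in the spherical spectrum.
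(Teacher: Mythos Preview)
Your proof is correct and follows essentially the same approach as the paper: the paper's one-line proof cites exactly \eqref{eq: inverting}, Lemma~\ref{lem integrality of operators appearing in explicit formula}, and Theorem~\ref{thm freeness of P-invariant vectors}, and your expansion unwinds precisely how these three ingredients combine---freeness over $\mathbf{C}$ reduces the question to showing $\mathcal{Q}_f\in\mathcal{H}_{G_p}^\circ(\mathbf{Z}[1/p])$, which in turn follows from the explicit formula and its integrality. Your identification of $\mathcal{Q}_{f_{z_0,\gamma_0}}$ with the explicit expression via density is exactly the mechanism implicit in the paper's citation of \eqref{eq: inverting}.
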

\begin{defn}
    Let $f_0=\ch(P_pG_p^\circ)$. For $f\in C_c^\infty(P_p\backslash G_p/G_p^\circ)[\frac{1}{1-\mathcal{S}_p}]$, the Hecke operator $\mathcal{P}_{f,\mathrm{loc}}$ is the unique element of $\mathcal{H}_{G_p}^\circ[\frac{1}{1-\mathcal{S}_p}]$ that satisfies $\mathcal{P}_{f,\mathrm{loc}}\cdot f_0=f.$
\end{defn}
\noindent Such a unique Hecke operator clearly exists by \Cref{thm freeness of P-invariant vectors}. We put 
\begin{align}\label{eq:111}
\scalemath{0.95}{C_c^{\infty,1}(P_p\backslash G_p/G_p^\circ,\mathbf{Z}[1/p])=\left\{f\in C_c^\infty(P_p\backslash G_p/G_p^\circ,\mathbf{Z}[1/p])\ |\ f\ \text{is valued in}\ \begin{cases}
    (p-1)\mathbf{Z}[1/p],\ \text{on}\ Z_p(T_p\times \{1\}) \\
    \mathbf{Z}[1/p],\ \text{everywhere else.}
\end{cases}\right\}}\end{align}
\noindent We also consider a non-compactly supported version of this which we denote by $C^{\infty,1}(P_p\backslash G_p/G_p^\circ,\mathbf{Z}[1/p])$ and is defined in the same way, simply by dropping the compact support condition.\\
\\
We write $C_c^{\infty,1}(P_p\backslash G_p/G_p^\circ,\mathbf{Z}[1/p])[\frac{1}{1-\mathcal{S}_p}]$ for the $\mathcal{H}_{G_p}^\circ(\mathbf{Z}[1/p])[\frac{1}{1-\mathcal{S}_p}]$-submodule generated by the lattice $C_c^{\infty,1}(P_p\backslash G_p/G_p^\circ,\mathbf{Z}[1/p])$ in $C_c^\infty(P_p\backslash G_p/G_p^\circ,\mathbf{Z}[1/p])[\frac{1}{1-\mathcal{S}_p}]$. We now obtain the following main integrality result, on the $P_p$-invariant side of things.

\begin{thm}\label{thm conj 1 for P-invariant}
Let $f$ be an element of $C_c^{\infty,1}(P_p\backslash G_p/G_p^\circ,\mathbf{Z}[1/p])[\tfrac{1}{1-\mathcal{S}_p}]$. Then $f=\mathcal{P}_{f,\mathrm{loc}}\cdot f_0$ and the Hecke operator $\mathcal{P}_{f,\mathrm{loc}}$ is contained in the ideal
$$\mathfrak{h}_p=\left(p-1,\mathcal{P}_p(1)^{'}\right)\subseteq \mathcal{H}_{G_p}^\circ(\mathbf{Z}[1/p])[\tfrac{1}{1-\mathcal{S}_p}].$$
If moreover $\tfrac{f}{1-\mathcal{S}_p}$ is an element of $ C_c^{\infty,1}(P_p\backslash G_p/G_p^\circ,\mathbf{Z}[1/p])$, then the containment holds without inverting $1-\mathcal{S}_p$.
\begin{proof}
    We may assume that $f\in C_c^{\infty,1}(P_p\backslash G_p/G_p^\circ,\mathbf{Z}[1/p])$ and that is is given by a characteristic function $c_{z,\gamma}\ch(P_pz(\gamma,1) G_p^\circ)$ with $z\in Z_p$ and $\gamma\in H_p$. Then $\mathcal{P}_{f,\mathrm{loc}}=c_{z,\gamma}\tfrac{-\mathcal{S}_p\mathcal{Q}_f^{'}}{1-\mathcal{S}_p}$ where $\mathcal{Q}_f=\mathscr{V}_{z,\gamma}\left[\mathscr{P}_{\gamma} + \mathscr{Q}_{\gamma}\mathcal{P}_p(1) \right]$.  We now proceed in the same style as in the final step of the proof of \Cref{lem integrality of operators appearing in explicit formula}. If $\gamma$ is contained in $y\left[\begin{smallmatrix}
        p^\lambda &\beta\\
        & 1
    \end{smallmatrix}\right] \GL_2(\mathbf{Z}_p)$ with $y\in Z_p$, $\lambda\geq 0$ and $\beta\in \mathbf{Z}/p^\lambda\mathbf{Z}$, then $\mathscr{Q}_{\gamma}=0$ and $\ch(P_pz(\gamma,1)G_p^\circ)$ coincides with $\ch(P_pz(y\left[\begin{smallmatrix}
        p^\lambda & \\
         & 1
    \end{smallmatrix}\right],1) G_p^\circ)$. Thus, by the assumption on the values of $f$, we must have $c_{z,\gamma}\in(p-1)\mathbf{Z}[1/p]$ and $\mathcal{Q}_f\in(p-1)\mathcal{H}_{G_p}^\circ(\mathbf{Z}[1/p])$. If $\gamma$ is contained in $y\left[\begin{smallmatrix}
        1 & \\
        & p^{\lambda}
    \end{smallmatrix}\right]\GL_2(\mathbf{Z}_p)$ for some $y\in Z_p$ and $\lambda\geq 0$, then the situation is practically the same.  Finally if $\gamma$ is contained in $y\left[\begin{smallmatrix}
        p^i & \beta \\
        & p^{\lambda-i}
    \end{smallmatrix}\right]\GL_2(\mathbf{Z}_p)$ for some $y\in Z_p$, $ 0<i<\lambda, \lambda>0$ and $\beta\in(\mathbf{Z}/p^i\mathbf{Z})^\times$, then as in the last part of the proof of \Cref{lem integrality of operators appearing in explicit formula}, using that in this case $\vol_{\GL_2(\mathbf{Q}_p)}(P_p^\circ \gamma \GL_2(\mathbf{Z}_p))$ is a multiple of $p-1$, we deduce that $\mathscr{V}_{z,\gamma}\left[\mathscr{P}_{\gamma} + \mathscr{Q}_{\gamma}\mathcal{P}_p(1) \right]$ is an element of $\mathfrak{h}_p\subseteq\mathcal{H}_{G_p}^\circ(\mathbf{Z}[1/p])$ as required. Moreover, if $f=(1-\mathcal{S}_p)\cdot f_0$ with $f_0\in C_c^{\infty,1}(P_p\backslash G_p/G_p^\circ,\mathbf{Z}[1/p])$ as in the statement of the theorem, then by freeness, $\mathcal{P}_{f,\mathrm{loc}}$ is given by $(1-\mathcal{S}_p)\cdot \tfrac{-\mathcal{S}_p\mathcal{Q}_{f_0}^{'}}{1-\mathcal{S}_p}$ and thus we are done.
\end{proof}
\end{thm}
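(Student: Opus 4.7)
The plan is to reduce, by linearity and the Iwasawa decomposition of $G_p$, to the case where $f = c_{z,\gamma}\,\ch(P_p z(\gamma,1) G_p^\circ)$ for a single pair $(z,\gamma)\in Z_p\times H_p$ and scalar $c_{z,\gamma}\in\mathbf{Z}[1/p]$. Combining the freeness statement \Cref{thm freeness of P-invariant vectors} with the explicit formula of \Cref{prop explicit formula for Lambda} then gives
$$\mathcal{P}_{f,\mathrm{loc}} \;=\; c_{z,\gamma}\cdot\frac{-\mathcal{S}_p\,\mathcal{Q}_f^{'}}{1-\mathcal{S}_p},\qquad \mathcal{Q}_f \;=\; \mathscr{V}_{z,\gamma}\bigl[\mathscr{P}_\gamma + \mathscr{Q}_\gamma\mathcal{P}_p(1)\bigr].$$
Since $-\mathcal{S}_p/(1-\mathcal{S}_p)$ is a unit of $\mathcal{H}_{G_p}^\circ(\mathbf{Z}[1/p])[\tfrac{1}{1-\mathcal{S}_p}]$ and the inversion automorphism $(-)^{'}$ preserves $p-1$ and sends $\mathcal{P}_p(1)$ to $\mathcal{P}_p(1)^{'}$, the problem collapses to showing that $c_{z,\gamma}\mathcal{Q}_f$ lies in the ideal $(p-1,\mathcal{P}_p(1))\subseteq\mathcal{H}_{G_p}^\circ(\mathbf{Z}[1/p])$.

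The $\mathcal{P}_p(1)$-coefficient $c_{z,\gamma}\mathscr{V}_{z,\gamma}\mathscr{Q}_\gamma$ already sits in $\mathcal{H}_{G_p}^\circ(\mathbf{Z}[1/p])$ by \Cref{lem integrality of operators appearing in explicit formula}; the whole residual burden is therefore to force the ``constant term'' $c_{z,\gamma}\mathscr{V}_{z,\gamma}\mathscr{P}_\gamma$ into $(p-1)\mathcal{H}_{G_p}^\circ(\mathbf{Z}[1/p])$. I handle this by mirroring the Cartan case analysis of \Cref{lem integrality of operators appearing in explicit formula}, splitting $\gamma$ according to its position in the Cartan decomposition of $H_p$.

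When $\gamma$ lies in one of the two ``diagonal'' strata $y\left[\begin{smallmatrix}p^\lambda & \beta\\ & 1\end{smallmatrix}\right]\GL_2(\mathbf{Z}_p)$ (with $\beta\in\mathbf{Z}/p^\lambda\mathbf{Z}$) or $y\left[\begin{smallmatrix}1 & \\ & p^\lambda\end{smallmatrix}\right]\GL_2(\mathbf{Z}_p)$, the unipotent part of $\gamma$ already sits in $P_p^\circ$, so the diagonal $P_p$-push and subsequent absorption by $G_p^\circ$ normalise $P_p z(\gamma,1) G_p^\circ$ to the shape $P_p z'(\mathrm{diag}(p^a,p^b),1) G_p^\circ$, a coset meeting $Z_p(T_p\times\{1\})$. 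The defining condition of $C_c^{\infty,1}$ then forces $c_{z,\gamma}\in(p-1)\mathbf{Z}[1/p]$, and together with $\mathscr{Q}_\gamma=0$ and $\mathscr{V}_{z,\gamma}\mathscr{P}_\gamma\in\mathcal{H}_{G_p}^\circ(\mathbf{Z}[1/p])$ (the parity argument on Schur polynomials in \Cref{lem integrality of operators appearing in explicit formula}) this lands $c_{z,\gamma}\mathcal{Q}_f$ in $(p-1)\mathcal{H}_{G_p}^\circ(\mathbf{Z}[1/p])$ at once. On the remaining ``middle'' stratum $y\left[\begin{smallmatrix}p^i & \beta\\ & p^{\lambda-i}\end{smallmatrix}\right]\GL_2(\mathbf{Z}_p)$ with $\beta\in(\mathbf{Z}/p^i\mathbf{Z})^\times$ and $0<i<\lambda$, the unipotent part $\left[\begin{smallmatrix}1 & \beta/p^{\lambda-i}\\ & 1\end{smallmatrix}\right]$ has an entry of negative valuation; after the diagonal-$P_p$ push its image in the second factor lies in $P_p\setminus H_p^\circ$ and cannot be absorbed by $G_p^\circ$, so the coset does \emph{not} meet $Z_p(T_p\times\{1\})$ and the lattice condition offers no constraint on $c_{z,\gamma}$. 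Here instead the volume factor $\vol_{\GL_2(\mathbf{Q}_p)}(P_p^\circ\gamma\GL_2(\mathbf{Z}_p))=\#(\mathbf{Z}/p^i\mathbf{Z})^\times$ is itself a multiple of $p-1$, and since this volume is an explicit factor of $\mathscr{V}_{z,\gamma}$, it takes over the task of supplying the required divisibility. This dichotomy is the main obstacle of the argument: the $(p-1)$-divisibility of the constant term is bought from two genuinely different sources, the lattice datum on the diagonal strata versus the volume normalisation on the middle stratum, and both must be verified in the same framework.

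Finally, for the improved statement without inverting $1-\mathcal{S}_p$, suppose $f_1:=f/(1-\mathcal{S}_p)\in C_c^{\infty,1}$. Freeness gives $\mathcal{P}_{f,\mathrm{loc}} = (1-\mathcal{S}_p)\mathcal{P}_{f_1,\mathrm{loc}} = -\mathcal{S}_p\mathcal{Q}_{f_1}^{'}$, so the factor $1-\mathcal{S}_p$ cancels cleanly and produces a genuine element of $\mathcal{H}_{G_p}^\circ(\mathbf{Z}[1/p])$. Applying the main argument to $f_1$ yields $\mathcal{Q}_{f_1}\in(p-1,\mathcal{P}_p(1))\subseteq\mathcal{H}_{G_p}^\circ(\mathbf{Z}[1/p])$, and therefore $\mathcal{P}_{f,\mathrm{loc}}\in(p-1,\mathcal{P}_p(1)^{'})$ without any need for the localisation at $1-\mathcal{S}_p$.
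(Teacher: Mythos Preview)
Your proposal is correct and follows essentially the same approach as the paper: reduce by linearity to a single coset $c_{z,\gamma}\ch(P_pz(\gamma,1)G_p^\circ)$, invoke \Cref{thm freeness of P-invariant vectors} and \Cref{prop explicit formula for Lambda} to identify $\mathcal{P}_{f,\mathrm{loc}}$ with $c_{z,\gamma}\tfrac{-\mathcal{S}_p\mathcal{Q}_f'}{1-\mathcal{S}_p}$, and then run the Cartan case split of \Cref{lem integrality of operators appearing in explicit formula}, extracting the factor $p-1$ from the lattice condition on the two diagonal strata and from the volume $\#(\mathbf{Z}/p^i\mathbf{Z})^\times$ on the middle stratum. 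Your treatment of the ``moreover'' clause, writing $f=(1-\mathcal{S}_p)f_1$ and cancelling the denominator to land $\mathcal{P}_{f,\mathrm{loc}}=-\mathcal{S}_p\mathcal{Q}_{f_1}'$ in $\mathcal{H}_{G_p}^\circ(\mathbf{Z}[1/p])$, likewise matches the paper's argument.
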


\section{Euler systems for $\GL_2\times \GL_2$}\label{sec euler systems for gl2 x gl2}
We now finally have all the necessary tools in order to state and prove our optimal abstract norm relations at a local integral level and use them in order to obtain \Cref{thm intro A} of the introduction.
\subsection{Local abstract integral norm relations}\label{sec local norm relations} Let $V$ be a smooth $G_p$-representation and let $\mathfrak{Z}:\mathcal{S}(\mathbf{Q}_p^2)\otimes \mathcal{H}(G_p)\rightarrow V$ be \textit{any} $(G_p\times H_p)$-equivariant map as in \Cref{def equiv maps}. Consider the induced diagram
    \[\begin{tikzcd}[ampersand replacement=\&,cramped,sep=small]
	{\mathcal{I}(G_p/G_p^\circ[p],\mathbf{Z}[1/p])} \\
	{\mathcal{I}(G_p/G_p^\circ,\mathbf{Z}[1/p])} \& {V.}
	\arrow["{\mathrm{Tr}}"', from=1-1, to=2-1]
	\arrow["\mathfrak{Z}", from=1-1, to=2-2]
	\arrow["\mathfrak{Z}"', from=2-1, to=2-2]
\end{tikzcd}\]
It follows straight from the definitions that the horizontal map is equivariant with respect to $\mathcal{H}_{G_p}^\circ$. Furthermore, for $\delta\in \mathcal{I}(G_p/G_p^\circ[p],\mathbf{Z}[1/p])$, we have (using the ``norm'' notation of \cite{Loeffler_2021})
$$\mathfrak{Z}\left(\mathrm{Tr}\left(\delta\right)\right)=\mathrm{norm}^{G_p^\circ[p]}_{G_p^\circ}\left(\mathfrak{Z}\left(\delta\right)\right):=\sum_{\gamma\in G_p^\circ/G_p^\circ[p]}\gamma\cdot \mathfrak{Z}\left(\delta\right).$$
Finally, we write $\delta_0$ for the unramified vector $\ch(\mathbf{Z}_p^2)\otimes \ch(G_p^\circ)$, which implies that $\mathrm{norm}^{G_p^\circ[p]}_{G_p^\circ}\left(\mathfrak{Z}\left(\delta_0\right)\right)$ is nothing more than $(p-1)\cdot \mathfrak{Z}(\delta_0).$ Recall that $\mathcal{S}_0(\mathbf{Q}_p^2)$ denotes the space of Schwartz functions that vanish at the origin and $\mathcal{I}_0(G_p/G_p^\circ)$ denotes the submodule of $\mathcal{I}(G_p/G_p^\circ)$ given by the image of $\mathcal{S}_0(\mathbf{Q}_p^2)\otimes C_c^\infty(G_p/G_p^\circ)$ down to the $H_p$-coinvariants. Similarly, recall the definitions of $\mathcal{I}_0(G_p/G_p^\circ,\mathbf{Z}[1/p])$ and $\mathcal{I}_0(G_p/G_p^\circ[p],\mathbf{Z}[1/p])$ given in \Cref{sec group schemes eq maps}
\begin{thm}[Integral abstract norm relations]\label{thm local norm relations}
    Let $V$ and $\mathfrak{Z}$ be as above. Then:
    \begin{enumerate}
        \item For $\delta\in\mathcal{I}(G_p/G_p^\circ,\mathbf{Z}[1/p])$, we have $\mathcal{P}_\delta\in\mathcal{H}_{G_p}^\circ(\mathbf{Z}[1/p])$ and $\mathfrak{Z}(\delta)=\mathcal{P}_\delta\cdot \mathfrak{Z}(\delta_0)$.
        \item For $\delta\in\mathcal{I}_0(G_p/G_p^\circ[p],\mathbf{Z}[1/p])$, we have $$ \mathcal{P}_{\mathrm{Tr}(\delta)}\in\left(p-1,\mathcal{P}_p(1)^{'}\right)\subseteq \mathcal{H}_{G_p}^\circ(\mathbf{Z}[1/p])\ \ \text{and}\ \ \mathrm{norm}^{G_p^\circ[p]}_{G_p^\circ}\left(\mathfrak{Z}(\delta)\right)=\mathcal{P}_{\mathrm{Tr}(\delta)}\cdot \mathfrak{Z}(\delta_0).$$
        \item For $\delta\in\mathcal{I}(G_p/G_p^\circ[p],\mathbf{Z}[1/p])$, we have $\mathcal{P}_{\mathrm{Tr}(\delta)}\in\left(p-1,\mathcal{P}_p^{'}(1)\right)\subseteq\mathcal{H}_{G_p}^\circ(\mathbf{Z}[1/p])[\tfrac{1}{1-\mathcal{S}_p}]$ and $\mathcal{P}_{\mathrm{Tr}(\delta)}$ has degree at most one as a polynomial in $\tfrac{1}{1-\mathcal{S}_p}$. If furthermore $\scalemath{0.9}{1-\mathcal{S}_p}$ acts invertibly as a linear endomorphism of $V^{G_p^\circ}$, then the norm relation of part two is also satisfied.
    \end{enumerate}
    \end{thm}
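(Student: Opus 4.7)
The plan is to transfer the $P$-invariant integrality statement \Cref{thm conj 1 for P-invariant} back to the original Rankin-Selberg setting via the Hecke-equivariant embedding $\Xi_c$ of \Cref{prop mapping to compact induction}. The bridge is the canonical identity $\mathcal{P}_\delta \cdot \ch(P_pG_p^\circ) = \Xi_c(\delta)$, valid because $\Xi_c(\mathcal{I}(G_p/G_p^\circ))$ is free of rank one over $\mathcal{H}_{G_p}^\circ$, together with the fact (\Cref{prop integrally from coinvariants to C^infinity}, \Cref{prop mapping to compact induction}) that $\Xi_c$ sends integral lattices to integral lattices.

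For part~(1), I would evaluate $\Lambda_{\Pi_p}$ on both sides of $\mathcal{P}_\delta\cdot\ch(P_pG_p^\circ)=\Xi_c(\delta)$ for all unramified Whittaker-type $\Pi_p=\pi_{p,1}\boxtimes\pi_{p,2}$ with $\omega_{\pi_{p,1}}\neq\omega_{\pi_{p,2}}^{-1}$. The left-hand side equals $\Theta_{\Pi_p}(\mathcal{P}_\delta'(1-\mathcal{S}_p))$ by \Cref{prop equivariance of linear form}(2); the right-hand side equals $\Theta_{\Pi_p}$ of an explicit element of $\mathcal{H}_{G_p}^\circ(\mathbf{Z}[1/p])$ by \Cref{prop explicit formula for Lambda} and \Cref{lem integrality of operators appearing in explicit formula}. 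Density of these eigensystems in $\Spec\mathcal{H}_{G_p}^\circ$ then forces $\mathcal{P}_\delta'(1-\mathcal{S}_p)\in\mathcal{H}_{G_p}^\circ(\mathbf{Z}[1/p])$. Since $1-\mathcal{S}_p$ is monic linear in $\mathcal{S}_p$ inside the Laurent polynomial ring $\mathcal{H}_{G_p}^\circ$, an elementary coefficient-wise induction promotes this to $\mathcal{P}_\delta\in\mathcal{H}_{G_p}^\circ(\mathbf{Z}[1/p])$, and the operator identity $\mathfrak{Z}(\delta)=\mathcal{P}_\delta\cdot\mathfrak{Z}(\delta_0)$ follows by applying any $\mathfrak{Z}$ to $\mathcal{P}_\delta\cdot\delta_0=\delta$.

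For parts~(2) and~(3), the crux is showing that $\Xi_c(\mathrm{Tr}(\delta))$ lies in the tighter lattice $C_c^{\infty,1}(P_p\backslash G_p/G_p^\circ,\mathbf{Z}[1/p])$ from \Cref{sec inverting}, respectively its localization at $1-\mathcal{S}_p$ for part~(3). Normality of $G_p^\circ[p]$ in $G_p^\circ$ yields $\mathrm{Tr}(\phi\otimes\ch(gG_p^\circ[p]))=\phi\otimes\ch(gG_p^\circ)$; the pointwise integral formula for $\Xi$ from the proof of \Cref{prop integrally from coinvariants to C^infinity}, applied on $Z_p(T_p\times\{1\})$, shows that the volume denominator $\vol_{H_p}(\mathrm{Stab}_{H_p}(\phi)\cap gG_p^\circ[p]g^{-1})^{-1}$ in the definition of the integral lattice at level $G_p^\circ[p]$ --- versus its counterpart at level $G_p^\circ$ --- produces an extra factor of $[G_p^\circ:G_p^\circ[p]]=p-1$ exactly on this diagonal-torus locus. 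For part~(2), the hypothesis $\phi(0,0)=0$ kills the non-compact Eisenstein piece of \Cref{lem Xi_0}, so \Cref{thm conj 1 for P-invariant} in its non-localized form applies and yields $\mathcal{P}_{\mathrm{Tr}(\delta)}\in(p-1,\mathcal{P}_p(1)')$. For part~(3), this Eisenstein piece survives and is truncated by the endomorphism $\Psi=1-\mathcal{S}_p^{-1}$ applied exactly once, which explains both the $(1-\mathcal{S}_p)$-localization and the ``degree at most one in $\tfrac{1}{1-\mathcal{S}_p}$'' statement; the invertibility clause is then formal, as $1-\mathcal{S}_p$ becomes a genuine operator on $V^{G_p^\circ}$ and the identity passes through. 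The main obstacle I anticipate is the volume-factor bookkeeping confirming that the $(p-1)$-factor appears precisely on $Z_p(T_p\times\{1\})$ and nowhere else, which requires a careful pointwise computation of $\Xi$ restricted to the diagonal torus that interacts delicately with the stabilizer-volume denominator in the definition of $\mathcal{I}_0(G_p/G_p^\circ[p],\mathbf{Z}[1/p])$.
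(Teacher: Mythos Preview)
Your overall strategy matches the paper's: transport everything through $\Xi_c$ and invoke \Cref{thm conj 1 for P-invariant}. Part~(1) is correct and coincides with the paper's argument, there packaged as $\mathcal{P}_\delta\in\mathcal{H}_{G_p}^\circ(\mathbf{Z}[1/p])[\tfrac{1}{1-\mathcal{S}_p}]\cap\mathcal{H}_{G_p}^\circ$ via \Cref{cor freeness at integral level for P-invariants}.

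The gap is the mechanism you propose for parts~(2)--(3). You suggest that the ratio of volume denominators $\vol_{H_p}(K)/\vol_{H_p}(K[p])$, with $K=\mathrm{Stab}_{H_p}(\phi)\cap gG_p^\circ g^{-1}$ and $K[p]=\mathrm{Stab}_{H_p}(\phi)\cap gG_p^\circ[p]g^{-1}$, supplies the factor $p-1$ on $Z_p(T_p\times\{1\})$. But this ratio equals $[K:K[p]]$, the size of the image of $\det:K\to(\mathbf{Z}/p\mathbf{Z})^\times$; it depends only on $(\phi,g)$, not on the evaluation point $x$, and can be any divisor of $p-1$. If for instance $\mathrm{Stab}_{H_p}(\phi)\cap H_p^\circ\subseteq 1+pM_2(\mathbf{Z}_p)$ (easy to arrange), then $K=K[p]$ and the ratio is $1$; your volume argument then yields nothing beyond ordinary $\mathbf{Z}[1/p]$-integrality. (Also, you only need the $(p-1)$ to appear on the torus locus, not that it appears ``nowhere else''.)

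The paper resolves this by introducing the level-$G_p^\circ[p]$ map $\Xi[p]:\mathcal{I}(G_p/G_p^\circ[p])\to C^\infty(P_p\backslash G_p/G_p^\circ[p])$ and the commutative square $\Xi\circ\mathrm{Tr}=\mathrm{Tr}\circ\Xi[p]$. The proof of \Cref{prop integrally from coinvariants to C^infinity} at level $G_p^\circ[p]$ gives $\Xi[p](\delta)\in C^\infty(P_p\backslash G_p/G_p^\circ[p],\mathbf{Z}[1/p])$. The $(p-1)$-divisibility on $Z_p(T_p\times\{1\})$ then comes from a double-coset identity, not a volume comparison: for $x\in Z_p(T_p\times\{1\})$ one has $P_pxG_p^\circ[p]=P_pxG_p^\circ$, because the diagonal mirabolic $P_p^\circ$ already realises every determinant class modulo $p$. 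Hence any left-$P_p$, right-$G_p^\circ[p]$-invariant function is constant along $xG_p^\circ$, and the trace at such $x$ is multiplication by $[G_p^\circ:G_p^\circ[p]]=p-1$. This is the missing idea. Once $\Xi(\mathrm{Tr}(\delta))\in C^{\infty,1}$ is established this way, your outline for separating parts~(2) and~(3) via the compact-support/$\Psi$-truncation dichotomy, and the invertibility clause, goes through as you describe.
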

\noindent This theorem can be interpreted as the local integral essence of the tame norm relations in motivic cohomology
for integral test data, as presented later on in \Cref{thm euler system norm relations}
    \begin{proof}
        Let $\delta\in\mathcal{I}(G_p/G_p^\circ,\mathbf{Z}[1/p])$. By \Cref{prop integrally from coinvariants to C^infinity} and \Cref{thm cyclicity on coinvariants}, $$\mathcal{P}_\delta\cdot \ch(P_pG_p^\circ)=\Xi_c(\delta)\in C_c^\infty(P_p\backslash G_p/G_p^\circ,\mathbf{Z}[1/p]).$$By \Cref{cor freeness at integral level for P-invariants}, we deduce that $\mathcal{P}_\delta=\mathcal{P}_{\Xi_c(\delta),\mathrm{loc}}\in\mathcal{H}_{G_p}^\circ(\mathbf{Z}[1/p])[\tfrac{1}{1-\mathcal{S}_p}]\cap \mathcal{H}_{G_p}^\circ$, and thus in particular that $\mathcal{P}_\delta\in\mathcal{H}_{G_p}^\circ(\mathbf{Z}[1/p])$. This proves the first part when combined with Hecke equivariance of $\mathfrak{Z}$. For the other two parts, we need some more work.
        One checks that we obtain a commutative diagram
     \[\begin{tikzcd}[column sep=small,row sep=scriptsize]
	{\mathcal{I}(G_p/G_p^\circ[p])} & {C^\infty(P_p\backslash G_p/G_p^\circ[p])} & \phi\otimes\xi & {\int_{H_p}\xi(h(-))f_\phi(h)\ dh} \\
	{\mathcal{I}(G_p/G_p^\circ)} & {C^\infty(P_p\backslash G_p/G_p^\circ)} & {\phi\otimes\sum_{\gamma\in G_p^\circ/G_p^\circ[p]}\xi((-)\gamma)} & {\int_{H_p}\sum_{\gamma\in G_p^\circ/G_p^\circ[p]}\xi(h(-)\gamma)f_\phi(h)\ dh}
	\arrow["{{\Xi[p]}}", from=1-1, to=1-2]
	\arrow["{{\mathrm{Tr}}}"', from=1-1, to=2-1]
	\arrow["{{\mathrm{Tr}}}"', from=1-2, to=2-2]
	\arrow[maps to, from=1-3, to=1-4]
	\arrow[maps to, from=1-3, to=2-3]
	\arrow[from=1-4, to=2-4]
	\arrow["\Xi", from=2-1, to=2-2]
	\arrow[maps to, from=2-3, to=2-4]
\end{tikzcd}\]
where $\Xi[p]$ is defined in the same way as $\Xi$, simply by replacing $G_p^\circ$ with $G_p^\circ[p]$, and the trace maps are the usual ones with respect to $G_p^\circ[p]\subseteq G_p^\circ$. We now take $\delta\in\mathcal{I}(G_p/G_p^\circ[p],\mathbf{Z}[1/p])$. The proof of \Cref{prop integrally from coinvariants to C^infinity} goes through in the same way at level $G_p^\circ[p]$ and thus $\Xi[p](\delta)\in C^\infty(P_p\backslash G_p/G_p^\circ[p],\mathbf{Z}[1/p])$. It is easy to check that for an element in $C^\infty(P_p\backslash G_p/G_p^\circ[p],\mathbf{Z}[1/p])$, its image under the trace map to $C^\infty(P_p\backslash G_p/G_p^\circ,\mathbf{Z}[1/p])$ is valued in $(p-1)\mathbf{Z}[1/p]$ on $Z_p(T_p\times \{1\})$. This is because for $g\in Z_p(T_p\times \{1\})$, $P_pgG_p^\circ[p]=P_pgG_p^\circ$ and hence $\mathrm{Tr}$ is simply multiplication by $[G_p^\circ:G_p^\circ[p]]=p-1$ on such a characteristic function. Thus $\mathrm{Tr}(\Xi[p](\delta))$ is contained in $C^{\infty,1}(P_p\backslash G_p/G_p^\circ,\mathbf{Z}[1/p])$. By commutativity of the diagram above, we deduce that $\Xi(\mathrm{Tr}(\delta))$ belongs in $ C^{\infty,1}(P_p\backslash G_p/G_p^\circ,\mathbf{Z}[1/p])$ and hence $\Xi_c(\mathrm{Tr}(\delta))$ belongs in $C_c^{\infty,1}(P_p\backslash G_p/G_p^\circ,\mathbf{Z}[1/p])$. Thus, by \Cref{thm freeness of P-invariant vectors} and \Cref{thm conj 1 for P-invariant}, $\mathcal{P}_{\mathrm{Tr}(\delta)}=\mathcal{P}_{\Xi_c(\mathrm{Tr}(\delta)),\mathrm{loc}}$ is an element of the ideal $\left(p-1,\mathcal{P}_p(1)^{'}\right)\subseteq\mathcal{H}_{G_p}^\circ(\mathbf{Z}[1/p])[\frac{1}{1-\mathcal{S}_p}]$ and has degree at most one as a polynomial in $\frac{1}{1-\mathcal{S}_p}.$ This proves the third part of the theorem. For the second part, if $\delta\in\mathcal{I}_0(G_p/G_p^\circ[p],\mathbf{Z}[1/p])$ then $\mathrm{Tr}(\delta)\in\mathcal{I}_0(G_p/G_p^\circ,\mathbf{Z}[1/p]) $. We have already remarked that for an element in the latter, its image under $\Xi$ is already compactly supported. Hence $\Xi_c(\mathrm{Tr}(\delta))$ is divisible by $\scalemath{0.9}{1-\mathcal{S}_p}$ in $C_c^{\infty,1}(P_p\backslash G_p/G_p^\circ,\mathbf{Z}[1/p])$. The second part of the theorem now follows from the ``moreover'' part of \Cref{thm conj 1 for P-invariant}.
    \end{proof}
\subsection{Cohomology classes and norm compatibility} We now  showcase how we may use our representation theoretic results to obtain norm-compatible classes in the cohomology of the infinite level Shimura variety $Y_G$. This approach was introduced in \cite{Loeffler_2021} and thus we draw heavily from \textit{op.cit}. 
\subsubsection{Modular varieties and motivic cohomology}
We follow the setup and conventions of \cite{Loeffler_Skinner_Zerbes_2021} and we write $Y_{\GL_2}=\varprojlim_U Y_{\mathrm{GL}_2}(U)$ for the infinite level modular (Shimura) curve regarded as a pro-variety over $\mathbf{Q}$ with a right action of $\GL_2(\mathbf{A}_f)$, whose $\mathbf{C}$-points are $\GL_2^+(\mathbf{Q})\backslash (\GL_2(\mathbf{A}_f)\times \mathbf{H})$ (as usual $\mathbf{H}$ denotes the upper-half plane). Similarly, we define $Y_G$ as $Y_{\GL_2}\times Y_{\GL_2}$ which is the Shimura variety associated to $G$. The compatibility of our Shimura data, induces a morphism of $\mathbf{Q}$-varieties 
\[\begin{tikzcd}[sep=scriptsize]
	{Y_{\mathrm{GL}_2}(\mathrm{GL}_2(\mathbf{A}_f)\cap gUg^{-1})} && {Y_G(U)} \\
	&& {Y_G(gUg^{-1})}
	\arrow["{\iota_{gU}}", dotted, from=1-1, to=1-3]
	\arrow[from=1-1, to=2-3]
	\arrow["\simeq"', from=2-3, to=1-3]
\end{tikzcd}\]
for any $U\subseteq G(\mathbf{A}_f)$ open compact and $g\in G(\mathbf{A}_f)/U$. If we temporarily let $\Gamma$ denote any of the groups $\{\GL_2,G\}$, then the motivic cohomology of $Y_\Gamma$ is given by
$$H_\mathrm{mot}^i(Y_\Gamma,\mathbf{Q}(j)):=\varinjlim_{\substack{U\ \text{open compact}}} H_{\mathrm{mot}}^i(Y_\Gamma(U),\mathbf{Q}(j))$$
where at each finite level $U$ the groups $H_{\mathrm{mot}}^i(Y_\Gamma(U),\mathbf{Q}(j))$ are defined in the sense of \cite{voevodsky2000triangulated} and \cite{friedlander2000bivariant}. The spaces $H_\mathrm{mot}^i(Y_\Gamma,\mathbf{Q}(j))$ are naturally smooth $\mathbf{Q}$-linear, $\Gamma(\mathbf{A}_f)$-representations.
\begin{rem}
    It is worth noting that everything in this section extends to the case of ``non-trivial coefficient sheaves''. See for example \cite{Loeffler_2021}, \cite{Loeffler_Skinner_Zerbes_2021} and \cite{grossi2020norm} for some of the formalisms required to extend the theory.
\end{rem}
\noindent By functoriality of motivic cohomology (with a shift in bi-degree) the map $\iota_{gU}$ induces a morphism
$$\iota_{gU,*}:H_\mathrm{mot}^1(Y_{\GL_2}(\GL_2(\mathbf{A}_f)\cap gUg^{-1}),\mathbf{Q}(1))\longrightarrow H_\mathrm{mot}^3(Y_G(U),\mathbf{Q}(2)).$$
In the same way as \cite[\S $9.1$]{Loeffler_2021} and \cite[\S $8.2$]{Loeffler_Skinner_Zerbes_2021},
we have:
\begin{prop}\label{prop mpa iota start}
    Let $\mathrm{vol}_{\GL_2(\mathbf{A}_f)}$ denote the normalized Haar measure on $\GL_2(\mathbf{A}_f)$ that gives $\GL_2(\hat{\mathbf{Z}})$ volume $1$. Then, there exists a morphism
    $$\iota_*:H^1_\mathrm{mot}(Y_{\GL_2},\mathbf{Q}(1))\otimes_\mathbf{Q} \mathcal{H}(G(\mathbf{A}_f),\mathbf{Q})\longrightarrow H_\mathrm{mot}^3(Y_G,\mathbf{Q}(2))$$
    characterized as follows: If $U\subseteq G(\mathbf{A}_f)$ is an open compact subgroup, $g\in G(\mathbf{A}_f)$ and $x$ is an element of $ H^1_\mathrm{mot}(Y_{\GL_2}(\GL_2(\mathbf{A}_f)\cap gUg^{-1}),\mathbf{Q}(1))$, then $$\iota_*(x\otimes \ch(gU))=\vol_{\GL_2(\mathbf{A}_f)}\left(\GL_2(\mathbf{A}_f)\cap gUg^{-1}\right)\cdot \iota_{gU,*}(x).$$
\end{prop}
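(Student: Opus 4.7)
The plan is to define $\iota_*$ via the prescribed formula on elementary tensors and to verify that it is consistent with the two direct-limit structures involved (on levels $K \subseteq \GL_2(\mathbf{A}_f)$ for $x$ and on levels $U\subseteq G(\mathbf{A}_f)$ for $\ch(gU)$). Writing $K_g := \GL_2(\mathbf{A}_f)\cap gUg^{-1}$, the finite-level Gysin pushforward $\iota_{gU,*}$ produces a class in $H^3_{\mathrm{mot}}(Y_G(U),\mathbf{Q}(2))$ whose image in the colimit $H^3_{\mathrm{mot}}(Y_G,\mathbf{Q}(2))$, rescaled by $\mathrm{vol}_{\GL_2(\mathbf{A}_f)}(K_g)$, is the proposed value of $\iota_*(x\otimes \ch(gU))$. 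This automatically gives a bilinear map on each $H^1_{\mathrm{mot}}(Y_{\GL_2}(K_g),\mathbf{Q}(1))\otimes\mathcal{H}(G(\mathbf{A}_f)/U,\mathbf{Q})$.

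The first sanity check is independence of the coset representative: replacing $g$ by $gu$ with $u\in U$ leaves $K_g$ unchanged and, by the commutative triangle defining $\iota_{gU}$ in the excerpt, the morphism $\iota_{gU}$ unchanged, so the formula gives the same class. The second is consistency with the direct limit in the $\GL_2$-variable: if $x$ is represented at a deeper level $K_0\subseteq K_g$ by $\tilde x = p^*x$ for $p: Y_{\GL_2}(K_0)\to Y_{\GL_2}(K_g)$, then $\mathrm{vol}(K_0)\cdot\iota_{gU,*}(p_* \tilde x) = (\mathrm{vol}(K_g)/[K_g:K_0])\cdot [K_g:K_0]\cdot\iota_{gU,*}(x) = \mathrm{vol}(K_g)\cdot\iota_{gU,*}(x)$ by the projection formula $p_*p^*=[K_g:K_0]$, matching the value at level $K_g$.

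The crux of the argument, and the main obstacle, is verifying consistency under refinement of $U$. Given $U'\subseteq U$ of finite index, the identity $\ch(gU) = \sum_{\gamma\in U/U'}\ch(g\gamma U')$ in $\mathcal{H}(G(\mathbf{A}_f),\mathbf{Q})$ demands
\begin{equation*}
\mathrm{vol}(K_g)\cdot\iota_{gU,*}(x) \;=\; \sum_{\gamma\in U/U'} \mathrm{vol}(K'_\gamma)\cdot\iota_{g\gamma U',\,*}(p_\gamma^* x)
\end{equation*}
in $H^3_{\mathrm{mot}}(Y_G,\mathbf{Q}(2))$, where $K'_\gamma := \GL_2(\mathbf{A}_f)\cap g\gamma U'\gamma^{-1}g^{-1}\subseteq K_g$ (note $\gamma U'\gamma^{-1}\subseteq U$ since $\gamma\in U$) and $p_\gamma:Y_{\GL_2}(K'_\gamma)\to Y_{\GL_2}(K_g)$ is the level-change map. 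I would establish this by pulling both sides back along the finite étale cover $q:Y_G(U')\to Y_G(U)$, which loses no information in the colimit $H^3_{\mathrm{mot}}(Y_G,\mathbf{Q}(2))$, and then invoking a base-change identity for the Gysin pushforwards. The square built from $\iota_{g\gamma U'}$ (top) and $\iota_{gU}$ (bottom) is not Cartesian, but the fiber product $Y_{\GL_2}(K_g)\times_{Y_G(U)} Y_G(U')$ decomposes as a disjoint union of the $Y_{\GL_2}(K'_\gamma)$ indexed by double cosets in $K_g\backslash U/U'$, with $\sum_\gamma [K_g:K'_\gamma]=[U:U']$ matching the degree of $q$. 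The volume factors $\mathrm{vol}(K'_\gamma)/\mathrm{vol}(K_g)=[K_g:K'_\gamma]^{-1}$ are engineered to cancel the multiplicities that appear when summing over the full index set $\gamma\in U/U'$ rather than over double-coset representatives, and the required identity then follows from the projection formula for finite étale maps. Once this is in place, $\iota_*$ descends uniquely to the tensor product on both direct limits, and the characterizing formula holds by construction.
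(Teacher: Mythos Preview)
The paper does not prove this proposition: it is stated with a reference to \cite{Loeffler_2021}~\S9.1 and \cite{loeffler2021euler}~\S8.2, where the construction is carried out in the formalism of Cartesian cohomology functors. Your direct argument is a correct unpacking of that formalism---the essential compatibility check under level refinement $U'\subseteq U$, resolved via the Mackey-type decomposition of the fibre product $Y_{\GL_2}(K_g)\times_{Y_G(U)}Y_G(U')$ and proper base change for Gysin pushforwards, is precisely the mechanism underlying those references, and your observation that the volume weights absorb the multiplicities $[K_g:K'_\gamma]$ arising in the passage from the sum over $U/U'$ to the sum over double cosets is exactly the point of the normalisation.
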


\subsubsection{Eisenstein classes and the Rankin-Eisenstein map}
 
\begin{thm}\label{thm eisenstein map}
There exists a canonical $\GL_2(\mathbf{A}_f)$-equivariant map
$$S_0(\mathbf{A}_f^2,\mathbf{Q})\longrightarrow H^1_\mathrm{mot}(Y_{\GL_2},\mathbf{Q}(1))=\mathcal{O}(Y_{\GL_2})^\times\otimes \mathbf{Q},\ \phi\mapsto g_\phi$$
characterized by the following: If $\phi=\ch((\alpha,\beta)+N\hat{\mathbf{Z}}^2)$ for some $N\in\mathbf{Z}_{\geq 1}$ and $(\alpha,\beta)\in \mathbf{Q}^2-N\mathbf{Z}^2$, then $g_\phi=g_{\alpha/N,\beta/N}$ is the Siegel unit in the notation of \emph{\cite[\S $1.4$]{kato2004p}\emph}.
\begin{proof}
    This is \cite[Theorem $1.8$]{colmez2004conjecture}.
\end{proof}
\end{thm}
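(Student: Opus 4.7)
The plan is to use the identification $H^1_\mathrm{mot}(Y_{\GL_2},\mathbf{Q}(1)) = \mathcal{O}(Y_{\GL_2})^\times \otimes \mathbf{Q}$ and build the map $\phi \mapsto g_\phi$ by prescribing its values on a convenient $\mathbf{Q}$-generating family of Schwartz functions, then verifying that the distribution relations among Siegel units force the assignment to descend to a well-defined $\mathbf{Q}$-linear map with the correct $\GL_2(\mathbf{A}_f)$-equivariance.

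First, I would observe that $S_0(\mathbf{A}_f^2, \mathbf{Q})$ is spanned over $\mathbf{Q}$ by the characteristic functions
$$\phi_{\alpha,\beta,N} := \ch\bigl((\alpha,\beta) + N\hat{\mathbf{Z}}^2\bigr),\ N \in \mathbf{Z}_{\geq 1},\ (\alpha,\beta) \in \mathbf{Q}^2 \setminus N\mathbf{Z}^2,$$
since any locally constant compactly supported $\mathbf{Q}$-valued function on $\mathbf{A}_f^2$ vanishing at the origin can, after refining $N$, be written as such a finite combination, and the exclusion $(\alpha,\beta) \notin N\mathbf{Z}^2$ is precisely the condition that the function vanish at $(0,0)$. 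The only non-trivial $\mathbf{Q}$-linear relations among these generators arise from refinement: for $M \mid N$,
$$\phi_{\alpha,\beta,M} = \sum_{\substack{(\alpha',\beta') \bmod N \\ (\alpha',\beta') \equiv (\alpha,\beta) \bmod M}} \phi_{\alpha',\beta',N}.$$

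Second, I would define on generators $g_{\phi_{\alpha,\beta,N}} := g_{\alpha/N,\beta/N}$, the Siegel unit in the sense of \cite{kato2004p} 1.4, viewed as an element of $\mathcal{O}(Y_1(N))^\times \otimes \mathbf{Q}$ and then pushed forward to $\mathcal{O}(Y_{\GL_2})^\times \otimes \mathbf{Q}$ along the projection from the infinite-level tower. Well-definedness reduces to checking that the refinement identity above is matched by the multiplicative distribution relation
$$g_{\alpha/M,\beta/M} = \prod_{\substack{(\alpha',\beta') \bmod N \\ (\alpha',\beta') \equiv (\alpha,\beta) \bmod M}} g_{\alpha'/N,\beta'/N},$$
which is classical. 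Tensoring with $\mathbf{Q}$ dissolves the standard $12$-th root ambiguity in the normalization of Siegel units, so no additional cocycle corrections are required, and the $\mathbf{Q}$-linear extension is canonical.

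Third, for $\GL_2(\mathbf{A}_f)$-equivariance I would match the two natural actions directly on generators: right translation by $g \in \GL_2(\mathbf{A}_f)$ sends $\phi_{\alpha,\beta,N}$ to a Schwartz function of the same shape with $(\alpha,\beta)$ replaced by $(\alpha,\beta)g$ at an adjusted level $N'$, while the induced action on the pro-variety $Y_{\GL_2}$ carries $g_{\alpha/N,\beta/N}$ to $g_{(\alpha/N,\beta/N)g}$ by the functorial transformation of torsion parameters on the moduli tower. Equivariance thus holds on generators and extends by linearity.

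The main obstacle is the distribution (norm) relation for Siegel units above, which is the technical heart of the statement; fortunately it is classical, due ultimately to Kubert-Lang and recorded in the precise form needed in \cite{kato2004p}. Once this is invoked, the construction and the verification of equivariance are essentially bookkeeping within the category of smooth $\GL_2(\mathbf{A}_f)$-representations over $\mathbf{Q}$, and the resulting map is the one singled out in \cite{colmez2004conjecture} Theorem $1.8$.
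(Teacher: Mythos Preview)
The paper gives no argument here beyond citing Theorem~1.8 of \cite{colmez2004conjecture}, so there is no proof strategy in the paper to compare against. Your sketch is a correct outline of the standard construction behind that citation: the classical distribution relations for Siegel units (Kubert--Lang, as recorded in \cite{kato2004p}) match the refinement relations among coset characteristic functions, and tensoring with $\mathbf{Q}$ absorbs the $12$-th-root ambiguity, after which equivariance is a direct check on generators.
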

\begin{defn}\label{def rankin eisenstein}
    We define the Rankin-Eisenstein map
    $$\mathcal{RE}:\mathcal{S}_0(\mathbf{A}_f^2,\mathbf{Q})\otimes_\mathbf{Q} \mathcal{H}(G(\mathbf{A}_f),\mathbf{Q})\longrightarrow H_\mathrm{mot}^3(Y_G,\mathbf{Q}(2)),\ \phi\otimes \xi\mapsto \iota_*(g_\phi\otimes \xi)$$
    where $\iota_*$ is the map of \emph{\Cref{prop mpa iota start}}.
\end{defn}
\noindent It is not hard to see that the map $\mathcal{RE}$ is $\left(\GL_2(\mathbf{A}_f)\times G(\mathbf{A}_f)\right)$-equivariant in the sense of \Cref{def equiv maps}. The detailed argument in \cite[\S $6.1$]{grossi2020norm} works verbatim. 
\subsubsection{The classes and norm compatibility}\label{sec classes and norm relatiosn}

 Let $S$ be a fixed finite set of primes containing $2$ and write $\mathbf{Q}_S=\prod_{p\in S} \mathbf{Q}_p$. Let $\delta_S\in \mathcal{I}_0(G(\mathbf{Q}_S)/K_S,\mathbf{Z})$ where $K_S$ is some open compact subgroup. These will form the local data at the ``bad'' primes and will be fixed throughout. 
 \begin{defn}For the data at primes $p\notin S$, we take any family $\underline{\delta}=(\delta_p)_{p\notin S}$ contained in the product $ \prod_{p\notin S}\left(\mathcal{S}(\mathbf{Q}_p^2,\mathbf{Q})\otimes_\mathbf{Q} \mathcal{H}(G_p/G_p^\circ,\mathbf{Q})\right)$.
For a square free integer $n\in\mathbf{Z}_{\geq 1}$ coprime to $S$ and $p\notin S$, we define the ``$n$-truncated'' element:
$$\delta[n]_p=\begin{dcases}
    \ch(\mathbf{Z}_p^2)\otimes \ch(G_p^\circ),\ \text{if}\ p\nmid n\\
    \delta_p,\ \text{if}\ p|n.
\end{dcases}$$
and we set $\delta[n]:=\delta_S\bigotimes_{p\notin S}\delta[n]_p$.
\end{defn}
\noindent Finally, for such integer $n$, we write $K_S[n]$ for the open compact subgroup given by the following product: $K_S\times\{g=(g_1,g_2)\in G(\hat{\mathbf{Z}}^S)\ |\ \det(g_2)\equiv 1\mod n\}$. We are now ready to define a family of cohomology classes which will depend on $\underline{\delta}$, and implicitly $\delta_S$, however to ease notation, we omit the dependence on $\delta_S$ since we fix it throughout. We write $\mathscr{Z}_S$ for the set of square-free positive integers coprime to $S$.
\begin{defn}\label{def classes}
    Similarly to \emph{\cite{Loeffler_2021}}, we set $$\mathcal{Z}_{\mathrm{mot},n}(\underline{\delta}):=\mathcal{RE}(\delta[n])\in H^3_\mathrm{mot}(Y_G(K_S),\mathbf{Q}(2)).$$
\end{defn}
\noindent We note that for a fixed family $\underline{\delta}$, the class $\mathcal{Z}_{\mathrm{
mot},n}(\underline{\delta})$, for any $n\in\mathscr{Z}_S$, only depends on the elements $\delta_p$ for primes $p|n$. Additionally and most importantly, if the family of input data $\underline{\delta}$ has determinant level, i.e. it lies in $\prod_{p\notin S}\left(\mathcal{S}(\mathbf{Q}_p^2,\mathbf{Q})\otimes_\mathbf{Q} \mathcal{H}(G_p/G_p^\circ[p],\mathbf{Q})\right)$, then
$$\mathcal{Z}_{\mathrm{mot},n}(\underline{\delta})\in H^3_\mathrm{mot}(Y_G(K_S[n]),\mathbf{Q}(2)).$$
\begin{thm}\label{thm euler system norm relations}
     For positive integers $n|m$ let $\mathrm{pr}_n^m$ denote the natural map $Y_G(K_S[m])\rightarrow Y_G(K_S[n])$. 
    \begin{enumerate}
        \item If $\underline{\delta}\in\prod_{p\notin S}\mathcal{I}(G_p/G_p^\circ,\mathbf{Z}[1/p])$, then for any $n,m\in\mathscr{Z}_S$ with $\tfrac{m}{n}=p$ prime\emph{:}
        $$\mathcal{Z}_{\mathrm{mot},m}\left(\underline{\delta}\right)=\mathcal{P}_{\delta_p}\cdot \mathcal{Z}_{\mathrm{mot},n}\left(\underline{\delta}\right),\ \text{with}\ \mathcal{P}_{\delta_p}\in\mathcal{H}_{G_p}^\circ(\mathbf{Z}[1/p]).$$
        \item If $\underline{\delta}\in\prod_{p\notin S}\mathcal{I}_0(G_p/G_p^\circ[p],\mathbf{Z}[1/p])$, then for any $n,m\in\mathscr{Z}_S$ with $\tfrac{m}{n}=p$\emph{:}
        \begin{align}\label{eq: 25}\scalemath{1}{(\mathrm{pr}^{m}_n)_*\left(\mathcal{Z}_{\mathrm{mot},m}\left(\underline{\delta}\right)\right)=\mathcal{P}_{\mathrm{Tr}(\delta_p)}\cdot \mathcal{Z}_{\mathrm{mot},n}\left(\underline{\delta}\right),\ \text{with}\ \mathcal{P}_{\mathrm{Tr}(\delta_p)}\in\left(p-1,\mathcal{P}_p^{'}(1)\right)\subseteq\mathcal{H}_{G_p}^\circ(\mathbf{Z}[1/p]).}\end{align}
        \item 
        For the specific choices of $\underline{\delta}$ below, we have for all $n,m\in\mathscr{Z}_S$ with $\tfrac{m}{n}=p$ prime 
\begin{align*}
(\mathrm{pr}^{m}_n)_*\left(\mathcal{Z}_{\mathrm{mot},m}\left(\underline{\delta}\right)\right)=\begin{dcases}
        (p-1)\cdot \mathcal{Z}_{\mathrm{mot},n}\left(\underline{\delta}\right),\ &\text{if}\ \underline{\delta}=\underline{\delta}_0\\
        \mathcal{P}_p^{'}(1)\cdot \mathcal{Z}_{\mathrm{mot},n}\left(\underline{\delta}\right),\ &\text{if}\ \underline{\delta}=\underline{\delta}_1
    \end{dcases}
\end{align*}
where $\scalemath{0.9}{\underline{\delta}_0=\left((p-1)\cdot \ch(\mathbf{Z}_p^2)\otimes \ch(G_p^\circ[p])\right)_{p\notin S}}$, and
$\scalemath{0.9}{\underline{\delta}_1=\left(n_p\phi_{p,2}\otimes\left(\ch(G_p^\circ[p])- \ch\left(\left(1,\left[\begin{smallmatrix}
            1 & 1/p \\
            & 1
        \end{smallmatrix}\right]\right) G_p^\circ[p]\right)\right)\right)_{p\notin S}}$,\\ $n_p=p(p-1)^2(p+1)$ and $\phi_{p,2}=\ch(p^2\mathbf{Z}_p\times (1+p^2\mathbf{Z}_p))$
    \end{enumerate}
\end{thm}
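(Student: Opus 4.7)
\medskip

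\noindent\textbf{Proof proposal for \Cref{thm euler system norm relations}.} The strategy is to reduce everything to the local integral abstract norm relations of \Cref{thm local norm relations} by using the equivariance of the Rankin--Eisenstein map. Fix $n,m\in\mathscr{Z}_S$ with $m/n=p$ prime and split the adelic data at $p$: write $\delta[n]=\delta[n]_p\otimes \delta^{(p)}$ where $\delta^{(p)}\in\mathcal{S}(\mathbf{A}_f^{(p),2},\mathbf{Q})\otimes\mathcal{H}(G(\mathbf{A}_f^{(p)}),\mathbf{Q})$ is the part away from $p$ (and identical in $\delta[n]$ and $\delta[m]$, since these families only differ at $p$). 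Fixing $\delta^{(p)}$, the map
$$\mathfrak{Z}_p:\mathcal{S}(\mathbf{Q}_p^2)\otimes \mathcal{H}(G_p)\longrightarrow H^3_{\mathrm{mot}}(Y_G,\mathbf{Q}(2)),\qquad \eta\longmapsto \mathcal{RE}(\eta\otimes \delta^{(p)})$$
is $(G_p\times H_p)$-equivariant in the sense of \Cref{def equiv maps}, since $\mathcal{RE}$ is $(\GL_2(\mathbf{A}_f)\times G(\mathbf{A}_f))$-equivariant and $\GL_2$ acts through the diagonal $H\hookrightarrow G$. Hence \Cref{thm local norm relations} is applicable.

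For part $(1)$, by \Cref{thm cyclicity on coinvariants} any $\delta_p\in\mathcal{I}(G_p/G_p^\circ)$ satisfies $\delta_p=\mathcal{P}_{\delta_p}\cdot \delta_{0,p}$ in the $H_p$-coinvariants, with $\delta_{0,p}=\ch(\mathbf{Z}_p^2)\otimes \ch(G_p^\circ)$. When $\delta_p$ lies in the integral lattice $\mathcal{I}(G_p/G_p^\circ,\mathbf{Z}[1/p])$, \Cref{thm local norm relations}$(1)$ gives $\mathcal{P}_{\delta_p}\in \mathcal{H}_{G_p}^\circ(\mathbf{Z}[1/p])$. Applying $\mathfrak{Z}_p$ and using $G_p$-equivariance of $\mathcal{RE}$ yields $\mathcal{Z}_{\mathrm{mot},m}(\underline{\delta})=\mathcal{P}_{\delta_p}\cdot \mathcal{Z}_{\mathrm{mot},n}(\underline{\delta})$.

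For part $(2)$, the essential observation is that the pushforward $(\mathrm{pr}^m_n)_*$ is, at the level of Hecke modules over the $p$-factor, precisely the trace $\mathrm{Tr}^{G_p^\circ[p]}_{G_p^\circ}$ of \Cref{sec group schemes eq maps}. More precisely, by functoriality of motivic cohomology and the characterisation of $\mathcal{RE}$ via volume factors in \Cref{prop mpa iota start},
$$(\mathrm{pr}^m_n)_*\bigl(\mathcal{RE}(\delta[m]_p\otimes\delta^{(p)})\bigr)=\mathcal{RE}\bigl(\mathrm{Tr}^{G_p^\circ[p]}_{G_p^\circ}(\delta[m]_p)\otimes \delta^{(p)}\bigr).$$
Since $\delta[m]_p=\delta_p\in\mathcal{I}_0(G_p/G_p^\circ[p],\mathbf{Z}[1/p])$, \Cref{thm local norm relations}$(2)$ provides
$\mathrm{Tr}(\delta_p)=\mathcal{P}_{\mathrm{Tr}(\delta_p)}\cdot \delta_{0,p}$ in $\mathcal{I}(G_p/G_p^\circ,\mathbf{Z}[1/p])$ with $\mathcal{P}_{\mathrm{Tr}(\delta_p)}\in (p-1,\mathcal{P}_p^{'}(1))\subseteq \mathcal{H}_{G_p}^\circ(\mathbf{Z}[1/p])$. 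Applying $\mathfrak{Z}_p$ once more gives \eqref{eq: 25}.

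For part $(3)$, one must compute the two Hecke operators $\mathcal{P}_{\mathrm{Tr}(\delta_{i,p})}$ explicitly. The case $\underline{\delta}_0$ is immediate: since $\ch(G_p^\circ[p])$ traces to $\ch(G_p^\circ)$ under summation over $G_p^\circ/G_p^\circ[p]$, we find $\mathrm{Tr}(\delta_{0,p})=(p-1)\,\delta_{0,p}^{\mathrm{unr}}$, hence $\mathcal{P}_{\mathrm{Tr}(\delta_{0,p})}=p-1$. The main obstacle lies in the $\underline{\delta}_1$ computation. I would attack this by mapping $\mathrm{Tr}(\delta_{1,p})$ through $\Xi_c$ into $C_c^{\infty,1}(P_p\backslash G_p/G_p^\circ,\mathbf{Z}[1/p])$, evaluating $\Lambda_{\Pi_p}$ on the resulting function using the explicit formula of \Cref{prop explicit formula for Lambda}, and then invoking the density of unramified $\Pi_p$ of Whittaker type in $\mathrm{Spec}(\mathcal{H}_{G_p}^\circ)$ together with the identity $\Lambda_{\Pi_p}(f_0)=\Theta_{\Pi_p}(1-\mathcal{S}_p)$ from \Cref{prop equivariance of linear form}. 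Concretely, one expects that the specific choice $\phi_{p,2}=\ch(p^2\mathbf{Z}_p\times(1+p^2\mathbf{Z}_p))$ together with the translate by $(1,u)$ with $u=\bigl[\begin{smallmatrix}1&1/p\\&1\end{smallmatrix}\bigr]$ is engineered to produce exactly the numerator Euler factor; the normalisation $n_p=1/((p-1)^2(p+1))$ compensates precisely for the volume factor $\mathscr{V}_{z_0,\gamma_0}$ appearing in \Cref{prop explicit formula for Lambda} and the index $[G_p^\circ:G_p^\circ[p]]=p-1$ coming from the trace. Carrying out this bookkeeping should yield $\Theta_{\Pi_p}(\mathcal{P}_{\mathrm{Tr}(\delta_{1,p})})=\Theta_{\Pi_p}(\mathcal{P}_p^{'}(1))$ for every such $\Pi_p$, hence the asserted equality in $\mathcal{H}_{G_p}^\circ(\mathbf{Z}[1/p])$. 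This final verification is the most delicate step and the place where all the explicit local machinery developed in \Cref{sec explicit formulae} is used in full.
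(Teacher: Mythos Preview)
Your proposal is correct and follows essentially the same route as the paper: fix the away-from-$p$ data to obtain a $(G_p\times H_p)$-equivariant map into motivic cohomology, then invoke \Cref{thm local norm relations} for parts $(1)$ and $(2)$, with the identification of $(\mathrm{pr}^m_n)_*$ with the trace/norm map handled via functoriality (the paper phrases this as ``pushforward acts on cohomology via coset representatives'' and encodes it in a commutative diagram, but the content is the same). For part $(3)$ the paper, like you, does not spell out the $\underline{\delta}_1$ computation in the proof itself but simply refers back to specialising the explicit formulae of \Cref{sec explicit formulae}; your outlined strategy via $\Xi_c$, $\Lambda_{\Pi_p}$ and density is exactly the mechanism that underlies that reference.
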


\begin{proof}
    Firstly note that the data $\delta[np]$ and $\delta[n]$ only differ at the prime $p$ and coincide everywhere else. Write $V$ for $H^3_\mathrm{mot}(Y_G,\mathbf{Q}(2))$ regarded as a smooth $G_p$-representation. It is also clear that we can check the equalities after tensoring with $\mathbf{C}$ and thus we will omit it from our notation. By the global equivariance of the Rankin-Eisenstein map, we can observe that after fixing the input data away from $p$, the composite map at $p$
    $$\mathcal{RE}_{\delta[m]^{(p)}}:\mathcal{S}(\mathbf{Q}_p^2)\otimes \mathcal{H}(G_p)\longrightarrow\mathcal{S}_0(\mathbf{A}_f^2)\otimes_\mathbf{Q}\mathcal{H}(G(\mathbf{A}_f))\overset{\mathcal{RE}}{\longrightarrow} V$$
    is $(G_p\times H_p)$-equivariant, where the first map is given by $x_p\mapsto \delta[m]^{(p)}\otimes x_p$. Thus if $\underline{\delta}$ is as in part one,
    $$\mathcal{Z}_{\mathrm{mot},m}\left(\underline{\delta}\right)=\mathcal{RE}_{\delta[m]^{(p)}}(\delta_p)=\mathcal{P}_{\delta_p}\cdot \mathcal{RE}_{\delta[m]^{(p)}}(\ch(\mathbf{Z}_p^2)\otimes \ch(G_p^\circ)),\ \mathcal{P}_{\delta_p}\in\mathcal{H}_{G_p}^\circ(\mathbf{Z}[1/p]).$$ This follows from the first part of \Cref{thm local norm relations}. But since $(p,nS)=1$, $\mathcal{RE}_{\delta[m]^{(p)}}$ evaluated at the unramified vector $\ch(\mathbf{Z}_p^2)\otimes \ch(G_p^\circ)$ is nothing more than $\mathcal{Z}_{\mathrm{mot},n}(\underline{\delta})$ and the first part of the theorem follows. For the second part, we have a commutative diagram
    \[\begin{tikzcd}
	{\mathcal{S}_0(\mathbf{A}_f^2,\mathbf{Q})\otimes_\mathbf{Q}\mathcal{H}\left(G(\mathbf{A}_f)/K_S[m],\mathbf{Q}\right)} & {H_\mathrm{mot}^3(Y_G(K_S[m]),\mathbf{Q}(2))} & {H_\mathrm{mot}^3(Y_G(K_S[n]),\mathbf{Q}(2))} \\
	{\mathcal{S}_0(\mathbf{Q}_p^2,\mathbf{Q})\otimes_{\mathbf{Q}} \mathcal{H}(G_p/G_p^\circ[p],\mathbf{Q})} & V & V
	\arrow["{\mathcal{RE}[m]}", from=1-1, to=1-2]
	\arrow["{\mathcal{RE}}", from=1-1, to=2-2]
	\arrow["{(\mathrm{pr}^{m}_n)_*}", from=1-2, to=1-3]
	\arrow["{\tau_{m}}", from=1-2, to=2-2]
	\arrow["{\tau_n}", from=1-3, to=2-3]
	\arrow[from=2-1, to=1-1]
	\arrow["{\mathcal{RE}_{\delta[m]^{(p)}}}"', from=2-1, to=2-2]
	\arrow["{\mathrm{norm}_{G_p^\circ}^{G_p^\circ[p]}}", from=2-2, to=2-3]
\end{tikzcd}\]
where $\tau_{n},\tau_{m}$ are the natural maps, and the leftmost vertical map is again given by $x_p\mapsto \delta[m]^{(p)}\otimes x_p$ which is well-defined since we are now assuming that $\underline{\delta}$ is as in the second part of the theorem. The commutativity of the right-most square follows from the fact that pushforward acts on cohomology via coset representatives. Thus, by the $(G_p\times H_p)$-equivariance property of $\mathcal{RE}_{\delta[m]^{(p)}}$ and the second part of \Cref{thm local norm relations}, we have for $\underline{\delta}$ as in the second statement of the theorem,
$$\mathrm{norm}_{G_p^\circ}^{G_p^\circ[p]}\left(\mathcal{RE}_{\delta[m]^{(p)}}(\delta_p)\right)=\mathcal{P}_{\mathrm{Tr}(\delta_p)}\cdot \mathcal{RE}_{\delta[m]^{(p)}}(\ch(\mathbf{Z}_p^2)\otimes \ch(G_p^\circ)),\ \mathcal{P}_{\mathrm{Tr}(\delta_p)}\in\left(p-1,\mathcal{P}_p^{'}(1)\right)\subseteq \mathcal{H}_{G_p}^\circ(\mathbf{Z}[1/p]).$$
The proof of the second statement in the theorem is now complete by following the commutative diagram above and unraveling the definitions.

\noindent We now deal with the third and final part. The $\underline{\delta}_0$ case is trivial. For the $\underline{\delta}_1$ case, we've just seen that it suffices to show that $\mathcal{P}_{\mathrm{Tr}(\delta_{{1,p}})}=\mathcal{P}_p^{'}(1)$ where 
$$\delta_{1,p}:=n_p\phi_{p,2}\otimes  \left(\ch(G_p^\circ[p])- \ch\left(\left(1,\left[\begin{smallmatrix}
            1 & 1/p \\
            & 1
        \end{smallmatrix}\right]\right) G_p^\circ[p]\right)\right).$$
        These local elements are indeed integral in the sense of \Cref{def integral lattice}. Even though the local Rankin-Selberg zeta integral of \cite{jacquet1983rankin} does not allow us to attack the generality of parts $(1)$ and $(2)$, it actually works well for this nice integral input data $\delta_{1,p}$. Thus, one can use it in order to realize $\mathcal{P}_{\mathrm{Tr}(\delta_{1,p})}$ as $\mathcal{P}_p^{'}(1)$ (one could also specialize the more general integral machinery of \Cref{sec from and back} and \Cref{sec Hecke forms}, but for this simple case of explicit input data, that is not really necessary). By the argument of \cite[Proposition $4.4.1$]{Loeffler_2021}, it suffices to show that the local Rankin-Selberg zeta integral of \cite{jacquet1983rankin} for this choice of integral data:
        \begin{align*}
            Z(n_p\phi_{p,2}, W_{\pi_{p,1}}^\mathrm{sph},W_{\pi_{p,2}}^\mathrm{sph}&- \left[\begin{smallmatrix}
                1 & 1/p\\
                & 1
            \end{smallmatrix}\right]W_{\pi_{p,2}}^\mathrm{sph},s)\\
            &:=n_p\int_{N_p\backslash H_p}W_{\pi_{p,1}}^\mathrm{sph}(h) \left(W_{\pi_{p,2}}^\mathrm{sph}- \left[\begin{smallmatrix}
                1 & 1/p\\
                & 1
            \end{smallmatrix}\right]W_{\pi_{p,2}}^\mathrm{sph}\right)(h)\ \phi_{p,2}((0,1)h)\ |\det(h)|_p^s\ dh
        \end{align*}
        is equal to the constant function $1$, for every unramified principal-series $\pi_{p,1}\boxtimes \pi_{p,2}=\mathcal{W}(\pi_{p,1},\psi)\boxtimes \mathcal{W}(\pi_{p,2},\overline{\psi})$ of $G_p$. This computation is a bit lengthy, but can be carried out directly by unfolding. For more details on this zeta integral and its properties, we refer the reader to \Cref{sec local integrals} and \cite{jacquet1983rankin}. 
\end{proof}
\begin{rem}
The strength of this result, and the payoff of the previous representation theoretic sections is having integrality statements \Cref{thm euler system norm relations}$(1)$,$(2)$ for \textit{generic} $\underline{\delta}$. This cannot be addressed with explicit zeta-integral computations. However, adding them in when choosing as input more ``classical'' data, allows us to get \Cref{thm euler system norm relations}$(3)$ which strengthens known results on norm-relations as well.
\end{rem}
\subsubsection{Base change to cyclotomic fields}\label{sec base change}
Following \cite[\S $8.2$]{Loeffler_2021}, we can give an alternative description of the classes $\mathcal{Z}_{\mathrm{mot},n}(\underline{\delta})$. We can regard $G$ as a subgroup of $G\times \mathbf{G}_m$ via the embedding $i:g=(g_1,g_2)\mapsto(g,\mathrm{det}(g_2))$. For every $n\in\mathscr{Z}_S$, this gives an open and closed embedding (which we still denote by $i$)
$$i:Y_G(K_S[n])\hookrightarrow Y_G(K_S)\times_{\mathrm{Spec}(\mathbf{Q})}\mathrm{Spec}(\mathbf{Q}(\mu_n)).$$
The pushforward along this map gives
$$i_*:H_{\mathrm{mot}}^3\left(Y_G(K_S[n]),\mathbf{Q}(2)\right)\longrightarrow H_\mathrm{mot}^3\left(Y_G(K_S)\times_{\mathrm{Spec}(\mathbf{Q})}\mathrm{Spec}(\mathbf{Q}(\mu_n)),\mathbf{Q}(2)\right).$$
which satisfies certain intertwining properties. In particular, for every prime $p\nmid n$, the map $i_*$ satisfies the following intertwining properties with respect to the action of $\mathcal{H}_{G_p}^\circ(\mathbf{Z}[1/p])$ on the left and the action of $\mathcal{H}_{G_p}^\circ(\mathbf{Z}[1/p])[\mathrm{Gal}(\mathbf{Q}(\mu_n)/\mathbf{Q})]$ on the right: For $x\in H_{\mathrm{mot}}^3\left(Y_G(K_S[n]),\mathbf{Q}(2)\right)$, we have
\begin{align}\label{eq: intert. props.}i_*(\theta\cdot x)=\begin{dcases}
    \theta\cdot i_*(x)&,\ \text{if} \ \theta=\mathcal{S}_{p,1}\\
    \theta\cdot i_*(x)&,\ \text{if} \ \theta=\mathcal{T}_{p,1}\\
    (\theta \mathrm{Frob}_p^{2})\cdot i_*(x)&,\ \text{if} \ \theta=\mathcal{S}_{p,2}\\
    (\theta\mathrm{Frob}_p)\cdot i_*(x)&,\ \text{if} \ \theta=\mathcal{T}_{p,2}
\end{dcases}\end{align}
where $\mathrm{Frob}_p$ denotes arithmetic Frobenius at $p$, regarded as an element of $\mathrm{Gal}(\mathbf{Q}(\mu_n)/\mathbf{Q})$. This follows from \cite[\S $8.2$]{Loeffler_2021} using the same normalization for the global Artin reciprocity map, and our definition of $i$.
\begin{rem}
    As remarked earlier, at first glance it might appear that there's a certain asymmetry lingering in the background since we've made a choice in which slot to apply the determinant map to $\mathbf{G}_m$. However the results are independent of this choice. Additionally, for Euler system applications, one can simply work with the group $\GL_2\times_{\GL_1} \GL_2$ which is essentially what happens in the Asai case at split primes \cite[\S $5$]{Groutides+2026+103+124}. Here the fiber product over $\GL_1$ is taken with respect to the determinant map.
\end{rem}
\noindent The polynomial $\mathcal{P}_p(x)$ of \Cref{def Euler polynomial factor} can be computed explicitly. It is given by
$$\mathcal{P}_p(x)=1-\tfrac{1}{p}\mathcal{T}_{p,1}\mathcal{T}_{p,2}x+\left(\tfrac{1}{p}\mathcal{S}_{p,1}\mathcal{T}_{p,2}^2+\tfrac{1}{p}\mathcal{S}_{p,2}\mathcal{T}_{p,1}^2-2\mathcal{S}_{p,1}\mathcal{S}_{p,2}\right)x^2-\tfrac{1}{p}\mathcal{S}_{p,1}\mathcal{S}_{p,2}\mathcal{T}_{p,1}\mathcal{T}_{p,2}x^3+\mathcal{S}_{p,1}^2\mathcal{S}_{p,2}^2x^4.$$
Combining this with \eqref{eq: intert. props.} it follows that $i_*$ intertwines the action of $\mathcal{P}_p^{'}(1)$ on $H_{\mathrm{mot}}^3\left(Y_G(K_S[n]),\mathbf{Q}(2)\right)$ with that of $\mathcal{P}_p^{'}\left(\mathrm{Frob}_p^{-1}\right)$ on $H_\mathrm{mot}^3\left(Y_G(K_S)\times_{\mathrm{Spec}(\mathbf{Q})}\mathrm{Spec}(\mathbf{Q}(\mu_n)),\mathbf{Q}(2)\right)$. \\
\\
For data as in \Cref{def classes}, we write $\Xi_{\mathrm{mot},n}(\underline{\delta})$ for the image of $\mathcal{Z}_{\mathrm{mot},n}(\underline{\delta})$ under $i_*$. Then we can re-state \Cref{thm euler system norm relations} for the classes $\Xi_{\mathrm{mot},n}(\underline{\delta})$:
\begin{cor}\label{cor base change}
\begin{enumerate}
\item If $\underline{\delta}\in\prod_{p\notin S}\mathcal{I}_0(G_p/G_p^\circ[p],\mathbf{Z}[1/p])$, then for any $n,m\in\mathscr{Z}_S$ with $\tfrac{m}{n}=p$ we have
$$
    \mathrm{norm}^{\mathbf{Q}(\mu_m)}_{\mathbf{Q}(\mu_n)}\left(\Xi_{\mathrm{mot},m}(\underline{\delta})\right)=\mathcal{P}_{\mathrm{Tr}(\delta_p)}^\mathrm{cycl}\cdot \Xi_{\mathrm{mot},n}(\underline{\delta})$$
     $$\mathcal{P}_{\mathrm{Tr}(\delta_p)}^\mathrm{cycl}\in \left(p-1,\mathcal{P}_p^{'}\left(\mathrm{Frob}_p^{-1}\right)\right)\subseteq \mathcal{H}_{G_p}^\circ(\mathbf{Z}[1/p])[\mathrm{Gal}(\mathbf{Q}(\mu_n)/\mathbf{Q})].$$
\item In the special cases $\underline{\delta}=\underline{\delta}_0$ and $\underline{\delta}=\underline{\delta}_1$ we have
\begin{align*}
    \mathrm{norm}^{\mathbf{Q}(\mu_m)}_{\mathbf{Q}(\mu_n)}(\Xi_{\mathrm{mot},m}(\underline{\delta}))&=\begin{dcases}
        (p-1)\cdot \Xi_{\mathrm{mot},n}\left(\underline{\delta}\right),\ &\text{if}\ \underline{\delta}=\underline{\delta}_0\\
        \mathcal{P}_p^{'}(\mathrm{Frob}_p^{-1})\cdot \Xi_{\mathrm{mot},n}\left(\underline{\delta}\right),\ &\text{if}\ \underline{\delta}=\underline{\delta}_1
    \end{dcases}
\end{align*} 
\end{enumerate}
\end{cor}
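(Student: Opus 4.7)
The plan is to deduce the corollary directly from \Cref{thm euler system norm relations} by pushing forward along the open-and-closed embedding $i: Y_G(K_S[n]) \hookrightarrow Y_G(K_S) \times_{\Spec \mathbf{Q}} \Spec \mathbf{Q}(\mu_n)$ and translating the Hecke identity at level $K_S[n]$ into a Hecke-times-Galois identity over the cyclotomic base. First I would observe that $i_*$ intertwines the projection $(\mathrm{pr}^m_n)_*: H^3_{\mathrm{mot}}(Y_G(K_S[m])) \to H^3_{\mathrm{mot}}(Y_G(K_S[n]))$ with the Galois norm $\mathrm{norm}^{\mathbf{Q}(\mu_m)}_{\mathbf{Q}(\mu_n)}$ on $H^3_{\mathrm{mot}}(Y_G(K_S) \times_\mathbf{Q} \Spec \mathbf{Q}(\mu_n),\mathbf{Q}(2))$. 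This comes from the fact that $i$ is compatible with the natural transition maps at finite level, together with the standard identification (via the Artin reciprocity normalization fixed in \Cref{sec base change}) of the deck transformations of $Y_G(K_S[m]) \to Y_G(K_S[n])$ with $\mathrm{Gal}(\mathbf{Q}(\mu_m)/\mathbf{Q}(\mu_n))$.

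Next I would apply $i_*$ to both sides of \eqref{eq: 25} to obtain, for $\underline{\delta}\in \prod_{p\notin S}\mathcal{I}_0(G_p/G_p^\circ[p],\mathbf{Z}[1/p])$ and $m/n = p$,
\[
\mathrm{norm}^{\mathbf{Q}(\mu_m)}_{\mathbf{Q}(\mu_n)}(\Xi_{\mathrm{mot},m}(\underline{\delta})) = i_*\!\left(\mathcal{P}_{\mathrm{Tr}(\delta_p)}\cdot \mathcal{Z}_{\mathrm{mot},n}(\underline{\delta})\right).
\]
The next step is to push $\mathcal{P}_{\mathrm{Tr}(\delta_p)}$ through $i_*$ using the intertwining rules \eqref{eq: intert. props.}. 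Writing $\mathcal{P}_{\mathrm{Tr}(\delta_p)} = \alpha(p-1) + \beta\cdot \mathcal{P}_p^{'}(1)$ with $\alpha,\beta \in \mathcal{H}_{G_p}^\circ(\mathbf{Z}[1/p])$, the constant $(p-1)$ is unaffected by the twist, while the explicit formula for $\mathcal{P}_p(x)$ recalled in \Cref{sec base change} shows that, monomial by monomial, multiplication by $\mathcal{P}_p^{'}(1)$ becomes multiplication by $\mathcal{P}_p^{'}(\mathrm{Frob}_p^{-1})$ after pushforward (each $\mathcal{S}_{p,2}^a \mathcal{T}_{p,2}^b$ acquires a $\mathrm{Frob}_p^{2a+b}$ and the resulting polynomial in $x$ evaluated at $1$ becomes the same polynomial in $\mathrm{Frob}_p^{-1}$ after reorganising). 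Defining $\mathcal{P}^\mathrm{cycl}_{\mathrm{Tr}(\delta_p)}$ to be the corresponding element of $\mathcal{H}_{G_p}^\circ(\mathbf{Z}[1/p])[\mathrm{Frob}_p^{\pm 1}]$, this lies in $(p-1, \mathcal{P}_p^{'}(\mathrm{Frob}_p^{-1}))$ and satisfies $i_*(\mathcal{P}_{\mathrm{Tr}(\delta_p)}\cdot x) = \mathcal{P}^\mathrm{cycl}_{\mathrm{Tr}(\delta_p)}\cdot i_*(x)$, which yields part (1). Part (2) then follows immediately by specialising to $\underline{\delta}_0$ and $\underline{\delta}_1$ and invoking part (3) of \Cref{thm euler system norm relations}, noting that the local factor $p-1$ for $\underline{\delta}_0$ and $\mathcal{P}_p^{'}(1)$ for $\underline{\delta}_1$ respectively transform to $p-1$ and $\mathcal{P}_p^{'}(\mathrm{Frob}_p^{-1})$ under $i_*$.

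The main obstacle is bookkeeping: verifying that the twist rule \eqref{eq: intert. props.} really produces $\mathcal{P}_p^{'}(\mathrm{Frob}_p^{-1})$ from $\mathcal{P}_p^{'}(1)$, as opposed to some inverse or transpose variant, and checking that the ideal-theoretic statement survives the intertwining (i.e.\ that one does not accidentally pick up a Frobenius-valued factor multiplying $p-1$). Both points reduce to the explicit monomial description of $\mathcal{P}_p(x)$ and the fact that each monomial's ``second-factor'' content $\mathcal{S}_{p,2}^a\mathcal{T}_{p,2}^b x^{?}$ transforms in lockstep with the correct power of $\mathrm{Frob}_p^{-1}$ attached to $x$; once this bookkeeping is carried out, the corollary is essentially a formal consequence of \Cref{thm euler system norm relations} and the formalism of \Cref{sec base change}.
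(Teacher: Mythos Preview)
Your proposal is correct and follows essentially the same route as the paper, which simply cites \Cref{thm euler system norm relations} together with the intertwining discussion in \Cref{sec base change}. Your write-up merely unpacks that discussion: the compatibility of $i_*$ with $(\mathrm{pr}^m_n)_*$ and the Galois norm, and the fact that the algebra homomorphism $\mathcal{H}_{G_p}^\circ(\mathbf{Z}[1/p])\to\mathcal{H}_{G_p}^\circ(\mathbf{Z}[1/p])[\mathrm{Frob}_p^{\pm 1}]$ induced by \eqref{eq: intert. props.} sends $\mathcal{P}_p'(1)$ to $\mathcal{P}_p'(\mathrm{Frob}_p^{-1})$ and hence carries the ideal $(p-1,\mathcal{P}_p'(1))$ into $(p-1,\mathcal{P}_p'(\mathrm{Frob}_p^{-1}))$.
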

\begin{proof}
    \Cref{thm euler system norm relations} and the discussion above
\end{proof}

\subsubsection{Integral \'etale realisation}\label{sec etale realisation}
In this section we fix an prime $\ell\notin S$. There is an \'etale regulator map
$$r_{\acute{e}\text{t}}: H_\mathrm{mot}^3(Y_G,\mathbf{Q}(2))\longrightarrow H_{\acute{e}\text{t}}^3(Y_G,\mathbf{Q}_\ell(2))$$
obtained by taking the limit of the regulator maps defined in \cite{huber2000realization} at each finite level. Let $n\in\mathbf{Z}_{\geq 1}$ be a square-free integer coprime to $\ell S$. As in \cite{Loeffler_2021}, but with different notation, for each integer $c>1$ and coprime to $6nS$, we define classes
\begin{align*}_c\mathcal{Z}_{\acute{e}\text{t},n}\left(\underline{\delta}\right)&:=\left(c^2-\prod_{p| c}\mathcal{S}_p^{v_p(c)}\right)\cdot r_{\acute{e}\text{t}}\left(\mathcal{Z}_{\mathrm{mot},n}\left(\underline{\delta}\right)\right)\\
_c\Xi_{\acute{e}\text{t},n}(\underline{\delta})&:= \left(c^2-\prod_{p| c}\mathcal{S}_p^{v_p(c)}\mathrm{Frob}_p^{2v_p(c)}\right)\cdot r_{\acute{e}\text{t}}\left(\Xi_{\mathrm{mot},n}\left(\underline{\delta}\right)\right)\end{align*}
Of course by \eqref{eq: intert. props.} these definitions are compatible with one another. 
These normalisation factors arise from the integral variant of the map of \Cref{thm eisenstein map}, a description of which can be found in \cite[\S $7$]{Loeffler_Skinner_Zerbes_2021}. If $\underline{\delta}$ is as in \Cref{thm euler system norm relations}, then by the argument of \cite[Proposition $9.5.2$]{Loeffler_2021} but simpler since we are dealing with trivial coefficients, the classes $_c\mathcal{Z}_{\acute{e}\text{t},n}\left(\underline{\delta}\right)$ actually lie in $H_{\acute{e}\text{t}}^3(Y_G,\mathbf{Z}_\ell(2))$ (and similarly for the $_c\Xi_{\acute{e}\text{t}}$-classes). Then, \Cref{cor base change} also holds for these integral classes.
   \section{Integrality for Rankin-Selberg type periods}\label{sec int periods}
\subsection{The space $\Hom_{P_p}(\pi_{p,1}\otimes\pi_{p,2},\delta_P)$ and an integral lattice in $\pi_{p,1}\otimes\pi_{p,2}$}\label{sec relation to l-modular}
The space $\Hom_{P_p}(\pi_{p,1}\otimes \pi_{p,2},\delta_P)$, up to twist, has been studied in \cite{jacquet1983rankin} in the more general setting of $\GL(n)$ and for (possibly ramified) Whittaker type representations, where a ``generic'' bound ($\leq 1$) on the dimension of this space is established, using vastly different methods. This step is crucial for the proof of the functional equation for the local zeta integrals of \textit{op.cit}. To link this to our work we will relate the module $C_c^\infty(P_p\backslash G_p/G_p^\circ)$ to a certain module of $P_p$-coinvariants. The natural object to consider here is the module of $\delta_P$-twisted $P_p$-coinvariants of $C_c^\infty(G_p/G_p^\circ)$ given by 
$$C_c^\infty(G_p/G_p^\circ)_{P_p,\delta_P}:=C_c^\infty(G_p/G_p^\circ)/\langle x\cdot\xi-\delta_P(x)\xi\ |\ x\in P_p,\xi\in C_c^\infty(G_p/G_p^\circ)\rangle.$$
This is again a module over $\mathcal{H}_{G_p}^\circ$ under the usual Hecke action induced by right translation.
\begin{prop}\label{prop from inv to coinv}
    There is a Hecke equivariant isomorphism
    $$\Phi: C_c^\infty(P_p\backslash G_p/G_p^\circ)\overset{\simeq}{\longrightarrow} C_c^\infty(G_p/G_p^\circ)_{P_p,\delta_P},\ \ch(P_pgG_p^\circ)\mapsto\frac{1}{\vol_{P_p}\left(P_p\cap g G_p^\circ g^{-1}\right)}\ch( gG_p^\circ)$$
    where $\vol_{P_p}$ denotes the volume with respect to right or left normalised Haar measure and is independent of such choice.
    \begin{proof}
    Firstly, we show that the volume factor in question is independent of the choice of left or right normalised Haar measure. Indeed, 
    \begin{align*}
        \vol_{P_p}(P_p\cap g G_p^\circ g^{-1},d^Rx)&=\int_{P_p\cap gG_p^\circ g^{-1}} d^Rx\\
        &=\int_{P_p\cap g G_p^\circ g^{-1}} \delta_P(x)\ d^Lx=\vol_{P_p}(P_p\cap g G_p^\circ g^{-1},d^Lx)
    \end{align*}
    where the last equality follows from the fact that $\delta_P$ is trivial on $P_p\cap gG_p^\circ g^{-1}$.
        We now check that the map $\Phi$ is well-defined. To do this, it clearly suffices to show that $\Phi(\ch(P_p g G_p^\circ))$ agrees with $\Phi(\ch(P_p yg G_p^\circ))$ for $y\in P_p$. Indeed, $\ch(P_p yg G_p^\circ)$ gets mapped to 
        \begin{align}\label{eq:26}
            \frac{1}{\vol_{P_p}(P_p\cap yg G_p^\circ g^{-1}y^{-1},d^Rx)}\ch(yg G_p^\circ)&=\frac{1}{\vol_{P_p}(P_p\cap yg G_p^\circ g^{-1},d^Rx)}\ch(yg G_p^\circ)\\
           \nonumber &=\frac{\delta_P(y)}{\vol_{P_p}(P_p\cap yg G_p^\circ g^{-1},d^Rx)}\ch(gG_p^\circ).
        \end{align}
        The first equality follows from the fact that $d^Rx$ is right invariant, and the second equality follows from the coinvariant action.
        Now, $\vol_{P_p}(P_p\cap yg G_p^\circ g^{-1},d^Rx)$ is given by $\int_{P_p\cap yg G_p^\circ g^{-1}} d^Rx= \int_{P_p\cap yg G_p^\circ g^{-1}} \delta_P(x) d^Lx$ which is in turn equal to $\delta_P(y) \vol_{P_p}\left(P_p\cap g G_p^\circ g^{-1}\right)$. From \eqref{eq:26} we now see that $\Phi$ is well-defined. The map $\Phi$ is a priori a morphism of $\mathbf{C}$-vector spaces.  Showing directly that $\Phi$ is Hecke equivariant using its current definition appears to be very hard. We will instead construct an inverse morphism to $\Phi$ which can be readily seen to be Hecke equivariant. The result will then follow at once. There is a natural morphism 
        $$\Psi: C_c^\infty(G_p/G_p^\circ)\longrightarrow C_c^\infty(P_p\backslash G_p/G_p^\circ),\ \xi\mapsto \left(f_\xi:g\mapsto \int_{P_p}\xi(xg)\ d^Rx\right)$$
        which is Hecke equivariant by construction. Now, for $y\in P_p$ $$f_{y\cdot\xi}(g)=\int_{P_p}\xi(y^{-1}xg)\ d^Rx=\int_{P_p}\xi(y^{-1}xg)\delta_P(x)\ d^Lx=\delta_P(y)\int_{P_p}\xi(xg)\delta_P(x)\ d^Lx=\delta_P(y)f_\xi(g).$$
        Thus, the map $\Psi$ factors through $C_c^\infty(G_p/G_p^\circ)_{P_p,\delta_P}$. Finally, for $g\in G_p$, the function $f_{\ch(gG_p^\circ)}$ is supported on $P_pg G_p^\circ$ and 
        $f_{\ch(gG_p^\circ)}(g)=\vol_{P_p}\left(P_p\cap g G_p^\circ g^{-1}\right).$ Hence $\Psi(\ch(gG_p^\circ))=\vol_{P_p}\left(P_p\cap g G_p^\circ g^{-1}\right)\ch(P_pgG_p^\circ)$ which shows that $\Phi$ and $\Psi$ are mutually inverse as required.
    \end{proof}
\end{prop}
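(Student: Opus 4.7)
The map $\Phi$ is specified on generators by a formula with a normalising volume factor, so the plan breaks into three checks: that the volume factor makes sense, that the assignment descends to the $\delta_P$-coinvariants, and that it is an isomorphism of Hecke modules.

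To begin, the volume $\vol_{P_p}(P_p\cap gG_p^\circ g^{-1})$ is unambiguous independently of a choice between left and right Haar measure: since $\delta_P$ is a continuous character of $P_p$ into $\mathbf{R}_{>0}$, its restriction to any compact subgroup is trivial, so the two measures (related by $d^L=\delta_P^{-1}d^R$) agree on the open compact subgroup $P_p\cap gG_p^\circ g^{-1}$.

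Next, I would check that the prescription $\ch(P_pgG_p^\circ)\mapsto\vol_{P_p}(P_p\cap gG_p^\circ g^{-1})^{-1}\ch(gG_p^\circ)$ is insensitive to replacing $g$ by $yg$ for $y\in P_p$, once one passes to the coinvariants. The stabiliser transforms by conjugation, $P_p\cap ygG_p^\circ g^{-1}y^{-1}=y(P_p\cap gG_p^\circ g^{-1})y^{-1}$, and a short computation with the convention $d^L=\delta_P^{-1}d^R$ shows that the Haar volume of this compact subgroup scales by $\delta_P(y)$. On the other hand, in the $\delta_P$-twisted coinvariants one has $\ch(ygG_p^\circ)=y\cdot\ch(gG_p^\circ)\equiv\delta_P(y)\,\ch(gG_p^\circ)$. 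The two factors of $\delta_P(y)$ cancel and $\Phi$ descends.

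The hardest step conceptually is Hecke equivariance of $\Phi$, since direct verification is encumbered by the volume factor interacting with Hecke convolution. My plan is to bypass this by writing down the natural candidate inverse
\[
\Psi:C_c^\infty(G_p/G_p^\circ)\longrightarrow C_c^\infty(P_p\backslash G_p/G_p^\circ),\qquad \xi\mapsto\Bigl(g\mapsto\int_{P_p}\xi(xg)\,d^Rx\Bigr),
\]
which is manifestly $\mathcal{H}_{G_p}^\circ$-equivariant by Fubini. Using $d^L=\delta_P^{-1}d^R$, the change of variables $x\mapsto y^{-1}x$ for $y\in P_p$ yields $f_{y\cdot\xi}(g)=\delta_P(y)\,f_\xi(g)$, so $\Psi$ factors through the $\delta_P$-coinvariants. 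A direct calculation then gives $\Psi(\ch(gG_p^\circ))=\vol_{P_p}(P_p\cap gG_p^\circ g^{-1})\,\ch(P_pgG_p^\circ)$, exhibiting $\Psi$ as a one-sided inverse of $\Phi$; together with the previous steps this forces $\Phi$ and $\Psi$ to be mutually inverse, and $\Phi$ inherits Hecke equivariance from $\Psi$. The main bookkeeping obstacle is pinning down the conjugation identity for the volume factor and the compatibility of modular-character conventions in the coinvariants, but once those are fixed every step reduces to a short formal computation.
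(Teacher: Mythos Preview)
Your proposal is correct and follows essentially the same approach as the paper: both verify independence of the volume from the choice of Haar measure via triviality of $\delta_P$ on compact subgroups, check well-definedness by showing the $\delta_P(y)$-scaling of the volume under $g\mapsto yg$ cancels the $\delta_P(y)$ appearing in the coinvariants, and then construct the integral map $\Psi$ as a manifestly Hecke-equivariant inverse. The only cosmetic difference is that the paper drops the right $y^{-1}$ in the conjugated stabiliser using right invariance before switching to left Haar measure, whereas you phrase it as a conjugation identity, but the computation is the same.
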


\begin{defn}
    \begin{enumerate}    
\item The lattice $\mathcal{J}(G_p/G_p^\circ,\mathbf{Z}[1/p])_{P_p,\delta_P}$ inside $C_c^\infty(G_p/G_p^\circ)_{P_p,\delta_P}$ consists of $\mathbf{Z}[1/p]$-linear combinations of elements of the form
    $$\frac{1}{\vol_{P_p}\left(P_p\cap g G_p^\circ g^{-1}\right)}\ch(g G_p^\circ),\ g\in G_p.$$
    \item The lattice $\mathcal{J}_1(G_p/G_p^\circ,\mathbf{Z}[1/p])_{P_p,\delta_P}$ inside $C_c^\infty(G_p/G_p^\circ)_{P_p,\delta_P}$ consists of $\mathbf{Z}[1/p]$-linear combinations of elements of the form 
    $$\frac{1}{\vol_{P_p}\left(P_p\cap gG_p^\circ[p] g^{-1}\right)}\ch(gG_p^\circ),\ g\in G_p.$$
\end{enumerate}
\end{defn}

\begin{lem}\label{lem identification of lattices}
    The map $\Phi$ identifies the lattices 
    \begin{align*}C_c^\infty(P_p\backslash G_p/G_p^\circ,\mathbf{Z}[1/p])&\simeq_{\Phi} \mathcal{J}(G_p/G_p^\circ,\mathbf{Z}[1/p])_{P_p,\delta_P}\\
    C_c^{\infty,1}(P_p\backslash G_p/G_p^\circ,\mathbf{Z}[1/p])&\simeq_{\Phi} \mathcal{J}_1(G_p/G_p^\circ,\mathbf{Z}[1/p])_{P_p,\delta_P}
    \end{align*}
    where $C_c^{\infty,1}$ is defined in \eqref{eq:111}.
\end{lem}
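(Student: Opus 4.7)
The plan is to use \Cref{prop from inv to coinv} to reduce both identifications to matching $\mathbf{Z}[1/p]$-spanning sets under the isomorphism $\Phi$. The first identification is essentially tautological: the indicator functions $\ch(P_p g G_p^\circ)$ form a $\mathbf{Z}[1/p]$-basis of $C_c^\infty(P_p\backslash G_p/G_p^\circ,\mathbf{Z}[1/p])$, and their $\Phi$-images $\tfrac{1}{\vol_{P_p}(P_p \cap gG_p^\circ g^{-1})}\ch(gG_p^\circ)$ are by construction exactly the generators of $\mathcal{J}(G_p/G_p^\circ,\mathbf{Z}[1/p])_{P_p,\delta_P}$.

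For the second identification, write $K = P_p \cap gG_p^\circ g^{-1}$ and $K_1 = P_p(1) \cap gG_p^\circ g^{-1}$. Since $P_p(1)$ is normal in $P_p$ (it is the preimage of $1+p\mathbf{Z}_p$ under $\det$), the index $[K:K_1]$ depends only on the double coset $P_p g G_p^\circ$. The identity $\vol_{P_p}(K) = [K:K_1]\cdot\vol_{P_p}(K_1)$ rewrites
\[
\Phi\bigl(c\,\ch(P_p g G_p^\circ)\bigr) = \frac{c}{[K:K_1]}\cdot\frac{1}{\vol_{P_p}(K_1)}\ch(g G_p^\circ),
\]
which lies in $\mathcal{J}_1$ exactly when $c/[K:K_1]\in\mathbf{Z}[1/p]$. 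Comparing with the pointwise definition of $C_c^{\infty,1}$, the identification reduces to the key claim: $[K:K_1] = p-1$ when $P_p g G_p^\circ$ meets $Z_p(T_p\times\{1\})$, and equals $1$ otherwise.

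To prove this key claim, I would first reduce to a normalized representative. Iwasawa on the second factor followed by a diagonal $P_p$-translation gives a representative $g = (z_1 p_1, z_2)$ with $z_i \in Z_p$ scalar and $p_1 = \left[\begin{smallmatrix}\alpha_1 & \beta_1 \\ & 1\end{smallmatrix}\right] \in P_p$. Since the second factor is scalar and hence central, $gG_p^\circ g^{-1} = (p_1\GL_2(\mathbf{Z}_p)p_1^{-1},\GL_2(\mathbf{Z}_p))$, and a direct matrix computation conjugating $x=\left[\begin{smallmatrix}a & b \\ & 1\end{smallmatrix}\right]$ by $p_1$ gives $K = \{x : a\in\mathbf{Z}_p^\times,\ b\in\mathbf{Z}_p,\ (a-1)\beta_1+b\in\alpha_1\mathbf{Z}_p\}$. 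The image of $K$ under $x\mapsto a\bmod (1+p\mathbf{Z}_p)$ (whose kernel is $K_1$) is all of $(\mathbf{Z}/p\mathbf{Z})^\times$ or trivial according as $v_p(\beta_1)\geq\min(0,v_p(\alpha_1))$ holds or fails, so $[K:K_1]\in\{p-1,1\}$ as claimed. In parallel, one checks that the double coset of $g$ meets $Z_p(T_p\times\{1\})$ iff $p_1 \in P_p^\circ T_p \GL_2(\mathbf{Z}_p)$: preserving a scalar second factor after a $(P_p,G_p^\circ)$-translation forces the acting $y$ to lie in $P_p^\circ$ and the right-factor $k_2$ to be upper-triangular in $\GL_2(\mathbf{Z}_p)$ with unit diagonal, reducing the first-factor condition to $p_1 \in P_p^\circ T_p \GL_2(\mathbf{Z}_p)$. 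A short Iwasawa calculation (using the $B_p\cap\GL_2(\mathbf{Z}_p)$-ambiguity in the $B_p\GL_2(\mathbf{Z}_p)$-decomposition of $p_1$) shows this is again equivalent to $v_p(\beta_1)\geq\min(0,v_p(\alpha_1))$, completing the proof.

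The main obstacle is precisely this consistency: although each direction (the index computation and the double-coset-meeting calculation) is elementary, verifying that they collapse to the same arithmetic condition on $(\alpha_1,\beta_1)$ requires careful tracking of the normalization ambiguity and the scaling freedom on the second factor.
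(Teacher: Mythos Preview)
Your proposal is correct and follows essentially the same approach as the paper: both reduce the second identification to comparing $\vol_{P_p}(P_p\cap gG_p^\circ g^{-1})$ and $\vol_{P_p}(P_p(1)\cap gG_p^\circ g^{-1})$ on explicit representatives $(z_1 p_1,z_2)$, and both find that the ratio is $p-1$ or $1$ according to the same dichotomy on $p_1$. The paper parametrizes the double cosets by the two normal forms $p_1=\left[\begin{smallmatrix}p^m&\\&1\end{smallmatrix}\right]$ and $p_1=\left[\begin{smallmatrix}p^m&p^{-n}\\&1\end{smallmatrix}\right]$ (with $n\geq 1$, $m>-n$) and computes both volumes separately in each case; you instead compute the index $[K:K_1]$ in one stroke via the image of $K$ under $a\mapsto a\bmod(1+p\mathbf{Z}_p)$, which is a mild streamlining but not a genuinely different idea.
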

\begin{proof}
    The first identification follows at once from the definitions and \Cref{prop from inv to coinv}. Thus we deal with the second one. It is straightforward to show that every double coset $P_pg G_p^\circ$ is equal to $P_p(z_1\left[\begin{smallmatrix}
        p^m & \\
        & 1
    \end{smallmatrix}\right],z_2)G_p^\circ$ with $m\in \mathbf{Z}$ and $ z_i\in Z_p$, or $P_p(z_1\left[\begin{smallmatrix}
        p^m & p^{-n}\\
        & 1
    \end{smallmatrix}\right],z_2)G_p^\circ$ with $n\in\mathbf{Z}_{\geq 1}, m\in\mathbf{Z}, m>-n $ and $z_i\in Z_p$. By definition of the two lattices and the map $\Phi$,  it suffices to show that 
    \begin{align*}
        \tfrac{1}{p-1}\cdot \vol_{P_p}\left(P_p\cap g G_p^\circ g^{-1}\right)&= \vol_{P_p}\left(P_p\cap g G_p^\circ[p] g^{-1}\right),\ g= \left(z_1\left[\begin{smallmatrix}
        p^m & \\
        & 1
    \end{smallmatrix}\right],z_2\right)\\
     \vol_{P_p}\left(P_p\cap g G_p^\circ g^{-1}\right)&=\vol_{P_p}\left(P_p\cap g G_p^\circ[p] g^{-1}\right),\ g=\left(z_1\left[\begin{smallmatrix}
        p^m & p^{-n}\\
        & 1
    \end{smallmatrix}\right],z_2\right).
    \end{align*}
    We firstly treat the first case, in which we have $P_p\cap gG_p^\circ g^{-1}=P_p^\circ\cap \left[\begin{smallmatrix}
        p^m & \\
        &1
    \end{smallmatrix}\right] H_p^\circ \left[\begin{smallmatrix}
        p^m & \\
        &1
    \end{smallmatrix}\right]^{-1}.$ A quick matrix calculation shows that this is nothing more than $P_p^\circ$ if $m\leq 0$ and $\left[\begin{smallmatrix}
        \mathbf{Z}_p^\times & p^m\mathbf{Z}_p\\
        & 1
    \end{smallmatrix}\right]$ if $m>0$. The former has volume $1$ and the latter has volume $\frac{1}{p^m}$. On the other hand, if $m\leq 0$ then $P_p\cap g G_p^\circ[p] g^{-1}=\left[\begin{smallmatrix}
        1+p\mathbf{Z}_p & \mathbf{Z}_p\\
        & 1
    \end{smallmatrix}\right]$ which clearly has volume $\frac{1}{p-1}$. If $m>0$ then $P_p\cap g G_p^\circ[p] g^{-1}=\left[\begin{smallmatrix}
        1+p\mathbf{Z}_p & p^m\mathbf{Z}_p \\
        &1
    \end{smallmatrix}\right]$ which has volume $\frac{1}{(p-1)p^m}$ as required. For the second case, note that $m+n$ is strictly greater than zero. Another quick matrix computation shows that in this case the two subgroups have identical volume which is given (up to a power of $p$) by the volume of $\left[\begin{smallmatrix}
        1+p^{n+m}\mathbf{Z}_p & \mathbf{Z}_p\\
        & 1
    \end{smallmatrix}\right]$.
\end{proof}
 We now let $\pi_{p,1},\pi_{p,2}$ be unramified Whittaker type $H_p$-representations with $\omega_{\pi_{p,1}}\neq \omega_{\pi_{p,2}}^{-1}$. One then has a surjective linear map by acting on the spherical vector 
$$C_c^\infty(G_p/G_p^\circ)_{P_p,\delta_P}[\tfrac{1}{1-\mathcal{S}_p}]\longrightarrow (\pi_{p,1}\otimes\pi_{p,2})_{P_p,\delta_P}$$
where the object on the right denotes the $\delta_P$-twisted $P_p$-coinvariants of the representation $\pi_{p,1}\boxtimes \pi_{p,2}$ defined in a similar fashion as $C_c^\infty(G_p/G_p^\circ)_{P_p,\delta_P}$.
\Cref{prop from inv to coinv} and \Cref{thm freeness of P-invariant vectors} then imply that 
$$\dim\ \mathrm{Hom}_{P_p}(\pi_{p,1}\otimes \pi_{p,2},\delta_P)\leq 1$$ and a non-zero linear form in this space is non-vanishing on the spherical vector $W_{\pi_{p,1}}^\mathrm{sph}\otimes W_{\pi_{p,2}}^\mathrm{sph}.$ In our setup we can actually explicitly write down a non-zero element of this space using zeta integrals. The construction is similar to that of \Cref{sec the linear form}. Indeed, the assignment
\begin{align}\label{eq: explicit P-invariant period}\mathscr{D}_{\Pi_p}:\Pi_p:=\pi_{p,1}\otimes \pi_{p,2}\longrightarrow\mathbf{C},\ W_1\otimes W_2\mapsto \lim_{s\rightarrow 0}\frac{\int_{\mathbf{Q}_p^\times}\left(W_1\boxtimes W_2\right)(\left[\begin{smallmatrix}
            a & \\
            & 1
        \end{smallmatrix}\right])\ |a|_p^{s-1}\ d^\times a}{L(\pi_{p,1}\boxtimes\pi_{p,2},s)}\end{align}
is well-defined and gives an element of $\Hom_{P_p}(\pi_{p,1}\otimes \pi_{p,2},\delta_P)$, which takes the value $L(\omega_{\pi_{p,1}\boxtimes \pi_{p,2}},0)^{-1}$ on the normalized spherical vector. This is clearly non-zero since $\omega_{\pi_{p,1}}\neq \omega_{\pi_{p,2}}^{-1}$.

It is also worth noting that $\Lambda_{\Pi_p}$ and $\mathscr{D}_{\Pi_p}$ are related through the following commutative diagram 
\[\begin{tikzcd}[ampersand replacement=\&,sep=scriptsize]
	{C_c^\infty(G_p/G_p^\circ)_{P_p,\delta_P}[\tfrac{1}{1-\mathcal{S}_p}]} \& {(\Pi_p)_{P_p,\delta_P}} \& {\mathbf{C}\simeq(\Pi_p^\vee)^{G_p^\circ}} \\
	{C_c^\infty(P_p\backslash G_p/G_p^\circ)[\tfrac{1}{1-\mathcal{S}_p}]}
	\arrow[two heads, from=1-1, to=1-2]
	\arrow["{\mathscr{D}_{\Pi_p}}", from=1-2, to=1-3]
	\arrow["{\simeq_\Phi}", from=2-1, to=1-1]
	\arrow[bend right =10,"{\Lambda_{\Pi_p}}"', from=2-1, to=1-3]
\end{tikzcd}\]
Passing to an integral level, we have the following result.
\begin{prop}\label{thm integral P-lattice for pi1 x pi2 }
    Let $\Pi_p:=\pi_{p,1}\otimes \pi_{p,2}$ be as above, and assume that the spherical Hecke eigensystem $\Theta_{\Pi_{p}}$ takes values in $\mathcal{R}$ when restricted to $\mathcal{H}_{G_p}^\circ(\mathcal{R})$, for some $\mathbf{Z}[1/p]$-algebra $\mathcal{R}$. Let $\mathcal{Z}$ be a non-zero linear form in $\mathrm{Hom}_{P_p}(\Pi_p,\delta_P)$, normalized to take the value $L(\omega_{\Pi_p},0)^{-1}$ on $W_{\pi_{p,1}}^\mathrm{sph}\otimes W_{\pi_{p,2}}^\mathrm{sph}$. Then:
    \begin{enumerate}
    
    \item The linear form $\mathcal{Z}$ is valued in $\mathcal{R}$ on the lattice 
    $$\mathrm{span}_{\mathcal{R}}\left\{\frac{1}{\vol_{P_p}(P_p\cap gG_p^\circ g^{-1})}\ g\cdot (W_{\pi_{p,1}}^\mathrm{sph}\otimes W_{\pi_{p,2}}^\mathrm{sph})\ |\ g\in G_p\right\}\subseteq\pi_{p,1}\otimes\pi_{p,2}.$$
    \item The linear form $\mathcal{Z}$ is valued in the ideal $\left(p-1,L(\Pi_p,0)^{-1}\right)\subseteq\mathcal{R}$ on the lattice 
    $$\mathrm{span}_\mathcal{R}\left\{\frac{1}{\vol_{P_p}(P_p\cap gG_p^\circ[p] g^{-1})}\ g\cdot (W_{\pi_{p,1}}^\mathrm{sph}\otimes W_{\pi_{p,2}}^\mathrm{sph})\ |\ g\in G_p\right\}\subseteq\pi_{p,1}\otimes\pi_{p,2}.$$
    \end{enumerate}
    \begin{proof}
    By multiplicity one described above, $\mathcal{Z}$ is already completely determined and coincides with $\mathscr{D}_{\Pi_p}$. The value of $\mathscr{D}_{\Pi_p}$ on an element of the lattice in question can be expressed as an 
$\mathcal{R}$-linear combination of elements of the form $\mathscr{D}_{\Pi_p}\left(\xi\cdot (W_{\pi_{p,1}}^\mathrm{sph}\otimes W_{\pi_{p,2}}^\mathrm{sph})\right)$ for some $\xi\in\mathcal{J}(G_p/G_p^\circ,\mathbf{Z}[1/p])_{P_p,\delta_P}$. Then $\xi$ corresponds to some $f$ in $C_c^\infty(P_p\backslash G_p/G_p^\circ,\mathbf{Z}[1/p])$ via $\Phi$. The first part then follows from \Cref{cor integrality for lambda} and the commutative diagram above. For the second part, using the same argument as for the first part, the value of $\mathscr{D}_{\Pi_p}$ evaluated on an element of the lattice in question is an $\mathcal{R}$-linear combination of elements of the form $\Lambda_{\Pi_p}(f)$ where now $f\in C_c^{\infty,1}(P_p\backslash G_p/G_p^\circ,\mathbf{Z}[1/p])$. But by \Cref{thm freeness of P-invariant vectors}, and \Cref{thm conj 1 for P-invariant}, the corresponding Hecke operator $\mathcal{P}_{f,\mathrm{loc}}$, is given by $\tfrac{-\mathcal{S}_p\mathcal{Q}_f^{'}}{1-\mathcal{S}_p}$ where $\mathcal{Q}_f\in(p-1,\mathcal{P}_p(1))\subseteq\mathcal{H}_{G_p}^\circ(\mathbf{Z}[1/p])$ . Hence by \Cref{prop equivariance of linear form}, $\Lambda_{\Pi_p}(f)=\Lambda_{\Pi_p}(\mathcal{P}_{f,\mathrm{loc}}\cdot f_0)=\Theta_{\Pi_p}(\mathcal{Q}_f)\Theta_{\Pi_p}(1-\mathcal{S}_p)^{-1}\Lambda_{\Pi_p}(f_0)=\Theta_{\Pi_p}(\mathcal{Q}_f)$. This gives the result.
\end{proof}
\end{prop}

\subsubsection{Necessity of inverting $\scalemath{0.9}{1-\mathcal{S}_p}$ via an $\mathrm{Ext}$-vanishing result of Prasad}\label{sec Ext vanishing} Recall \Cref{thm freeness of P-invariant vectors} which states that the module $C_c^\infty(P_p\backslash G_p/G_p^\circ)[\frac{1}{1-\mathcal{S}_p}]$ is free of rank one over $\mathcal{H}_{G_p}^\circ[\frac{1}{1-\mathcal{S}_p}]$, generated by $\ch(P_pG_p^\circ)$. A natural thing to ask is whether the localisation at the Hecke operator $\scalemath{0.9}{1-\mathcal{S}_p}$ is optimal and necessary. In other words is the module $C_c^\infty(P_p\backslash G_p/G_p^\circ)$ generated by $\ch(P_pG_p^\circ)$ over $\mathcal{H}_{G_p}^\circ$. As the title of this section suggests, the answer is no. Indeed, if $C_c^\infty(P_p\backslash G_p/G_p^\circ)$ was cyclic as a $\mathcal{H}_{G_p}^\circ$-module, then our earlier argument would show that 
$$\dim\ \mathrm{Hom}_{P_p}(\pi_{p,1}\otimes \pi_{p,2},\delta_P)\leq 1$$
for \textit{every} unramified Whittaker type $\pi_{p,1},
\pi_{p,2}$. In particular,
$$\dim\ \mathrm{Hom}_{P_p}(\pi_p\otimes \pi_p^\vee,\delta_P)\leq 1$$
for every unramified irreducible principal-series $\pi_p$ with $\omega_{\pi_p}=1$. By the same argument as the one in \Cref{eq: chain of isos}, one can deduce that the Hom-space above is nothing more than
        $$\mathrm{Hom}_{H_p}\left(\pi_p\otimes \pi_p^\vee\otimes\mathrm{ind}_{B_p}^{H_p}\delta_B^{-1/2},\mathbf{1}\right)= \mathrm{Hom}_{\mathrm{PGL_2(\mathbf{Q}_p})}\left(\pi_p\otimes \pi_p^\vee\otimes\mathrm{ind}_{B_p}^{H_p}\delta_B^{-1/2},\mathbf{1}\right).$$
        It is well-known that the non-Whittaker type principal-series $\mathrm{ind}_{B_p}^{H_p}\delta_B^{-1/2}$ fits into a short exact sequence of $\mathrm{PGL_2(\mathbf{Q}_p})$-modules 
        $$0\longrightarrow \mathbf{1}\longrightarrow \mathrm{ind}_{B_p}^{H_p}\delta_B^{-1/2}\longrightarrow\mathrm{St}\longrightarrow 0.$$
        Tensoring with $\pi_p$ and taking $\mathrm{Ext}_{\mathrm{PGL}_2(\mathbf{Q}_p)}^{*}\left(-,\pi_p\right)$, we obtain a long exact sequence
        
        \begin{align}\label{eq:28}0\longrightarrow \mathrm{Hom}_{\mathrm{PGL_2(\mathbf{Q}_p})}\left(\pi_p\otimes\pi_p^\vee\otimes \mathrm{St},\mathbf{1}\right)&\longrightarrow\mathrm{Hom}_{\mathrm{PGL_2(\mathbf{Q}_p})}\left(\pi_p\otimes \pi_p^\vee\otimes \mathrm{ind}_{B_p}^{H_p}\delta_B^{-1/2},\mathbf{1}\right)\\
        \nonumber&\longrightarrow \mathrm{Hom}_{\mathrm{PGL}_2(\mathbf{Q}_p)}\left(\pi_p\otimes \pi_p^\vee,\mathbf{1}\right)\longrightarrow \mathrm{Ext}_{\mathrm{PGL}_2(\mathbf{Q}_p)}^{1}\left(\pi_p\otimes \mathrm{St},\pi_p\right).
        \end{align}
       By \cite[Proposition $2.6$]{Prasad2018ExtICM} and self-duality of $\mathrm{St}$, the $\mathrm{Ext}$-group appearing above is naturally isomorphic to $\mathrm{Ext}_{\mathrm{PGL}_2(\mathbf{Q}_p)}^{1}\left(\pi_p\otimes \pi_p^\vee,\mathrm{St}\right)$. By the Schneider-Stuhler duality theorem \cite[Theorem $8.1$]{Prasad2018ExtICM} and the fact that $\mathrm{Ext}_{\mathrm{PGL}_2(\mathbf{Q}_p)}^{i}\left(\pi_p\otimes \pi_p^\vee,\mathrm{St}\right)=0$ for $i>1$ (which is the split rank of $\mathrm{PGL_2})$, we have
       $$\mathrm{Ext}_{\mathrm{PGL}_2(\mathbf{Q}_p)}^{1}\left(\pi_p\otimes \pi_p^\vee,\mathrm{St}\right)\simeq \Hom_{\mathrm{PGL}_2(\mathbf{Q}_p)}\left(\mathbf{1},\pi_p\otimes \pi_p^\vee\right).$$
       However, this space vanishes by \cite[Proposition $8.1$]{Prasad2018ExtICM}. The space $
           \mathrm{Hom}_{\mathrm{PGL}_2(\mathbf{Q}_p)}\left(\pi_p\otimes \pi_p^\vee,\mathbf{1}\right)$ is at least one dimensional and $
           \mathrm{Hom}_{\mathrm{PGL}_2(\mathbf{Q}_p)}\left(\pi_p\otimes \pi_p^\vee\otimes \mathrm{St},\mathbf{1}\right)$ is precisely one dimensional,
       where the second fact follows from the main result of \cite{prasad1990trilinear}. Thus going back to \eqref{eq:28}, we finally deduce that 
       $$\dim\ \mathrm{Hom}_{\mathrm{PGL}_2(\mathbf{Q}_p)}\left(\pi_p\otimes \pi_p^\vee\otimes \mathrm{ind}_{B_p}^{H_p}\delta_B^{-1/2},\mathbf{1}\right)\geq 2$$
       Of course this contradicts the bound $\dim\ \mathrm{Hom}_{P_p}\left(\pi_p\otimes \pi_p^\vee,\delta_P\right)\leq 1$. Thus the $\mathcal{H}_{G_p}^\circ$-module $C_c^\infty(P_p\backslash G_p/G_p^\circ)$ is indeed not generated by $\ch(P_pG_p^\circ)$ without localising at the Hecke operator $\scalemath{0.9}{1-\mathcal{S}_p}$.

\subsubsection{Relation to branching laws for $\ell$-modular representations} We fix a prime $\ell\neq p$ and we identify the residue field of $\overline{\mathbf{Z}}_\ell$ with $\overline{\mathbf{F}}_\ell$. We fix a choice of $p^i$ in $\overline{\mathbf
F}_\ell$ for each $i\in\mathbf
{Z}$, compatible with the projection from $\overline{\mathbf{Z}}_\ell$ to $\overline{\mathbf{F}}_\ell$. The character $\delta_P$ is still given (using different notation) by $\delta_P(\left[\begin{smallmatrix}
    a & b\\
     & 1
\end{smallmatrix}\right])=p^{-v_p(a)}$. We have algebra and module structures on $\mathcal{H}_{G_p}^\circ(\overline{\mathbf{F}}_\ell)$, $C_c^\infty(P_p\backslash G_p/G_p^\circ,\overline{\mathbf{F}}_\ell), C_c^\infty(G_p/G_p^\circ,\overline{\mathbf{F}}_\ell)_{P_p,\delta_P}$ by lifting to the corresponding $\overline{\mathbf{Z}}_\ell$-points and then projecting back down. Clearly this is independent of the choice of lifts and thus well-defined. 

As in \cite[\S $3.3$]{groutides2024integral} we say that an unramified $\overline{\mathbf{F}}_\ell$-linear 
 $H_p$-representation $\pi_p$ is \textit{non-degenerate} if $\dim\ (\pi_p)^{H_p^\circ}$ is equal to $1$ and $(\pi_p)^{H_p^\circ}$ generates $\pi_p$. For such a representation we still write $W_{\pi_p}^\mathrm{sph}$ for its spherical vector. The irreducible $\overline{\mathbf{F}}_\ell$-linear unramified principal-series are indeed non-degenerate. This is part of \cite{minguez2014unramified}, but for $\GL_2$, it's been known before that as mentioned in \cite{barthel1995modular}. For the reducible cases, it is a bit more subtle compared to the characteristic zero case (see for example \cite{vigneras1989representations}). 
 \begin{thm}\label{thm modular mult 1 mirabolic}
     Suppose $\ell\nmid p(p-1)$ and let $\pi_{p,1},\pi_{p,2}$ be unramified non-degenerate $\overline{\mathbf{F}}_\ell$-linear $H_p$-representations with $\omega_{\pi_{p,1}}(p)\neq \omega_{\pi_{p,2}}^{-1}(p)$ in $\overline{\mathbf{F}}_\ell^\times$. Then
     $$\dim_{\overline{\mathbf{F}}_\ell}\ \mathrm{Hom}_{P_p}(\pi_{p,1}\otimes\pi_{p,2},\delta_P)\leq 1$$
     and any such non-zero linear form is non-vanishing on $W_{\pi_{p,1}}^\mathrm{sph}\otimes W_{\pi_{p,2}}^\mathrm{sph}$.
     \begin{proof}
         By fixing an isomorphism $\mathbf{C}\simeq\overline{\mathbf{Q}}_\ell$, it follows from \Cref{cor freeness at integral level for P-invariants} that $C_c^\infty(P_p\backslash G_p/G_p^\circ,\overline{\mathbf{Z}}_\ell)[\frac{1}{1-\mathcal{S}_p}]$ is cyclic as a $\mathcal{H}_{G_p}^\circ(\overline{\mathbf{Z}}_\ell)[\frac{1}{1-\mathcal{S}_p}]$-module generated by $\ch(P_pG_p^\circ)$. From this, it is clear that $C_c^\infty(P_p\backslash G_p/G_p^\circ,\overline{\mathbf{F}}_\ell)[\frac{1}{1-\mathcal{S}_p}]$ is also cyclic as a $\mathcal{H}_{G_p}^\circ(\overline{\mathbf{F}}_\ell)[\frac{1}{1-\mathcal{S}_p}]$-module generated by $\ch(P_pG_p^\circ)$. We have a commutative square
         \[\begin{tikzcd}[ampersand replacement=\&,cramped,sep=scriptsize]
	{C_c^\infty(P_p\backslash G_p/G_p^\circ,\overline{\mathbf{Z}}_\ell)} \& {C_c^\infty(G_p/G_p^\circ,\overline{\mathbf{Z}}_\ell)_{P_p,\delta_P}} \\
	{C_c^\infty(P_p\backslash G_p/G_p^\circ,\overline{\mathbf{F}}_\ell)} \& {C_c^\infty(G_p/G_p^\circ,\overline{\mathbf{F}}_\ell)_{P_p,\delta_P}}
	\arrow["{\nu_\ell}", two heads, from=1-1, to=2-1]
	\arrow["{\tau_\ell}"', two heads, from=1-2, to=2-2]
	\arrow["\Phi", hook, from=1-1, to=1-2]
	\arrow["\tilde{\Phi}", from=2-1, to=2-2]
\end{tikzcd}\]
where $\Phi$ is the map from \Cref{prop from inv to coinv} and $\tilde{\Phi}$ is defined by first lifting through $\nu_\ell$ and then applying $\Phi$ and $\tau_\ell$, which is clearly well-defined. From the volume calculations in the proof of \Cref{lem identification of lattices}, the definition of $\Phi$ and the assumption on $\ell\nmid p(p-1)$, it follows that $\tilde{\Phi}$ is surjective. A diagram chase using injectivity of $\Phi$ also shows that $\tilde{\Phi}$ is injective, and finally $\mathcal{H}_{G_p}^\circ(\overline{\mathbf{F}}_\ell)$-equivariance follows from the definitions. Thus, $C_c^\infty(G_p/G_p^\circ,\overline{\mathbf{F}}_\ell)_{P_p,\delta_P}[\frac{1}{1-\mathcal{S}_p}]$ is a cyclic $\mathcal{H}_{G_p}^\circ(\overline{\mathbf{F}}_\ell)[\frac{1}{1-\mathcal{S}_p}]$-module generated by $\ch(G_p^\circ)$. Of course, the linear map $C_c^\infty(G_p/G_p^\circ,\overline{\mathbf{F}}_\ell)\rightarrow \Pi_p=\pi_{p,1}\boxtimes \pi_{p,2}$ can be defined in a completely algebraic manner via $\ch(gG_p^\circ)\mapsto g\cdot W_{\Pi_p}^\mathrm{sph}$. Then the linear map $$C_c^\infty(G_p/G_p^\circ,\overline{\mathbf{F}}_\ell)_{P_p,\delta_P}[\tfrac{1}{1-\mathcal{S}_p}]\longrightarrow (\Pi_p)_{P_p,\delta_P},\ \tfrac{\ch(gG_p^\circ)}{(1-\mathcal{S}_p)^{i}}\mapsto \tfrac{g\cdot W_{\Pi_p}^\mathrm{sph}}{(1-\omega_{\Pi_p}(p))^{i}},\ g\in G_p, i\in\mathbf{Z}_{\geq 0}$$ is surjective. 
By the cyclicity mentioned above and the fact that for such modular representations the space $(\Pi_p)^{G_p^\circ}$ is still one dimensional, its image is generated by the spherical vector and the
result follows.
     \end{proof}
 \end{thm}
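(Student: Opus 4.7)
The plan is to lift the modular situation to characteristic zero via a fixed isomorphism $\mathbf{C}\simeq\overline{\mathbf{Q}}_\ell$, exploit the integral freeness result \Cref{cor freeness at integral level for P-invariants} over $\overline{\mathbf{Z}}_\ell$, and then reduce modulo the maximal ideal of $\overline{\mathbf{Z}}_\ell$. Concretely, \Cref{cor freeness at integral level for P-invariants} says that $C_c^\infty(P_p\backslash G_p/G_p^\circ,\overline{\mathbf{Z}}_\ell)[\tfrac{1}{1-\mathcal{S}_p}]$ is free of rank one over $\mathcal{H}_{G_p}^\circ(\overline{\mathbf{Z}}_\ell)[\tfrac{1}{1-\mathcal{S}_p}]$, generated by $\ch(P_pG_p^\circ)$; this cyclicity is preserved by the surjection $\overline{\mathbf{Z}}_\ell\twoheadrightarrow\overline{\mathbf{F}}_\ell$, giving at once that $C_c^\infty(P_p\backslash G_p/G_p^\circ,\overline{\mathbf{F}}_\ell)[\tfrac{1}{1-\mathcal{S}_p}]$ is cyclic over $\mathcal{H}_{G_p}^\circ(\overline{\mathbf{F}}_\ell)[\tfrac{1}{1-\mathcal{S}_p}]$ with the same generator.

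The next step is to transport this cyclicity across the Hecke-equivariant isomorphism $\Phi$ of \Cref{prop from inv to coinv}. The map $\Phi$ involves the normalisation factors $\vol_{P_p}(P_p\cap gG_p^\circ g^{-1})^{-1}$; the explicit volume computation in the proof of \Cref{lem identification of lattices} shows that these factors are always of the form $p^m$ or $p^m(p-1)$ for some $m\in\mathbf{Z}$, and so are units in $\overline{\mathbf{Z}}_\ell$ precisely under the hypothesis $\ell\nmid p(p-1)$. One therefore defines an $\ell$-modular analogue $\tilde{\Phi}$ by lifting, applying $\Phi$ and projecting back to $\overline{\mathbf{F}}_\ell$; a short diagram chase with the reduction maps shows that $\tilde{\Phi}$ is a well-defined, $\mathcal{H}_{G_p}^\circ(\overline{\mathbf{F}}_\ell)$-equivariant isomorphism. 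Consequently $C_c^\infty(G_p/G_p^\circ,\overline{\mathbf{F}}_\ell)_{P_p,\delta_P}[\tfrac{1}{1-\mathcal{S}_p}]$ is cyclic, generated by the image of $\ch(G_p^\circ)$.

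Finally, the purely algebraic map $\ch(gG_p^\circ)\mapsto g\cdot W_{\Pi_p}^{\mathrm{sph}}$ from $C_c^\infty(G_p/G_p^\circ,\overline{\mathbf{F}}_\ell)$ to $\Pi_p=\pi_{p,1}\boxtimes\pi_{p,2}$ descends after twisted $P_p$-coinvariants and localisation at $1-\mathcal{S}_p$ to a surjection onto $(\Pi_p)_{P_p,\delta_P}$; the localisation is legitimate because $\mathcal{S}_p$ acts on $\Pi_p$ as the scalar $\omega_{\Pi_p}(p)=\omega_{\pi_{p,1}}(p)\omega_{\pi_{p,2}}(p)$, which by hypothesis differs from $1$ in $\overline{\mathbf{F}}_\ell^\times$. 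Since $(\Pi_p)^{G_p^\circ}$ is one-dimensional by non-degeneracy of both factors, the image of the cyclic generator $\ch(G_p^\circ)$ is (a non-zero scalar multiple of) $W_{\pi_{p,1}}^{\mathrm{sph}}\otimes W_{\pi_{p,2}}^{\mathrm{sph}}$, so $(\Pi_p)_{P_p,\delta_P}$ is at most one-dimensional over $\overline{\mathbf{F}}_\ell$ and the spherical vector has non-zero image there; dualising gives the claimed statement about $\mathrm{Hom}_{P_p}(\Pi_p,\delta_P)$ and its non-vanishing on the spherical test vector.

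The main obstacle is the middle step: the passage from characteristic-zero cyclicity to $\ell$-modular cyclicity through $\Phi$. One really has to pin down the denominators that appear in the definition of $\Phi$ (which is why the hypothesis on $\ell$ comes in as precisely $\ell\nmid p(p-1)$, not just $\ell\neq p$), and confirm that integrality of $\tilde{\Phi}$ and $\tilde{\Phi}^{-1}$ hold simultaneously so as to yield an isomorphism rather than a mere surjection or injection. Everything else — the reduction mod $\ell$, the action on the spherical vector, the one-dimensionality of $(\Pi_p)^{G_p^\circ}$ — is either immediate from the integral theory already developed or follows from the assumption on central characters that makes $1-\mathcal{S}_p$ invertible on $\Pi_p$.
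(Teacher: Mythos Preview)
Your proposal is correct and follows essentially the same route as the paper's own proof: reduce the integral cyclicity of \Cref{cor freeness at integral level for P-invariants} modulo the maximal ideal, transport it across an $\overline{\mathbf{F}}_\ell$-analogue $\tilde{\Phi}$ of the isomorphism $\Phi$ from \Cref{prop from inv to coinv} (using the volume computations of \Cref{lem identification of lattices} and the hypothesis $\ell\nmid p(p-1)$ to ensure the normalisation factors are units), and then map onto $(\Pi_p)_{P_p,\delta_P}$ via the spherical vector, with the central-character condition making $1-\mathcal{S}_p$ invertible on $\Pi_p$. Your explicit identification of the volume factors as $p^m$ or $p^m(p-1)$ is exactly the reason the paper invokes \Cref{lem identification of lattices} at this point.
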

 \begin{cor}
     Suppose $\ell\nmid p(p-1)$. Let $\pi_{p,1},\pi_{p,2}$ be unramified non-degenerate $\overline{\mathbf{F}}_\ell$-linear representations of $H_p$ and let $\pi_{p,3}$ be an unramified normalized parabolic induction  $\mathrm{ind}_{B_p}^{H_p}\left[\begin{smallmatrix}
         \chi_{p,1} \\
          & \chi_{p,2}
     \end{smallmatrix}\right]$ with $(\chi_{p,1}/\chi_{p,2})(p)\neq p$ in $\overline{\mathbf{F}}_\ell^\times$. Then
     $$\dim_{\overline{\mathbf{F}}_\ell}\ \mathrm{Hom}_{H_p}(\pi_{p,1}\otimes \pi_{p,2}\otimes \pi_{p,3},\mathbf{1})\leq 1.$$
     If furthermore $\pi_{p,3}$ is also non-degenerate then any such non-zero linear form does not vanish on the spherical vector $W_{\pi_{p,1}}^\mathrm{sph}\otimes W_{\pi_{p,2}}^\mathrm{sph}\otimes W_{\pi_{p,3}}^\mathrm{sph}$.
     \begin{proof}
          Again by twisting we may assume that $\pi_{p,3}$ has trivial central character, i.e. $\chi_{p,2}=\chi_{p,1}^{-1}$ and $\omega_{\pi_{p,1}}\omega_{\pi_{p,2}}=1$. Using the fact that $B_p=Z_p\times P_p$, one can check that $$\pi_{p,3}^\vee\simeq I(\chi_{p,1}^{-1},\chi_{p,1})\simeq \left(\mathrm{Ind}_{P_p}^{H_p}\left(\chi_{p,1}^{-1}\delta_P^{1/2}\right)\right)_1$$where the subscript on the right denotes the eigenspace of the trivial character with respect to the action of the center. Thus, we have a chain of isomorphisms
\begin{align}\label{eq: chain of isos}
     \mathrm{Hom}_{H_p}(\pi_{p,1}\otimes \pi_{p,2}\otimes \pi_{p,3},\mathbf{1})&\simeq\mathrm{Hom}_{H_p}\left(\pi_{p,1}\otimes \pi_{p,2}, I(\chi_{p,1}^{-1},\chi_{p,1})\right)\\
   \nonumber &\simeq \mathrm{Hom}_{P_p}\left(\pi_{p,1}\otimes \pi_{p,2},\chi_{p,1}^{-1}\delta_P^{1/2}\right)\\
\nonumber&\simeq\mathrm{Hom}_{P_p}\left(\pi_{p,1}\otimes\pi_{p,2}\otimes\chi_{p,1}\delta_P^{1/2},\delta_P\right)
\end{align}
where the second isomorphism follows by Frobenius reciprocity for the non-compact induction (e.g \cite[III $2.5$]{renard2010representations}). However, as a $P_p$-representation, $\pi_{p,1}\otimes\pi_{p,2}\otimes \chi_{p,1}\delta_P^{1/2}$ is isomorphic to $\pi_{p,1}\otimes\sigma_{p,2}$ where $\sigma_{p,2}$ is the unramified Whittaker type $H_p$-representation given by $\pi_{p,2}\otimes \chi_{p,1}|\cdot|_p^{1/2}$. By assumption, we have $\chi_{p,1}(p)^2\neq p$ and thus the product of central characters $(\omega_{\pi_{p,1}}\omega_{\sigma_{p,2}})(p)=\chi_{p,1}(p)^2\cdot p^{-1}\neq 1$. Then the bound on the dimension follows from \Cref{thm modular mult 1 mirabolic}. Finally, to show that if such a non-zero linear form exists then it doesn't vanish on the spherical vector (assuming $\pi_{p,3}$ is non-degenerate), one needs to trace through the various isomorphisms of $\mathrm{Hom}$-spaces above (Frobenius reciprocity and duality) while using \cite[Lemme $18$ part $(3)$]{vigneras1989representations} and its proof, and the corresponding result of \Cref{thm modular mult 1 mirabolic}.
     \end{proof}
 \end{cor}

       \begin{rem}
           We would also like to remark that for irreducible representations $\pi_{p,i}$ (even ramified ones) the uniqueness of the triple-product period above, can also be obtained by the proof of \cite{prasad1990trilinear}, which works verbatim in this $\ell\nmid p(p-1)$ modular setting. This has been verified through personal correspondence with Dipendra Prasad.
       \end{rem}

\subsection{Integral lattice in $\mathcal{S}(\mathbf{Q}_p^2)\otimes \pi_{p,1}\otimes \pi_{p,2}$}\label{sec in lattice in S x pi1 x pi2}
 
As usual, we let $\pi_{p,1},\pi_{p,2}$ be unramified $H_p$-representations of Whittaker type. By \Cref{prop JSPS} we know that a non-zero $H_p$-invariant period in $\Hom_{H_p}(\mathcal{S}(\mathbf{Q}_p^2)\otimes \pi_{p,1}\otimes \pi_{p,2},\mathbf{1})$ is unique up to scalars and is non-vanishing on the unramified vector $\ch(\mathbf{Z}_p^2)\otimes W_{\pi_{p,1}}^\mathrm{sph}\otimes W_{\pi_{p,2}}^\mathrm{sph}$. We can thus rescale it so it maps $\ch(\mathbf{Z}_p^2)\otimes W_{\pi_{p,1}}^\mathrm{sph}\otimes W_{\pi_{p,2}}^\mathrm{sph}$ to $1$ and we call such a period \textit{normalized}. In fact we will see later, when we discuss an application of this to global Rankin-Selberg integrals, that such a non-zero period always exists and we will explicitly write it down. For now though, we need only consider an abstract period of this form, without any sort of realization of it. 
\begin{thm}\label{thm integral JPSS periods}
    Let $\mathcal{Z}$ be the non-zero normalized $H_p$-invariant period in $\Hom_{H_p}(\mathcal{S}(\mathbf{Q}_p^2)\otimes \pi_{p,1}\otimes \pi_{p,2},\mathbf{1})$. Assume that the spherical Hecke eigensystem of $\pi_{p,1}\boxtimes \pi_{p,2}$ restricts to a morphism $\mathcal{H}_{G_p}^\circ(\mathcal{R})\rightarrow\mathcal{R}$ for some $\mathbf{Z}[1/p]$-algebra $\mathcal{R}\subseteq\mathbf{C}$. Then:
    \begin{enumerate}
        \item The period $\mathcal{Z}$ is valued in $\mathcal{R}$ on the lattice 
        \begin{align*}
           \scalemath{0.9}{ \mathrm{span}_{\mathcal{R}}\left\{\phi\otimes g\left(W_{\pi_{p,1}}^\mathrm{sph}\otimes W_{\pi_{p,2}}^\mathrm{sph}\right)\ |\ g\in G_p\ \mathrm{and}\ \phi\in\mathcal{S}(\mathbf{Q}_p^2)\ \mathrm{valued}\ \mathrm{in}\ \frac{1}{\vol_{H_p}\left(\mathrm{Stab}_{H_p}(\phi)\cap g G_p^\circ g^{-1}\right)}\mathcal{R}\right\}.}
        \end{align*}
        \item The period $\mathcal{Z}$ is valued in the ideal $\left(p-1,L(\pi_{p,1}\boxtimes \pi_{p,2},0)^{-1}\right)\subseteq\mathcal{R}$ on the lattice 
        \begin{align*}
           \scalemath{0.9}{ \mathrm{span}_{\mathcal{R}}\left\{\phi\otimes g\left(W_{\pi_{p,1}}^\mathrm{sph}\otimes W_{\pi_{p,2}}^\mathrm{sph}\right)\ |\ g\in G_p\ \mathrm{and}\ \phi\in\mathcal{S}_0(\mathbf{Q}_p^2)\ \mathrm{valued}\ \mathrm{in}\ \frac{1}{\vol_{H_p}\left(\mathrm{Stab}_{H_p}(\phi)\cap g G_p^\circ[p] g^{-1}\right)}\mathcal{R}\right\}.}
        \end{align*}
        \item If $\omega_p:=\omega_{\pi_{p,1}}\omega_{\pi_{p,2}}\neq 1$, the period $\mathcal{Z}$ satisfies
        $$L(\omega_p,0)^{-1}\mathcal{Z}(-)\in \left(p-1,L(\pi_{p,1}\boxtimes \pi_{p,2},0)^{-1}\right)\subseteq\mathcal{R}$$
        on the lattice
        \begin{align*}
           \scalemath{0.9}{ \mathrm{span}_{\mathcal{R}}\left\{\phi\otimes g\left(W_{\pi_{p,1}}^\mathrm{sph}\otimes W_{\pi_{p,2}}^\mathrm{sph}\right)\ |\ g\in G_p\ \mathrm{and}\ \phi\in\mathcal{S}(\mathbf{Q}_p^2)\ \mathrm{valued}\ \mathrm{in}\ \frac{1}{\vol_{H_p}\left(\mathrm{Stab}_{H_p}(\phi)\cap g G_p^\circ[p] g^{-1}\right)}\mathcal{R}\right\}.}
        \end{align*}
    \end{enumerate}
    \begin{proof}
        The proof relies on the three parts of \Cref{thm local norm relations}, in particular the integral behavior of the Hecke operators $\mathcal{P}_\delta$ attached to integrals elements $\delta$. To obtain the result one combines this with \cite[Proposition $4.4.1$]{Loeffler_2021}, whose proof is formal and adapts to our setup in an identical fashion. 
    \end{proof}
\end{thm}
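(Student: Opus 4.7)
The strategy is to recast the period $\mathcal{Z}$ as a $(G_p\times H_p)$-equivariant map into an auxiliary smooth representation, and then read off each of the three integrality statements directly from the corresponding part of the local abstract norm relations (\Cref{thm local norm relations}). This is a formal adaptation of Proposition~4.4.1 in \cite{Loeffler_2021}.

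Concretely, I would set $V := \pi_{p,1}^\vee \boxtimes \pi_{p,2}^\vee$ and let $\mathfrak{Z}\colon \mathcal{S}(\mathbf{Q}_p^2)\otimes \mathcal{H}(G_p)\to V$ be the $(G_p\times H_p)$-equivariant map corresponding to $\mathcal{Z}$ via the canonical identification recalled after \Cref{def equiv maps}, characterised by $\mathfrak{Z}(\phi\otimes\xi)(W) = \mathcal{Z}(\phi\otimes \xi\cdot W)$ for $W\in \Pi_p := \pi_{p,1}\otimes\pi_{p,2}$. Since $V^{G_p^\circ}$ is one-dimensional, the normalisation of $\mathcal{Z}$ together with $\vol_{G_p}(G_p^\circ)=1$ gives $\mathfrak{Z}(\delta_0)(W^{\mathrm{sph}})=1$, and a change of variables $g\mapsto g^{-1}$ in the Hecke convolution produces the bridge identity
\[
(\theta\cdot \mathfrak{Z}(\delta_0))(W^{\mathrm{sph}}) = \Theta_{\Pi_p}(\theta'),\qquad \theta\in \mathcal{H}_{G_p}^\circ,
\]
where $\theta'$ denotes the image of $\theta$ under the inversion automorphism. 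This identity converts Hecke-operator containments from \Cref{thm local norm relations} into scalar-valued containments for $\mathcal{Z}$.

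For part $(1)$, the generator $\phi\otimes gW^{\mathrm{sph}}$ is extracted from $\delta:=\phi\otimes\ch(gG_p^\circ)\in \mathcal{I}(G_p/G_p^\circ,\mathcal{R})$ by pairing $\mathfrak{Z}(\delta)$ with $W^{\mathrm{sph}}$, and \Cref{thm local norm relations}$(1)$ then yields $\mathcal{Z}(\phi\otimes gW^{\mathrm{sph}})=\Theta_{\Pi_p}(\mathcal{P}_\delta')\in\mathcal{R}$, since $\mathcal{P}_\delta'\in\mathcal{H}_{G_p}^\circ(\mathbf{Z}[1/p])$ and $\Theta_{\Pi_p}$ maps this subalgebra into $\mathcal{R}$ by hypothesis. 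For part $(2)$, one takes $\delta:=\phi\otimes\ch(gG_p^\circ[p])\in\mathcal{I}_0(G_p/G_p^\circ[p],\mathcal{R})$; because $W^{\mathrm{sph}}$ is $G_p^\circ$-fixed, the norm along $G_p^\circ[p]\subseteq G_p^\circ$ contributes a factor $[G_p^\circ:G_p^\circ[p]]=p-1$, exactly cancelling the volume factor $\vol(G_p^\circ[p])=(p-1)^{-1}$ carried by $\ch(gG_p^\circ[p])\cdot W^{\mathrm{sph}}$. Applying \Cref{thm local norm relations}$(2)$ gives $\mathcal{Z}(\phi\otimes gW^{\mathrm{sph}})=\Theta_{\Pi_p}(\mathcal{P}_{\mathrm{Tr}(\delta)}')$ with $\mathcal{P}_{\mathrm{Tr}(\delta)}'\in (p-1,\mathcal{P}_p(1))$, and \Cref{def Euler polynomial factor} identifies $\Theta_{\Pi_p}(\mathcal{P}_p(1))=L(\Pi_p,0)^{-1}$, giving the claimed containment. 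Part $(3)$ proceeds identically using \Cref{thm local norm relations}$(3)$: the hypothesis $\omega_p\neq 1$ ensures that $\Theta_{\Pi_p}(1-\mathcal{S}_p)=L(\omega_p,0)^{-1}\neq 0$, and an appropriate multiple involving $L(\omega_p,0)$ clears the single $(1-\mathcal{S}_p)^{-1}$-denominator carried by $\mathcal{P}_{\mathrm{Tr}(\delta)}'$.

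The main obstacle is purely bookkeeping: keeping track of the inversion automorphism $'$ on $\mathcal{H}_{G_p}^\circ$ (which interchanges $\Theta_V=\Theta_{\Pi_p^\vee}$ with $\Theta_{\Pi_p}$ under the pairing against $W^{\mathrm{sph}}$), the two different volume normalisations for $G_p^\circ$ and $G_p^\circ[p]$, and the scaling imposed by the normalisation of $\mathcal{Z}$. Once the bridge identity is in place, no further representation-theoretic input is required beyond \Cref{thm local norm relations}.
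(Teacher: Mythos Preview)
Your proposal is correct and follows exactly the approach the paper indicates: it applies the three parts of \Cref{thm local norm relations} to the $(G_p\times H_p)$-equivariant map associated to $\mathcal{Z}$, which is precisely the formal adaptation of \cite[Proposition~4.4.1]{Loeffler_2021} that the paper invokes. You have simply unpacked the details (the bridge identity via $\Theta_{\Pi_p^\vee}=\Theta_{\Pi_p}\circ(-)'$, the volume bookkeeping for $G_p^\circ[p]$, and the clearing of the $(1-\mathcal{S}_p)^{-1}$ denominator in part~$(3)$ using $\Theta_{\Pi_p}(1-\mathcal{S}_p)=L(\omega_p,0)^{-1}$) that the paper leaves implicit.
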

\begin{rem}
    Notice how parts $(2)$ and $(3)$ compared to part $(1)$, of \Cref{thm integral JPSS periods}, have the determinant level condition $G_p^\circ[p]$. Also, note that part $(3)$  compared to part $(2)$ allows for Schwartz functions which might not vanish at the origin, at the expense of a potential Dirichlet $L$-factor popping up.
\end{rem}
\begin{rem}
    The assumption on the spherical Hecke eigensystem of $\pi_{p,1}\boxtimes \pi_{p,2}$ is a natural one to make from a global viewpoint as we will see later when we discuss an application of this to global Rankin-Selberg integrals for product of modular forms.
    \end{rem}
\noindent 
\subsection{Integral lattice in $\pi_{p,1}\otimes \pi_{p,2}\otimes \pi_{p,3}$}\label{sec integral lattice in pi1 x pi2 x pi3}
For notational convenience we now let $\pi_{p,3}$ be an unramified Whittaker type $H_p$-representation of the form $I(\chi_p,|\cdot|_p^{-1/2})$ such that $\omega_{\pi_{p,1}}\omega_{\pi_{p,2}}\omega_{\pi_{p,3}}=1$. From \Cref{thm trilinear forms prasad} we know that a non-zero invariant period in $\Hom_{H_p}(\pi_{p,1}\otimes \pi_{p,2}\otimes \pi_{p,3},\mathbf{1})$ is unique up to scalars and is non vanishing on the spherical vector $W_{\pi_{p,1}}^\mathrm{sph}\otimes W_{\pi_{p,2}}^\mathrm{sph}\otimes W_{\pi_{p,3}}^\mathrm{sph}$. We can thus rescale it to map this spherical vector to $1$ and call such a period \textit{normalized}. In fact by \cite{harris2001note} such a period always exists. Once again we can make use of the $H_p$-equivariant map $$\mathcal{S}(\mathbf{Q}_p^2)\rightarrow \pi_{p,3},\ \phi\mapsto f_{\hat{\phi},\chi_p}$$ constructed in \cite{Loeffler_Skinner_Zerbes_2021}, which we describe in this section for completeness: For a Schwartz function $\phi\in\mathcal{S}(\mathbf{Q}_p^2)$, we write $\hat{\phi}$ for its Fourier transform 
$$\hat{\phi}(x,y):=\int_{\mathbf{Q}_p}\int_{\mathbf{Q}_p}e_p(xv-yu)\phi(u,v)\ dudv$$
where $e_p$ is the standard additive character of $\mathbf{Q}_p$ mapping $1/p^n$ to $\mathrm{exp}(2\pi i/p^n)$ (this is an explicit description of the character $\psi$ fixed at the beginning of \Cref{sec Hecke forms}) and $du, dv$ denote the usual additive Haar measure on $\mathbf{Q}_p$. Since $\pi_{p,3}$ is unramified, the character $\chi_p$ is of the form $|\cdot|_p^a$ for some $a\in\mathbf{C}$. Let $b=a+\tfrac{1}{2}$, then by \cite[Proposition $3.2.2$]{Loeffler_Skinner_Zerbes_2021},
$$f_{\phi,\chi_p}(h):=\lim_{s\rightarrow 0} \frac{|\det(h)|_p^{s+a+\tfrac{1}{2}}}{L\left(|\cdot|_p^{b},2s+1\right)}\int_{\mathbf{Q}_p^\times}\phi((0,x)h)|x|_p^{2s+b+1}\ d^\times x,\ h\in H_p$$
is well-defined and gives an element of $I(\chi_p,|\cdot|_p^{-1/2})$. Furthermore, the assignment $\phi\mapsto f_{\hat{\phi},\chi_p}$ is surjective, $H_p$-equivariant and by \cite[Lemma $3.2.5$]{Loeffler_Skinner_Zerbes_2021} it maps $\ch(\mathbf{Z}_p^2)$ to $W_{\pi_{p,3}}^\mathrm{sph}$.
\begin{cor}
    Let $\mathcal{Z}$ be the non-zero normalized invariant period in $\Hom_{H_p}(\pi_{p,1}\otimes \pi_{p,2}\otimes \pi_{p,3},\mathbf{1})$. Assume that the spherical Hecke eigensystem of $\pi_{p,1}\boxtimes \pi_{p,2}$ restricts to a morphism $\mathcal{H}_{G_p}^\circ(\mathcal{R})\rightarrow\mathcal{R}$ for some $\mathbf{Z}[1/p]$-algebra $\mathcal{R}\subseteq\mathbf{C}$. Then:
    \begin{enumerate}
        \item The period $\mathcal{Z}$ is valued in $\mathcal{R}$ on the lattice 
        \begin{align*}
            \scalemath{0.9}{\mathrm{span}_{\mathcal{R}}\left\{ g\left(W_{\pi_{p,1}}^\mathrm{sph}\otimes W_{\pi_{p,2}}^\mathrm{sph}\right)\otimes f_{\hat{\phi},\chi_p}\ |\ g\in G_p\ \mathrm{and}\ \phi\in\mathcal{S}(\mathbf{Q}_p^2)\ \mathrm{valued}\ \mathrm{in}\ \frac{1}{\vol_{H_p}\left(\mathrm{Stab}_{H_p}(\phi)\cap g G_p^\circ g^{-1}\right)}\mathcal{R}\right\}.}
        \end{align*}
        \item The period $\mathcal{Z}$ is valued in the ideal $\left(p-1,L(\pi_{p,1}\boxtimes \pi_{p,2},0)^{-1}\right)\subseteq\mathcal{R}$ on the lattice 
        \begin{align*}
           \scalemath{0.9}{ \mathrm{span}_{\mathcal{R}}\left\{ g\left(W_{\pi_{p,1}}^\mathrm{sph}\otimes W_{\pi_{p,2}}^\mathrm{sph}\right)\otimes f_{\hat{\phi},\chi_p} \ |\ g\in G_p\ \mathrm{and}\ \phi\in\mathcal{S}_0(\mathbf{Q}_p^2)\ \mathrm{valued}\ \mathrm{in}\ \frac{1}{\vol_{H_p}\left(\mathrm{Stab}_{H_p}(\phi)\cap g G_p^\circ[p] g^{-1}\right)}\mathcal{R}\right\}.}
        \end{align*}
        \item If $\omega_p:=\omega_{\pi_{p,1}}\omega_{\pi_{p,2}}\neq 1$, the period $\mathcal{Z}$ satisfies
        $$L(\omega_p,0)^{-1}\mathcal{Z}(-)\in \left(p-1,L(\pi_{p,1}\boxtimes \pi_{p,2},0)^{-1}\right)\subseteq\mathcal{R}$$
        on the lattice
        \begin{align*}
            \scalemath{0.9}{\mathrm{span}_{\mathcal{R}}\left\{ g\left(W_{\pi_{p,1}}^\mathrm{sph}\otimes W_{\pi_{p,2}}^\mathrm{sph}\right)\otimes f_{\hat{\phi},\chi_p} \ |\ g\in G_p\ \mathrm{and}\ \phi\in\mathcal{S}(\mathbf{Q}_p^2)\ \mathrm{valued}\ \mathrm{in}\ \frac{1}{\vol_{H_p}\left(\mathrm{Stab}_{H_p}(\phi)\cap g G_p^\circ[p] g^{-1}\right)}\mathcal{R}\right\}.}
        \end{align*}
        \end{enumerate}
        \begin{proof}
            This follows from \Cref{thm integral JPSS periods} and the map induced by $f_{(\hat{-}),\chi_p}$ at the level of $\mathrm{Hom}$-spaces.
        \end{proof}
\end{cor}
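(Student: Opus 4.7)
The plan is to reduce the corollary directly to \Cref{thm integral JPSS periods} by exploiting the surjective $H_p$-equivariant map $\mathcal{S}(\mathbf{Q}_p^2) \twoheadrightarrow \pi_{p,3}$, $\phi \mapsto f_{\hat\phi,\chi_p}$, recalled immediately before the statement, which sends $\ch(\mathbf{Z}_p^2)$ to $W_{\pi_{p,3}}^\mathrm{sph}$. Precomposing $\mathcal{Z}$ with this map in the third tensor slot (and permuting factors) produces a linear form
$$\widetilde{\mathcal{Z}}\colon \mathcal{S}(\mathbf{Q}_p^2) \otimes \pi_{p,1} \otimes \pi_{p,2} \longrightarrow \mathbf{C}, \qquad \phi \otimes v_1 \otimes v_2 \longmapsto \mathcal{Z}\bigl(v_1 \otimes v_2 \otimes f_{\hat\phi,\chi_p}\bigr),$$
which is manifestly $H_p$-invariant by equivariance of $\phi \mapsto f_{\hat\phi,\chi_p}$ and of $\mathcal{Z}$. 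The normalization of $\mathcal{Z}$ on $W_{\pi_{p,1}}^\mathrm{sph} \otimes W_{\pi_{p,2}}^\mathrm{sph} \otimes W_{\pi_{p,3}}^\mathrm{sph}$ combined with $\ch(\mathbf{Z}_p^2) \mapsto W_{\pi_{p,3}}^\mathrm{sph}$ yields $\widetilde{\mathcal{Z}}(\ch(\mathbf{Z}_p^2) \otimes W_{\pi_{p,1}}^\mathrm{sph} \otimes W_{\pi_{p,2}}^\mathrm{sph}) = 1$, so by \Cref{prop JSPS} the linear form $\widetilde{\mathcal{Z}}$ is the unique \emph{normalized} invariant period in $\Hom_{H_p}(\mathcal{S}(\mathbf{Q}_p^2) \otimes \pi_{p,1} \otimes \pi_{p,2},\mathbf{1})$, to which \Cref{thm integral JPSS periods} applies directly under the same hypothesis on $\Theta_{\pi_{p,1} \boxtimes \pi_{p,2}}$.

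With this identification in place, each of the three lattices appearing in the corollary is by construction the image, under $\phi \otimes v_1 \otimes v_2 \mapsto v_1 \otimes v_2 \otimes f_{\hat\phi,\chi_p}$, of the corresponding lattice in \Cref{thm integral JPSS periods}, with the \emph{same} Schwartz function $\phi$ and the \emph{same} denominator condition (the bound is on $\phi$ itself, not on its Fourier transform, so no readjustment is needed). Under this dictionary the three integrality statements for $\mathcal{Z}$ become precisely the three integrality statements for $\widetilde{\mathcal{Z}}$ supplied by \Cref{thm integral JPSS periods}.

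Consequently the proof is purely formal: I would simply write down $\widetilde{\mathcal{Z}}$, verify the normalization and $H_p$-equivariance, invoke \Cref{thm integral JPSS periods} parts (1), (2), (3) in turn, and translate each assertion back through the map $\phi \mapsto f_{\hat\phi,\chi_p}$. There is no genuine obstacle here; all the serious work, the cyclicity of $\mathcal{I}(G_p/G_p^\circ)$, the explicit description of the Hecke operators $\mathcal{P}_\delta$ and their containment in $\mathfrak{h}_p = (p-1, \mathcal{P}_p(1)^{'})$, and the handling of the determinant-level subgroup $G_p^\circ[p]$, has already been carried out once and for all in the proof of \Cref{thm integral JPSS periods}, and the present corollary only transports those results along a surjection.
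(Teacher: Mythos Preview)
Your proposal is correct and matches the paper's own proof essentially verbatim: the paper writes only that the result ``follows from \Cref{thm integral JPSS periods} and the map induced by $f_{(\hat{-}),\chi_p}$ at the level of $\mathrm{Hom}$-spaces,'' which is precisely the construction of your $\widetilde{\mathcal{Z}}$ and the formal transport of the three integrality statements. Your verification that the normalizations agree (via $\ch(\mathbf{Z}_p^2)\mapsto W_{\pi_{p,3}}^\mathrm{sph}$) and that the lattice conditions on $\phi$ are preserved unchanged makes explicit exactly what the paper's one-line proof leaves implicit.
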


\subsection{Rankin-Selberg integral for product of modular forms}\label{rankin selber integrals}
\subsubsection{The local integrals}\label{sec local integrals}
We once again fix the additive character $\psi:\mathbf{Q}_p\rightarrow\mathbf{C}^\times$ of conductor one from previous sections. As before, we consider the family of $G_p$-representations $\pi_{p,1}\boxtimes \pi_{p,2}$ where $\pi_{p,1}$ and $\pi_{p,2}$ are unramified $H_p$-representations of Whittaker type, and we identify $\pi_{p,1}$ with $\mathcal{W}(\pi_{p,1},\psi)$ and $\pi_{p,2}$ with $\mathcal{W}(\pi_{p,2},\overline{\psi})$.

\begin{defn}[\cite{jacquet1983rankin}]\label{def zeta integral}
    For $W_1\in\mathcal{W}(\pi_{p,1},\psi), W_2\in\mathcal{W}(\pi_{p,2},\overline{\psi})$ and $\phi\in\mathcal{S}(\mathbf{Q}_p^2)$, the zeta integral $Z(W_1,W_2,\phi,s)$ is given by
    \begin{align}
        Z(\phi,W_1,W_2,s)=\int_{N_p\backslash H_p}W_1(h)W_2(h)\phi((0,1)h)|\mathrm{det}(h)|_p^s\ dh
    \end{align}
\end{defn}
\begin{prop}\label{prop convergence}
    \begin{enumerate}
        \item The zeta integral of \emph{\Cref{def zeta integral}} converges for $\Re(s)$ large enough and admits unique meromorphic continuation as a rational function of $p^s$.
        \item As $W_1,W_1$ and $\phi$ vary, the $Z(W_1.W_2,\phi,s)$ generate the fractional ideal $L(\pi_{p,1}\boxtimes \pi_{p,2},s)\mathbf{C}[p^s,p^{-s}]$.
        
        \end{enumerate}
        \begin{proof}
            This is one of the main results of \cite{jacquet1983rankin} where it is stated and proved in greater generality.
        \end{proof}
\end{prop}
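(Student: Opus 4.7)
The plan is to establish convergence and meromorphic continuation via an Iwasawa unfolding of the quotient integral, and then bootstrap from an explicit spherical computation to the ideal-generation statement.

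For part $(1)$, I would first use the Iwasawa decomposition $H_p = N_p A_p H_p^\circ$ to parametrize $N_p \backslash H_p$ by pairs $(\mathrm{diag}(a,b), k) \in A_p \times H_p^\circ$ with a suitable modulus factor. Since the integrand is right $H_p^\circ$-finite (both $W_i$ are fixed by some open compact, as is $\phi$), the compactness of $H_p^\circ$ reduces the integral to a finite sum of two-variable torus integrals of the form
\begin{equation*}
\int_{\mathbf{Q}_p^\times \times \mathbf{Q}_p^\times}
W_1\!\left(\tfrac{}{}\!\left[\begin{smallmatrix} a & \\ & b\end{smallmatrix}\right]\! k\right)
W_2\!\left(\tfrac{}{}\!\left[\begin{smallmatrix} a & \\ & b\end{smallmatrix}\right]\! k\right)
\phi((0,b)k)\,|ab|_p^{s}|a/b|_p^{-1}\,d^\times a\,d^\times b.
\end{equation*}
Pulling out the central character via $W_i(\mathrm{diag}(a,b)k) = \omega_{\pi_i}(b) W_i(\mathrm{diag}(a/b,1)k)$ and changing variables $c = a/b$ decouples the two factors: the integral over $b$ is controlled by the compact support of $\phi$ (giving a Tate-type factor in $\omega_{\pi_1}\omega_{\pi_2}$ at $2s$), while the integral over $c$ is controlled by the asymptotic decay of the product $W_1W_2$ on the torus of $\GL_2$. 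The latter comes from the Kirillov/Jacquet module description of Whittaker type representations: for $|c|_p \to \infty$ the functions $W_i(\mathrm{diag}(c,1)k)$ vanish identically, and for $|c|_p \to 0$ they are polynomial-exponential in $|c|_p$ (the exponents being the Satake parameters in the unramified case, and bounded in absolute value in general). This gives convergence for $\mathrm{Re}(s)$ large.

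For the meromorphic continuation, the cleanest route is to observe that after the unfolding above, the $c$-integral is a sum of finitely many Tate integrals on $\mathbf{Q}_p^\times$ twisted by characters coming from the Satake/Jacquet data, each of which extends to a rational function of $p^s$ with poles from the standard Tate/$\Gamma$-factor computation. One may alternatively invoke, in a uniform way, the fact (also originally in \cite{jacquet1983rankin}) that $Z(\phi,W_1,W_2,s)/L(\pi_{p,1}\boxtimes\pi_{p,2},s)$ belongs to $\mathbf{C}[p^s,p^{-s}]$; this simultaneously handles continuation and the containment half of part $(2)$.

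For part $(2)$, the containment $Z(\phi,W_1,W_2,s) \in L(\pi_{p,1}\boxtimes\pi_{p,2},s)\mathbf{C}[p^s,p^{-s}]$ is the just-mentioned polynomiality of the quotient. For surjectivity onto the ideal, I would use the spherical test data $\phi_0 = \ch(\mathbf{Z}_p^2)$, $W_i = W_{\pi_{p,i}}^{\mathrm{sph}}$: the explicit Shintani/Casselman--Shalika formula for spherical Whittaker functions on the torus evaluates the unfolded integral as $L(\pi_{p,1}\boxtimes \pi_{p,2},s)$ up to a nowhere-vanishing factor (a Tate $L$-factor of the product of central characters at $2s$), which already produces a generator of the ideal. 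To obtain all of $L(\pi_{p,1}\boxtimes\pi_{p,2},s)\mathbf{C}[p^s,p^{-s}]$ rather than just a scalar multiple, I would translate $W_i$ by elements of the torus to multiply the spherical contribution by arbitrary monomials $p^{ns}$: specifically, replacing $W_2$ by the $\mathrm{diag}(p^n,1)$-translate shifts the $c$-integral by a factor $p^{-ns}$, and varying $n \in \mathbf{Z}$ sweeps out all of $\mathbf{C}[p^s,p^{-s}]$.

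The main obstacle I anticipate is the asymptotic control of $W_1W_2$ on the torus for general (not necessarily unramified) Whittaker type vectors, as needed for both uniform convergence and the polynomiality of $Z/L$. In the irreducible generic case this is classical via the Kirillov model; for the reducible Whittaker type representations (the $I(\chi,|\cdot|^{-1/2})$ containing Steinberg) one has to argue separately, either by a direct computation in the induced model or by deforming $\chi$ and specializing a rational family — this is the technical step that makes a uniform treatment in the Whittaker type setting (rather than just the irreducible generic one) a little delicate.
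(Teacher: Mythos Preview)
The paper does not actually prove this proposition; it simply cites \cite{jacquet1983rankin}. Your sketch is essentially the standard argument behind that citation, and part~(1) is fine. There is, however, a genuine error in your surjectivity argument for part~(2).

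Translating $W_2$ alone by $\mathrm{diag}(p^n,1)$ does \emph{not} multiply the zeta integral by $p^{-ns}$. After unfolding with spherical data, this replaces the torus integrand $W_1^{\mathrm{sph}}(\mathrm{diag}(c,1))\,W_2^{\mathrm{sph}}(\mathrm{diag}(c,1))$ by $W_1^{\mathrm{sph}}(\mathrm{diag}(c,1))\,W_2^{\mathrm{sph}}(\mathrm{diag}(cp^n,1))$, which (via Shintani's formula; compare the paper's \Cref{lem two integrals}) evaluates to $L(\Pi_p,s)$ times a nontrivial polynomial in $p^{-s}$, not to $p^{-ns}L(\Pi_p,s)$. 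So you have not produced the monomials $p^{ns}$. The correct step is to first show that the family of zeta integrals is a $\mathbf{C}[p^s,p^{-s}]$-module: for any $g\in H_p$ one has
\[
Z(\phi,\,gW_1,\,gW_2,\,s)=|\det g|_p^{-s}\,Z(g^{-1}\!\cdot\phi,\,W_1,\,W_2,\,s),
\]
so acting diagonally by $g=\mathrm{diag}(p,1)$ shows stability under $p^{\pm s}$. Combined with the spherical computation $Z(\phi_0,W_{\pi_{p,1}}^{\mathrm{sph}},W_{\pi_{p,2}}^{\mathrm{sph}},s)=L(\pi_{p,1}\boxtimes\pi_{p,2},s)$ (exactly, with no extra Tate factor---the $b$-integral supplies precisely the $L(\omega_{\Pi_p},2s)$ that cancels the one coming from the $c$-integral) and the containment $Z/L\in\mathbf{C}[p^s,p^{-s}]$, this finishes surjectivity.

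Finally, your closing worry about reducible Whittaker type is unnecessary here: the statement is only asserted for \emph{unramified} representations, where Shintani's explicit formula for $W^{\mathrm{sph}}$ applies uniformly to both the irreducible and reducible principal series and gives the required torus asymptotics directly.
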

\begin{prop}\label{prop ev at spherical vector}
    Let $\phi_0=\ch(\mathbf{Z}_p^2)$. Then, 
    $$Z(\phi_0,W_{\pi_{p,1}}^\mathrm{sph},W_{\pi_{p,1}}^\mathrm{sph},s)=L(\pi_{p,2}\boxtimes\pi_{p,2},s).$$
    \begin{proof}
        This is well know and the proof uses \cite{shintani1976explicit} to evaluate the spherical Whittaker functions in terms of Satake parameters.
    \end{proof}
\end{prop}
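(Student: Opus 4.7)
The plan is to reduce the zeta integral on $N_p \backslash H_p$ to an integral over the diagonal torus $T_p$, evaluate the spherical Whittaker functions there using Shintani's formula, and sum the resulting series to recognize the Rankin-Selberg $L$-factor. First I would use the Iwasawa decomposition $H_p = N_p T_p H_p^\circ$ together with the right $H_p^\circ$-invariance of $W_{\pi_{p,1}}^{\mathrm{sph}}$, $W_{\pi_{p,2}}^{\mathrm{sph}}$ and $\phi_0$ (and the fact that $\vol_{H_p}(H_p^\circ) = 1$), to rewrite the integral as
\[
Z(\phi_0, W_{\pi_{p,1}}^{\mathrm{sph}}, W_{\pi_{p,2}}^{\mathrm{sph}}, s) = \int_{T_p} W_{\pi_{p,1}}^{\mathrm{sph}}(t) W_{\pi_{p,2}}^{\mathrm{sph}}(t)\, \phi_0((0,1)t)\, |\det t|_p^s\, \delta_B(t)^{-1}\, dt.
\]
Writing $t = \bigl[\begin{smallmatrix} a & \\ & d \end{smallmatrix}\bigr] = \bigl[\begin{smallmatrix} d & \\ & d \end{smallmatrix}\bigr]\bigl[\begin{smallmatrix} a/d & \\ & 1 \end{smallmatrix}\bigr]$ and using the centrality relation $W_{\pi_{p,i}}^{\mathrm{sph}}(zt) = \omega_{\pi_{p,i}}(z) W_{\pi_{p,i}}^{\mathrm{sph}}(t)$ for $z \in Z_p$, the factor $\phi_0((0,1)t) = \phi_0((0,d))$ forces $d \in \mathbf{Z}_p \cap \mathbf{Q}_p^\times$.

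Next I would carry out the change of variables $a \mapsto ad$ to separate the $d$-integral (which, together with the product of central characters, collapses to a local zeta factor involving $\omega_{\pi_{p,1}\boxtimes \pi_{p,2}}$) from the $a$-integral, which becomes
\[
\int_{\mathbf{Q}_p^\times} W_{\pi_{p,1}}^{\mathrm{sph}}\!\left(\bigl[\begin{smallmatrix} a & \\ & 1 \end{smallmatrix}\bigr]\right) W_{\pi_{p,2}}^{\mathrm{sph}}\!\left(\bigl[\begin{smallmatrix} a & \\ & 1 \end{smallmatrix}\bigr]\right) |a|_p^{s-1}\, d^\times a.
\]
The Shintani formula expresses the values $W_{\pi_{p,i}}^{\mathrm{sph}}(\bigl[\begin{smallmatrix} p^n & \\ & 1 \end{smallmatrix}\bigr])$ for $n \geq 0$ as the Schur polynomial $p^{-n/2}\mathfrak{s}_n(\alpha_{\pi_{p,i}}, \beta_{\pi_{p,i}})$ in the Satake parameters, and makes them vanish for $n < 0$. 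Substituting this in and expanding as a geometric series in $p^{-s}$ reduces the computation to the standard identity
\[
\sum_{n \geq 0} \mathfrak{s}_n(\alpha_1, \beta_1)\,\mathfrak{s}_n(\alpha_2, \beta_2)\, X^n = \prod_{i,j \in \{1,2\}} (1 - \alpha_i \beta_j X)^{-1},
\]
which, after the appropriate shift in $s$ and combining with the contribution from the $d$-integral, is precisely $L(\pi_{p,1} \boxtimes \pi_{p,2}, s)$.

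The only subtle point is bookkeeping of the measure normalisations and the modular character $\delta_B$ to make sure the shifts in $s$ match exactly the conventions used to define $L(\pi_{p,1} \boxtimes \pi_{p,2}, s)$ as a polynomial in the Satake parameters (in the style of \cite{jacquet2006automorphic}); this is the main place where one has to be careful, since mismatched normalisations would yield an extra factor of a power of $p^s$. Beyond that, everything is mechanical, and the argument is essentially the same as the $\GL_2 \times \GL_2$ computation sketched in \cite{grossi2020norm} Section~2.2.2.
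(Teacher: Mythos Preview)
Your approach is exactly the one the paper has in mind (Iwasawa decomposition, Shintani's formula, then the Schur--Cauchy type summation), and the overall structure is correct. One small slip: the identity you quote,
\[
\sum_{n \geq 0} \mathfrak{s}_n(\alpha_1,\beta_1)\,\mathfrak{s}_n(\alpha_2,\beta_2)\,X^n \;=\; \prod_{i,j}(1-\alpha_i\beta_j X)^{-1},
\]
is not quite right as stated --- the full Cauchy identity sums over \emph{all} partitions with at most two parts, whereas here you are summing only over single-row partitions $(n)$. The correct formula carries an extra numerator $(1-\alpha_1\beta_1\alpha_2\beta_2 X^2)$, which at $X=p^{-s}$ is exactly $L(\omega_{\pi_{p,1}\boxtimes\pi_{p,2}},2s)^{-1}$; it is this numerator that is cancelled by the $d$-integral contribution $L(\omega_{\pi_{p,1}\boxtimes\pi_{p,2}},2s)$, yielding $L(\pi_{p,1}\boxtimes\pi_{p,2},s)$ on the nose. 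With that correction the bookkeeping works out and your sketch is complete.
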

\begin{defn}\label{def linear form} Let $\pi_{p,1}, \pi_{p,2}$ be as above. 
    The linear form $\mathcal{Z}:\mathcal{S}(\mathbf{Q}_p^2)\otimes \pi_{p,1}\otimes \pi_{p,2} \rightarrow\mathbf{C}$ is given by
    $$\mathcal{Z}(\phi\otimes W_1\otimes W_1)=\lim_{s\rightarrow 0}\frac{Z(\phi,W_1,W_2,s)}{L(\pi_{p,1}\boxtimes \pi_{p,2},s)}.$$
\end{defn}
\noindent By \Cref{prop convergence}, the linear form $\mathcal{Z}$ is well defined. Additionally, \Cref{prop ev at spherical vector} shows that $\mathcal{Z}$ takes the value $1$ on $\phi_0\otimes W_{\pi_{p,1}}^\mathrm{sph}\otimes W_{\pi_{p,2}}^\mathrm{sph}$. A standard change of variables also shows that $\mathcal{Z}$ is in fact $H_p$-invariant. Putting everything together, the linear form $\mathcal{Z}$ gives a non-zero invariant period in the space $\Hom_{H_p}(\mathcal{S}(\mathbf{Q}_p^2)\otimes \pi_{p,1}\otimes \pi_{p,2},\mathbf{1})$, which by \Cref{prop JSPS}, is in fact a basis.
\subsubsection{The unramified period for a product of modular forms}
Let $f_1,f_2$ be normalized cuspidal Hecke eigenforms of even integral weights $k_1,k_2$, levels $N_1,N_2$ and Nebentypes $\epsilon_1,\epsilon_2$. Write $\pi_{f_i}$ for the corresponding cuspidal automorphic representation attached to $f_i$ (\cite{gelbart2006automorphic}). Let $S$ be a finite set of places containing $\infty$ and all the prime divisors of $N_1N_2$. Then, $\pi_{f_1}^S\boxtimes\pi_{f_2}^S$ is unramified and contains a unique normalized spherical vector $W_{\pi_{f_1}}^\mathrm{sph}\otimes W_{\pi_{f_2}}^\mathrm{sph}$. The unramified Rankin-Selberg period associated to $f_1\times f_2$ is the unique normalized linear form
 $$\mathcal{Z}_{f_1\times f_2}\in\mathrm{Hom}_{\GL_2(\mathbf{A}^S)}\left(\mathcal{S}((\mathbf{A}^S)^2)\otimes \pi_{f_1}^S\otimes\pi_{f_2}^S,\mathbf{1}\right)$$
 in this one-dimensional Hom-space.
 We fix a prime $\ell\nmid N_1N_2$ and an isomorphism $\iota:\mathbf{C}\simeq \overline{\mathbf{Q}}_\ell$.  

\begin{thm}\label{thm global rankin selberg}
     Let $\ell\nmid N_1N_2$ be a fixed prime and fix $\mathbf{C}\simeq_\iota\overline{\mathbf{Q}}_\ell$. Let $\mathbf{L}_{f_1,f_2}$ be the smallest $\ell$-adic number field containing the composite of the number fields of $f_1$ and $f_2$ under $\iota$. Then the following are true: 
    \begin{enumerate}
        \item For any $g_1,g_2\in\GL_2(\mathbf{A}^S)$ and any decomposable Schwartz function $\Phi=\otimes_{p\not\in S}\Phi_p\in \mathcal{S}((\mathbf{A}^S)^2)$ where each $\Phi_p$ is valued in $\vol_{H_p}(\mathrm{Stab}_{H_p}(\Phi_p)\cap g_pG_p^\circ g_p^{-1})^{-1}\mathcal{O}_{\mathbf{L}_{f_1,f_2}}$, the period $\mathcal{Z}_{f_1\times f_2}$ satisfies $$\mathcal{Z}_{f_1\times f_2}(\Phi,g_1W_{\pi_{f_1}^S}^\mathrm{sph},g_2W_{\pi_{f_2}^S}^\mathrm{sph})\in\mathcal{O}_{\mathbf{L}_{f_1,f_2}}.$$ 
        \item Suppose, moreover, that $S_0$ denotes a finite set of primes disjoint from $S$, and for each $p\in S_0$, $\Phi_p$ is valued in $\vol_{H_p}(\mathrm{Stab}_{H_p}(\Phi_p)\cap g_pG_p^\circ[p]g_p^{-1})^{-1}\mathcal{O}_{\mathbf{L}_{f_1,f_2}}$ \emph{(note the addition of the determinant level $G_p^\circ[p]$ instead of $G_p^\circ$)}. Then, the period $\mathcal{Z}_{f_1\times f_2}$ also satisfies
        $$ \mathcal{Z}_{f_1\times f_2}(\Phi,g_1W_{\pi_{f_1}^S}^\mathrm{sph},g_2W_{\pi_{f_2}^S}^\mathrm{sph})\cdot \left(\prod_{\substack{p\in S_0\ \mathrm{s.t:}\\ \Phi_p(0,0)\neq 0}} L_p(\epsilon_1\epsilon_2,0)^{-1}\right)\in \prod_{\substack{p\in S_{0}}}\left(p-1,L_p(\pi_{f_1}\boxtimes \pi_{f_2},0)^{-1}\right)\subseteq\mathcal{O}_{\mathbf{L}_{f_1,f_2}}.$$
        In particular, if the level of $f_1,f_2$ satisfies $(\#(\mathbf{Z}/N_1 N_2\mathbf{Z})^\times,\ell)=1$, then the containment holds without the bracketed product of abelian $L$-factors.
    \end{enumerate}

        \begin{proof}
        The linear form $\mathcal{Z}_{f_1\times f_2}$ can be realized as 
        $$\mathcal{Z}_{f_1\times f_2}(\Phi,g_1W_{\pi_{f_1}^S}^\mathrm{sph},g_2W_{\pi_{f_2}^S}^\mathrm{sph})=\prod_{p\not\in S}\mathcal{Z}(\Phi_p\otimes g_{1,p}W_{\pi_{f_1,p}}^\mathrm{sph}\otimes g_{2,p}W_{\pi_{f_2,p}}^\mathrm{sph})$$
        where $\mathcal{Z}$ is as in \Cref{def linear form}. We are now in a position to apply \Cref{thm integral JPSS periods}, at each prime $p\not\in S$ for which the expression inside the product is not $1$, with the $\mathbf{Z}[1/p]$-algebra $\mathcal{R}$ taken to be $\mathcal{O}_\mathbf{L}$. Thus, it suffices to show that the local condition on the spherical Hecke eigensystem of each local component $\pi_{f_1,p}\boxtimes \pi_{f_2,p}$ is satisfied. Indeed, this follows similarly to \cite[Theorem $8.1.1$]{groutides2024integral} using as mentioned before, the normalizations of \cite{loeffler2012computation}. If the last assumption on the level of $f_1,f_2$ holds then the bracketed integral on the left is a unit in $\mathcal{O}_\mathbf{L}$ which concludes the proof.
    \end{proof}
\end{thm}

\appendix
\section{Structure of $C_c^\infty(P_p\backslash G_p/G_p^\circ)[\tfrac{1}{1-\mathcal{S}_p}]$}\label{appendix}
This appendix is dedicated to the proof of \Cref{thm freeness of P-invariant vectors}. As such we refer the reader back to \Cref{sec Hecke forms} for notation and definitions. 

\begin{thm}
     The module $C_c^\infty(P_p\backslash G_p/G_p^\circ)[\tfrac{1}{1-\mathcal{S}_p}]$ is free of rank one over $\mathcal{H}_{G_p}^\circ[\tfrac{1}{1-\mathcal{S}_p}]$, generated by the characteristic function $f_0:=\ch(P_pG_p^\circ)$. In particular, for $f\in C_c^\infty(P_p\backslash G_p/G_p^\circ)$, we have $f=\tfrac{-\mathcal{S}_p\mathcal{Q}_f^{'}}{1-\mathcal{S}_p}\cdot f_0$, where $\mathcal{Q}_f$ is the Hecke operator described in \emph{\Cref{sec inverting}}.
    \begin{proof} We firstly show that the module in question is cyclic with $f_0$ as a generator.
        By \eqref{eq: inverting}, it suffices to show that the intersection of the kernels of the linear forms $\Lambda_{\Pi_p}$ as we vary the representation $\Pi_p$, is trivial. For an element $f$ in this intersection, we have $\Lambda_{\Pi_p}(f)=\Theta_{\Pi_p}(\mathcal{Q}_f)=0$ for every $\Pi_p$. By the usual density argument, this implies that $\mathcal{Q}_f$ is identically zero in $\mathcal{H}_{G_p}^\circ$. We now claim that $\mathcal{Q}_f=0$ implies that $f=0$. Firstly, for $m,n,r,t\in\mathbf{Z}$,  we put $f_{m,n}^{r,t}:=\ch\left(P_p p^r\left(p^t\left[\begin{smallmatrix}
            p^m & \\
            & 1
        \end{smallmatrix}\right]\left[\begin{smallmatrix}
            1 & p^{n}\\
            & 1
        \end{smallmatrix}\right],1\right) G_p^\circ\right)$ in $C_c^\infty(P_p\backslash G_p/G_p^\circ)$. We can always write $f$ uniquely as 
        $$f=\left(\sum_{i=1}^N a_i f_{m_i,0}^{r_i,t_i}\right)+\left(\sum_{j=1}^M b_j f_{\mu_j,\nu_j}^{\rho_j,\tau_j}\right)$$
        where:
        \begin{itemize}
            \item $|m_0|\leq |m_1|\leq  \dots \leq |m_N| $; if $|m_i|=|m_{i+1}|$ then $r_i\leq r_{i+1}$; if $|m_i|=|m_{i+1}|$ and $r_i=r_{i+1}$ then $t_i\leq t_{i+1}$.
            \item For each $j$, $\nu_j<-\mathrm{max}\{\mu_j,0\}$. 
            \item All the $a_i$ and $b_j$ are non-zero.
        \end{itemize}
        By \cref{prop explicit formula for Lambda} we have 
        \begin{align}\label{eq:18}
            \mathcal{Q}_f=\left(\sum_{i=1}^N a_i\mathscr{V}_{r_i,t_i}\mathscr{P}_{m_i}\right)+\left(\sum_{j=1}^M b_j \mathscr{V}_{\rho_j,\tau_j}\left(\mathscr{P}_{\mu_j}+\mathscr{Q}_{\mu_j,\nu_j}\mathcal{P}_p(1)\right)\right)=0
        \end{align}
        in $\mathcal{H}_{G_p}^\circ=\mathbf{C}[\mathcal{S}_{p,1}^{\pm 1},\mathcal{T}_{p,1},\mathcal{S}_{p,2}^{\pm 1},\mathcal{T}_{p,2}]$. The condition $\nu_j<-\mathrm{max}\{\mu_j,0\}$ guarantees that each $\mathscr{D}_{\mu_j,\nu_j}$ is identically non-zero by \Cref{prop explicit formula for Lambda}. 
        We write  \eqref{eq:18} as 
        \begin{align}\label{eq:19}
            \left(\sum_{i=1}^N a_i\mathscr{V}_{r_i,t_i}\mathscr{P}_{m_i}\right)+\left(\sum_{j=1}^M b_j \mathcal{V}_{\rho_j,\tau_j}\mathscr{P}_{\mu_j}\right)+\left(\sum_{j=1}^Mb_j \mathscr{V}_{\rho_j,\tau_j}\mathscr{Q}_{\mu_j,\nu_j}\mathcal{P}_p(1)\right)=0
        \end{align}
        If the rightmost sum in \eqref{eq:19} is non-zero, then $\mathcal{P}_p(1)$ divides $\left(\sum_{i=1}^N a_i\mathscr{V}_{r_i,t_i}\mathscr{P}_{m_i}\right)+\left(\sum_{j=1}^M b_j \mathscr{V}_{\rho_j,\tau_j}\mathscr{P}_{\mu_j}\right)$.
        \textbf{Claim $0$:} We now claim that $\mathcal{P}_p(1)$ does not divide $\left(\sum_{i=1}^N a_i\mathscr{V}_{r_i,t_i}\mathscr{P}_{m_i}\right)+\left(\sum_{j=1}^M b_j \mathscr{V}_{\rho_j,\tau_j}\mathscr{P}_{\mu_j}\right)$.
        
        In particular, we show that $\mathcal{P}_p(1)$ never divides a non-zero finite sum of the form $\sum c\ (\mathcal{S}_{p,1}\mathcal{S}_{p,2})^s\mathcal{S}_{p,1}^k \mathscr{P}_\lambda$. Consider an arbitrary sum of this form, which we write as 
        \begin{align}\label{eq:20}\Delta:=\left(p^{-n/2}\mathcal{P}_{p,1}^{(n)}(1)\sum_{i,j}c_{i,j}(\mathcal{S}_{p,1}\mathcal{S}_{p,2})^{s_i}\mathcal{S}_{p,1}^{k_j}\right)&+\left(p^{-n/2}\mathcal{P}_{p,2}^{(n)}(1)\sum_{i,j}d_{i,j}(\mathcal{S}_{p,1}\mathcal{S}_{p,2})^{\tilde{s}_i}\mathcal{S}_{p,1}^{\tilde{k}_j}\right)\\
        &+ \text{terms of this form involving }  \scalemath{0.9}{\mathcal{P}_{p,1}^{(e_1)}(1),\mathcal{P}_{p,2}^{(e_2)}(1)}\text{ with }  e_1,e_2<n.
        \end{align}
It is straightforward to compute, using the explicit description of the spherical Hecke eigensystem of the representations $\Pi_p$, that
        \begin{itemize}
            \item $\mathcal{P}_p(1)=1-\frac{1}{p}\mathcal{T}_{p,1}\mathcal{T}_{p,2}-\frac{1}{p}\mathcal{S}_{p,1}\mathcal{S}_{p,2}\mathcal{T}_{p,1}\mathcal{T}_{p,2}+\frac{1}{p}\mathcal{S}_{p,1}\mathcal{T}_{p,2}^2+\frac{1}{p}\mathcal{S}_{p,2}\mathcal{T}_{p,1}^2-2\mathcal{S}_{p,1}\mathcal{S}_{p,2}+\mathcal{S}_{p,1}^2\mathcal{S}_{p,2}^2$. Note that $\mathcal{P}_p(1)=\mathcal{P}_{p,1}^{(1)}(1)=\mathcal{P}_{p,2}^{(1)}(1)$ by \Cref{rem :4.3.4}.
            \item $\frac{1}{p^{n/2}}\mathcal{P}_{p,1}^{(n)}(1)=\frac{1}{p^{\lfloor n/2 \rfloor}}\mathcal{T}_{p,1}^{n-1}-\frac{1}{p^{\lfloor n/2 \rfloor}}\mathcal{S}_{p,1}\mathcal{T}_{p,2}\mathcal{T}_{p,1}^{n-2}+O(\mathcal{T}_{p,1}^{n-3})$, for $n\geq 2$ 
            \item $\frac{1}{p^{n/2}}\mathcal{P}_{p,2}^{(n)}(1)=\frac{1}{p^{\lfloor n/2 \rfloor}}\mathcal{T}_{p,2}^{n-1}-\frac{1}{p^{\lfloor n/2 \rfloor}}\mathcal{S}_{p,2}\mathcal{T}_{p,1}\mathcal{T}_{p,2}^{n-2}+O(\mathcal{T}_{p,2}^{n-3})$, for $n\geq 2$.
        \end{itemize}
        where $O(\mathcal{T}_{p,i}^{e})$ denotes a linear combination of terms of the form $\mathcal{S}_{p,1}^a\mathcal{S}_{p,2}^b\mathcal{T}_{p,i}^c$ with $0\leq c\leq e$.
        From this, by looking at the degrees of the $\mathcal{T}_{p,i}$ terms, we can assume that $n\geq 3$ in \eqref{eq:20}, since otherwise the non-divisibility claim holds trivially. It is also important to note that $\Delta$ has no monomials containing $\mathcal{T}_{p,1}^a\mathcal{T}_{p,2}^b$ with both $a,b\geq 2$. If $\mathcal{P}_p(1)$ divides $\Delta$ in $\mathcal{H}_{G_p}^\circ$, then it is not hard to see that we can write 
        $$\Delta=\left(A_1\mathcal{T}_{p,1}^{n-3}+A_2\mathcal{T}_{p,1}^{n-4}\mathcal{T}_{p,2}+\dots +A_{n-3}\mathcal{T}_{p,1}\mathcal{T}_{p,2}^{n-4}+A_{n-2}\mathcal{T}_{p,2}^{n-3}+C\right)\mathcal{P}_p(1)$$
        where $A_i\in\mathbf{C}[\mathcal{S}_{p,1}^{\pm 1},\mathcal{S}_{p,2}^{\pm 1}]$, all the exponents of the $\mathcal{T}_{p,i}$ ($i=1,2$) are non-negative, and $C$ is a  $\mathbf{C}[\mathcal{S}_{p,1}^{\pm 1},\mathcal{S}_{p,2}^{\pm 1}]$-linear combination of monomials containing only $\mathcal{T}_{p,1}^a\mathcal{T}_{p,2}^b$ with $a+b<n-3$, and $a,b\geq 0.$\\
        Comparing coefficients of $\mathcal{T}_{p,1}^{n-1}$ and $\mathcal{T}_{p,2}^{n-1}$ in $\Delta$, we get the relations
        \begin{align}\label{eq:21}
            A_1=\frac{1}{p^{\lfloor n/2 \rfloor -1}}\sum_{i,j}c_{i,j}\mathcal{S}_{p,1}^{s_i+k_j}\mathcal{S}_{p,2}^{s_i-1}\ \text{ and }\ A_{n-2}=\frac{1}{p^{\lfloor n/2 \rfloor -1}}\sum_{i,j}d_{i,j}\mathcal{S}_{p,1}^{\tilde{s}_i+\tilde{k}_j-1}\mathcal{S}_{p,2}^{\tilde{s}_i}.
        \end{align}
        The monomials that contribute to $\frac{1}{p^{\lfloor n/2\rfloor}}\mathcal{S}_{p,1}\mathcal{T}_{p,2}\mathcal{T}_{p,1}^{n-2}$ in $\Delta$ are $A_1\mathcal{T}_{p,1}^{n-3}$ and $A_2\mathcal{T}_{p,1}^{n-4}\mathcal{T}_{p,2}$. Similarly, the monomials contributing to $\frac{1}{p^{\lfloor n/2\rfloor}}\mathcal{S}_{p,2}\mathcal{T}_{p,1}\mathcal{T}_{p,2}^{n-2}$ in $\Delta$ are $A_{n-2}\mathcal{T}_{p,2}^{n-3}$ and $A_{n-3}\mathcal{T}_{p,1}\mathcal{T}_{p,2}^{n-4}$. Comparing coefficients of $\mathcal{T}_{p,2}\mathcal{T}_{p,1}^{n-2}$ and $\mathcal{T}_{p,1}\mathcal{T}_{p,2}^{n-2}$, we get the relations 
        \begin{align}\label{eq:22}
            -\frac{1}{p}(1+\mathcal{S}_{p,1}\mathcal{S}_{p,2})A_1+\frac{1}{p}\mathcal{S}_{p,2}A_2&=\frac{-1}{p^{\lfloor n/2 \rfloor}}\sum_{i,j}c_{i,j}\mathcal{S}_{p,1}^{s_i+k_j+1}\mathcal{S}_{p,2}^{s_i}\\
            \nonumber \frac{1}{p}(1+\mathcal{S}_{p,1}\mathcal{S}_{p,2})A_{n-2}+\frac{1}{p}\mathcal{S}_{p,1}A_{n-3}&= \frac{-1}{p^{\lfloor n/2 \rfloor}}\sum_{i,j}d_{i,j}\mathcal{S}_{p,1}^{\tilde{s}_i+\tilde{k}_j}\mathcal{S}_{p,2}^{\tilde{s}_i+1}.
        \end{align}
        Combining \eqref{eq:21} and \eqref{eq:22} and simplifying, we obtain the relations
        \begin{align}\label{eq:24}
            A_1=\mathcal{S}_{p,2}A_2\ \text{ and }\ A_{n-2}=\mathcal{S}_{p,2}A_{n-3}.
        \end{align}
        We observe that for $n=3$ or $4$, \eqref{eq:24} already implies our claim that $\mathcal{P}_p(1)|\Delta$ if and only if $\Delta=0$. Thus we may assume that $n\geq 5$. Comparing coefficients of $\mathcal{T}_{p,1}^a\mathcal{T}_{p,2}^b$ with $a+b=n-1$ and $a,b\geq 2$ (which as we remarked earlier, they vanish), we obtain the following system of equations:
        \begin{align}\label{eq:25}
            \mathcal{S}_{p,1}A_1-(1+\mathcal{S}_{p,1}\mathcal{S}_{p,2})A_2+\mathcal{S}_{p,2}A_3&=0\\
           \nonumber \mathcal{S}_{p,1}A_2-(1+\mathcal{S}_{p,1}\mathcal{S}_{p,2})A_3+\mathcal{S}_{p,2}A_4&=0\\
          \nonumber  \vdots\\
           \nonumber \mathcal{S}_{p,1}A_{n-4}-(1+\mathcal{S}_{p,1}\mathcal{S}_{p,2})A_{n-3}+\mathcal{S}_{p,2}A_{n-2}&=0.
        \end{align}
        A lengthy but elementary substitution exercise, together with \eqref{eq:24}, yields that 
        $$A_2=\mathcal{S}_{p,1}\mathcal{S}_{p,2}A_2\ \text{for}\ n=5$$
        $$A_2=\mathcal{S}_{p,2}^{n-5}A_{n-3}\ \text{and}\ A_{n-3}=\mathcal{S}_{p,1}^{n-5} A_2\ \text{for}\ n\geq 6.$$
        Combining this with \eqref{eq:24} and \eqref{eq:25}, we deduce that $A_1=\dots =A_{n-2}=0$, which finishes the proof of \textbf{Claim $0$}.
        
        Thus, \eqref{eq:19} now implies that $S:=\sum_{j=1}^Mb_j \mathscr{V}_{\rho_j,\tau_j}\mathscr{Q}_{\mu_j,\nu_j}$ is identically zero. Up to re-ordering we may assume that the tuple $(-\nu_M,-\mu_M, \rho_M, \tau_M)$ is maximal under lexicographic ordering, among all such tuples for each $j=0,\dots,M$.\\
        \textbf{Claim $1$:} We claim that the coefficients $b_j$ are zero.
        
        We prove that $b_M=0$, and thus inductively, all the $b_i=0$. To show this, we use the explicit expression 
of $\mathscr{V}$ and $\mathscr{Q}$ found in 
\cref{prop explicit formula for Lambda}. The leading monomial of $\mathscr{V}_{\rho_M,\tau_M}\mathscr{Q}_{\mu_M,\nu_M}$ is given up to a non-zero scalar multiple, by 
$$
b_M\ (\mathcal{S}_{p,1}\mathcal{S}_{p,2})^{\rho_M}\mathcal{S}_{p,1}^{\tau_M}\ \mathcal{T}_{p,1}^{-\nu_M-1}\ \mathcal{T}_{p,2}^{-\nu_M-\mu_M-1}
$$
regardless of the parity of $\mu_M$. This can be seen directly from \Cref{prop explicit formula for Lambda}. But the claim that $b_M=0$ now follows easily by considering the tuples mentioned above: If $-\nu_{M-1}<-\nu_{M}$ we are done by considering the exponent of $\mathcal{T}_{p,1}$. Assume we have equality, then if $-\mu_{M-1}<-\mu_M$ then we are done by considering the exponent of $\mathcal{T}_{p,2}$. We argue like this two more times with $\rho_M$ and $\tau_M$. Since the values of $\nu_j,\mu_j,\rho_j,\tau_j$ completely determine $f_{\mu_j,\nu_j}^{\rho_j,\tau_j}$, it cannot be the case that two different $j$ give rise to the same tuples $(\nu_j,\mu_j,\rho_j,\tau_j)$. Thus, \textbf{Claim $1$} holds by the exponent comparison.

Now, \eqref{eq:19} and \textbf{Claim $1$}, imply that 
 $$T:=\sum_{i=1}^N a_i\mathscr{V}_{r_i,t_i}\mathscr{P}_{m_i}=0.$$
 We now use this to deduce the following.\\
 \textbf{Claim $2$:} Our second and last claim is that the coefficients $a_i$ are also zero.
 
 We do this by showing that $a_N=0$ and then proceeding inductively as before. Recall that
 $$\mathscr{P}_{m_i}=\frac{1}{p^{m_i}}\begin{cases}
        \mathcal{P}_{p,1}^{(m_i+1)}(1),\ &\mathrm{if}\ m_i\geq 0\\
        \mathcal{P}_{p,2}^{(|m_i|+1)}(1),\ &\mathrm{if}\ m_i<0
    \end{cases}$$
 Suppose first that $|m_N|\geq 2$. Looking at the explicit expressions for $\mathcal{P}_{p,1}^{(n)}(1)$ and $\mathcal{P}_{p,2}^{(n)}(1)$ it follows that $a_N\mathscr{V}_{r_N,t_N}\mathscr{P}_{m_N}$ is the only term of $T$ that contains a single monomial which is a non-zero scalar multiple of
 $$a_N\cdot\begin{dcases}(\mathcal{S}_{p,1}\mathcal{S}_{p,2})^{r_N}\mathcal{S}_{p,1}^{t_N}\mathcal{T}_{p,1}^{m_N},\ &m_N\geq 0\\
(\mathcal{S}_{p,1}\mathcal{S}_{p,2})^{r_N}\mathcal{S}_{p,1}^{t_N}\mathcal{T}_{p,2}^{|m_N|},\ &m_N< 0
 \end{dcases}$$
 Thus considering inequalities between exponents as before, it follows that $a_N=0$. Now suppose that $|m_N|=1$. If $0=|m_{N-1}|<1$ we are clearly done by considering once again exponents of $\mathcal{T}_{p,1}$ or $\mathcal{T}_{p,2}$. If $|m_{N-1}|=1$, we let $K\in\{1,\dots, N-1\}$ be the smallest integer for which $$|m_K|=|m_{K+1}|=\dots=|m_{N-1}|=|m_N|=1.$$
 Then, we have
 \begin{align*}T&=\left(\sum_{i=1}^{K-1}a_i(\mathcal{S}_{p,1}\mathcal{S}_{p,2})^{r_i}\mathcal{S}_{p,1}^{t_i}\right)\mathscr{P}_0+\left(\sum_{i=K}^N a_i(\mathcal{S}_{p,1}\mathcal{S}_{p,2})^{r_i}\mathcal{S}_{p,1}^{t_i}\mathscr{P}_{m_i}\right),\ \text{where}\ m_i=\pm 1\\
 &=\left(\sum_{i=1}^{K-1}a_i(\mathcal{S}_{p,1}\mathcal{S}_{p,2})^{r_i}\mathcal{S}_{p,1}^{t_i}\right)\mathscr{P}_0+\left(\sum_{i\in I_1}a_i(\mathcal{S}_{p,1}\mathcal{S}_{p,2})^{r_i}\mathcal{S}_{p,1}^{t_i}\right)\mathscr{P}_1+ \left(\sum_{i\in I_{-1}}a_i(\mathcal{S}_{p,1}\mathcal{S}_{p,2})^{r_i}\mathcal{S}_{p,1}^{t_i}\right)\mathscr{P}_{-1}.\end{align*}
 where $I_1\cup I_{-1}=\{K,\dots,N\}$, $i\in I_1$ if and only if $m_1=1$ and $i\in I_{-1}$ if and only if $m_i=-1$.
 We write 
 $$C_1:=\sum_{i\in I_1}a_i(\mathcal{S}_{p,1}\mathcal{S}_{p,2})^{r_i}\mathcal{S}_{p,1}^{t_i}\ \text{ and }\ C_{-1}:=\sum_{i\in I_{-1}}a_i(\mathcal{S}_{p,1}\mathcal{S}_{p,2})^{r_i}\mathcal{S}_{p,1}^{t_i}.$$
 Recall that 
 \begin{align*}
     \mathscr{P}_0=1-\mathcal{S}_{p,1}\mathcal{S}_{p,2},\ 
     \mathscr{P}_1=-\frac{1}{p}\mathcal{S}_{p,1}\mathcal{T}_{p,2}+\frac{1}{p}\mathcal{T}_{p,1},\
     \mathscr{P}_{-1}=\frac{1}{p}\mathcal{S}_{p,2}\mathcal{T}_{p,1}+\frac{1}{p}\mathcal{T}_{p,2}.
 \end{align*}
 Since $T=0$, we must have
 $$-\frac{C_1}{p}\mathcal{S}_{p,1}\mathcal{T}_{p,2}+\frac{C_1}{p}\mathcal{T}_{p,1}-\frac{C_{-1}}{p}\mathcal{S}_{p,2}\mathcal{T}_{p,1}+\frac{C_{-1}}{p}\mathcal{T}_{p,2}=0.$$
 From this, we deduce that
 $$C_1=C_{-1}\mathcal{S}_{p,2}\ \text{ and }\ C_{-1}=C_1\mathcal{S}_{p,1}.$$
 It follows at once that $C_1=C_{-1}=0.$ Since for $i\in I_1$, $f_{m_i,0}^{r_i,t_i}=f_{1,0}^{r_i,t_i}$ is completely determined by $(r_i,t_i)$ we conclude that $a_i=0$ for every $i\in I_1$ and in an identical fashion $a_i=0$ for every $i\in I_{-1}$. This deals with the case where $|m_N|=1$. Finally, if $m_N=0$, then 
 $$T=\left(\sum_{i=1}^{N}a_i(\mathcal{S}_{p,1}\mathcal{S}_{p,2})^{r_i}\mathcal{S}_{p,1}^{t_i}\right)\mathscr{P}_0=0$$
 and $f_{m_i,0}^{r_i,t_i}=f_{0,0}^{r_i,t_i}$ is once again determined by $(r_i,t_i)$. Thus, \textbf{Claim $2$} follows.
 
 Putting everything together, we have shown that $\mathscr{Q}_f=0$ implies that $f=0$ and as a result, we deduce that
 $$\bigcap_{\Pi_p} \mathrm{ker}(\Lambda_{\Pi_p})=0.$$
 This is exactly what we wanted to prove in order to establish the cyclicity statement. For freeness, note that $\Lambda_{\Pi_p}:C_c^\infty(P_p\backslash G_p/G_p^\circ)\rightarrow(\Pi_p^\vee)^{G_p^\circ}$, for $\omega_{\pi_{p,1}}\neq \omega_{\pi_{p,2}}^{-1}$, can be extended naturally to a Hecke equivariant map  $$C_c^\infty(P_p\backslash G_p/G_p^\circ)[\tfrac{1}{1-\mathcal{S}_p}]\longrightarrow (\Pi_p^\vee)^{G_p^\circ},\ \tfrac{f}{(1-\mathcal{S}_p)^r}\mapsto\tfrac{\Lambda_{\Pi_p}(f)}{\Theta_{\Pi_p}(1-\mathcal{S}_p^{-1})^r}.$$
 We just proved that every such element $\tfrac{f}{(1-\mathcal{S}_p)^r}$ is given by $\tfrac{-\mathcal{S}_p\mathcal{Q}_f^{'}}{(1-\mathcal{S}_p)^{r+1}}\cdot f_0.$ Thus freeness follows from Hecke equivariance and the fact that this slightly shrinked family of representations $\Pi_p$ is still dense in $\mathrm{Spec}(\mathcal{H}_{G_p}^\circ)$.
    \end{proof}
\end{thm}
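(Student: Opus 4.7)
The plan is to deduce both cyclicity and freeness from the existence of the Hecke-equivariant family of linear forms $\Lambda_{\Pi_p}$ constructed in Section \ref{sec Hecke forms}. The starting observation is equation \eqref{eq: inverting}: for every $f \in C_c^\infty(P_p\backslash G_p/G_p^\circ)$ and every unramified $\Pi_p = \pi_{p,1}\boxtimes\pi_{p,2}$ of Whittaker type with $\omega_{\pi_{p,1}} \neq \omega_{\pi_{p,2}}^{-1}$, we have $\Lambda_{\Pi_p}((1-\mathcal{S}_p)\cdot f) = \Lambda_{\Pi_p}(-\mathcal{S}_p\mathcal{Q}_f^{'}\cdot f_0)$. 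Hence the element $(1-\mathcal{S}_p)\cdot f + \mathcal{S}_p\mathcal{Q}_f^{'}\cdot f_0$ lies in $\bigcap_{\Pi_p}\ker(\Lambda_{\Pi_p})$, and the desired identity $f = \tfrac{-\mathcal{S}_p \mathcal{Q}_f^{'}}{1-\mathcal{S}_p}\cdot f_0$ in the localisation (hence cyclicity) follows once this intersection is shown to be trivial.

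To handle the intersection, note that $f \in \bigcap_{\Pi_p}\ker(\Lambda_{\Pi_p})$ forces $\Theta_{\Pi_p}(\mathcal{Q}_f) = 0$ for every such $\Pi_p$, and by density of this family in $\mathrm{Spec}(\mathcal{H}_{G_p}^\circ)$ this gives $\mathcal{Q}_f = 0$ in $\mathcal{H}_{G_p}^\circ$. The real task is therefore to prove the implication $\mathcal{Q}_f = 0 \Rightarrow f = 0$. For this I would use Iwasawa decomposition to expand $f$ as a finite sum $\sum a_i f_{m_i,0}^{r_i,t_i} + \sum b_j f_{\mu_j,\nu_j}^{\rho_j,\tau_j}$ (with $\nu_j>0$), ordered lexicographically by $(|\mu|+\nu, |\mu|, \rho, \tau)$, and then invoke the explicit formula of \Cref{prop explicit formula for Lambda} to convert the vanishing of $\mathcal{Q}_f$ into a polynomial identity in $\mathbf{C}[\mathcal{S}_{p,1}^{\pm 1},\mathcal{T}_{p,1},\mathcal{S}_{p,2}^{\pm 1},\mathcal{T}_{p,2}]$.

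The main obstacle is the combinatorial/algebraic analysis of this polynomial identity. The key technical step is a non-divisibility lemma: the Euler polynomial $\mathcal{P}_p(1)$ cannot divide any non-zero $\mathbf{C}[\mathcal{S}_{p,i}^{\pm 1}]$-linear combination of the $\mathscr{P}_m$ operators. This would be established by a careful leading-coefficient comparison in the $\mathcal{T}_{p,1}, \mathcal{T}_{p,2}$ variables: by writing $\Delta = \bigl(\sum_k A_k \mathcal{T}_{p,1}^{n-2-k}\mathcal{T}_{p,2}^{k-1} + C\bigr)\mathcal{P}_p(1)$ and matching coefficients in the top few degrees, one obtains a linear recursion $\mathcal{S}_{p,1}A_{k-1} - (1+\mathcal{S}_{p,1}\mathcal{S}_{p,2})A_k + \mathcal{S}_{p,2}A_{k+1} = 0$ together with boundary conditions $A_1 = \mathcal{S}_{p,2}A_2$ and $A_{n-2} = \mathcal{S}_{p,2}A_{n-3}$, which together force all $A_k = 0$. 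This non-divisibility then forces the ``error'' terms (those with $\nu_j > 0$) to vanish separately, and a second round of leading-monomial comparisons (now on the remaining ``main'' part) inductively kills the coefficients $b_M, b_{M-1}, \dots$ and then $a_N, a_{N-1}, \dots$

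Finally, freeness over $\mathcal{H}_{G_p}^\circ[\tfrac{1}{1-\mathcal{S}_p}]$ will follow by extending each $\Lambda_{\Pi_p}$ Hecke-equivariantly to the localisation via $\tfrac{f}{(1-\mathcal{S}_p)^r} \mapsto \tfrac{\Lambda_{\Pi_p}(f)}{\Theta_{\Pi_p}(1-\mathcal{S}_p^{-1})^r}$: if $\mathcal{P}\cdot f_0 = 0$ for some $\mathcal{P} \in \mathcal{H}_{G_p}^\circ[\tfrac{1}{1-\mathcal{S}_p}]$, then applying $\Lambda_{\Pi_p}$ and using \Cref{prop equivariance of linear form}(2) yields $\Theta_{\Pi_p}(\mathcal{P}) \cdot L(\omega_{\Pi_p},0)^{-1} = 0$ for all $\Pi_p$ in the still-dense restricted family, which forces $\mathcal{P} = 0$. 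Combined with the cyclicity proved above, this gives freeness and completes the theorem.
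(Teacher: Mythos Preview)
Your proposal is correct and follows essentially the same approach as the paper's proof: reduction to $\mathcal{Q}_f=0\Rightarrow f=0$ via density, the non-divisibility of $\mathcal{P}_p(1)$ into $\mathbf{C}[\mathcal{S}_{p,i}^{\pm1}]$-combinations of the $\mathscr{P}_m$ via the recursion and boundary relations you state, then a leading-monomial induction to kill first the $b_j$ and then the $a_i$, with freeness handled by extending $\Lambda_{\Pi_p}$ to the localisation over the restricted family. The only cosmetic discrepancy is that the Hecke equivariance of $\Lambda_{\Pi_p}$ is through $\Theta_{\Pi_p^\vee}$, so one should write $\Theta_{\Pi_p}(\mathcal{P}')$ rather than $\Theta_{\Pi_p}(\mathcal{P})$ in your freeness step, but since $(-)'$ is an automorphism this is harmless.
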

    \bibliography{citation} 
\bibliographystyle{amsalpha}

\noindent\textit{Mathematics Institute, Zeeman Building, University of Warwick, Coventry CV4 7AL,
England}.\\
\textit{Email address}: Alexandros.Groutides@warwick.ac.uk
\end{document}